\definecolor{green}{rgb}{0,0.8,0} 
\renewcommand{\Re}{\mathrm{Re}}
\newcommand{\veps}{\varepsilon}
\newcommand{\bfH}{{\bf H}}
\newcommand{\bbN}{\mathbb N}
\newcommand{\bbR}{\mathbb R}
\newcommand{\calC}{\mathcal C}
\newcommand{\calD}{\mathcal D}
\newcommand{\calF}{\mathcal F}
\newcommand{\calJ}{\mathcal J}
\newcommand{\calK}{\mathcal K}
\newcommand{\calL}{\mathcal L}
\newcommand{\calR}{\mathcal R}
\newcommand{\calS}{\mathcal S}
\definecolor{deepgreen}{cmyk}{1,0,1,0.5}
\newcommand{\C}{\mathbb{C}}
\newcommand{\N}{\mathbb{N}}
\newcommand{\R}{\mathbb{R}}
\newcommand{\Rmnum}[1]{\expandafter\@slowromancap\romannumeral #1@}
\newcommand{\ha}{\widehat}
\renewcommand{\bar}{\underline}
\newcommand{\Del}[1]{}
\numberwithin{equation}{section}
\newtheorem{theorem}{Theorem}[section]
\newtheorem{corollary}[theorem]{Corollary}
\newtheorem{lemma}[theorem]{Lemma}
\newtheorem{proposition}[theorem]{Proposition}
\newtheorem{remark}[theorem]{Remark}
\renewcommand\Re{\mathrm{Re}\,}
\newcommand{\frakR}{\mathfrak{R}}
\newcommand{\xb}{\overline{x}}
\newcommand{\trho}{\widetilde{\rho}}
\newcommand{\xt}{\tilde{x}}
\newcommand{\tcalL}{\tilde{\calL}}
\newcommand{\tcalJ}{\tilde{\calJ}}
\begin{document}

\title{On stability of blow up solutions for the critical co-rotational Wave Maps problem}
\author{Joachim Krieger, Shuang Miao}

\subjclass{35L05, 35B40}

\keywords{critical wave equation, blowup}
\thanks{Support of the Swiss National Fund is gratefully acknowledged.}
\begin{abstract}
We show that the finite time blow up solutions for the co-rotational Wave Maps problem constructed in \cite{KST2,GaoKr} are stable under suitably small perturbations within the co-rotational class, provided the scaling parameter $\lambda(t) = t^{-1-\nu}$ is sufficiently close to $t^{-1}$, i. e. the constant $\nu$ is sufficiently small and positive. The method of proof is inspired by \cite{CondBlow,BuKr}, but takes advantage of geometric structures of the Wave Maps problem already used in \cite{RaRod,BeKrTa} to simplify the analysis. In particular, we heavily exploit that the resonance at zero satisfies a natural first order differential equation. 
\end{abstract}

\maketitle
\tableofcontents

\section{Introduction}

Let $M\hookrightarrow \R^k$ be a Riemannian submanifold of Euclidean space, and $U: \R^{n+1}\longrightarrow M$ a map. Here $\R^{n+1}$ denotes Minkowski space equipped with the standard metric $m$ with signature $-1,1,\ldots, 1$. Then $U$ is called a Wave Map, provided it is critical with respect to the formal action functional 
\[
\mathcal{L}(U): = \int_{\R^{n+1}}\partial_{\alpha}U\cdot\partial^{\alpha}U\,dxdt,
\]
where $U$ is expressed in terms of the ambient coordinates of $\R^k$ and the raising of indices is in accordance with the Minkowski metric, i. e. 
\[
\partial^{\alpha} = m^{\alpha\beta}\partial_{\beta}.
\]
The Wave Maps problem can then be cast in the form 
\begin{equation}\label{eq:WMgeneral}
\Box U^i + C^i_{jk}(U)\partial_{\alpha}U^j\partial^{\alpha}U^k = 0
\end{equation}
for suitable coefficient functions $C^i_{jk}(U)$. 
Throughout the Einstein summation convention is in place, with summation taking place over repeated indices, and Greek letters denoting indices $0,1,\ldots, n$, while Roman indices refer to spatial indices $1,2,\ldots, n$ only. 
The Wave Maps problem has attracted particular interest in the {\it{critical}} dimension $n = 2$, where recent progress has revealed a rather satisfying picture in terms of regularity versus singularity formation of solutions. Specifically, the {\it{Threshold Theorem}} established in full generality in \cite{StTat} and in more specific cases (but with more detailed description of the solutions) in \cite{Tao1,Tao2,Tao3,Tao4,Tao5,Tao6,Tao7,KS0}, implies that provided $M$ does not admit non-trivial finite energy harmonic maps originating on $\R^2$, singularities cannot form. In particular, Wave Maps $U: \R^{2+1}\longrightarrow {\mathbf{H}}^2$ are globally regular. These results were obtained after obtaining a rather precise theory for small-energy wave maps from $\bbR^{2+1}$ to $\bfH^{2}$ as well as general targets, see \cite{Krieger04,Tao2,Tataru1,Tataru3,Tataru2}, which in turn were preceded by more specialized results in a symmetry reduced setting by Christodoulou, Tahvildar-Zadeh \cite{ChShadi}, and
Struwe \cite{Struwe2}. An important precursor of the small energy theory by Tao and Tataru was the work by Klainerman-Machedon \cite{KlMa1,KlMa2}, introducing the framework of $X^{s,b}$-spaces in the context of wave equations with null-structures. 
\\

By contrast, $M = S^2$, the simplest target admitting non-trivial finite energy harmonic maps originating on $\R^2$, does admit solutions resulting in finite time singularities. More precisely, numerical evidence in \cite{BizWM,IsenLieb} strongly suggested singularity development for equivariant wave maps of co-rotation index one from $\bbR^{2+1}$ to $S^{2}$. Struwe \cite{Struwe1} showed that restricting to co-rotational Wave Maps, such a  singularity can only occur in an \emph{energy concentration scenario}, and more precisely he showed that this has to happen by the bubbling off of at least one harmonic map along a suitable time sequence. Later, this type of singular solution with exactly one bubble was constructed in \cite{KST2,RodSter,RaRod}. Similar `bubbling off' singularities can be also constructed for solutions to the focusing energy-critical nonlinear wave equation, Yang-Mills equation, and the critical Schr\"odinger map equation (see \cite{KST1,KS1,KST,MeRaRo1,MeRaRo2,Perelman}). We also note that the result of Struwe was rendered more precise by Cote \cite{Cote}, detailing a decomposition of singular co-rotational Wave Maps into a number of bubbles plus an error term. 
\\

The numerical result in \cite{BizWM} suggests that if the blowup rate is close to the self-similar rate, then the blowup solution (with one bubble) is stable. In the present work, we give a rigorous proof of this in case of the solutions built in \cite{KST2, GaoKr}.\\
Due to energy conservation of Wave Maps 
\[
\sum_{\alpha = 0}^n\int_{\R^2}|\partial_{\alpha}U|^2\,dx = \text{const},
\]
singularities are necessarily of an energy concentration type. Specifically, the solutions in the above mentioned references are all of the form 
\[
U(t, x) = Q(\lambda(t)x) + \veps(t, x),
\]
where $Q$ is a suitable harmonic map $Q: \R^2\longrightarrow S^2$, and $\lim_{t\rightarrow T}\lambda(t) = +\infty$, with $T$ the finite time blow up. The radiation term $\veps(t, x)$ on the other hand gets evacuated outside of the backward light cone centered at the singularity, in the sense that 
\[
\lim_{t\rightarrow T}\sum_{\alpha = 0}^2\int_{|x|\leq T - t}|\partial_{\alpha}\veps|^2\,dx = 0.
\]
The constructions in \cite{KST2,RodSter,RaRod} all rely on suitable symmetry reductions to reduce the equations to a system of wave equations which no longer involve the complicated derivative structure as in \eqref{eq:WMgeneral}. Precisely, both \cite{KST2, RaRod} consider the so-called {\it{co-rotational symmetry reduction}} of Wave Maps $U: \R^{2+1}\longrightarrow S^2$, where the Wave Map $U$ is stipulated to be of the specific form 
\[
U(t, x) = (u(t, r),\omega),\,r = |x|,
\]
where one uses standard polar coordinates on the sphere $S^2$ and $(r, \omega)$ refer to standard spherical coordinates on $\R^2$. One then infers the equation 
\begin{equation}\label{eq:WMcorot}
-u_{tt} + u_{rr} + \frac1r u_r = \frac{\sin(2u)}{2r^2}
\end{equation}
Observe that energy conservation for this problem is expressed by 
\[
\int_{0}^\infty \left[\frac12(u_r^2 + u_t^2)+ \frac{\sin^2u}{2r^2}\right]\,r\,dr = \text{const}.
\]

This problem admits the {\it{static finite energy solution}} 
\[
Q(r) = 2\arctan r,
\]
and the blow ups in \cite{KST2,RaRod} are constructed by suitable perturbations of this solution, and are of the form 
\[
u(t, r) = Q(\lambda(t)r) + \veps(t, r),\,\lim_{t\rightarrow 0}\lambda(t) = +\infty.
\]

An important difference between the examples constructed in \cite{RaRod} versus those in \cite{KST2} is that the former were shown to be {\it{stable under suitable (co-rotational) perturbations}}, while the solutions in \cite{KST2} came without any stability assertion. Also, while the solutions in \cite{RaRod} have $C^\infty$-smooth data, those in \cite{KST2} are of lesser regularity $H^{1+\nu-}$, $\nu>\frac12$, and also display quite different concentration dynamics (i. e. the functions $\lambda(t)$), namely 
\[
\lambda(t) = t^{-1-\nu},\,\nu>\frac12
\]
in \cite{KST2} versus 
\[
\lambda(t)\sim t^{-1}e^{c\sqrt{|\log t|}}
\]
in \cite{RaRod}. 
\\
The construction of \cite{KST2} was extended to the full polynomial range $\nu>0$ in \cite{GaoKr}, where solutions for \eqref{eq:WMcorot} of the form 
\[
u(t, r) = Q(\lambda(t)r) + \veps(t, r),\,\lambda(t) = t^{-1-\nu},\,0<\nu<\frac12
\]
were constructed. The issue of their {\it{stability}}, however, remained open, just as for the solutions constructed in \cite{KST2}. 
\\

In this paper, we address the issue of stability, and prove that the solutions constructed in \cite{KST2,GaoKr} are stable under suitable (co-rotational) perturbations, provided $\nu>0$ is small enough. This result is a direct analogue of the corresponding one established in \cite{BuKr} for the energy critical focussing nonlinear wave equation 
\[
\Box u = -u^5
\]
on $\R^{3+1}$. 
\\
In order to formulate the main theorem, we associate with a function $\veps(r)$, defined for $r\geq 0$, the map from $\R^2\rightarrow \C\simeq\R^2$ defined by  
\[
(r, \theta)\longrightarrow \veps(r)\cdot e^{i\theta}
\]
Then denote by 
\[
\big\|\veps\big\|_{H^{l}_{\R^2}}
\]
the $H^{l}$-norm of the above map from $\R^2$ to $\R^2$.

\begin{theorem}\label{thm:Main} Let $\nu>0$ sufficiently small. Then there exists a singular solution $u_{\nu}$ for \eqref{eq:WMcorot} on $[t_0, 0)\times \R^2$, $t_0 = t_0(\nu)>0$ small enough,  of the form 
\[
u_{\nu}(t, r) = Q(\lambda(t)r) + \veps_{\nu}(t, r),\,\lambda(t) = t^{-1-\nu}, 
\]
constructed as in \cite{KST2,GaoKr}, with the following property: There is $\delta_0>0$ small enough, such that for any data perturbation $(\veps_0, \veps_1)\in H_{\R^2}^4\times H_{\R^2}^3$ with 
\[
\big\|(\veps_0, \veps_1)\big\|_{H_{\R^2}^4\times H_{\R^2}^3}<\delta_0, 
\]
the data $(u_{\nu}(t_0, \cdot)+\veps_0, \partial_t u_{\nu}(t_0, \cdot) + \veps_1)$ at time $t_0$ lead to a solution for \eqref{eq:WMcorot} on $[t_0, 0)\times \R^2$ of the form 
\begin{equation}\label{eq:ansatz}
u(t, r) = Q(\lambda(t)r) + \veps(t, r)
\end{equation}
with $(\veps(t, \cdot), \veps_t(t, \cdot)\in H^{1+\nu-}_{\R^2}\times H^{\nu-}_{\R^2}$ for all $t\in [t_0, 0)$ and with 
\[
\lim_{t\rightarrow 0}\int_0^{t}\left(\veps_t^2 + \veps_r^2 + \frac{\sin^2\veps}{2r^2}\right)\,rdr = 0
\]
Thus the perturbed solutions display the same dynamics, and blow up at the same point in space time. Their regularity at any time $t\in [t_0,0)$ is of class $H^{1+\nu-}$. 
\end{theorem}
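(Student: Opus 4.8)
\section*{Proof proposal}

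\textbf{Overall strategy.} The plan is to linearize \eqref{eq:WMcorot} around the approximate blow-up profile $u_\nu$ and set up a perturbative scheme in self-similar (or ``renormalized'') variables, following the Bizon--Kahane--Krieger type analysis of \cite{BuKr,CondBlow} but exploiting the geometric structure emphasized in \cite{RaRod,BeKrTa}. Introduce the new time variable $\tau$ with $\frac{d\tau}{dt}=\lambda(t)$ and the self-similar space variable $R=\lambda(t)r$; write the perturbation as $\veps(t,r)=\lambda(t)^{-1}\cdot$(something), or more precisely pass to the natural ``Hamiltonian'' unknown $v(\tau,R)$ obtained from $\veps$ by the substitution that conjugates the co-rotational wave operator to a radial Schr\"odinger-type operator. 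The linearized operator around $Q(R)$ in the self-similar coordinates is the standard $-\partial_{RR}-\frac1R\partial_R+\frac{\cos(2Q)}{R^2}$ (after the $e^{i\theta}$ map, an operator on $\R^2$), which has a threshold resonance at energy zero given explicitly by $Q'(R)=\frac{2}{1+R^2}$ — this is the ``resonance at zero'' referred to in the abstract, and the key point is that it solves a first order ODE, which will be used to control the only dangerous mode.

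\textbf{Key steps.}
\emph{Step 1 (Set-up and the linearized flow).} Derive the equation satisfied by $v(\tau,R)$: it is of the form $\partial_\tau^2 v + (\text{first order in }\tau\text{ from }\lambda'/\lambda) + \mathcal{L}v = \mathcal{N}(v) + e$, where $\mathcal{L}$ is the self-similar linearized operator with potential, $\mathcal{N}$ collects the quadratic-and-higher nonlinear terms coming from $\sin(2u)$, and $e$ is the error generated by the fact that $u_\nu$ is only an \emph{approximate} solution (controlled quantitatively by the constructions of \cite{KST2,GaoKr}).
\emph{Step 2 (Spectral/transference estimates for the linear evolution).} Establish decay/energy estimates for the linear propagator of $\partial_\tau^2 + (\text{damping}) + \mathcal{L}$ on the relevant weighted Sobolev spaces adapted to the light cone; here the subtlety is the zero resonance $Q'$ and, when $\nu$ is small, a nearly-zero eigenvalue. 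The mechanism is: project off the resonant direction, use the explicit first-order ODE for the resonance to solve the ODE for the coefficient of that mode (a linear ODE in $\tau$ whose coefficient $\sim \lambda'/\lambda \sim \nu$ smallness makes it controllable), and get a contraction for the stable part.
\emph{Step 3 (Distorted Fourier / transport structure).} As in \cite{KST2,GaoKr} one uses the distorted Fourier transform associated to $\mathcal{L}$; the ``transport'' operator $\transport$ governs how frequencies move, and one proves the key bounds on the spectral measure and the transference operator $\mathcal{K}$, uniformly as $\nu\to 0$.
\emph{Step 4 (Nonlinear iteration).} Close a fixed point argument in a space encoding: $H^{1+\nu-}\times H^{\nu-}$-type control of $\veps$ at every $\tau$, plus the vanishing of the light-cone energy $\int_0^t(\veps_t^2+\veps_r^2+\frac{\sin^2\veps}{2r^2})\,r\,dr\to 0$. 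The higher regularity assumption $H^4\times H^3$ on the data is used to gain the extra smoothness/decay needed to control the nonlinearity $\mathcal{N}(v)$ and to absorb the error $e$; one propagates this higher regularity (possibly with slow growth in $\tau$) alongside the basic estimate.
\emph{Step 5 (Conclusion).} From the fixed point, the solution exists on all of $[t_0,0)$, has the stated form \eqref{eq:ansatz}, retains regularity $H^{1+\nu-}$, and the light-cone energy vanishes, so it blows up at the same spacetime point with the same rate $\lambda(t)=t^{-1-\nu}$.

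\textbf{Main obstacle.} The hardest part will be Step 2--Step 3: controlling the linear evolution uniformly in $\nu$ as $\nu\downarrow 0$, because the gap between the (generalized) zero mode and the continuous spectrum degenerates, and the damping coefficient $\lambda'/\lambda$ also degenerates; one must show that the \emph{sign} and structure of the resonance equation (the first-order ODE satisfied by $Q'$) still yields a good a priori estimate with constants that do not blow up. A secondary difficulty is matching the functional framework: the data perturbation lives in ordinary Sobolev spaces $H^4\times H^3$ on $\R^2$, whereas the iteration naturally takes place in the distorted-Fourier/weighted spaces of \cite{KST2,GaoKr}, so one needs careful transference lemmas (and the $e^{i\theta}$-extension device introduced before Theorem~\ref{thm:Main}) to pass between the two and to verify that the constructed $u_\nu$ plus a small $H^4\times H^3$ perturbation indeed lands in the domain of the scheme. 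Everything else — deriving the $v$-equation, expanding $\sin(2u)$, bounding $\mathcal{N}$, and running the contraction — is expected to be technically involved but routine given the machinery of \cite{KST2,GaoKr,BuKr}.
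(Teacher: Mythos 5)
Your overall frame (renormalized variables $\tau,R$, distorted Fourier transform, separate treatment of the zero mode, fixed point for the remainder) is in the same family as the paper, but as written the scheme has genuine gaps, and the difficulties you single out are not the ones that actually arise. First, the way the paper exploits the ``first order ODE for the resonance'' is not by projecting it off: the resonance is $\phi_0(R)=\frac{R}{1+R^2}$ (proportional to $RQ'$, not $Q'(R)=\frac{2}{1+R^2}$, which is not in the kernel of the linearized operator), and the point is that $\mathcal{D}\phi_0=0$ for the first order operator $\mathcal{D}$ with $\mathcal{L}=\mathcal{D}^*\mathcal{D}$. One then works with the new unknown $\mathcal{D}\veps$, whose evolution is governed by the conjugated operator $\tilde{\mathcal{L}}=\mathcal{D}\mathcal{D}^*$ with no resonance and a much tamer spectral measure, together with the decomposition $\veps=\phi(\mathcal{D}\veps)+c(\tau)\phi_0(R)$ and a second order ODE for $c(\tau)$ driven by $\lim_{R\to0}R^{-1}\mathcal{D}^*\mathcal{D}\veps$. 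A mere spectral projection does not produce this gain in the spectral measure, and your ``main obstacle'' (degenerating gap / nearly-zero eigenvalue as $\nu\downarrow0$) is not present: the linearized operator is independent of $\nu$ and has no small eigenvalue; smallness of $\nu$ enters only through the decay rates of $\lambda'/\lambda$ and the structure of $u^\nu$.

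Second, two mechanisms without which the iteration does not close are missing. (i) The background $u^\nu$ is only $H^{1+\nu-}$, with a ``shock'' across the light cone $R\sim\nu\tau$; consequently the source terms involving $u^\nu$ produce Fourier coefficients decaying only like $\xi^{-3/2-\nu/2}$ after one Duhamel application, which is not enough to define the crucial quantity $h(\tau)\sim\int_0^\infty \bar x(\tau,\xi)\tilde\rho(\xi)\,d\xi$ feeding the $c(\tau)$ ODE. The paper resolves this by isolating an explicit oscillatory fine structure $e^{\pm i\nu\tau\xi^{1/2}}\xi^{-2-k\nu/2}(\log\xi)^j$ of the Fourier transform (the ``admissible'' decomposition) and integrating by parts in $\xi$; your plan to compensate by propagating the extra $H^4\times H^3$ regularity of the data cannot work, because the obstruction sits in the fixed background, not in the perturbation (this rigidity is also why the blow-up point cannot shift). (ii) Several linear terms do not gain smallness in one iteration step: the transference terms $2\frac{\lambda'}{\lambda}\mathcal{K}_0\mathcal{D}_\tau\bar x$ and the couplings between $c(\tau)$ and $\mathcal{D}\veps$ (the $h(\tau)$ source, and the linear-in-$\veps$ term with $\veps=c(\tau)\phi_0$). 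Hence a plain Banach contraction of the type you propose fails; the paper needs two- and three-fold re-iteration with structural cancellations for the resonant couplings, and for the nonlocal transference terms a manifold re-iteration argument (splitting $\mathcal{K}_0$ into diagonal and off-diagonal parts and gaining a factorial $\epsilon^{-j}/j!$ from iterated time integrals, as in the conditional-blowup framework) to force smallness after many steps. Without identifying and handling these terms, Step 4 of your outline is not ``routine'' and the argument does not close.
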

\begin{remark} We note that the conditions on the data $\veps_{0,1}$ imply in particular that they vanish at the origin $r = 0$. 

\end{remark}

\section{Outline of paper}

The preceding theorem will be obtained by implementing a suitable iterative scheme to solve the perturbative problem, which will be formulated in terms of two quantities $\calD\veps$ and $c(\tau)$, representing the `non-resonant' as well as `resonant' parts of $\veps$ and which can be re-assembled to produce $\veps$ via the formula \eqref{eqn:epsdecomp}. The next section will be devoted to deduce the structure of the equation for $\calD\veps$, given in \eqref{eq Dveps temp 2}, as well as the ODE driving $c(\tau)$, given in \eqref{ctau ODE}. 
\\
Section 4 then details the distorted Fourier basis which we use to describe the function $\calD\veps$, largely based on \cite{KST2,BeKrTa}. There we also derive the all-important transference identity given in Proposition~\ref{prop:Kstructure}. At the end of that section, we also introduce the norm $\|\cdot\|_{S_0}$, which will be the key ingredient to control the iterates. 
In section 5, the translation of the problem to the Fourier side is implemented, carefully transforming the left hand side into a manageable transport operator, see \eqref{eq on Fourier side final}. This is largely analogous to the formulation used in \cite{KS1}. 
The following sections are then devoted to controlling the source terms appearing on the right of \eqref{eq on Fourier side final}, which will be possible once the correct fine structure of the Fourier transform $\xb(\tau, \xi)$ of $\calD\veps$ has been identified, see \eqref{eq:admissiblexb}. The necessity of such a decomposition comes from the limited smoothness of the $u_{\nu}$ getting perturbed, as manifested by a `shock' these solutions experience across the light cone. This latter feature is also responsible for the somewhat puzzling fact that the perturbed solutions blow up in the same space-time location: the perturbations are too regular to displace the shock to a shifted light cone. Thus the shock across the light cone appears to impart a certain rigidity to these solutions. However, as far as the main thrust of this paper is concerned, this is only a technical aspect, specific to the solutions being perturbed, while the method in and of itself is presumably of much wider applicability. 
\\
In the sections leading from there up to the critical section 11, bounds for iterates are deduced, sometimes requiring {\it{re-iteration}} to force a smallness gain, see e. g. Lemma~\ref{lem:ctocdelicate}. However, this involves at most a small number of re-iterations. 
\\
There is, however, one type of source term, specifically the {\it{non-local linear operators}} given by $2\frac{\lambda'}{\lambda}\mathcal{K}_0\calD_{\tau}\xb$, see Proposition~\ref{prop:nonlocallinear1}, where no smallness is obtained, even after two- or three-fold re-iteration. This is an issue which arose in analogous fashion in \cite{KST1,DoMHKS,CondBlow}, and we deal with this by a similar method via {\it{manifold re-iteration}}, in section 11.

\section{Separation of the dynamics into resonant and non-resonant parts}

Following \cite{KST2}, the equation governing the evolution of $\veps(t, r)$ in the ansatz \eqref{eq:ansatz} is given by 
\begin{equation}\label{eq:epsequation1}\begin{split}
&\left(-\partial_t^2 + \partial_r^2 + \frac1r\partial_r\right)\veps - \frac{\cos(2Q(\lambda(t)r)}{r^2}\veps = N(\veps),\\
&N(\veps) = \frac{\cos(2u^\nu) - \cos(2Q(\lambda(t)r)}{r^2}\veps + \frac{\sin(2u^\nu)}{2r^2}(\cos(2\veps) - 1) + \frac{\cos(2u^\nu)}{2r^2}(\sin(2\veps) - 2\veps)
\end{split}\end{equation}
Introducing the new coordinates
\[
\tau = \int_t^\infty \lambda(s)\,ds,\,R = \lambda(t)r, 
\]
and using the identities
\begin{align*}
 \partial_{t}\veps= -\lambda\left(\partial_{\tau}\veps+\frac{\lambda'(\tau)}{\lambda}R\partial_{R}\veps\right),\quad \partial_{r}\veps=\lambda\partial_{R}\veps,
\end{align*}
and
\begin{align*}
 \partial^{2}_{t}\veps=\lambda'(\tau)\lambda\left(\partial_{\tau}\veps+\frac{\lambda'(\tau)}{\lambda}R\partial_{R}\veps\right)+\lambda^{2}\left(\partial_{\tau}+\frac{\lambda'(\tau)}{\lambda}R\partial_{R}\right)^{2}\veps,\quad \partial_{r}^{2}\veps=\lambda^{2}\partial_{R}^{2}\veps,
\end{align*}
we infer the following equation in terms of the new coordinate system: 
\begin{align}\label{eq veps tau R}
\begin{split}
 -&\left(\left(\partial_{\tau}+\frac{\lambda'(\tau)}{\lambda(\tau)}R\partial_{R}\right)^{2}+\frac{\lambda'(\tau)}{\lambda(\tau)}\left(\partial_{\tau}+\frac{\lambda'(\tau)}{\lambda(\tau)}R\partial_{R}\right)\right)\veps\\
 &+\left(\partial_{R}^{2}+\frac{1}{R}\partial_{R}-\frac{\cos(2Q(R))}{R^{2}}\right)\veps=\lambda^{-2}N(\veps),\\
 \Rightarrow\quad &-\left(\left(\partial_{\tau}+\frac{\lambda'(\tau)}{\lambda(\tau)}R\partial_{R}\right)^{2}+\frac{\lambda'(\tau)}{\lambda(\tau)}\left(\partial_{\tau}+\frac{\lambda'(\tau)}{\lambda(\tau)}R\partial_{R}\right)\right)\veps-\calL\veps=\lambda^{-2}N(\veps)
 \end{split}
\end{align}
where we use the notation 
\begin{align*}
\calL\veps &= \left(-\partial_{R}^{2}-\frac{1}{R}\partial_{R}+\frac{\cos(2Q(R))}{R^{2}}\right)\veps\\
& =  \left(-\partial_{R}^{2}-\frac{1}{R}\partial_{R} + \frac{1}{R^2}\frac{1-6R^2 + R^4}{(1+R^2)^2}\right)\veps
\end{align*}
Following \cite{RaRod,BeKrTa} we now introduce a key first order operator
\begin{align*}
 \calD:=\partial_{R}+\frac{1}{R}\frac{R^{2}-1}{R^{2}+1}=\partial_{R}+\frac{1}{R}\left(1-\frac{2}{R^{2}+1}\right)=\partial_{R}+\frac{1}{R}-\frac{2}{R(R^{2}+1)}.
\end{align*}
as well as its dual operator 
 \begin{align*}
 -\calD^{*}=\partial_{R}+\frac{1}{R}\left(1+\frac{1-R^{2}}{1+R^{2}}\right).
 \end{align*}
Then the following remarkable identity is checked by direct computation: 
\begin{proposition}(\cite{RaRod,BeKrTa}) The operator $\mathcal{L}$ can be factorized
\[
\mathcal{L} = \calD^{*} \calD
\]
Furthermore, the operator $\mathcal{D}$ annihilates the resonance: 
\[
\mathcal{D}\phi_0(R) = 0,\quad {\phi_{0}(R):=\frac{R}{1+R^{2}}.}
\]
\end{proposition}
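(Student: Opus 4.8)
The statement is purely computational: it asserts the factorization $\mathcal{L} = \calD^*\calD$ and the identity $\calD\phi_0 = 0$. My plan is to verify both by direct computation, organizing the algebra so that the potential term in $\mathcal{L}$ emerges transparently. First I would compute $\calD\phi_0$. With $\phi_0(R) = R/(1+R^2)$, one has $\phi_0'(R) = (1-R^2)/(1+R^2)^2$, and then
\[
\calD\phi_0 = \phi_0' + \frac{1}{R}\frac{R^2-1}{R^2+1}\phi_0 = \frac{1-R^2}{(1+R^2)^2} + \frac{1}{R}\frac{R^2-1}{R^2+1}\cdot\frac{R}{1+R^2} = \frac{1-R^2}{(1+R^2)^2} + \frac{R^2-1}{(1+R^2)^2} = 0,
\]
which settles the second claim immediately.

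For the factorization, I would write $\calD = \partial_R + a(R)$ with $a(R) = \frac{1}{R}\frac{R^2-1}{R^2+1}$, and compute the formal adjoint with respect to the measure $R\,dR$ (the natural measure here, since $\mathcal{L}$ contains the radial Laplacian $-\partial_R^2 - \frac1R\partial_R$). The adjoint of $\partial_R$ with respect to $R\,dR$ is $-\partial_R - \frac1R$, so the adjoint of $\calD$ is $-\partial_R - \frac1R + a(R)$; comparing with the stated $-\calD^*$ requires checking that $-\frac1R + a(R) = \frac1R\left(1 + \frac{1-R^2}{1+R^2}\right) - \frac{2}{R}$ — wait, more carefully, one checks $-\frac1R + a = \frac1R\big(\frac{R^2-1}{R^2+1} - 1\big) = \frac1R \cdot \frac{-2}{R^2+1}$, while $\frac1R(1 + \frac{1-R^2}{1+R^2}) - \frac1R = \frac1R\cdot\frac{1-R^2}{1+R^2}$, and indeed $\frac{1-R^2}{1+R^2} - 1 = \frac{-2R^2}{1+R^2}$ so these must be reconciled; I would do this bookkeeping explicitly to confirm the stated expression for $-\calD^*$. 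Then I would expand $\calD^*\calD = (-\partial_R - \frac1R + a)(\partial_R + a) $ carefully, using $(-\partial_R)(a\,\cdot) = -a' - a\partial_R$, to obtain
\[
\calD^*\calD = -\partial_R^2 - \frac1R\partial_R + \big(a^2 - a' - \tfrac{a}{R}\big),
\]
and the task reduces to verifying the scalar identity $a^2 - a' - \frac{a}{R} = \frac{\cos(2Q(R))}{R^2} = \frac{1}{R^2}\frac{1-6R^2+R^4}{(1+R^2)^2}$. With $a = \frac{1}{R} - \frac{2}{R(R^2+1)}$, computing $a^2$, $a'$, and $a/R$ and combining over the common denominator $R^2(1+R^2)^2$ is routine; I expect the numerator to collapse to $1 - 6R^2 + R^4$.

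The only mild obstacle is sign/measure bookkeeping in identifying the adjoint: one must be consistent about whether $\calD^*$ denotes the adjoint with respect to $R\,dR$ (it does, given the factorization must reproduce the radial Laplacian) and track the minus sign in the paper's convention $-\calD^* = \partial_R + \cdots$. Once the measure is fixed, everything is a finite rational-function computation with no analytic subtlety. I would present the $\phi_0$ computation first (one line), then the adjoint identification, then the potential-matching computation, noting that $\calD\phi_0 = 0$ is also consistent with $\phi_0$ spanning the kernel of $\mathcal{L} = \calD^*\calD$ restricted to decaying solutions.
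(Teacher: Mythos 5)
Your proposal is correct and takes the same approach as the paper, which simply asserts the factorization and the annihilation identity as checked by direct computation (citing \cite{RaRod,BeKrTa}); your computation of $\mathcal{D}\phi_0=0$ is exact, and expanding $\mathcal{D}^*\mathcal{D}=-\partial_R^2-\tfrac1R\partial_R+(a^2-a'-\tfrac{a}{R})$ with $a=\tfrac{1}{R}\tfrac{R^2-1}{R^2+1}$ does indeed yield the potential $\tfrac{R^4-6R^2+1}{R^2(1+R^2)^2}$. The parenthetical where you try to reconcile $-\tfrac1R+a$ with the paper's written form of $-\mathcal{D}^*$ is garbled as stated, but the identity you need, $-\tfrac1R+a=-\tfrac1R\bigl(1+\tfrac{1-R^2}{1+R^2}\bigr)=-\tfrac{2}{R(1+R^2)}$, does hold, so the bookkeeping you flag as "to be done explicitly" goes through.
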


The idea now is to pass from the equation for $\veps$ to one for $\mathcal{D}\veps$. This will replace the operator $\mathcal{L}$ by the operator $\tilde{\mathcal{L}} = \mathcal{D}\mathcal{D}^*$, which no longer has a resonance at zero, and correspondingly has a much more regular spectral measure associated to it. Then introducing the right inverse of $\mathcal{D}$ given by
\[
\phi(g) = \phi_0(R)\int_0^R(\phi_0(s))^{-1}g(s)\,ds,
\]
we can set 
\begin{equation}\label{eqn:epsdecomp}
\veps = \phi(\mathcal{D}\veps) + c(\tau)\phi_0(R), 
\end{equation}
and we reduce to controlling the evolution of the pair of functions $(\mathcal{D}\veps, c(\tau))$. In the following, we deduce the system of equations describing the evolution of these functions. 

\subsection{The equation for $\mathcal{D}\veps$.}

Here we apply $\mathcal{D}$ to the equation and encounter a number of commutators. We have 
\begin{align}\label{commutator D RdR}
\begin{split}
[\calD,R\partial_{R}]=&[\partial_{R},R\partial_{R}]+\left[\frac{1}{R},R\partial_{R}\right]-\left[\frac{2}{R(R^{2}+1)},R\partial_{R}\right]\\
=&\partial_{R}+\frac{1}{R}-\frac{6R^{2}+2}{R(R^{2}+1)^{2}}=\calD-\frac{4R}{(R^{2}+1)^{2}}.
\end{split}
\end{align}
and so 
\begin{align*}
\left[\calD,\partial_{\tau}+\frac{\lambda'}{\lambda}R\partial_{R}\right]=\frac{\lambda'}{\lambda}[\calD,R\partial_{R}]=\frac{\lambda'}{\lambda}\calD-\frac{4R\lambda'}{(R^{2}+1)^{2}\lambda},
\end{align*}
and further 
\\
\begin{align*}
\left[\calD,\left(\partial_{\tau}+\frac{\lambda'}{\lambda}R\partial_{R}\right)^{2}\right]\veps
&=\left[\calD,\partial_{\tau}+\frac{\lambda'}{\lambda}R\partial_{R}\right]\left(\partial_{\tau}+\frac{\lambda'}{\lambda}R\partial_{R}\right)\veps+\left(\partial_{\tau}+\frac{\lambda'}{\lambda}R\partial_{R}\right)\left[\calD,\partial_{\tau}+\frac{\lambda'}{\lambda}R\partial_{R}\right]\veps\\
=&\frac{\lambda'}{\lambda}\left(\calD-\frac{4R}{(R^{2}+1)^{2}}\right)\left(\partial_{\tau}+\frac{\lambda'}{\lambda}R\partial_{R}\right)\veps+\left(\partial_{\tau}+\frac{\lambda'}{\lambda}R\partial_{R}\right)\left(\frac{\lambda'}{\lambda}\left(\calD-\frac{4R}{(R^{2}+1)^{2}}\right)\right)\veps.
\end{align*}
Applying $\mathcal{D}$ to \eqref{eq veps tau R}, we then obtain 
\begin{align}\label{eq Dveps temp 1}
\begin{split}
&-\left(\left(\partial_{\tau}+\frac{\lambda'(\tau)}{\lambda(\tau)}R\partial_{R}\right)^{2}+\frac{\lambda'(\tau)}{\lambda(\tau)}\left(\partial_{\tau}+\frac{\lambda'(\tau)}{\lambda(\tau)}R\partial_{R}\right)\right)\calD\veps-\tcalL\calD\veps\\
=&\lambda^{-2}\calD\left(N(\veps)\right)+\frac{\lambda'}{\lambda}\left(\calD-\frac{4R}{(R^{2}+1)^{2}}\right)\left(\partial_{\tau}+\frac{\lambda'}{\lambda}R\partial_{R}\right)\veps\\
&+\left(\partial_{\tau}+\frac{\lambda'}{\lambda}R\partial_{R}\right)\left(\frac{\lambda'}{\lambda}\left(\calD-\frac{4R}{(R^{2}+1)^{2}}\right)\right)\veps\\
&+\left(\frac{\lambda'}{\lambda}\right)^{2}\left(\calD-\frac{4R}{(1+R^{2})^{2}}\right)\veps\\
=:&\lambda^{-2}\calD\left(N(\veps)\right)+I+II+III.
\end{split}
\end{align}
Commuting $\calD$ and $\partial_{\tau}+\frac{\lambda'}{\lambda}R\partial_{R}$ again, we obtain:

\begin{align*}
I=&\left(\frac{\lambda'}{\lambda}\right)^{2}\left(\calD-\frac{4R}{(R^{2}+1)^{2}}\right)\veps+\frac{\lambda'}{\lambda}\left(\partial_{\tau}+\frac{\lambda'}{\lambda}R\partial_{R}\right)\calD\veps-\frac{\lambda'}{\lambda}\frac{4R}{(R^{2}+1)^{2}}\left(\partial_{\tau}+\frac{\lambda'}{\lambda}R\partial_{R}\right)\veps,\\
II=&\left(\frac{\lambda'}{\lambda}\right)'\left(\calD-\frac{4R}{(R^{2}+1)^{2}}\right)\veps+\frac{\lambda'}{\lambda}\left(\partial_{\tau}+\frac{\lambda'}{\lambda}R\partial_{R}\right)\calD\veps-\frac{\lambda'}{\lambda}\left(\partial_{\tau}+\frac{\lambda'}{\lambda}R\partial_{R}\right)\left(\frac{4R}{(R^{2}+1)^{2}}\veps\right)
\end{align*}
Finally, we can reformulate equation \eqref{eq Dveps temp 1} as follows:
\begin{align}\label{eq Dveps temp 2}
\begin{split}
&-\left(\left(\partial_{\tau}+\frac{\lambda'}{\lambda}R\partial_{R}\right)^{2}+3\frac{\lambda'}{\lambda}\left(\partial_{\tau}+\frac{\lambda'}{\lambda}R\partial_{R}\right)\right)\calD\veps-\tilde{\calL}\calD\veps\\
=&\lambda^{-2}\calD\left(N(\veps)\right)-\frac{4R}{(R^{2}+1)^{2}}\left(2\left(\frac{\lambda'}{\lambda}\right)^{2}+\left(\frac{\lambda'}{\lambda}\right)'\right)\veps\\
&-\frac{\lambda'}{\lambda}\frac{4R}{(R^{2}+1)^{2}}\left(\partial_{\tau}+\frac{\lambda'}{\lambda}R\partial_{R}\right)\veps-\frac{\lambda'}{\lambda}\left(\partial_{\tau}+\frac{\lambda'}{\lambda}R\partial_{R}\right)\left(\frac{4R}{(R^{2}+1)^{2}}\veps\right)\\
&+\left(2\left(\frac{\lambda'}{\lambda}\right)^{2}+\left(\frac{\lambda'}{\lambda}\right)'\right)\calD\veps\\
=:&\lambda^{-2}\calD\left(N(\veps)\right)+\calR(\veps,\calD\veps)+\left(2\left(\frac{\lambda'}{\lambda}\right)^{2}+\left(\frac{\lambda'}{\lambda}\right)'\right)\calD\veps.
\end{split}
\end{align}
Here recall that $\tilde{\mathcal{L}} = \mathcal{D}\mathcal{D}^*$. For the most part, we shall be working with the preceding equation to derive bounds on $\calD\veps$. 

\subsection{The equation for $c(\tau)$.} Here we deduce a second order ODE for $c(\tau)$ (as in \eqref{eqn:epsdecomp}). For this, assume that we have at least the order of vanishing 
\[
\left(\partial_{\tau} + \frac{\lambda'}{\lambda}R\partial_R\right)^{\kappa}\mathcal{D}\veps(\tau, R) = O(R^2),\,\kappa = 0,1,2.
\]
as $R\rightarrow 0$. Further, assume that 
\[
\mathcal{D}\veps(\tau, R) = \alpha(\tau)R^2 + O(R^4).
\]
This will be justified later on once we introduce the space to control $\mathcal{D}\veps(\tau, R)$ and introduce suitable regularizations. It is then easily verified that all terms of the form 
\[
\left(\partial_{\tau} + \frac{\lambda'}{\lambda}R\partial_R\right)^{\kappa}\phi(\mathcal{D}\veps(\tau, R))
\]
are of size $O(R^2)$ as $R\rightarrow 0$. Furthermore, we infer (as $R\rightarrow 0$)
\[
\mathcal{L}\big(\phi(\mathcal{D}\veps(\tau, R))\big) = \mathcal{D}^*\mathcal{D}\veps(\tau, R)) = h(\tau)\phi_0(R) + O(R^3)
\]
We further have the following relations 
\begin{align*}
\left(\partial_{\tau}+\frac{\lambda'}{\lambda}R\partial_{R}\right)\left(c(\tau)\phi_{0}(R)\right)=&\phi_{0}(R)\left(\partial_{\tau}+\frac{\lambda'}{\lambda}\right)c(\tau)+O(R^{2}),\\
\left(\partial_{\tau}+\frac{\lambda'}{\lambda}R\partial_{R}\right)^{2}\left(c(\tau)\phi_{0}(R)\right)=&\phi_{0}(R)\left(\partial_{\tau}+\frac{\lambda'}{\lambda}\right)^{2}c(\tau)+O(R^{2}).
\end{align*}
Recalling \eqref{eqn:epsdecomp}, we see that the left hand side of \eqref{eq veps tau R} can be written in the regime of small $R$ in the form 
\begin{align}\label{ctau ODE inves 1}
 \begin{split}
  -\phi_{0}(R)\left(\partial_{\tau}+\frac{\lambda'}{\lambda}\right)^{2}c(\tau)-\phi_{0}(R)\frac{\lambda'}{\lambda}\left(\partial_{\tau}+\frac{\lambda'}{\lambda}\right)c(\tau)
  -h(\tau)\phi_{0}(R)+O(R^{2}).
 \end{split}
\end{align}
On the other hand, the right hand side of \eqref{eq veps tau R} is 

\begin{align}\label{ctau ODE inves 2}
 \lambda^{-2}N(\veps)=\lambda^{-2}\phi(\calD(N(\veps)))+\lambda^{-2}n(\tau)\phi_{0}(R).
\end{align}
Dividing by $R$ and letting $R\rightarrow 0$, we obtain the following ODE for the parameter $c(\tau)$: 
\begin{align}\label{ctau ODE}
 \left(\partial_{\tau}+\frac{\lambda'}{\lambda}\right)^{2}c(\tau)+\frac{\lambda'}{\lambda}\left(\partial_{\tau}+\frac{\lambda'}{\lambda}\right)c(\tau)
  +h(\tau)+\lambda^{-2}n(\tau)=0.
\end{align}
where the functions $h(\tau), n(\tau)$ are determined by the relations 
\[
h(\tau) = \lim_{R\rightarrow 0}R^{-1}\mathcal{D}^*\mathcal{D}\veps(\tau, R),\,n(\tau) = \lim_{R\rightarrow 0}R^{-1} \lambda^{-2}N(\veps)(\tau, R).
\]
\subsection{Summary}

The goal now becomes to solve the coupled system consisting of \eqref{eq Dveps temp 2}, \eqref{ctau ODE}, keeping in mind that $\veps(\tau, R)$ is then given by \eqref{eqn:epsdecomp}. 

\section{Fourier representation of the derivative $\mathcal{D}\veps$.}

\subsection{Distorted Fourier basis at the level of the derivative}

Here, in analogy to \cite{BeKrTa}, we derive the Fourier representation with a much less singular measure compared to the one in \cite{KST2}, upon passing to the derivative of the perturbation. 
Recall from \cite{KST2} that any $L^2_{dR}$ function $f(R)$ on $(0,\infty)$ admits a representation 
\[
f(R) = \int_0^\infty x(\xi)\phi_{KST}(R, \xi)\rho(\xi)\,d\xi,\,\quad x(\xi) = \int_0^\infty f(R)\phi_{KST}(R, \xi)\,dR,\,\quad\mathcal{L}\big(R^{-\frac12}\phi_{KST}(R, \xi)\big) = \xi R^{-\frac12}\phi_{KST}(R, \xi).
\]
In fact, the Fourier basis $\phi(r, z)$ from Section 4 in \cite{KST2} is renamed $\phi_{KST}(R, \xi)$ here. We shall consistently rely on the asymptotics derived in loc. cit. 
In particular, the spectral measure $\rho(\xi)$ has the asdymptotics $\rho(\xi)\sim \xi$ as $\xi\rightarrow\infty$, and $\rho(\xi)\sim \frac{1}{\xi\log^2\xi}$ as $\xi\rightarrow 0$. As we shall be working with radial functions on $\R^2$ and hence replacing $dR$ by $RdR$, we replace the Fourier basis $\phi_{KST}$ by $R^{-\frac12}\phi_{KST}$. Also, we identify radial $g(x)$ with $g(x) = f(R)$, $|x| = R$, and then write 
\[
f(R) = \int_0^\infty x(\xi)R^{-\frac12}\phi_{KST}(R, \xi)\rho(\xi)\,d\xi,\,\quad x(\xi) = \langle R^{-\frac12}\phi_{KST}(R, \xi), f(R)\rangle_{L^2_{RdR}}
\]

Assuming $f$ to be smooth and compactly supported away zero, say, we can then differentiate the preceding relation, obtaining 
\begin{align*}
\mathcal{D}f(R) &= \int_0^\infty x(\xi)\mathcal{D}\big(R^{-\frac12}\phi_{KST}(R, \xi)\big)\rho(\xi)\,d\xi\\
& =  \int_0^\infty x(\xi)\xi^{-1}\mathcal{D}\big(R^{-\frac12}\phi_{KST}(R, \xi)\big)\tilde{\rho}(\xi)\,d\xi
\end{align*}
where $\tilde{\rho}(\xi) = \xi\rho(\xi)$. Following \cite{BeKrTa}, let us set henceforth
\begin{equation}\label{eq:phidef}
\phi(R, \xi): = \xi^{-1}\mathcal{D}\big(R^{-\frac12}\phi_{KST}(R, \xi)\big).
\end{equation}
Observe that these are now generalized eigenfunctions associated to the operator $\tilde{\mathcal{L}}$: 
\[
\tilde{\calL}\phi(R, \xi) = \xi\phi(R, \xi).
\]

Then for $f$ as before we obtain the representation formula
\[
\mathcal{D}f(R) = \int_0^\infty x(\xi)\phi(R, \xi)\tilde{\rho}(\xi)\,d\xi, 
\]
where we have the relation 
\[
x(\xi) = \langle \mathcal{D}f, \phi(R, \xi)\rangle_{L^2_{RdR}}.
\]
Indeed, we have 
\begin{align*}
\langle \mathcal{D}f, \phi(R, \xi)\rangle_{L^2_{RdR}} &= \xi^{-1} \langle \mathcal{D}f, \mathcal{D}\big(R^{-\frac12}\phi_{KST}(R, \xi)\big)\rangle_{L^2_{RdR}} =  \xi^{-1} \langle f, \mathcal{L}\big(R^{-\frac12}\phi_{KST}(R, \xi)\big)\rangle_{L^2_{RdR}}\\& =  \langle f, \big(R^{-\frac12}\phi_{KST}(R, \xi)\big)\rangle_{L^2_{RdR}} = x(\xi).
 \end{align*}
 Moreover, the map $\mathcal{D}f\longrightarrow x(\xi)$ is an isometry from $L^2_{RdR}$ to $L^2_{\tilde{\rho}}$: 
 \begin{align*}
 \big\|\mathcal{D}f\big\|_{L^2_{RdR}}^2 = \langle \mathcal{L}f, f\rangle_{L^2_{RdR}} &= \int_0^\infty \langle R^{\frac12} \mathcal{L}f, \phi_{KST}\rangle_{L^2_{dR}} \overline{\langle R^{\frac12}f, \phi_{KST}\rangle}_{L^2_{dR}}\rho(\xi)\,d\xi\\
 & = \int_0^\infty \langle  \mathcal{L}f, R^{-\frac12}\phi_{KST}\rangle_{L^2_{RdR}} \overline{\langle f, R^{-\frac12}\phi_{KST}\rangle}_{L^2_{RdR}}\rho(\xi)\,d\xi\\
 & =  \int_0^\infty \langle f, R^{-\frac12}\phi_{KST}\rangle_{L^2_{RdR}} \overline{\langle f, R^{-\frac12}\phi_{KST}\rangle}_{L^2_{RdR}}\xi\rho(\xi)\,d\xi\\
 & = \big\|\langle \mathcal{D}f, \phi(R, \xi)\rangle_{L^2_{RdR}}\big\|_{L^2_{\tilde{\rho}}}^2
 \end{align*}
 We shall from now on work with the Fourier representation
 \begin{equation}\label{eq:vepsrep}
 \mathcal{D}\veps(\tau, R) = \int_0^\infty \phi(R, \xi)\xb(\tau, \xi)\tilde{\rho}(\xi)\,d\xi
 \end{equation}
 We shall also use the notation
 \[
 \mathcal{F}\big( \mathcal{D}\veps(\tau, \cdot)\big)(\xi): = \xb(\tau, \xi).
 \]
 For $\calD\veps$, we have the following $L^{\infty}_{dR}$-estimate:
 \begin{lemma}\label{lem: Dveps Linfty}
 Let $\calD\veps$ be given by \eqref{eq:vepsrep}, and define $\|\cdot\|_{S_0}$ as in Proposition~\ref{prop:K_0Sbound} below. We have the following $L^{\infty}_{dR}$-estimate:
 \begin{align}\label{Dveps Linfty}
 \left\|\frac{\calD\veps(\tau,R)}{\langle\log R\rangle}\right\|_{L^{\infty}_{dR}}\lesssim\|\langle\xi\rangle^{-\frac{1}{2}}\xb(\tau,\xi)\|_{S_{0}}.
 \end{align}
 \end{lemma}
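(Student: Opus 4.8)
The plan is to obtain \eqref{Dveps Linfty} directly from the Fourier representation \eqref{eq:vepsrep}, inserting the pointwise asymptotics for the generalized eigenfunctions $\phi(R,\xi)$ of \eqref{eq:phidef} (recorded in Section~4 and inherited from the $\phi_{KST}$-asymptotics of \cite{KST2,BeKrTa}) together with the behaviour of the modified spectral measure $\tilde\rho(\xi)=\xi\rho(\xi)$, i.e. $\tilde\rho(\xi)\sim\xi^{2}$ as $\xi\to\infty$ and $\tilde\rho(\xi)\sim\langle\log\xi\rangle^{-2}$ as $\xi\to0$, so that in particular $\tilde\rho$ is integrable near $\xi=0$. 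Starting from
\[
\big|\calD\veps(\tau,R)\big|\;\leq\;\int_{0}^{\infty}\big|\phi(R,\xi)\big|\,\big|\xb(\tau,\xi)\big|\,\tilde\rho(\xi)\,d\xi,
\]
I would split the $\xi$-integral at the transition frequency $\xi\sim R^{-2}$ and, on each piece, distribute a factor $\langle\xi\rangle^{1/2}$ onto $\xb$ and the complementary weights onto $\phi(R,\xi)$, so that a Cauchy--Schwarz step in the measure $\tilde\rho\,d\xi$ produces a factor of $\xb$ measured in a weighted $L^{2}_{\tilde\rho}$-norm dominated by $\|\langle\xi\rangle^{-1/2}\xb(\tau,\cdot)\|_{S_{0}}$; this domination is part of the content of (the definition underlying) Proposition~\ref{prop:K_0Sbound}.

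On the region $\{R^{2}\xi\lesssim1\}$, which in particular contains all low frequencies, the eigenfunction is essentially frozen at its zero-energy value $\phi(R,\xi)\approx\phi(R,0)$, and $\phi(R,0)$ is the solution of $\tilde{\calL}u=0$ that is regular at the origin; equivalently $\phi(R,0)=\calD w_{1}$ with $\calL w_{1}=\phi_{0}$ (the first-order structure of the resonance). Since $w_{1}$ grows only like $R\log R$ as $R\to\infty$, one gets $|\phi(R,\xi)|\lesssim\langle\log R\rangle$ on this region, with the additional decay $|\phi(R,\xi)|\lesssim R^{2}$ as $R\to0$; combined with the integrability of $\tilde\rho$ near $\xi=0$, this part contributes $\lesssim\langle\log R\rangle\,\|\langle\xi\rangle^{-1/2}\xb(\tau,\cdot)\|_{S_{0}}$, and this is the only place where the logarithmic weight on the left of \eqref{Dveps Linfty} is needed. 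On the region $\{R^{2}\xi\gtrsim1\}$ the eigenfunction is in its oscillatory, Bessel-type regime, with $|\phi(R,\xi)|\lesssim R^{-1/2}$ times a $\xi$-decaying amplitude fixed by the normalization; there the growth $\tilde\rho\sim\xi^{2}$ is beaten both by the amplitude decay of $\phi(R,\xi)$ and by the high-frequency decay built into $\|\cdot\|_{S_{0}}$, while the $R^{-1/2}$ prefactor is absorbed by the fact that the lower endpoint $\xi\sim R^{-2}$ recedes to $+\infty$ as $R\to0$; this piece therefore contributes $\lesssim\|\langle\xi\rangle^{-1/2}\xb(\tau,\cdot)\|_{S_{0}}$ with no further logarithmic loss. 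Adding the two contributions, dividing by $\langle\log R\rangle$, and taking the supremum over $R$ yields \eqref{Dveps Linfty}.

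The main obstacle is the bookkeeping around the transition zone $R^{2}\xi\sim1$, where the asymptotic form of $\phi(R,\xi)$ changes character: one must verify that matching the two regimes costs at most one power of $\langle\log R\rangle$ (precisely the logarithm generated by applying $\calD$ to the slowly growing fundamental solution $w_{1}$) and no positive power of $R$ as $R\to0$. This forces one to use the genuine $\langle\log\xi\rangle^{-2}$ decay of $\tilde\rho$ near $\xi=0$ rather than a crude bound, and to quantify exactly how much high-frequency decay of $\xb$ is supplied by $\|\cdot\|_{S_{0}}$; once these points are pinned down, everything else is routine and rests entirely on the quantitative $\phi_{KST}$-asymptotics imported from \cite{KST2,BeKrTa} and summarised in Section~4.
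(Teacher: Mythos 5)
Your proposal is correct and follows essentially the same route as the paper: split the frequency integral according to the regimes $R^{2}\xi\lesssim 1$ and $R^{2}\xi\gtrsim 1$, insert the pointwise bounds on $\phi(R,\xi)$ inherited from \cite{KST2}, use $\tilde{\rho}(\xi)\sim\langle\log\xi\rangle^{-2}$ near zero (which is where $\kappa<\tfrac12$ enters) and $\tilde{\rho}(\xi)\sim\xi^{2}$ at infinity, and conclude by Cauchy--Schwarz against the weight defining $\|\cdot\|_{S_{0}}$. The only differences are bookkeeping --- the paper splits at the fixed frequency $\xi=\tfrac12$ rather than at $\xi\sim R^{-2}$, and your blanket descriptions gloss over the corner regimes $R\gg 1$, $R^{-2}\le\xi\le\tfrac12$ (oscillatory with amplitude $\xi^{-1/4}|\log\xi|$, not decaying) and $R\ll 1$, $1\le\xi\le R^{-2}$ (where $\tilde{\rho}$ is not integrable and one needs $|\phi|\lesssim R^{2}\lesssim\xi^{-1}$ plus the $S_{0}$ decay) --- but the same Cauchy--Schwarz step covers these within the allowed $\langle\log R\rangle$ budget.
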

 
 \begin{proof}
 We write
\begin{align*}
\calD\veps(R)=\int_{0}^{\frac{1}{2}}\xb(\tau,\xi)\phi(R,\xi)\trho(\xi)d\xi+\int_{\frac{1}{2}}^{\infty}\xb(\tau,\xi)\phi(R,\xi)\trho(\xi)d\xi:=A+B.
\end{align*}
For $A$, we note that if $R\ll1$ and $R^{2}\xi\lesssim 1$, we have $\phi(R,\xi)\sim R^{2}\lesssim 1$. If $R\gtrsim1$ and $R^{2}\xi\lesssim 1$, we have $\phi(R,\xi)\sim\log R\lesssim |\log\xi|$. If $R^{2}\xi\gtrsim1$, we have $\phi(R,\xi)\sim|\log\xi|$, $\xi\ll 1$. In any case,

\begin{align*}
\frac{|A|}{\langle\log R\rangle}\lesssim\int_{0}^{\frac{1}{2}}\xb(\tau,\xi)|\log\xi|^{-2}d\xi\lesssim\|\xi^{\frac{1}{2}}\langle\log\xi\rangle^{-1-\kappa}\xb(\tau,\xi)\|_{L^{2}(\xi\leq\frac{1}{2})},\,0<\kappa<\frac12.
\end{align*}
For $B$, if $R^{2}\xi\ll 1$, $\phi(R,\xi)\sim R^{2}$. If $R^{2}\xi\gtrsim 1$, $\phi(R,\xi)\sim\xi^{-1}$. In any case, 

\begin{align*}
|B|\lesssim\int_{\frac{1}{2}}^{\infty}\xb(\tau,\xi)\xi^{-1}\trho(\xi)d\xi\lesssim\int_{\frac{1}{2}}^{\infty}\xb(\tau,\xi)\xi d\xi\lesssim\|\langle\xi\rangle^{2+\kappa}\xb(\tau,\xi)\|_{L^{2}(\xi\geq\frac{1}{2})}.
\end{align*}
 \end{proof}
 \subsection{The transference identity}
 Following the method in \cite{KST2}, a key issue arising when translating the equation \eqref{eq Dveps temp 2} to the Fourier side is the fact that the operator $R\partial_R$ does not translate to $-2\xi\partial_{\xi}$. Instead, one encounters an operator $\mathcal{K}$ which is defined by the relation
 \begin{align*}
 \ha{R\partial_{R}u}=-2\xi\partial_{\xi}\ha{u}+\calK\ha{u}.
\end{align*}
Then in analogy to Theorem 5.1 in \cite{KST2}, we deduce the following important 
\begin{proposition}\label{prop:Kstructure} The operator $\mathcal{K}$ is given by 
\[
\mathcal{K}f(\xi) = -2f(\xi) + \mathcal{K}_0f(\xi),
\]
where $\mathcal{K}_0$ is an integral operator with kernel 
\[
\frac{\tilde{\rho}(\eta)F(\xi, \eta)}{\xi - \eta},
\]
i. e. we have 
\[
 \mathcal{K}_0f(\xi) = \int_0^\infty \frac{\tilde{\rho}(\eta)F(\xi, \eta)}{\xi - \eta} f(\eta)\,d\eta,
 \]
 where the integral is in the principal value sense. The function $F$ is $C^2$ on $(0,\infty)\times (0,\infty)$, and satisfies the bounds 
 \begin{equation}\label{eq:Fbound1}\begin{split}
 &|F(\xi, \eta)|\lesssim 1,\,\xi + \eta<1\\
 &|F(\xi, \eta)|\lesssim \langle \xi\rangle^{-\frac{5}{4}}\langle\eta\rangle^{-\frac54}\langle \xi-\eta\rangle^{-1},\,\xi+\eta\geq 1. 
 \end{split}\end{equation}
 \begin{equation}\label{eq:Fbound2}\begin{split}
 &|\partial_{\xi}F(\xi, \eta)|\lesssim \langle\log\xi\rangle^3,\,\xi<1\\
 &|\partial_{\xi}F(\xi, \eta)|\lesssim \langle\xi\rangle^{-\frac32}\langle\eta\rangle^{-1},\,\xi\geq 1. 
 \end{split}\end{equation}
 \begin{equation}\label{eq:Fbound3}\begin{split}
 &|\partial_{\eta}F(\xi, \eta)|\lesssim \langle\log\eta\rangle^3,\,\eta<1\\
 &|\partial_{\eta}F(\xi, \eta)|\lesssim \langle\eta\rangle^{-\frac32}\langle\xi\rangle^{-1},\,\eta\geq 1. 
 \end{split}\end{equation}
\end{proposition}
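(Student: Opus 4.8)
\medskip

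\textbf{Proof strategy.}

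The plan is to reduce the proposition to an algebraic commutator identity together with the pointwise asymptotics of the distorted eigenfunctions $\phi(R,\xi)$ imported from \cite{KST2,BeKrTa}. First I would record the identity underlying the transference relation: writing out the factorization $\tcalL=\calD\calD^{*}$ one gets
\[
\tcalL=-\partial_{R}^{2}-\tfrac1R\partial_{R}+\widetilde V(R),\qquad \widetilde V(R)=\frac{4}{R^{2}(1+R^{2})},
\]
and since the dilation generator satisfies $[-\partial_{R}^{2}-\tfrac1R\partial_{R},R\partial_{R}]=2(-\partial_{R}^{2}-\tfrac1R\partial_{R})$ on $L^{2}(R\,dR)$ while $[\widetilde V,R\partial_{R}]=-R\widetilde V'$, a direct computation yields
\[
[\tcalL,R\partial_{R}]=2\tcalL-(2\widetilde V+R\widetilde V')=2\tcalL+W,\qquad W(R):=\frac{8}{(1+R^{2})^{2}},
\]
a smooth bounded function which is $O(R^{-4})$ at infinity. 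Differentiating $\tcalL\phi(\cdot,\xi)=\xi\phi(\cdot,\xi)$ in $\xi$ and combining with this commutator gives $(\tcalL-\xi)\big(R\partial_{R}\phi(\cdot,\xi)-2\xi\partial_{\xi}\phi(\cdot,\xi)\big)=W(R)\phi(R,\xi)$, so that $R\partial_{R}\phi(\cdot,\xi)-2\xi\partial_{\xi}\phi(\cdot,\xi)$ equals, modulo a multiple of the homogeneous solution $\phi(\cdot,\xi)$, the resolvent $(\tcalL-\xi)^{-1}$ applied to $W\phi(\cdot,\xi)$.

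Next I would substitute this into the inversion formula $u=\int_{0}^{\infty}\widehat u(\xi)\phi(\cdot,\xi)\trho(\xi)\,d\xi$ and read off $\mathcal{K}$. Integrating by parts in $\xi$ in the $2\xi\partial_{\xi}\phi$ term produces both the leading $-2\xi\partial_{\xi}$ of the transference identity and the diagonal constant $-2$ in the statement, while re-expanding $(\tcalL-\xi)^{-1}\big(W\phi(\cdot,\xi)\big)$ in the basis $\{\phi(\cdot,\eta)\}$ produces the singular integral operator, with the principal value arising from the pole of the resolvent at $\eta=\xi$. Keeping careful track of which contributions are diagonal and which are off-diagonal, exactly as in the proof of Theorem~5.1 of \cite{KST2}, identifies $\mathcal{K}_{0}$ as the stated operator, with
\[
F(\xi,\eta)=\int_{0}^{\infty}W(R)\,\phi(R,\xi)\,\phi(R,\eta)\,R\,dR=\int_{0}^{\infty}\frac{8R}{(1+R^{2})^{2}}\,\phi(R,\xi)\,\phi(R,\eta)\,dR.
\]
Symmetry $F(\xi,\eta)=F(\eta,\xi)$ is immediate; convergence of the integral, continuity, and---differentiating under the integral sign and using the smooth joint dependence of $\phi(R,\xi)$ recorded in \cite{KST2,BeKrTa}---the asserted $C^{2}$ regularity follow from $\phi(R,\xi)\phi(R,\eta)=O(R^{4})$ as $R\to0$ and the $O(R^{-4})$ decay of $W$.

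The bounds \eqref{eq:Fbound1}--\eqref{eq:Fbound3} would then be extracted from this integral formula using the asymptotics of $\phi(R,\xi)$ (and of $\partial_{\xi}\phi$, $\partial_{\eta}\phi$) from \cite{KST2,BeKrTa}, the same ones used in the proof of Lemma~\ref{lem: Dveps Linfty}: for $R^{2}\xi\lesssim1$ one has $\phi(R,\xi)\sim R^{2}$ when $R\ll1$ and $\phi(R,\xi)$ of size $\lesssim\langle\log R\rangle$ (with a logarithmic transition $\sim\langle\log\xi\rangle$ near $R\sim\xi^{-1/2}$ when $\xi\ll1$) when $R\gtrsim1$, while for $R^{2}\xi\gtrsim1$ the eigenfunction is of WKB type---oscillatory with amplitude decaying in $R$ beyond the turning point $R\sim\xi^{-1/2}$ and of size $\sim\langle\xi\rangle^{-5/4}R^{-1/2}$ for $\xi\gtrsim1$. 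Inserting these, splitting the $R$-integral at $R\sim\min(\xi,\eta)^{-1/2}$ and $R\sim\max(\xi,\eta)^{-1/2}$, and using the rapid decay of the weight $\tfrac{R}{(1+R^{2})^{2}}$ gives \eqref{eq:Fbound1} when $\xi+\eta<1$ directly, and the $\langle\xi\rangle^{-5/4}\langle\eta\rangle^{-5/4}$ decay when $\xi+\eta\geq1$. The oscillatory gain $\langle\xi-\eta\rangle^{-1}$ is obtained either by non-stationary phase in $R$ on the oscillatory range (the phases being $e^{\pm iR(\sqrt\xi\pm\sqrt\eta)}$), or, more robustly, by using the eigenvalue equation to write $(\xi-\eta)F(\xi,\eta)=\int_{0}^{\infty}\phi(\cdot,\xi)\big([\tcalL,W]\phi(\cdot,\eta)\big)R\,dR$ (no boundary terms, by the $O(R^{2})$ vanishing at $0$ and the decay of $W$ at infinity), where $[\tcalL,W]=-W''-\tfrac1RW'-2W'\partial_{R}$ is first order with coefficients decaying like $R^{-5}$ and $R^{-6}$, so that the right side obeys essentially the same pointwise bound as $F$; combining this with the direct bound yields the factor $\langle\xi-\eta\rangle^{-1}$. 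The derivative bounds \eqref{eq:Fbound2}, \eqref{eq:Fbound3} follow the same way after differentiating under the integral, the logarithmic loss $\langle\log\xi\rangle^{3}$ (resp. $\langle\log\eta\rangle^{3}$) for small $\xi$ (resp. $\eta$) being exactly the threshold loss carried by $\partial_{\xi}\phi$ (resp. $\partial_{\eta}\phi$), inherited from the resonance of $\calL$ at zero.

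The step I expect to be the main obstacle is this last one in the regime $\xi,\eta\gtrsim1$: obtaining the sharp weight $\langle\xi\rangle^{-5/4}\langle\eta\rangle^{-5/4}$ \emph{simultaneously} with the oscillatory factor $\langle\xi-\eta\rangle^{-1}$ requires a careful non-stationary phase bookkeeping in which the amplitudes of the two WKB expansions, the decay of $W$, and the boundary contributions near the turning points must all be controlled at once---in particular the crude use of the commutator identity replaces $\phi(\cdot,\eta)$ by $\partial_{R}\phi(\cdot,\eta)$, which is $\sqrt\eta$ times larger, so one has to symmetrise the first-order term $W'\partial_{R}$ to avoid losing a power. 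A secondary, bookkeeping-type point is the precise verification in the second step that the diagonal correction is exactly $-2$, so that $\mathcal{K}_{0}$ is a pure principal-value integral operator with no leftover diagonal term.
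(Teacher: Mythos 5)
Your proposal follows essentially the same route as the paper's own proof: there, too, the diagonal contribution $-2f$ comes from combining the $\xi$-integration by parts (which yields $-2\big(1+\eta\trho'(\eta)/\trho(\eta)\big)f(\eta)$) with the delta-measure part of the kernel extracted from the oscillatory asymptotics of $(R\partial_{R}-2\xi\partial_{\xi})\phi(R,\xi)$, the off-diagonal kernel is written as $F(\xi,\eta)=\langle W(R)\phi(R,\xi),\phi(R,\eta)\rangle_{RdR}$ with $W=[\tcalL,R\partial_R]-2\tcalL$, the bounds \eqref{eq:Fbound1}--\eqref{eq:Fbound3} are obtained by splitting the $R$-integral according to the asymptotic regimes $R^{2}\xi\lesssim1$, $R^{2}\xi\gtrsim1$ of $\phi$ and $\partial_{\xi}\phi$, and the gain $\langle\xi-\eta\rangle^{-1}$ is produced by exactly your commutator identity, $(\eta-\xi)F(\xi,\eta)=-\langle(2W_{R}\partial_{R}+W_{RR}+\tfrac1R W_{R})\phi(\cdot,\xi),\phi(\cdot,\eta)\rangle_{RdR}$. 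The two caveats you flag (verifying that the net diagonal constant is exactly $-2$, and the extra $\xi^{1/2}$ carried by $W_{R}\partial_{R}\phi$) are precisely the points the paper settles by the same bookkeeping, so your strategy is sound and coincides with the paper's.
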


\begin{proof}
 The proof consists of two parts. In the first part we compute the diagonal part of $\calK$ and in the second part we focus on the off-diagonal part $\calK_{0}$. By its definition, for $f\in C^{\infty}_{0}((0,\infty))$, $\calK$ is given by

\begin{align}\label{operator K pre}
\begin{split}
 \calK f(\eta):=&\left\langle\int_{0}^{\infty}f(\xi)R\partial_{R}\phi(R,\xi)\trho(\xi)d\xi,\phi(R,\eta)\right\rangle_{L^{2}_{RdR}}\\
 &+\left\langle\int_{0}^{\infty}2\xi\partial_{\xi}f(\xi)\phi(R,\xi)\trho(\xi)d\xi,\phi(R,\eta)\right\rangle_{L^{2}_{RdR}}\\
 =&\left\langle\int_{0}^{\infty}f(\xi)[R\partial_{R}-2\xi\partial_{\xi}]\phi(R,\xi)\trho(\xi)d\xi,\phi(R,\eta)\right\rangle_{L^{2}_{RdR}}\\
 &-2\left(1+\frac{\eta\trho'(\eta)}{\trho(\eta)}\right)f(\eta).
\end{split}
 \end{align}
 Since $f\in C^{\infty}_{0}((0,\infty))$, we are able to restrict $\xi,\eta$ on a compact subset of $(0,\infty)$. When $R^{2}\xi\leq 1$, $\left|[R\partial_{R}-2\xi\partial_{\xi}]\phi(R,\xi)\right|\lesssim 1$. Therefore in this case, for fixed $\xi,\eta$ in a compact subset of $(0,\infty)$, the kernel in consideration is bounded. For the case when $R^{2}\xi\gtrsim 1$, we have, from \cite{KST2}, 
 
 \begin{align*}
 \phi_{KST}(R,\xi)=\frac{1}{2}\left(a(\xi)\xi^{-\frac{1}{4}}e^{iR\xi^{\frac{1}{2}}}\left(1+\frac{3i}{8R\xi^{\frac{1}{2}}}\right)+\overline{a(\xi)}\xi^{-\frac{1}{4}}e^{-iR\xi^{\frac{1}{2}}}\left(1-\frac{3i}{8R\xi^{\frac{1}{2}}}\right)\right)+O(R^{-2})
\end{align*}
Here $O(\cdot)$ depends on $\xi$. Our Fourier basis $\phi$ is given by $\phi(R,\xi)=\calD\left(R^{-\frac{1}{2}}\phi_{KST}(R,\xi)\right)$. We have

\begin{align*}
 R^{-\frac{1}{2}}\phi_{KST}(R,\xi)=\frac{R^{-\frac{1}{2}}}{2}\left(a(\xi)\xi^{-\frac{1}{4}}e^{iR\xi^{\frac{1}{2}}}+\overline{a(\xi)}\xi^{-\frac{1}{4}}e^{-iR\xi^{\frac{1}{2}}}\right)+O\left(R^{-\frac{3}{2}}\right).
\end{align*}
Therefore $\phi(R,\xi)$ has the following asymptotic behavior

\begin{align}\label{expansion for phi}
 \phi(R,\xi)=\xi^{-1}\calD\left(R^{-\frac{1}{2}}\phi_{KST}(R,\xi)\right)=\frac{R^{-\frac{1}{2}}}{2}\left(ia(\xi)\xi^{-\frac{3}{4}}e^{iR\xi^{\frac{1}{2}}}-i\overline{a(\xi)}\xi^{-\frac{3}{4}}e^{-iR\xi^{\frac{1}{2}}}\right)+O\left(R^{-\frac{3}{2}}\right).
\end{align}
 Now we apply $R\partial_{R}-2\xi\partial_{\xi}$ to $\phi(R,\xi)$ and track the leading order terms. When $R\partial_{R}$ hits $\frac{R^{-\frac{1}{2}}}{2}$, we obtain

\begin{align*}
 -\frac{R^{-\frac{1}{2}}}{4}\left(ia(\xi)\xi^{-\frac{3}{4}}e^{iR\xi^{\frac{1}{2}}}-i\overline{a(\xi)}\xi^{-\frac{3}{4}}e^{-iR\xi^{\frac{1}{2}}}\right).
\end{align*}
 When $R\partial_{R}-2\xi\partial_{\xi}$ hits $e^{iR^{2}\xi}$, we get zero. So the other leading order contribution is given by

\begin{align*}
 -{R^{-\frac{1}{2}}}\left(i\xi\partial_{\xi}\left(a(\xi)\xi^{-\frac{3}{4}}\right)e^{iR\xi^{\frac{1}{2}}}-i\xi\partial_{\xi}\left(\overline{a(\xi)}\xi^{-\frac{3}{4}}\right)e^{-iR\xi^{\frac{1}{2}}}\right).
\end{align*}
So we have the following expansion for $(R\partial_{R}-2\xi\partial_{\xi})\phi(R,\xi)$:

\begin{align}\label{expansion for K kernel}
\begin{split}
 &(R\partial_{R}-2\xi\partial_{\xi})\phi(R,\xi)\\
 =&-\frac{R^{-\frac{1}{2}}}{4}\left(i\left(a(\xi)\xi^{-\frac{3}{4}}+4\xi\partial_{\xi}\left(a(\xi)\xi^{-\frac{3}{4}}\right)\right)e^{iR\xi^{\frac{1}{2}}}-i\left(\overline{a(\xi)}\xi^{-\frac{3}{4}}+4\xi\partial_{\xi}\left(\overline{a(\xi)}\xi^{-\frac{3}{4}}\right)\right)e^{-iR\xi^{\frac{1}{2}}}\right)+O(R^{-\frac{3}{2}}).
 \end{split}
\end{align}
The kernel of the part of $\calK$ is formally given by

\begin{align}\label{K kernel 1}
 \int_{0}^{\infty}(R\partial_{R}-2\xi\partial_{\xi})\phi(R,\xi)\trho(\xi)\overline{\phi(R,\eta)}RdR.
\end{align}
The delta measure is from the following contribution:

\begin{align}\label{K kernel 2}
\begin{split}
& -\frac{1}{8}\int_{0}^{\infty}\left(a(\xi)\xi^{-\frac{3}{4}}+4\xi\partial_{\xi}\left(a(\xi)\xi^{-\frac{3}{4}}\right)\right)\trho(\xi)\overline{a(\eta)}\eta^{-\frac{3}{4}}e^{iR(\xi^{\frac{1}{2}}-\eta^{\frac{1}{2}})}dR\\
 &-\frac{1}{8}\int_{0}^{\infty}\left(\overline{a(\xi)}\xi^{-\frac{3}{4}}+4\xi\partial_{\xi}\left(\overline{a(\xi)}\xi^{-\frac{3}{4}}\right)\right)\trho(\xi)a(\eta)\eta^{-\frac{3}{4}}e^{-iR(\xi^{\frac{1}{2}}-\eta^{\frac{1}{2}})}dR,
 \end{split}
\end{align}
 which is given by
 
 \begin{align}\label{K kernel 3}
 \begin{split}
  &-\frac{1}{4}\int_{0}^{\infty}\Re\left(\left(a(\xi)\xi^{-\frac{3}{4}}+4\xi\partial_{\xi}\left(a(\xi)\xi^{-\frac{3}{4}}\right)\right)\trho(\xi)\overline{a(\eta)}\eta^{-\frac{3}{4}}\right)e^{iR(\xi^{\frac{1}{2}}-\eta^{\frac{1}{2}})}dR\\
 =&-\frac{\pi}{2}\Re\left(\left(a(\xi)\xi^{-\frac{3}{4}}+4\xi\partial_{\xi}\left(a(\xi)\xi^{-\frac{3}{4}}\right)\right)\overline{a(\eta)}\eta^{-\frac{3}{4}}\right)\trho(\xi)\delta(\xi^{\frac{1}{2}}-\eta^{\frac{1}{2}})\\
 =&-\pi\xi^{\frac{1}{2}}\trho(\xi)\Re\left(\left(a(\xi)\xi^{-\frac{3}{4}}+4\xi\partial_{\xi}\left(a(\xi)\xi^{-\frac{3}{4}}\right)\right)\overline{a(\xi)}\xi^{-\frac{3}{4}}\right)\delta(\xi-\eta)\\
 =&-\pi\xi^{\frac{3}{4}}\rho(\xi)\Re\left(-2|a(\xi)|^{2}+4\xi^{\frac{1}{4}}a'(\xi)\overline{a(\xi)}\right)\delta(\xi-\eta)\\
 =&-\pi\xi^{\frac{3}{4}}\rho(\xi)\Re\left(-2|a(\xi)|^{2}\xi^{-\frac{3}{4}}+2\xi^{\frac{1}{4}}\left(|a(\xi)|^{2}\right)'\right)\delta(\xi-\eta)\\
 =&2\left(1+\frac{\xi\rho'(\xi)}{\rho(\xi)}\right)\delta(\xi-\eta)=2\frac{\xi\trho'(\xi)}{\trho(\xi)}\delta(\xi-\eta).
 \end{split}
\end{align}
 Combining this with \eqref{operator K pre}, we have
 \begin{align}\label{operator K}
 (\calK f)(\eta)=-2f(\eta)+(\calK_{0}f)(\eta).
\end{align}
 
Next we turn to the off-diagonal part $\calK_{0}$. Arguing as in \cite{KST2}, we see that $F(\xi,\eta)$ is given by
 
 \begin{align}\label{F expression}
  \begin{split}
   F(\xi,\eta)=&\left\langle W(R)\phi(R,\xi),\phi(R,\eta)\right\rangle_{RdR},\\
   W(R)=&[\tcalL,R\partial_{R}]-2\tcalL=\frac{16}{(R^{2}+1)^{2}}-\frac{32}{(1+R^{2})^{3}}
  \end{split}
 \end{align}
We record the behavior of $\phi(R,\xi)$ as follows, relying on the asymptotics developed in Section 4 of \cite{KST2}: When $R^{2}\xi\lesssim 1$,

\begin{align}\label{phi behavior 1}
 \begin{split}
&  |\phi(R,\xi)|\lesssim R^{2}\lesssim \langle\xi\rangle^{-1},\quad\textrm{when}\quad R\lesssim 1,\\
 &|\phi(R,\xi)|\lesssim\log R,\quad \textrm{when}\quad R\gg1.
 \end{split}
\end{align}
When $R^{2}\xi\gtrsim 1$,

\begin{align}\label{phi behavior 2}
 \begin{split}
  &|\phi(R,\xi)|\lesssim R^{-\frac{1}{2}}\xi^{-\frac{1}{4}}|\log\xi|\lesssim |\log\xi|,\quad \textrm{when}\quad \xi\ll1,\\
  &|\phi(R,\xi)|\lesssim R^{-\frac{1}{2}}\xi^{-\frac{5}{4}},\quad \textrm{when}\quad \xi\gtrsim 1.
 \end{split}
\end{align}
We start with the case when \underline{$\xi$ and $\eta$ are close}. We split the integral defining $F(\xi,\eta)$ as follows:

\begin{align*}
 F(\xi,\eta)=&\int_{0}^{\infty}W(R) \phi(R,\xi)\phi(R,\eta)RdR\\
 =&\int_{0}^{\min\{\xi^{-\frac{1}{2}},1\}}+\int_{\min\{\xi^{-\frac{1}{2}},1\}}^{\max\{\xi^{-\frac{1}{2}},1\}}+\int_{\max\{\xi^{-\frac{1}{2}},1\}}^{\infty}:=I+II+III.
\end{align*}
For $I$ when $\xi\gtrsim1$, we use the first estimate in \eqref{phi behavior 1} to obtain

\begin{align}\label{F diagonal Ia}
 |I|\lesssim \xi^{-1}\eta^{-1}\int_{0}^{\xi^{-\frac{1}{2}}}\frac{R}{(1+R^{2})^{2}}dR\lesssim\langle\xi\rangle^{-2}\langle\eta\rangle^{-1}.
\end{align}
When $\xi\ll1$, we use the second bound in \eqref{phi behavior 1} to obtain

\begin{align}\label{F diagonal Ib}
 |I|\lesssim\int_{0}^{1}\frac{(\log R)^{2}R}{(1+R^{2})^{2}}dR\lesssim 1.
\end{align}
For $II$ when $\xi\lesssim R^{-2}\ll1$, we use the second of \eqref{phi behavior 1} to obtain

\begin{align}\label{F diagonal IIa}
 |II|\lesssim\int_{1}^{\xi^{-\frac{1}{2}}}\frac{(\log R)^{2}R}{(1+R^{2})^{2}}dR\lesssim 1.
\end{align}
When $\xi\gtrsim1$, we use the second of \eqref{phi behavior 2} to obtain

\begin{align}\label{F diagonal IIb}
 |II|\lesssim \xi^{-\frac{5}{4}}\eta^{-\frac{5}{4}}\int_{\xi^{-\frac{1}{2}}}^{1}\frac{1}{(1+R^{2})^{2}}dR\lesssim\langle\xi\rangle^{-\frac{5}{4}}\langle\eta\rangle^{-\frac{5}{4}}
\end{align}
For $III$ when $\xi\gtrsim1$, the estimate is similar to \eqref{F diagonal IIb}. When $\xi\ll1$, we use the first of \eqref{phi behavior 2} and the fact $|\log\xi|=\log\xi^{-1}\lesssim |\log R|$ to estimate in the same way as \eqref{F diagonal IIa}. \emph{This completes the discussion when $\xi$ and $\eta$ are close.}

Now we consider the case when \underline{$\xi$ and $\eta$ are far away.} We start with the following integration by parts:

\begin{align*}
 \eta F(\xi,\eta)=&\left\langle W(R)\phi(R,\xi),\tcalL\phi(R,\eta)\right\rangle_{RdR}\\
 =&\left\langle[\tcalL,W(R)]\phi(R,\xi),\phi(R,\eta)\right\rangle_{RdR}+\xi F(\xi,\eta),
\end{align*}
which gives 

\begin{align}\label{F off diagonal pre}
 (\eta-\xi)F(\xi,\eta)=&-\left\langle\left(2W_{R}\partial_{R}+W_{RR}+\frac{1}{R}W_{R}\right)\phi(R,\xi),\phi(R,\eta)\right\rangle_{RdR}.
\end{align}
Here the integration by parts is justified using the fact that $W(R)$ decays fast enough as $R\rightarrow\infty$ and $\phi(0,\cdot)=0$. In view of the fact 

\begin{align*}
 W_{R}(R)=O(R),\quad W_{RR}=O(1),\quad \textrm{when}\quad R\rightarrow0,
\end{align*}
we have

\begin{align}\label{F off diagonal 1}
\begin{split}
 \left|(\eta-\xi)F(\xi,\eta)\right|\lesssim& \left|\left\langle\frac{1}{(1+R^{2})^{2}}\phi(R,\xi),\phi(R,\eta)\right\rangle_{RdR}\right|\\
\lesssim&\langle\xi\rangle^{-\frac{5}{4}}\langle\eta\rangle^{-\frac{5}{4}}.
 \end{split}
 \end{align}
Here to derive the last inequality above, we use the same argument as deriving \eqref{F diagonal Ia}-\eqref{F diagonal IIb}.

Next we turn to the derivative of $F$. Due to the symmetry between $\xi$ and $\eta$, we only consider the derivative with respect to $\xi$. Here the idea is roughly the same as for estimating $F(\xi,\eta)$. We start by listing the pointwise bounds on $\partial_{\xi}\phi(R,\xi)$. When $R^{2}\xi\lesssim 1$ we have (using the Taylor expansion for $\phi(R,\xi)$ in $R^{2}\xi$)

\begin{align}\label{phi deri behavior 1}
 \begin{split}
  &|\partial_{\xi}\phi(R,\xi)|\lesssim\langle\xi\rangle^{-2},\quad \textrm{when}\quad R\lesssim 1,\\
  &|\partial_{\xi}\phi(R,\xi)|\lesssim R^{2}|\log R|\quad \textrm{when}\quad R\gg1.
 \end{split}
\end{align}
When $R^{2}\xi\gtrsim 1$, differentiating in $\xi$ naturally gives a factor of $\xi^{-1}$. On the other hand, when the derivative hits $e^{iR\xi^{\frac{1}{2}}}$, we get a factor of $R\xi^{-\frac{1}{2}}$ and $\xi^{-1}\lesssim R\xi^{-\frac{1}{2}}$. So we have

\begin{align}\label{phi deri behavior 2}
 \begin{split}
  &|\partial_{\xi}\phi(R,\xi)|\lesssim |\log\xi|R\xi^{-\frac{1}{2}},\quad \textrm{when}\quad \xi\ll1,\\
  &|\partial_{\xi}\phi(R,\xi)|\lesssim\xi^{-1}R\xi^{-\frac{1}{2}},\quad\textrm{when}\quad \xi\gtrsim 1. 
 \end{split}
\end{align}
As before we break the integral defining $\partial_{\xi}F(\xi,\eta)$ into three parts:

\begin{align*}
 \partial_{\xi}F(\xi,\eta)=&\int_{0}^{\infty}W(R) \partial_{\xi}\phi(R,\xi)\phi(R,\eta)RdR\\
 =&\int_{0}^{\min\{\xi^{-\frac{1}{2}},1\}}+\int_{\min\{\xi^{-\frac{1}{2}},1\}}^{\max\{\xi^{-\frac{1}{2}},1\}}+\int_{\max\{\xi^{-\frac{1}{2}},1\}}^{\infty}:=I'+II'+III'.
\end{align*}
For $I'$, when $\xi\gtrsim1$ we use the first of \eqref{phi deri behavior 1} to bound

\begin{align}\label{F off diagonal Ia}
 |I'|\lesssim \langle\xi\rangle^{-3}\langle\eta\rangle^{-1}.
\end{align}
When $\xi\ll1$, we use the second of \eqref{phi deri behavior 1} to bound

\begin{align}\label{F off diagonal Ib}
 |I'|\lesssim\int_{0}^{1}\frac{R^{3}(\log R)^{2}}{(1+R^{2})^{2}}dR\lesssim 1.
\end{align}
For $II'$, when $\xi\gtrsim 1$, we use the second of \eqref{phi deri behavior 2} to bound

\begin{align}\label{F off diagonal IIa}
 |II'|\lesssim \xi^{-\frac{3}{2}}\eta^{-1}\int_{\xi^{-\frac{1}{2}}}^{1}\frac{R^{2}}{(1+R^{2})^{2}}dR\lesssim\langle\xi\rangle^{-\frac{3}{2}}\langle\eta\rangle^{-1}.
\end{align}
When $\xi\ll1$, we use the second of \eqref{phi deri behavior 1} to bound

\begin{align}\label{F off diagonal IIb}
 |II'|\lesssim \int_{1}^{\xi^{-\frac{1}{2}}}\frac{R^{3}(\log R)^{2}}{(1+R^{2})^{2}}dR\lesssim |\log\xi|^{3}.
\end{align}
The estimate for $III'$ is similar to deriving \eqref{F off diagonal IIa} and \eqref{F off diagonal IIb}. This completes the proof.
\end{proof}
As an important consequence for us, we observe the following 
\begin{proposition}\label{prop:K_0Sbound} Introduce the norm 
\[
\big\|x\big\|_{S_0}: = \big\|\langle\xi\rangle^{2+\kappa}\xi^{\frac12}\langle\log\xi\rangle^{-1-\kappa}x(\xi)\big\|_{L^2_{d\xi}(0,\infty)}. 
\]
for some $\frac12>\kappa>0$. Then we have the bound 
\[
\big\|\mathcal{K}_0x\big\|_{S_0}\lesssim_{\kappa}\big\|x\big\|_{S_0}
\]
Moreover, there is the following smallness gain: letting $\epsilon>0$, there is $\gamma(\kappa)>0$, such that we have 
\[
\big\|\chi_{\xi<\epsilon}\mathcal{K}_0x\big\|_{S_0} + \big\|\mathcal{K}_0\chi_{\eta<\epsilon}x\big\|_{S_0}+ \big\|\chi_{\xi>\epsilon^{-1}}\mathcal{K}_0x\big\|_{S_0} + \big\|\mathcal{K}_0\chi_{\eta>\epsilon^{-1}}x\big\|_{S_0}\lesssim_{\kappa}\langle\log\epsilon\rangle^{-\gamma(\kappa)}\big\|x\big\|_{S_0}.
\]

\end{proposition}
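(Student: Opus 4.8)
The plan is to prove the $S_0$-boundedness of $\mathcal{K}_0$ by directly estimating the integral operator against the weight appearing in the definition of $\|\cdot\|_{S_0}$, using the kernel bounds \eqref{eq:Fbound1} from Proposition~\ref{prop:Kstructure} together with the asymptotics $\trho(\eta)\sim\eta$ as $\eta\to\infty$ and $\trho(\eta)\sim\langle\log\eta\rangle^{-2}$ as $\eta\to 0$. Write $w(\xi) = \langle\xi\rangle^{2+\kappa}\xi^{\frac12}\langle\log\xi\rangle^{-1-\kappa}$, so that $\|x\|_{S_0} = \|wx\|_{L^2_{d\xi}}$. Then $\|\mathcal{K}_0 x\|_{S_0}$ is the $L^2_{d\xi}$-norm of $\int_0^\infty \frac{w(\xi)}{w(\eta)}\cdot\frac{\trho(\eta)F(\xi,\eta)}{\xi-\eta}\,(w(\eta)x(\eta))\,d\eta$, so it suffices by Schur's test (and the standard boundedness of the Hilbert transform to handle the principal-value singularity on the diagonal) to bound the modified kernel $\widetilde K(\xi,\eta) = \frac{w(\xi)}{w(\eta)}\trho(\eta)F(\xi,\eta)$ in the appropriate symmetric way: namely to show $\sup_\xi \int |\widetilde K(\xi,\eta)|\langle\xi-\eta\rangle^{-1}\,d\eta<\infty$ and the symmetric statement with the roles reversed, plus a local $L^2$-in-both-variables bound near the diagonal to absorb the $\pv$.

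The key steps are then the following. First, split into the region $\xi+\eta<1$ and the region $\xi+\eta\geq 1$. In the low region, $\frac{w(\xi)}{w(\eta)}\sim (\xi/\eta)^{1/2}(\log\eta/\log\xi)^{1+\kappa}$, $\trho(\eta)\sim\langle\log\eta\rangle^{-2}$, and $|F|\lesssim 1$; so the modified kernel is bounded by $(\xi/\eta)^{1/2}\langle\log\xi\rangle^{-1-\kappa}\langle\log\eta\rangle^{\kappa-1}$, whose $d\eta$-integral over $\eta<1$ (against the integrable diagonal factor) is finite uniformly in $\xi$ because the $\eta^{-1/2}$ singularity at the origin is integrable and the logarithm is harmless; the symmetric integral is handled identically. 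In the high region, use the decay $|F(\xi,\eta)|\lesssim\langle\xi\rangle^{-5/4}\langle\eta\rangle^{-5/4}\langle\xi-\eta\rangle^{-1}$; since $\frac{w(\xi)}{w(\eta)}\trho(\eta)\lesssim \langle\xi\rangle^{5/2+\kappa}\langle\eta\rangle^{-1/2-\kappa}$ (crudely, ignoring logs and the $\xi^{1/2}$ factor which only helps), the modified kernel is $\lesssim\langle\xi\rangle^{5/4+\kappa}\langle\eta\rangle^{-7/4-\kappa}\langle\xi-\eta\rangle^{-1}$, and pairing the $\langle\xi-\eta\rangle^{-1}$ with either $\langle\xi\rangle^{-5/4-\kappa}$ or $\langle\eta\rangle^{-7/4-\kappa}$ (according to whether $\eta\gtrsim\xi$ or $\eta\lesssim\xi$) makes both Schur integrals converge; the mixed region $\xi<1\leq\eta$ or $\eta<1\leq\xi$ is a simpler sub-case combining the two mechanisms. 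The $\pv$-contribution near $\xi\sim\eta$ is controlled by writing $\mathcal{K}_0 = $ (Hilbert-transform-type piece) $+$ (smoother remainder) using the $C^1$-regularity of $F$ from \eqref{eq:Fbound2}--\eqref{eq:Fbound3}: $\frac{F(\xi,\eta)-F(\eta,\eta)}{\xi-\eta}$ is bounded by $\sup|\partial_\xi F|$ and hence by a (logarithmically weighted) bounded kernel, while the leading term $F(\eta,\eta)\trho(\eta)\cdot\pv\frac{1}{\xi-\eta}$ is a Hilbert transform conjugated by a smooth multiplier, and one checks $w(\xi)/w(\eta)$ is a bounded multiplicative perturbation on dyadic blocks so that $L^2$-boundedness survives.

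For the smallness gain, the mechanism is that each of the four truncations forces one of the variables into a region $\{\xi<\epsilon\}$, $\{\eta<\epsilon\}$, $\{\xi>\epsilon^{-1}\}$, or $\{\eta>\epsilon^{-1}\}$, and in each case revisiting the Schur estimates above one sees that the relevant integral carries an extra decaying factor: in the low-frequency truncations an extra power of $\langle\log\xi\rangle^{-\gamma}$ or $\langle\log\eta\rangle^{-\gamma}$ coming from the slack in the logarithmic weights (we only used $\langle\log\eta\rangle^{\kappa-1}$, which is integrable with room to spare against $\langle\log\xi\rangle^{-1-\kappa}$, leaving a genuine power of $\langle\log\epsilon\rangle^{-\gamma(\kappa)}$ after restricting); in the high-frequency truncations an extra power of $\langle\xi\rangle^{-\delta}$ or $\langle\eta\rangle^{-\delta}$ from the slack in the polynomial weights, which again restricts to $\langle\log\epsilon\rangle^{-\gamma}$ (indeed even to a power of $\epsilon$, but the logarithmic statement suffices). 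The precise choice of $\gamma(\kappa)$ comes from tracking how much room is left over after the basic boundedness estimate; the point is that none of the estimates above was sharp, so one always gains.

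The main obstacle I expect is the principal-value / diagonal analysis: away from the diagonal everything reduces to routine Schur-test bookkeeping with the explicit weights and the kernel bounds \eqref{eq:Fbound1}, but near $\xi=\eta$ one genuinely needs to extract a Hilbert transform and argue that conjugation by the smooth, slowly-varying multiplier $w(\xi)/w(\eta)$ and the extra factor $\trho(\eta)F(\eta,\eta)$ preserves $L^2$-boundedness — this is where the $C^1$ bounds \eqref{eq:Fbound2}--\eqref{eq:Fbound3} on $F$ are essential, and where some care is needed to see that the logarithmic weights $\langle\log\xi\rangle^{-1-\kappa}$, being slowly varying, do not spoil the Calderón--Zygmund argument. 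Once that piece is isolated and handled (exactly as in the analogous Theorem in \cite{KST2}), the smallness gain is then immediate from the surplus decay noted above.
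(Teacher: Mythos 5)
Your overall strategy — use the kernel bounds of Proposition~\ref{prop:Kstructure} together with the asymptotics of $\trho$, split off and handle the diagonal singularity by a Calder\'on--Zygmund/Hilbert-transform argument, and treat the off-diagonal part by a Schur-type test with the explicit weight $w(\xi)=\langle\xi\rangle^{2+\kappa}\xi^{1/2}\langle\log\xi\rangle^{-1-\kappa}$ — is exactly the natural route suggested by Section~4 and the one the authors presumably have in mind (the paper itself gives no proof, presenting the proposition as a direct consequence of the $F$-bounds, in the spirit of \cite{KST2}).

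There is, however, a concrete gap in your off-diagonal estimate. You claim that the row sum $\sup_\xi\int|\widetilde K(\xi,\eta)|\,d\eta$ is finite and that ``the symmetric integral is handled identically.'' It is not: the kernel of $\mathcal K_0$ is genuinely asymmetric in $\xi$ and $\eta$, because the factor $\trho(\eta)$ acts only in the $\eta$ variable. In the region $\xi,\eta<1$ one has, as you computed,
\[
\frac{w(\xi)}{w(\eta)}\,\trho(\eta)\,|F(\xi,\eta)|\;\lesssim\;\Bigl(\frac{\xi}{\eta}\Bigr)^{1/2}\langle\log\xi\rangle^{-1-\kappa}\langle\log\eta\rangle^{\kappa-1},
\]
and since $\langle\xi-\eta\rangle\sim 1$ there, the column sum at a fixed small $\eta$ is
\[
\int_0^1\frac{w(\xi)}{w(\eta)}\,\trho(\eta)\,|F(\xi,\eta)|\,d\xi
\;\sim\;\eta^{-1/2}\langle\log\eta\rangle^{\kappa-1}\int_0^1 \xi^{1/2}\langle\log\xi\rangle^{-1-\kappa}\,d\xi
\;\sim\;\eta^{-1/2}\langle\log\eta\rangle^{\kappa-1},
\]
which blows up polynomially as $\eta\to 0$ (the $\langle\log\eta\rangle^{\kappa-1}\to 0$ factor cannot compete with $\eta^{-1/2}$). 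So the Schur test with test function $h\equiv 1$, which is what $\sup_\xi\int$ and ``the symmetric statement'' amount to, \emph{fails}, and the proof as outlined does not go through. Moreover, retaining the true $1/(\xi-\eta)$ factor instead of $\langle\xi-\eta\rangle^{-1}$ does not rescue this: for $\xi\gg\eta$ with both small, $1/(\xi-\eta)\sim 1/\xi$, and one still gets a divergent $\eta^{-1/2}\langle\log\eta\rangle^{\kappa-1}$.

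The fix is standard but must be stated: use a nontrivial Schur test function. For instance $h(\xi)=\xi^{-a}$ with $0<a<\tfrac12$ balances the low-frequency region; a short calculation shows
\[
\int_0^1 \widetilde K(\xi,\eta)\,\frac{\eta^{-a}}{|\xi-\eta|}\,d\eta\lesssim \xi^{-a}\langle\log\xi\rangle^{-2},
\qquad
\int_0^1 \widetilde K(\xi,\eta)\,\frac{\xi^{-a}}{|\xi-\eta|}\,d\xi\lesssim \eta^{-a}\langle\log\eta\rangle^{-2},
\]
on the off-diagonal parts, and similarly for the high-frequency region (where, incidentally, the $F$-decay $\langle\xi\rangle^{-5/4}\langle\eta\rangle^{-5/4}\langle\xi-\eta\rangle^{-1}$ together with $\trho(\eta)\sim\eta^2$ forces the constraint $-\tfrac34+\kappa\le b<-\tfrac14-\kappa$ on the test-function exponent $b$, so the argument as it stands requires $\kappa$ small — certainly $\kappa<\tfrac14$; since $\kappa$ is taken small throughout the paper this is harmless, but worth noting that $\kappa<\tfrac12$ is not the sharp constant one gets from these kernel bounds alone). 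Alternatively one can work with dyadic blocks and sum the block norms, which makes the same constraint visible. Once this Schur-test weight is in place, the rest of your sketch — the Calder\'on--Zygmund treatment of the diagonal via extraction of a Hilbert transform plus a $C^1$ remainder using \eqref{eq:Fbound2}--\eqref{eq:Fbound3}, and the smallness gain from the surplus decay under the frequency truncations — is sound and matches what one expects from the analogous argument in \cite{KST2}.
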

 
 \section{Translation of \eqref{eq Dveps temp 2} to the Fourier side}
 
 Move the last term in \eqref{eq Dveps temp 2} to the left and apply the distorted Fourier transform with respect to $\tilde{\calL}$. Writing 
 \[
 \mathcal{D}\veps(\tau, R) = \int_0^\infty \xb(\tau, \xi)\phi(R, \xi)\tilde{\rho}(\xi)\,d\xi,
 \]
we obtain 
\begin{align}\label{eq on Fourier side temp 6}
 \begin{split}
  &-\left(\partial_{\tau}-\frac{\lambda'}{\lambda}2\xi\partial_{\xi}+\frac{\lambda'}{\lambda}\calK\right)^{2}\xb-3\frac{\lambda'}{\lambda}\left(\partial_{\tau}-\frac{\lambda'}{\lambda}2\xi\partial_{\xi}+\frac{\lambda'}{\lambda}\calK\right)\xb-\xi\xb-\left(2\left(\frac{\lambda'}{\lambda}\right)^{2}+\left(\frac{\lambda'}{\lambda}\right)'\right)\xb\\
  =&\calF\left(\lambda^{-2}\calD\left(N(\veps)\right)\right)+\calF\left(\calR(\veps,\calD\veps)\right).
 \end{split}
\end{align}
where throughout we shall use the notation (recalling \eqref{eq:phidef})
\[
\calF(g)(\xi) = \int_0^\infty g(R)\phi(R, \xi)\,R\,dR.
\]
Then introduce the dilation type operator
\begin{equation}\label{eq:D_tau}
\calD_{\tau}:=\partial_{\tau}-\frac{\lambda'}{\lambda}2\xi\partial_{\xi}-\frac{\lambda'}{\lambda}
\end{equation}
Recalling the preceding subsection, we then infer the equation
\begin{align}\label{eq on Fourier side temp 7}
 \begin{split}
  &-\left(\calD_{\tau}-\frac{\lambda'}{\lambda}+\frac{\lambda'}{\lambda}\calK_{0}\right)^{2}\xb-3\frac{\lambda'}{\lambda}\left(\calD_{\tau}-\frac{\lambda'}{\lambda}+\frac{\lambda'}{\lambda}\calK_{0}\right)\xb-\xi\xb-\left(2\left(\frac{\lambda'}{\lambda}\right)^{2}+\left(\frac{\lambda'}{\lambda}\right)'\right)\xb\\
  =&\calF\left(\lambda^{-2}\calD\left(N(\veps)\right)\right)+\calF\left(\calR(\veps,\calD\veps)\right)\\
  \Rightarrow\quad &-\left(\calD_{\tau}-\frac{\lambda'}{\lambda}\right)^{2}\xb-3\frac{\lambda'}{\lambda}\left(\calD_{\tau}-\frac{\lambda'}{\lambda}\right)\xb-\xi\xb-\left(2\left(\frac{\lambda'}{\lambda}\right)^{2}+\left(\frac{\lambda'}{\lambda}\right)'\right)\xb\\
  &=2\frac{\lambda'}{\lambda}\calK_{0}\calD_{\tau}\xb+\left(\frac{\lambda'}{\lambda}\right)'\calK_{0}\xb+\frac{\lambda'}{\lambda}[\calD_{\tau},\calK_{0}]\xb+\left(\frac{\lambda'}{\lambda}\right)^{2}\calK_{0}^{2}\xb\\
  &+\left(\frac{\lambda'}{\lambda}\right)^2\calK_{0}\xb+\calF\left(\lambda^{-2}\calD\left(N(\veps)\right)\right)+\calF\left(\calR(\veps,\calD\veps)\right)\\
  =:&\frakR(\tau,\xb)+\calF\left(\lambda^{-2}\calD\left(N(\veps)\right)\right)+\calF\left(\calR(\veps,\calD\veps)\right).
 \end{split}
\end{align}
So finally we obtain:

\begin{align}\label{eq on Fourier side final}
\begin{split}
 -\left(\calD^{2}_{\tau}+\frac{\lambda'}{\lambda}\calD_{\tau}+\xi\right)\xb=\frakR(\tau,\xb)+\calF\left(\lambda^{-2}\calD\left(N(\veps)\right)\right)+\calF\left(\calR(\veps,\calD\veps)\right)=:f(\tau,\xb),
 \end{split}
\end{align}
where the expression on the left arises after simplification of the expression on the left of \eqref{eq on Fourier side temp 7}. Note that this formula is the exact analogue of (3.4) in \cite{KS1}, and we can correspondingly use analogous formulae for the explicit solution of the corresponding linear homogeneous and inhomogeneous problem. In particular, 
\begin{align}\label{free wave with data}
 \left(\calD_{\tau}^{2}+\frac{\lambda'}{\lambda}\calD_{\tau}+\xi\right)\xb(\tau,\xi)=0;\quad \xb(\tau_{0},\xi)=\xb_{1}(\xi),\quad \calD_{\tau}\xb(\tau_{0},\xi)=\xb_{1}(\xi)
\end{align}
is solved by 
\begin{align}\label{sol to free wave}
 \begin{split}
  \xb(\tau,\xi)=&\frac{\lambda(\tau)}{\lambda(\tau_{0})}\cos\left(\lambda(\tau)\xi^{\frac{1}{2}}\int_{\tau_{0}}^{\tau}\lambda(u)^{-1}du\right)\xb_{0}\left(\frac{\lambda(\tau)^{2}}{\lambda(\tau_{0})^{2}}\xi\right)\\
 &+\xi^{-\frac{1}{2}}\sin\left(\lambda(\tau)\xi^{\frac{1}{2}}\int_{\tau_{0}}^{\tau}\lambda(u)^{-1}du\right)\xb_{1}\left(\frac{\lambda(\tau)^{2}}{\lambda(\tau_{0})^{2}}\xi\right).
 \end{split}
\end{align}
while the inhomogeneous equation with trivial data 
\begin{align*}
 \left(\calD_{\tau}^{2}+\frac{\lambda'}{\lambda}\calD_{\tau}+\xi\right)\xb(\tau,\xi)=f(\tau,\xi)
\end{align*}
is solved by 
\begin{align}\label{sol to inhomo xb}
 \begin{split}
\xb(\tau,\xi)=
\xi^{-\frac{1}{2}}\int_{\tau_{0}}^{\tau}\sin\left(\lambda(\tau)\xi^{\frac{1}{2}}\int_{\sigma}^{\tau}\lambda(u)^{-1}du\right)f\left(\sigma,\frac{\lambda(\tau)^{2}}{\lambda(\sigma)^{2}}\xi\right)d\sigma.
 \end{split}
\end{align}
We shall next develop a functional framework and deduce bounds for the homogeneous propagator given by \eqref{sol to free wave}. Specifically, we shall require a good weighted $L^\infty$-bound: 
\begin{proposition}\label{prop:PropagatorLinftyBound}
Let 
\[
\calD\veps(R) = \int_0^\infty \phi(R, \xi)\xb(\tau,\xi)\tilde{\rho}(\xi)\,d\xi,\,\veps(R) = \phi_0(R)\int_0^R\phi_0^{-1}(s)\calD\veps(s)\,ds.
\]
with $\xb(\tau,\xi)$ given by \eqref{sol to free wave}. Then we have the bound 
\[
\left\|\frac{\calD\veps(\tau, R)}{\langle\log R\rangle R}\right\|_{L^\infty_{dR}}\lesssim \left(\frac{\lambda(\tau)}{\lambda(\tau_0)}\right)^{-1}\left[\big\|\xb_{0}\big\|_{S_0} + \big\|\xb_{1}\big\|_{S_1}\right],
\]
where we recall Proposition~\ref{prop:K_0Sbound} for the definition of $\big\|\cdot\big\|_{S_0}$ and we set $\big\|\cdot\big\|_{S_1}: = \big\|\xi^{-\frac12}\cdot\big\|_{S_0}$. 
\end{proposition}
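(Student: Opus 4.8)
The plan is to estimate the homogeneous propagator \eqref{sol to free wave} directly in the $S_0$-norm (resp.\ $S_1$-norm for the second term), then feed the resulting bound on $\xb(\tau, \cdot)$ into Lemma~\ref{lem: Dveps Linfty} and the one-dimensional Hardy-type estimate that converts a bound on $\calD\veps$ into a bound on $\calD\veps / (\langle \log R\rangle R)$. The two pieces of \eqref{sol to free wave} are treated separately: the cosine term carries the data $\xb_0$ with prefactor $\lambda(\tau)/\lambda(\tau_0)$ and a dilated argument $\frac{\lambda(\tau)^2}{\lambda(\tau_0)^2}\xi$, while the sine term carries $\xb_1$ with the extra $\xi^{-1/2}$, which is exactly why the natural norm for $\xb_1$ is $S_1 = \xi^{-1/2}S_0$.

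\textbf{Step 1: the dilation is almost an isometry on $S_0$ up to the scaling prefactor.} Substituting $\xi \mapsto \mu^2 \xi$ with $\mu = \lambda(\tau)/\lambda(\tau_0) \geq 1$ in the $L^2_{d\xi}$ integral defining $\|\cdot\|_{S_0}$, one checks that $\|x(\mu^2 \cdot)\|_{S_0} \lesssim \mu^{-1} \langle \log \mu\rangle^{C}\|x\|_{S_0}$ for the relevant range of $\tau$ near $0$; the change of variables produces a $\mu^{-1}$ from the Jacobian (after accounting for the $\xi^{1/2}$ weight, which contributes $\mu$, so net $\mu^{-1}$) and the weights $\langle \xi\rangle^{2+\kappa}$, $\langle \log\xi\rangle^{-1-\kappa}$ only worsen by logarithmic-in-$\mu$ factors since $\mu \geq 1$. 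Since $\lambda(\tau) = \tau^{-1-\nu}$ behaves polynomially, for $\tau_0 \leq \tau < 0$ the ratio $\mu$ is bounded and the log losses are harmless; but to get the clean power $\mu^{-1}$ stated in the proposition, one uses that the cosine/sine factors are bounded by $1$, so the prefactor $\lambda(\tau)/\lambda(\tau_0) = \mu$ in front of the cosine combines with the $\mu^{-1}$ from the dilation to give $O(1)$ in $S_0$, hence $O(\mu^{-1})$ once we extract the claimed $(\lambda(\tau)/\lambda(\tau_0))^{-1}$; similarly, for the sine term, $\xi^{-1/2}\sin(\cdots)\xb_1(\mu^2\xi)$ in $S_0$ equals the $S_1$-norm of $\xb_1(\mu^2\cdot)$ times bounded factors, giving $\mu^{-1}\|\xb_1\|_{S_1}$ after the same dilation computation — note the $\xi^{-1/2}$ sitting outside the dilated argument must be commuted through, producing another factor of $\mu$ which recombines correctly.

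\textbf{Step 2: pass from $\xb(\tau, \cdot) \in S_0$ to the pointwise bound on $\calD\veps$.} By Lemma~\ref{lem: Dveps Linfty} applied with the $\langle\xi\rangle^{-1/2}$-weighted version, $\|\calD\veps(\tau, \cdot)/\langle \log R\rangle\|_{L^\infty_{dR}} \lesssim \|\langle\xi\rangle^{-1/2}\xb(\tau,\xi)\|_{S_0} \lesssim \|\xb(\tau,\xi)\|_{S_0}$, which by Step 1 is $\lesssim \mu^{-1}[\|\xb_0\|_{S_0} + \|\xb_1\|_{S_1}]$. To upgrade the denominator from $\langle \log R\rangle$ to $\langle \log R\rangle R$, one uses that $\calD\veps(\tau, R) = O(R^2)$ as $R \to 0$ (from the admissibility of $\xb$ and the behavior $\phi(R,\xi) \sim R^2$ for $R^2\xi \lesssim 1$, as already recorded in the proof of Lemma~\ref{lem: Dveps Linfty}), so near $R = 0$ the quotient $\calD\veps/R$ is in fact $O(R)$ and hence bounded; for $R \gtrsim 1$ one simply has $R \geq 1$ so dividing by an extra $R$ only helps. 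More precisely, split $\|\calD\veps/(\langle\log R\rangle R)\|_{L^\infty}$ into $R \leq 1$ and $R > 1$: on $R > 1$ it is dominated by $\|\calD\veps/\langle\log R\rangle\|_{L^\infty}$; on $R \leq 1$ one revisits the Fourier representation, using $|\phi(R,\xi)| \lesssim R^2\langle\xi\rangle^{-1}$ when $R^2\xi \lesssim 1$ and $|\phi(R,\xi)| \lesssim R^{-1/2}\xi^{-1/4}|\log\xi|$ when $R^2\xi \gtrsim 1$ (so $\lesssim R^2$ there too after using $R^2\xi \gtrsim 1$), to conclude $|\calD\veps(\tau,R)| \lesssim R^2 \cdot (\text{finite } \xi\text{-integral of } \xb \text{ against } \trho)$, and the $\xi$-integral is controlled by $\|\xb\|_{S_0}$ via Cauchy–Schwarz against the weight, giving the extra power of $R$ needed.

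\textbf{The main obstacle} I expect is Step 1 — carefully tracking the scaling and logarithmic weights through the dilation $\xi \mapsto \mu^2\xi$ in the $S_0$-norm, and verifying that the oscillatory factors $\cos(\lambda(\tau)\xi^{1/2}\int_{\tau_0}^\tau \lambda^{-1})$ and $\xi^{-1/2}\sin(\cdots)$ genuinely lose nothing beyond their trivial $L^\infty$ bound (in particular that the $\xi^{1/2}$-derivative of the sine phase, which one might want to use for an oscillatory gain, is not needed and the crude bound suffices). One must also confirm that over the relevant $\tau$-interval $[\tau_0, 0)$ the quantity $\lambda(\tau)\int_{\tau_0}^\tau \lambda(u)^{-1}\,du$ stays comparable to a fixed constant times $\tau$ (true since $\lambda(u) = u^{-1-\nu}$), so that the phase, while large, does not interact badly with the $\langle\xi\rangle^{2+\kappa}$ weight — but since we only use $|\cos|, |\sin| \leq 1$, this is not actually an issue, which is the point that makes the crude estimate work. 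The remaining steps are routine given Lemma~\ref{lem: Dveps Linfty} and the asymptotics of $\phi(R,\xi)$ already quoted from \cite{KST2}.
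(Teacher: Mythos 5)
Your overall route — bound the scaled propagator in a weighted $L^2_{d\xi}$ norm, then convert to a pointwise bound on $\calD\veps$ via the Fourier representation — is exactly the paper's: it delegates to Proposition~\ref{prop:PropagatorEnergy} for the $S_0$-bound and to Lemma~\ref{lem: Dveps Linfty} / Lemma~\ref{lem:epsbound} for the $L^\infty$ conversion. Your Step~2 is a sound version of the latter, including the extra division by $R$ using $\phi(R,\xi)=O(R^2)$ as $R\to 0$.

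However, Step~1 contains a concrete error that you then paper over with a circular step. The change of variables $\xi\mapsto\mu^{-2}\eta$ in the $S_0$-norm gives
\[
\big\|x(\mu^2\,\cdot)\big\|_{S_0}^2=\mu^{-4}\int_0^\infty \langle\mu^{-2}\eta\rangle^{2(2+\kappa)}\,\eta\,\langle\log(\mu^{-2}\eta)\rangle^{-2-2\kappa}\,|x(\eta)|^2\,d\eta,
\]
so the correct estimate is $\|x(\mu^2\,\cdot)\|_{S_0}\lesssim \mu^{-2}\|x\|_{S_0}$, not $\mu^{-1}$: the $L^2$ Jacobian contributes $\mu^{-1}$, and the $\xi^{1/2}$ weight, which becomes $\mu^{-1}\eta^{1/2}$, contributes another $\mu^{-1}$ (you state it contributes a factor of $\mu$, which is the wrong sign and does not even yield your own claimed "net $\mu^{-1}$"). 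Once this is corrected, the prefactor $\mu=\lambda(\tau)/\lambda(\tau_0)$ in \eqref{sol to free wave} directly gives $\mu\cdot\mu^{-2}\|\xb_0\|_{S_0}=\mu^{-1}\|\xb_0\|_{S_0}$, which is exactly the claim; there is nothing left to "extract," and indeed the assertion that one can pass from "$O(1)$ in $S_0$" to "$O(\mu^{-1})$" by extraction is a logical gap — you cannot pull a decay factor out of a quantity you have only shown to be bounded.

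One further point worth noting: the $\langle\log\mu\rangle^C$ loss you introduce does not actually occur. Near $\eta\approx\mu^2$, where the log ratio $\langle\log\eta\rangle^{1+\kappa}/\langle\log(\mu^{-2}\eta)\rangle^{1+\kappa}$ loses a factor of order $\langle\log\mu\rangle^{1+\kappa}$, the polynomial ratio $\langle\mu^{-2}\eta\rangle^{2+\kappa}/\langle\eta\rangle^{2+\kappa}$ gains a compensating $\mu^{-2(2+\kappa)}$, which dominates. The pointwise comparison $\langle\mu^{-2}\eta\rangle^{2(2+\kappa)}\langle\log(\mu^{-2}\eta)\rangle^{-2-2\kappa}\lesssim \langle\eta\rangle^{2(2+\kappa)}\langle\log\eta\rangle^{-2-2\kappa}$ therefore holds uniformly for $\mu\geq 1$, and the scaling estimate is clean. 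The same reasoning handles the $\xb_1$ term after factoring $\xi^{-1/2}=\mu\,(\mu^2\xi)^{-1/2}$ as you indicate.
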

This proposition is in fact a consequence of the next proposition and lemma. 
In addition, we shall also require good energy type bounds:
\begin{proposition}\label{prop:PropagatorEnergy} Let $\xb(\tau,\xi)$ be given by \eqref{sol to free wave}. Then we have (here $\kappa>0$ is as in the definition of $\|\cdot\|_{S_0}$)
\begin{align*}
\sup_{\tau\geq \tau_0}\frac{\lambda(\tau)}{\lambda(\tau_0)}\left\langle\log\frac{\lambda(\tau)}{\lambda(\tau_0)}\right\rangle^{1+\kappa}[\big\|\xb(\tau, \cdot)\big\|_{S_0} + \big\|\mathcal{D}_{\tau}\xb(\tau, \cdot)\big\|_{S_1}]\lesssim \big\|\bar{x}_0\big\|_{S_0} + \big\|\bar{x}_1\big\|_{S_1}. 
\end{align*}
\end{proposition}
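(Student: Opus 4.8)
The plan is to work directly from the explicit formula \eqref{sol to free wave} for $\xb(\tau,\xi)$ and estimate the $S_0$ and $S_1$ norms of $\xb(\tau,\cdot)$ and $\calD_\tau\xb(\tau,\cdot)$, tracking the decay factor $\frac{\lambda(\tau)}{\lambda(\tau_0)}\langle\log\frac{\lambda(\tau)}{\lambda(\tau_0)}\rangle^{1+\kappa}$. The essential point is a change of variables: with $\mu:=\frac{\lambda(\tau)^2}{\lambda(\tau_0)^2}$, the first term of \eqref{sol to free wave} is $\frac{\lambda(\tau)}{\lambda(\tau_0)}\cos(\cdots)\xb_0(\mu\xi)$. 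For a weight $w(\xi)=\langle\xi\rangle^{2+\kappa}\xi^{\frac12}\langle\log\xi\rangle^{-1-\kappa}$, substituting $\xi\mapsto\xi/\mu$ in $\|w(\xi)\xb_0(\mu\xi)\|_{L^2_{d\xi}}$ produces $\mu^{-\frac12}\|w(\xi/\mu)\xb_0(\xi)\|_{L^2_{d\xi}}$, and since $\mu\geq 1$ one checks the pointwise comparison $w(\xi/\mu)\lesssim \mu^{-\frac12}\langle\log\mu\rangle^{1+\kappa}w(\xi)$ (the worst case is the small-$\xi$ regime, where the logarithmic weight in $S_0$ is essentially neutral and the $\xi^{1/2}$ factor yields the gain $\mu^{-1/2}$, while the loss $\langle\log\mu\rangle^{1+\kappa}$ comes from comparing $\langle\log(\xi/\mu)\rangle$ with $\langle\log\xi\rangle$). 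Combined with the overall prefactor $\frac{\lambda(\tau)}{\lambda(\tau_0)}=\mu^{1/2}$ and the bound $|\cos(\cdots)|\leq 1$, this gives exactly the claimed decay for the first term's contribution to $\|\xb(\tau,\cdot)\|_{S_0}$.

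For the second term $\xi^{-\frac12}\sin(\cdots)\xb_1(\mu\xi)$, the factor $\xi^{-1/2}$ is precisely absorbed by passing from $S_0$ to $S_1$ on the data side (this is the reason $\xb_1$ is measured in $S_1$): $\|\xi^{-1/2}\sin(\cdots)\xb_1(\mu\xi)\|_{S_0}\leq \|\xi^{-1/2}\xb_1(\mu\xi)\|_{S_0}$, and after the same scaling argument applied to $\xi^{-1/2}\xb_1(\mu\xi)$ — noting $\|\xi^{-1/2}g(\mu\xi)\|_{S_0}=\mu^{1/2}\|(\xi^{-1/2}g)(\mu\xi)\|_{S_0}$ up to the weight comparison — one again obtains $\mu^{-1/2}\langle\log\mu\rangle^{1+\kappa}\|\xb_1\|_{S_1}$, so the prefactor $\mu^{1/2}$ closes the estimate. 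For the $\calD_\tau\xb$ bound in $S_1$, I would compute $\calD_\tau$ applied to \eqref{sol to free wave} directly; since \eqref{sol to free wave} solves \eqref{free wave with data}, $\calD_\tau\xb$ is again of the same structural form (an oscillatory combination with the roles of the two data slots permuted and an extra $\xi^{1/2}$ inside the sine, which is exactly compensated by the $\xi^{-1/2}$ in the $S_1$ norm), so the same scaling estimate applies. One has to be mildly careful that $\calD_\tau$ contains the term $-\frac{\lambda'}{\lambda}2\xi\partial_\xi$ and the $-\frac{\lambda'}{\lambda}$ zeroth-order piece; hitting the argument $\mu\xi$ with $\partial_\xi$ and hitting $\mu=\mu(\tau)$ with $\partial_\tau$ are designed to combine so that $\calD_\tau$ acts cleanly, which is the content of the "$\calD_\tau$ is the right dilation operator" remark following \eqref{eq:D_tau}.

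The main obstacle is the sharp bookkeeping of the logarithmic weights. Unlike a clean power weight, $\langle\log\xi\rangle^{-1-\kappa}$ does not behave well under the scaling $\xi\mapsto\mu\xi$ when $\mu$ is large and $\xi$ is simultaneously small or moderate: one must carefully split the $\xi$-integral into the regions $\xi\lesssim \mu^{-1}$, $\mu^{-1}\lesssim\xi\lesssim 1$, $1\lesssim\xi$, and in each region establish the pointwise weight comparison $w(\xi/\mu)\lesssim \mu^{-1/2}\langle\log\mu\rangle^{1+\kappa} w(\xi)$ with the correct power of $\langle\log\mu\rangle$, verifying that the exponent $1+\kappa$ (and not something larger) suffices — this is where the specific choice of $\kappa>0$ in the $S_0$ norm and the shape of the spectral measure asymptotics $\trho(\xi)\sim\xi$, $\trho(\xi)\sim\langle\log\xi\rangle^{-2}$ matter. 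Once this pointwise weight lemma is in hand, the rest is a routine $L^2$ change of variables together with $|\cos|,|\sin|\leq 1$, and taking the supremum over $\tau\geq\tau_0$ gives the stated bound. I would isolate the weight comparison as a short preliminary lemma before running the argument above.
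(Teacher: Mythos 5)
Your general strategy — change of variables $\eta=\mu\xi$ with $\mu=\lambda(\tau)^2/\lambda(\tau_0)^2$, a pointwise weight-comparison lemma, and $|\cos|,|\sin|\le 1$ — is the natural one, and the part about $\calD_\tau$ acting cleanly on the dilated argument is fine. However there is a genuine gap in the arithmetic of the weight comparison, and it is not a small bookkeeping issue.

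\textbf{The sign of the log factor is wrong, and the estimate you conclude does not close.} You assert $w(\xi/\mu)\lesssim\mu^{-1/2}\langle\log\mu\rangle^{1+\kappa}w(\xi)$, with the log factor a \emph{loss}, and then claim that combined with the prefactor $\frac{\lambda(\tau)}{\lambda(\tau_0)}=\mu^{1/2}$ this ``gives exactly the claimed decay.'' Follow the algebra: with your bound, the first term of \eqref{sol to free wave} satisfies $\|f_1\|_{S_0}\lesssim\mu^{1/2}\cdot\mu^{-1/2}\cdot\mu^{-1/2}\langle\log\mu\rangle^{1+\kappa}\|\bar{x}_0\|_{S_0}=\mu^{-1/2}\langle\log\mu\rangle^{1+\kappa}\|\bar{x}_0\|_{S_0}$, so multiplying by the prefactor $\frac{\lambda(\tau)}{\lambda(\tau_0)}\langle\log\frac{\lambda(\tau)}{\lambda(\tau_0)}\rangle^{1+\kappa}\sim\mu^{1/2}\langle\log\mu\rangle^{1+\kappa}$ from the statement yields $\langle\log\mu\rangle^{2(1+\kappa)}\|\bar{x}_0\|_{S_0}$, which is unbounded. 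The proposition requires a $\langle\log\mu\rangle^{-1-\kappa}$ \emph{gain} relative to the plain $(\lambda(\tau)/\lambda(\tau_0))^{-1}$ decay, so a weight lemma with a $\langle\log\mu\rangle^{1+\kappa}$ loss runs the wrong way by a factor $\langle\log\mu\rangle^{2(1+\kappa)}$.

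\textbf{The true extremal regime defeats a purely pointwise argument.} In fact the sharp constant in the weight comparison is better than you claim but still not enough: one checks $\sup_{\xi>0}w(\xi/\mu)/w(\xi)\sim\mu^{-1/2}$ for $\mu\ge1$, with the supremum attained as $\xi\to 0^{+}$, \emph{not} carrying any $\langle\log\mu\rangle^{1+\kappa}$ loss — as $\xi\to 0$ both $\langle\log\xi\rangle$ and $\langle\log(\xi/\mu)\rangle$ blow up and their ratio tends to $1$, so the log weight is exactly neutral there (the cases $\xi\sim\mu$ where $\langle\log\xi\rangle/\langle\log(\xi/\mu)\rangle\sim\langle\log\mu\rangle$ are killed by the polynomial ratio $\langle\xi/\mu\rangle^{2+\kappa}/\langle\xi\rangle^{2+\kappa}\sim\mu^{-(2+\kappa)}$). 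Plugging in this optimal pointwise bound still only yields $\|\xb(\tau,\cdot)\|_{S_0}\lesssim\mu^{-1/2}\big(\|\bar x_0\|_{S_0}+\|\bar x_1\|_{S_1}\big)$, which misses the $\langle\log\mu\rangle^{-1-\kappa}$. Worse, in the critical region $\eta\lesssim\tau_0^{-2}$ (i.e., $\xi=\eta/\mu$ very small) the phase $\lambda(\tau)\xi^{1/2}\int_{\tau_0}^\tau\lambda^{-1}\,du\approx\nu\tau_0\,\eta^{1/2}$ is $O(1)$ \emph{uniformly in $\mu$}, so $\cos$ stays near $1$ and there is no oscillatory cancellation available either. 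Your plan therefore cannot, as written, produce the stated logarithmic gain; you need some additional structural input — e.g., a restriction on the low-frequency content of the data or a more refined frequency decomposition that trades the weight gain at moderate frequencies against the bounded-measure set near $\xi=0$ — before the proposed weight lemma and change of variables can close the estimate.
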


We observe from the last section that control over $\|\xb\|_{S_0} + \|\mathcal{D}_{\tau}\xb\|_{S_{1}}$ implies control over 
\[
\|(\veps, \partial_{\tau}\veps)\big\|_{H^{3+\nu-}\times H^{2+\nu-}}, 
\]
provided $\veps(\tau, R) = \int_0^\infty\phi(R, \xi)\xb(\tau, \xi)\tilde{\rho}(\xi)\,d\xi$.

\section{Bounds for the ODE controlling the evolution of the resonance, \eqref{ctau ODE}}

Note that the operator 
\[
L_c: =  \left(\partial_{\tau}+\frac{\lambda'}{\lambda}\right)^{2}+\frac{\lambda'}{\lambda}\left(\partial_{\tau}+\frac{\lambda'}{\lambda}\right)
\]
with $\lambda(\tau) = \tau^{-1-\nu^{-1}}$ admits the fundamental system
\[
\phi_1(\tau) = \tau^{-1-\nu^{-1}},\,\phi_2(\tau) = \tau^{-1-\frac{2}{\nu}}.
\]
Then the equation 
\[
L_cy(\tau) = r(\tau),\,y(\tau_0) = y'(\tau_0) = 0
\]
admits the explicit solution 
\begin{equation}\label{eq:L_cParametrix}
y(\tau) = \nu\left(\tau^{-1-\nu^{-1}}\int_{\tau_0}^{\tau}\sigma^{2+\nu^{-1}}r(\sigma)\,d\sigma - \tau^{-1-2\nu^{-1}}\int_{\tau_0}^{\tau}\sigma^{2+2\nu^{-1}}r(\sigma)\,d\sigma\right)
\end{equation}

\section{Fine structure of $\mathcal{D}\veps$ and setup of iteration scheme}

\subsection{A simple $L^\infty$-bound}
We shall be working with the representation 
\begin{equation}\label{eq:DepsFourierRep}
\mathcal{D}\veps(\tau, R) = \int_0^\infty \phi(R, \xi)\xb(\tau,\xi)\tilde{\rho}(\xi)\,d\xi
\end{equation}
In order to determine how much regularity for $\mathcal{D}\veps(\tau, R)$, respectively how much decay we need for $\xb$ (in terms of $\xi$), the following lemma is useful:

\begin{lemma}\label{lem:epsbound} Assume \eqref{eq:DepsFourierRep}. Then, setting $\veps = \phi_0(R)\int_0^R[\phi_0(s)]^{-1}\mathcal{D}\veps(\tau, s)\,ds$, we have the bound 
\[
\left\|\frac{\veps(\tau, R)}{\langle\log\langle R\rangle\rangle R}\right\|_{L^\infty_{dR}}\lesssim \left\|\langle \xi\rangle^{1+\kappa}\xi^{\frac12}\langle\log\xi\rangle^{-1-\kappa}\xb(\tau,\xi)\right\|_{L^2_{d\xi}}.
\]
\end{lemma}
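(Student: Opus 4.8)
The strategy is to pass from the bound on $\veps$ to one on $\calD\veps$ via the right inverse $\phi(\cdot)$, and then to invoke the already-proved $L^\infty$ estimate of Lemma~\ref{lem: Dveps Linfty} (or rather, to mimic its proof). First I would record that, by the definition $\veps = \phi_0(R)\int_0^R[\phi_0(s)]^{-1}\calD\veps(\tau,s)\,ds$, we have pointwise
\[
\left|\frac{\veps(\tau,R)}{\langle\log\langle R\rangle\rangle R}\right| \lesssim \frac{|\phi_0(R)|}{\langle\log\langle R\rangle\rangle R}\int_0^R \frac{|\calD\veps(\tau,s)|}{|\phi_0(s)|}\,ds,
\]
and since $\phi_0(R) = R/(1+R^2)$ we have $|\phi_0(R)|\lesssim \min(R, R^{-1})$ and $|\phi_0(s)|^{-1}\lesssim \max(s^{-1}, s)$. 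So the task reduces to controlling $\int_0^R \max(s^{-1},s)\,|\calD\veps(\tau,s)|\,ds$ against the weight $\langle\log\langle R\rangle\rangle R\,\min(R^{-1},R)^{-1} = \langle\log\langle R\rangle\rangle R\,\max(R,R^{-1})$; i.e., for $R\lesssim 1$ one needs $\int_0^R s^{-1}|\calD\veps(\tau,s)|\,ds \lesssim R^2\cdot(\text{norm})$ and for $R\gtrsim 1$ one needs $\int_0^R \max(s^{-1},s)|\calD\veps(\tau,s)|\,ds \lesssim R^2\langle\log R\rangle\cdot(\text{norm})$.

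The key input is the Fourier-side pointwise description of $\phi(R,\xi)$ already used in Lemma~\ref{lem: Dveps Linfty}: namely $|\phi(R,\xi)|\lesssim R^2$ when $R^2\xi\lesssim1$ and $R\lesssim1$, $|\phi(R,\xi)|\lesssim\log R$ when $R^2\xi\lesssim 1$ and $R\gg1$, $|\phi(R,\xi)|\lesssim|\log\xi|$ when $R^2\xi\gtrsim1$ and $\xi\ll1$, and $|\phi(R,\xi)|\lesssim R^{-1/2}\xi^{-5/4}$ when $\xi\gtrsim1$. Inserting these into \eqref{eq:DepsFourierRep} and splitting $\int_0^\infty d\xi = \int_0^{1/2} + \int_{1/2}^\infty$, I would obtain, exactly as in the proof of Lemma~\ref{lem: Dveps Linfty}, the improved pointwise bound $|\calD\veps(\tau,R)| \lesssim \min(R^2,1)\,\|\langle\xi\rangle^{1+\kappa}\xi^{1/2}\langle\log\xi\rangle^{-1-\kappa}\xb(\tau,\xi)\|_{L^2_{d\xi}}$ for $R\lesssim1$, and $|\calD\veps(\tau,R)|\lesssim \langle\log R\rangle\,\|\cdots\|_{L^2}$ for $R\gtrsim1$ — the point being that the small-$\xi$ piece produces the $|\log\xi|^{-2}$-integrable tail (the extra $\langle\xi\rangle$ power here, compared to $\langle\xi\rangle^2$ in Lemma~\ref{lem: Dveps Linfty}, is harmless since we integrate $d\xi$, not $d\xi/\sqrt\xi$, wait — more carefully, the relevant Cauchy–Schwarz against $\trho(\xi)\,d\xi\sim \xi\,d\xi$ at large $\xi$ uses one power of $\langle\xi\rangle$ from $\trho$ plus the $\xi^{-1}$ from $\phi$, leaving exactly the weight in the statement).

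Plugging this pointwise bound on $|\calD\veps(\tau,s)|$ into the integral: for $R\lesssim1$, $\int_0^R s^{-1}\cdot s^2\,ds \sim R^2$, giving the $R^2$ needed after multiplying by $|\phi_0(R)|/(\langle\log\langle R\rangle\rangle R)\sim 1/R$; for $R\gtrsim 1$, split the $s$-integral at $s=1$: the part $\int_0^1 s^{-1}\cdot s^2\,ds\sim 1$ and the part $\int_1^R s\cdot\langle\log s\rangle\,ds\sim R^2\langle\log R\rangle$ dominate, and multiplying by $|\phi_0(R)|/(\langle\log\langle R\rangle\rangle R)\sim 1/(\langle\log R\rangle R^3)$ gives the claimed bound. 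I expect the main obstacle to be purely bookkeeping: tracking the various regimes $R\lessgtr 1$, $R^2\xi\lessgtr 1$, $\xi\lessgtr 1$ simultaneously and making sure the logarithmic weights and the endpoint behavior of $\phi(R,\xi)$ as $R\to 0$ (where $\phi(R,\xi)\sim R^2$ uniformly, so that $\calD\veps$ vanishes to second order and the integral $\int_0^R s^{-1}|\calD\veps|\,ds$ converges) are handled correctly; there is no analytic subtlety beyond what is already present in Lemma~\ref{lem: Dveps Linfty}.
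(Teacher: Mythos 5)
There is a genuine gap: your key intermediate claim, the pointwise bound $|\calD\veps(\tau,R)|\lesssim \min(R^2,1)\,\big\|\langle\xi\rangle^{1+\kappa}\xi^{\frac12}\langle\log\xi\rangle^{-1-\kappa}\xb(\tau,\cdot)\big\|_{L^2_{d\xi}}$ for $R\lesssim 1$, does not follow from Cauchy--Schwarz with this weight and is in fact false. The obstruction is the high-frequency tail: for $\xi\gtrsim R^{-2}$ one only has $|\phi(R,\xi)|\lesssim R^{-\frac12}\xi^{-\frac54}$ and $\trho(\xi)\sim\xi^2$, and pairing this against the weight $\xi^{\frac32+\kappa}\langle\log\xi\rangle^{-1-\kappa}$ yields a contribution of size $R^{2\kappa}\langle\log R\rangle^{1+\kappa}$, not $R^2$ (the same happens for $1\lesssim\xi\lesssim R^{-2}$, where $|\phi|\lesssim R^2$ but the dual weight integral grows like $R^{-2+2\kappa}$). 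Concretely, concentrating $\xb$ on $\xi\sim\Lambda$ with unit weighted $L^2$-norm and evaluating at $R\sim\Lambda^{-\frac12}$ makes $|\calD\veps(R)|/R^2$ of size $\Lambda^{1-\kappa}\langle\log\Lambda\rangle^{1+\kappa}$; note the paper itself only asserts $\calD\veps=O(R^2)$ after a regularisation (large-frequency cutoff), precisely because such quadratic vanishing is not encoded in these norms. Since your small-$R$ bookkeeping ($\int_0^R s^{-1}s^2\,ds\sim R^2$) is built on this false estimate, that branch of the argument collapses as written. Your parenthetical hesitation about whether $\langle\xi\rangle^{1+\kappa}$ versus $\langle\xi\rangle^{2+\kappa}$ suffices is exactly the crux: a sup-in-$R$ bound on $\calD\veps$ of the type of Lemma~\ref{lem: Dveps Linfty} genuinely needs the stronger weight, and the half power of $\xi$ you are missing cannot be recovered by a pointwise bound on $\calD\veps$.

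The paper closes this gap differently: it never bounds $\calD\veps$ pointwise, but switches the order of the $s$- and $\xi$-integrations and exploits the inner $s$-integral, $\int_0^{\min\{\xi^{-1/2},1\}}\big|\phi(s,\xi)/s\big|\,ds\lesssim\xi^{-1}$ for $\xi>1$ (and similarly on $s\in[\xi^{-1/2},1]$ using $|\phi|\lesssim s^{-\frac12}\xi^{-\frac54}$), so that the extra smoothing from $\calD\veps\mapsto\veps$ produces the missing $\xi^{-\frac12}$ and the weight $\langle\xi\rangle^{1+\kappa}\xi^{\frac12}\langle\log\xi\rangle^{-1-\kappa}$ suffices; for $R\geq1$, $s\geq1$ it uses $|\phi(s,\xi)|\lesssim\langle\log s\rangle$, $|\phi(s,\xi)|\lesssim s^{-\frac12}\xi^{-\frac54}$ and Cauchy--Schwarz before integrating in $s$. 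Your overall scheme could be repaired without reverting to the paper's order switch, because your stated target for $R\lesssim1$ is also miscomputed: since $\phi_0(R)/(\langle\log\langle R\rangle\rangle R)\sim1$ there, you only need $\int_0^R s^{-1}|\calD\veps(\tau,s)|\,ds\lesssim\|\cdots\|$, not $\lesssim R^2\|\cdots\|$, and for this the true pointwise bound $|\calD\veps(\tau,s)|\lesssim s^{2\kappa}\langle\log s\rangle^{1+\kappa}\|\cdots\|$ (valid for $0<\kappa<\frac12$), together with $|\calD\veps(\tau,s)|\lesssim\langle\log s\rangle\|\cdots\|$ for $s\gtrsim1$, is enough. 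But you would have to prove these corrected estimates; as it stands the proposal relies on an estimate that fails.
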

\begin{proof} We distinguish between the case $R<1$ and $R>1$: 
\\
{\it{(1) $R<1$}}. Here we have $\frac{ \phi_0(R)}{R}\lesssim 1$, and further $s<1$ in the inner integral and $[\phi_0(s)]^{-1}\lesssim s^{-1}$. Then further restricting to $s^2\xi<1$ and switching orders of integration, we bound the corresponding contribution by 
\begin{align*}
\int_0^\infty \big|\xb(\tau,\xi)\big|\left(\int_0^{\min\{\xi^{-\frac12},1\}}\left|\frac{\phi(s, \xi)}{s}\right|\,ds\right)\tilde{\rho}(\xi)\,d\xi&\lesssim \int_0^1 \big|\xb(\tau,\xi)\big|\tilde{\rho}(\xi)\,d\xi+ \int_1^\infty  \big|\xb(\tau,\xi)\big|\xi^{-1}\tilde{\rho}(\xi)\,d\xi\\
&\lesssim \left\|\langle \xi\rangle^{1+\kappa}\xi^{\frac12}\langle\log\xi\rangle^{-1-\kappa}\xb(\tau,\xi)\right\|_{L^2_{d\xi}}.
\end{align*}
Here we have taken advantage of the bound 
\[
\int_0^{\min\{\xi^{-\frac12},1\}}\left|\frac{\phi(s, \xi)}{s}\right|\,ds\lesssim \xi^{-1}
\]
in the regime $\xi>1$. 
\\
On the other hand, restricting to $s^2\xi\geq 1$ (which entails $\xi>1$), we use $|\phi(s, \xi)|\lesssim s^{-\frac12}\xi^{-\frac54}$, giving 
\[
\int_{\xi^{-\frac12}}^1\left|\frac{\phi(s, \xi)}{s}\right|\,ds\lesssim \xi^{-1},
\]
 and from there 
 \[
 \int_0^\infty \big|\xb(\tau,\xi)\big|\left(\int_{\xi^{-\frac12}}^1\left|\frac{\phi(s, \xi)}{s}\right|\,ds\right)\tilde{\rho}(\xi)\,d\xi\lesssim \big\|\langle \xi\rangle^{1+}\xi^{\frac12}\langle\log\xi\rangle^{-1-\kappa}\xb(\tau,\xi)\big\|_{L^2_{d\xi}}.
 \]
 {\it{(2) $R\geq 1$}}. Here use $\frac{ \phi_0(R)}{R}\lesssim R^{-2}$. The contribution to the $s$-integral from the region $s<1$ is handled like in the preceding case. For $s>1$, use the bounds 
 \[
 |\phi(s, \xi)|\lesssim \langle\log s\rangle,\,|\phi(s,\xi)|\lesssim s^{-\frac12}\xi^{-\frac54},\,\xi>1.
 \]
 It follows that for $s>1$, using the Cauchy-Schwarz inequality, we have 
 \[
\int_0^\infty \big|\xb(\tau,\xi)\big|\big|\phi(s, \xi)\big|\tilde{\rho}(\xi)\,d\xi\lesssim \langle\log s\rangle \big\|\langle \xi\rangle^{1+\kappa}\xi^{\frac12}\langle\log\xi\rangle^{-1-\kappa}\xb(\tau,\xi)\big\|_{L^2_{d\xi}}.
\]
 This in turn implies
 \begin{align*}
\left| \frac{ \phi_0(R)}{R}\int_1^R[\phi_0(s)]^{-1}\mathcal{D}\veps(\tau, s)\,ds\right|&\lesssim R^{-2}\int_1^R s \left|\int_0^\infty \big|\xb(\tau,\xi)\big|\big|\phi(s, \xi)\big|\tilde{\rho}(\xi)\,d\xi\right|\,ds\\
&\lesssim \langle\log\langle R\rangle\rangle  \big\|\langle \xi\rangle^{1+}\xi^{\frac12}\langle\log\xi\rangle^{-1-}\xb(\tau,\xi)\big\|_{L^2_{d\xi}}.
 \end{align*}
\end{proof}

The preceding lemma suggests that one should require $\xi^{\frac32+}\xb(\tau, \xi)$ to be in $L^2_{d\xi}$. This, however, turns out not to be quite enough, on account of the very delicate source term $h(\tau)$ in the equation \eqref{ctau ODE}  for  $c(\tau)$. Recall that this term is given by 
\[
h(\tau) = \lim_{R\rightarrow 0}R^{-1}\mathcal{D}^*\mathcal{D}\veps(\tau, R).
\]
Expanding $\mathcal{D}\veps$ as above and using the Taylor expansion around $R = 0$ for $\phi(R, \xi)$ which starts with $cR^2$, we find 
\[
h(\tau) = c\int_0^\infty \xb(\tau,\xi)\tilde{\rho}(\xi)\,d\xi.
\]
Control of this quantity via Cauchy Schwarz requires a bound on $\big\|\xb(\tau, \cdot)\big\|_{S_0}$, the latter as defined in Proposition~\ref{prop:K_0Sbound}. While it is straightforward to obtain control over such a norm for the `zeroth iterate', i. e. the linear forward propagator given by \eqref{sol to free wave}, this appears not quite possible for the Duhamel propagator of some of the source terms contained in $\lambda^{-2}\mathcal{D}(N(\veps))$ on the right hand side in \eqref{eq Dveps temp 2}. In fact, those terms in $N(\veps)$ in \eqref{eq:epsequation1} depending on $u^\nu$ will only be of regularity $H^{1+\nu-}$, whence after application of $\mathcal{D}$ of regularity $H^{\nu-}$, which on the Fourier side translates to $\xi^{1+\frac{\nu}{2}-}\xb(\tau, \xi)\in L^2_{d\xi}$. Application of the inhomogeneous parametrix \eqref{sol to inhomo xb} improves this to $\xi^{\frac32+\frac{\nu}{2}-}\xb(\tau, \xi)\in L^2_{d\xi}$, which, however, falls much short of the space $\big\|\cdot\big\|_{S_0}$. 
\\

The way out of this impasse will be to exploit the fact that the `singularity' of $u^\nu$ only occurs on the light cone $R = \nu\tau$, and hence far from the origin, which allows one to isolate the part of the Fourier transform that decays more slowly, and exhibit a rapid temporal oscillation for it, depending on the frequency. 

\subsection{Structure of $u^{\nu}$, and consequences for the structure of $\mathcal{D}\veps$.}

We shall call a function $\xb(\tau, \xi)$ admissible for large $\xi$, provided we have for some large $N$ and suitable functions $a_{kj}(\tau), b(\tau, \xi)$ the representation
\begin{equation}\label{eq:admissiblexb}
\xb(\tau, \xi) = \chi_{\xi>1}\sum_{N\geq k\geq 1,N\geq j\geq 0}\sum_{\pm}a_{kj}^{\pm}(\tau)\frac{e^{\pm i\nu\tau\xi^{\frac12}}}{\xi^{2+\frac{k\nu}{2}}}(\log\langle \xi\rangle)^j +  \chi_{\xi>1}\frac{b(\tau, \xi)}{\xi^{\frac52+\frac{\nu}{2}-}} + x_{good}(\tau, \xi),
\end{equation}
where $b(\tau,\xi)$ admits the representation 
\[
b(\tau, \xi) = \partial_{\tau}c(\tau, \xi) + d(\tau, \xi),\,\big|c(\tau, \xi)\big| + \big|d(\tau, \xi)\big|\lesssim \frac{1}{\xi^{\frac12}}\cdot\tau^{-1}\frac{\lambda(\tau_0)}{\lambda(\tau)}\left\langle\log\frac{\lambda(\tau)}{\lambda(\tau_0)}\right\rangle^{-1-\frac{\kappa}{2}},
\]
and such that we have the bounds 
\begin{align*}
&\sum_{\pm}\tau\big|a_{kj}^{\pm}(\tau)\big| + \sum_{\pm}\tau\big|(a_{kj}^{\pm})'(\tau)\big| + \tau\big\|(|b|+\xi^{\frac12}|c|+\xi^{\frac12}|d|)(\tau, \xi)\big\|_{L_{\xi}^{\infty}}\\&\hspace{4cm} + \tau\big\|\langle\xi\rangle^{-\frac12}\mathcal{D}_{\tau}b(\tau, \xi)\big\|_{L_{\xi}^{\infty}} + \big\|x_{good}(\tau, \xi)\big\|_{S_0} + \big\|\mathcal{D}_{\tau}x_{good}(\tau, \xi)\big\|_{S_1}\\&<\left(\frac{\lambda(\tau)}{\lambda(\tau_0)}\right)^{-1}\left\langle\frac{\lambda(\tau)}{\lambda(\tau_0)}\right\rangle^{-1-\frac{\kappa}{2}}
\end{align*}
where the small positive constant $\kappa>0$ is as in the definition of $\|\cdot\|_{S_0}$. \\
The idea here is that the leading terms 
\[
 \sum_{N\geq k\geq 1,N\geq j\geq 0}\sum_{\pm}a_{kj}^{\pm}(\tau)\frac{e^{\pm i\nu\tau\xi^{\frac12}}}{\xi^{2+\frac{k\nu}{2}}}(\log\langle \xi\rangle)^j + \frac{b(\tau, \xi)}{\xi^{\frac52+\frac{\nu}{2}-}}
 \]
 essentially reflect the structure of the Fourier transform of $u^\nu$, localized to the singularity around the light cone (the function being smooth elsewhere). While these terms have weaker $\xi$ decay than $x_{good}(\tau, \xi)$, they have additional oscillatory structure which allows for cancellations in certain integrals. Specifically, we have the following elementary 
 \begin{lemma}\label{lem:cintegralbound} With $\xb$ as in \eqref{eq:admissiblexb} for $\xi>1$, and $\big\|\chi_{\xi<1}\xb(\tau, \xi)\big\|_{S_0}<\left(\frac{\lambda(\tau)}{\lambda(\tau_0)}\right)^{-1}\left\langle\log\frac{\lambda(\tau)}{\lambda(\tau_0)}\right\rangle^{-1-\frac{\kappa}{2}}$, we have the formula
 \[
 \int_0^\infty\xb(\tau, \xi)\tilde{\rho}(\xi)\,d\xi = \partial_{\tau}A(\tau) + B(\tau)
 \]
 where the expression $\partial_{\tau}A(\tau)$ is in the distributional sense (i. e. the function $A(\tau)$ is not necessarily $C^1$ but is in $C^0$) and $\tau\big|A(\tau)\big| + \big|B(\tau)\big|\lesssim \left(\frac{\lambda(\tau)}{\lambda(\tau_0)}\right)^{-1}\left\langle\log\frac{\lambda(\tau)}{\lambda(\tau_0)}\right\rangle^{-1-\frac{\kappa}{2}}$.
 
 \end{lemma}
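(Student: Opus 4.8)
The plan is to split $\int_0^\infty\xb(\tau,\xi)\tilde\rho(\xi)\,d\xi$ according to the three groups of terms in \eqref{eq:admissiblexb} (plus the region $\xi<1$, which is covered only by the hypothesis on $\|\chi_{\xi<1}\xb(\tau,\cdot)\|_{S_0}$, with $\|\cdot\|_{S_0}$ as in Proposition~\ref{prop:K_0Sbound}), route the rapidly decaying contributions into $B(\tau)$ by Cauchy--Schwarz or absolute integration, and — the one real point — exploit the oscillation $e^{\pm i\nu\tau\xi^{1/2}}$ to manufacture the $\partial_\tau$-structure for the leading terms, whose $\xi$-decay is too weak to integrate directly. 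For $\xi<1$ one has
\[
\Big|\int_0^1\xb(\tau,\xi)\tilde\rho(\xi)\,d\xi\Big|\lesssim\|\chi_{\xi<1}\xb(\tau,\cdot)\|_{S_0}\,\big\|\chi_{\xi<1}\langle\xi\rangle^{-2-\kappa}\xi^{-\frac12}\langle\log\xi\rangle^{1+\kappa}\tilde\rho\big\|_{L^2_{d\xi}},
\]
and the last factor is finite precisely because $\tilde\rho(\xi)=O(\langle\log\xi\rangle^{-2})$ near $0$ and $0<\kappa<\tfrac12$, so this lands in $B(\tau)$ with the asserted bound. The same pairing near $\xi=\infty$, where $\tilde\rho(\xi)\sim\xi^2$ and $2\kappa>0$ makes $\xi^{-1-2\kappa}\langle\log\xi\rangle^{2+2\kappa}$ integrable, handles the $x_{good}$ piece on $\xi>1$ via the admissibility bound $\|x_{good}(\tau,\cdot)\|_{S_0}$. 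For the $b$-term write $b=\partial_\tau c+d$: since $|c|+|d|\lesssim\xi^{-1/2}\tau^{-1}\tfrac{\lambda(\tau_0)}{\lambda(\tau)}\langle\log\tfrac{\lambda(\tau)}{\lambda(\tau_0)}\rangle^{-1-\frac\kappa2}$ and $\tilde\rho(\xi)\,\xi^{-5/2-\nu/2+}\lesssim\xi^{-1/2-\nu/2+}$, the two products are $\lesssim\xi^{-1-\nu/2+}$, integrable on $(1,\infty)$ for $\nu>0$; hence $\int_1^\infty\tfrac{d\,\tilde\rho}{\xi^{5/2+\nu/2-}}\,d\xi$ is placed in $B(\tau)$, while $\int_1^\infty\tfrac{\partial_\tau c\,\tilde\rho}{\xi^{5/2+\nu/2-}}\,d\xi=\partial_\tau A_b(\tau)$ with $A_b(\tau):=\int_1^\infty\tfrac{c(\tau,\xi)\,\tilde\rho(\xi)}{\xi^{5/2+\nu/2-}}\,d\xi$, continuous in $\tau$ and obeying $\tau|A_b(\tau)|\lesssim\tfrac{\lambda(\tau_0)}{\lambda(\tau)}\langle\log\tfrac{\lambda(\tau)}{\lambda(\tau_0)}\rangle^{-1-\frac\kappa2}$; this goes into $A(\tau)$.

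The oscillatory sum is the heart of the matter. Fix $k,j,\pm$ and set $g_{kj}(\xi):=\tilde\rho(\xi)(\log\langle\xi\rangle)^j\xi^{-2-k\nu/2}$, so that $g_{kj}(\xi)\sim\xi^{-k\nu/2}(\log\xi)^j$ on $\xi>1$ — decaying, but for $\nu$ small and $k=1$ too slowly to integrate against the phase. The key identity $\xi^{1/2}e^{\pm i\nu\tau\xi^{1/2}}=\mp\tfrac{i}{\nu}\,\partial_\tau\big(e^{\pm i\nu\tau\xi^{1/2}}\big)$ trades one power $\xi^{1/2}$ for a $\tau$-derivative: put
\[
A_{kj}^{\pm}(\tau):=\mp\frac{i}{\nu}\int_1^\infty e^{\pm i\nu\tau\xi^{1/2}}\,\xi^{-1/2}g_{kj}(\xi)\,d\xi,
\]
whose amplitude $\xi^{-1/2}g_{kj}(\xi)\sim\xi^{-1/2-k\nu/2}(\log\xi)^j$ becomes, after $\xi=u^2$, of size $\sim u^{-k\nu}(\log u)^j\to0$ (here $k\ge1$, $\nu>0$), so the oscillatory integral converges conditionally, $A_{kj}^{\pm}\in C^0$, and formally $\partial_\tau A_{kj}^{\pm}(\tau)=\int_1^\infty e^{\pm i\nu\tau\xi^{1/2}}g_{kj}(\xi)\,d\xi$. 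One integration by parts in $\xi$, via $\partial_\xi e^{\pm i\nu\tau\xi^{1/2}}=\pm\tfrac{i\nu\tau}{2}\xi^{-1/2}e^{\pm i\nu\tau\xi^{1/2}}$ and the known asymptotics of $\tilde\rho$ with its derivatives, gives $|A_{kj}^{\pm}(\tau)|\lesssim_{k,j,\nu}\tau^{-1}$: the boundary term at $\xi=\infty$ vanishes precisely because $\xi^{1/2}\cdot\xi^{-1/2-k\nu/2}=\xi^{-k\nu/2}\to0$, the boundary term at $\xi=1$ is a bounded multiple of $(\nu\tau)^{-1}$, and the resulting integral has amplitude $\sim\xi^{-1-k\nu/2}(\log\xi)^j\in L^1(1,\infty)$. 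By Leibniz, $a_{kj}^{\pm}(\tau)\,\partial_\tau A_{kj}^{\pm}(\tau)=\partial_\tau\big(a_{kj}^{\pm}(\tau)A_{kj}^{\pm}(\tau)\big)-\big(a_{kj}^{\pm}\big)'(\tau)\,A_{kj}^{\pm}(\tau)$, the first term into $A(\tau)$ and the second into $B(\tau)$; combining the admissibility bounds $\tau|a_{kj}^{\pm}(\tau)|+\tau|(a_{kj}^{\pm})'(\tau)|\lesssim(\tfrac{\lambda(\tau)}{\lambda(\tau_0)})^{-1}\langle\log\tfrac{\lambda(\tau)}{\lambda(\tau_0)}\rangle^{-1-\frac\kappa2}$ (the admissible bound has the even stronger $\langle\lambda(\tau)/\lambda(\tau_0)\rangle^{-1-\kappa/2}$) with $\tau|A_{kj}^{\pm}(\tau)|\lesssim_\nu1$ and $\tau\ge\tau_0=\tau_0(\nu)$ large yields $\tau|a_{kj}^{\pm}A_{kj}^{\pm}|+|(a_{kj}^{\pm})'A_{kj}^{\pm}|\lesssim(\tfrac{\lambda(\tau)}{\lambda(\tau_0)})^{-1}\langle\log\tfrac{\lambda(\tau)}{\lambda(\tau_0)}\rangle^{-1-\frac\kappa2}$. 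Summing over the finitely many $1\le k\le N$, $0\le j\le N$ and $\pm$ and setting $A:=\sum_{k,j,\pm}a_{kj}^{\pm}A_{kj}^{\pm}+A_b$, with $B$ the sum of all remaining pieces, completes the construction.

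The one genuinely delicate point — and the expected main obstacle — is the meaning of $\partial_\tau A_{kj}^{\pm}$: the integral $\int_1^\infty e^{\pm i\nu\tau\xi^{1/2}}g_{kj}(\xi)\,d\xi$ is not even an improper $\xi$-integral, since after $\xi=u^2$ the amplitude grows like $u^{1-k\nu}$, so the identity $\partial_\tau A_{kj}^{\pm}=\int_1^\infty e^{\pm i\nu\tau\xi^{1/2}}g_{kj}(\xi)\,d\xi$ holds only in $\mathcal D'$. One legitimizes it by pairing with $\psi\in C_c^\infty$: $\langle\partial_\tau A_{kj}^{\pm},\psi\rangle=-\int A_{kj}^{\pm}\psi'$, and since $\int e^{\pm i\nu\tau\xi^{1/2}}\psi(\tau)\,d\tau$ is Schwartz in $\xi^{1/2}$, Fubini in $(\xi,\tau)$ on finite $\xi$-intervals (justified by uniform convergence of the $A_{kj}^{\pm}$-integral) together with one integration by parts in $\tau$ turn this into $\int_1^\infty g_{kj}(\xi)\big(\int e^{\pm i\nu\tau\xi^{1/2}}\psi(\tau)\,d\tau\big)\,d\xi$, which is exactly the required distributional pairing. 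This — and the analogous reading of $\partial_\tau A_b$ — is precisely why the statement asserts only $A\in C^0$ and interprets $\partial_\tau A$ distributionally; the Cauchy--Schwarz estimates and the integrations by parts in $\xi$ are otherwise routine.
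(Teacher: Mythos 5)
Your proposal is correct and follows essentially the same route as the paper's (very terse) proof: the asymptotics $\tilde{\rho}\sim\xi^2$ for $\xi>1$ and $\tilde{\rho}\sim\langle\log\xi\rangle^{-2}$ for $\xi<1$, Cauchy--Schwarz against the $S_0$-weight for the small-frequency and $x_{good}$ pieces, and exploitation of the oscillation $e^{\pm i\nu\tau\xi^{1/2}}$ with integration by parts in $\xi$ for the leading large-frequency terms. What you add — extracting $\partial_\tau$ via $\xi^{1/2}e^{\pm i\nu\tau\xi^{1/2}}=\mp\tfrac{i}{\nu}\partial_\tau e^{\pm i\nu\tau\xi^{1/2}}$ before integrating by parts, and using the decomposition $b=\partial_\tau c+d$ for the $\xi^{-\frac52-\frac{\nu}{2}+}$-term — is exactly the mechanism the paper's sketch presupposes (it is why the statement has a distributional $\partial_\tau A$ at all), so this is a correct filling-in of details rather than a different argument.
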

\begin{proof} This follows from the fact that $\tilde{\rho}\sim \xi^2$ when $\xi>1$ respectively $\tilde{\rho}\sim \frac{1}{\log^2\xi}$ when $\xi<1$, and integration by parts with respect to $\xi$ for large frequencies and the first two terms in \eqref{eq:admissiblexb}.

\end{proof}

\subsection{A technical remark concerning regularisation} We recall that the reductions leading to the equation for $c(\tau)$ required sufficient regularity of $\veps(\tau, R)$ to ensure the quadratic vanishing 
\[
\mathcal{D}\veps(\tau, R) = O(R^2)
\]
and furthermore ensure the existence of the quantity $h(\tau)$ in \eqref{ctau ODE}, or at least justify the integration by parts in the preceding lemma. All of this can be easily dealt with by regularising the singular (on account of the presence of $u^{\nu}$) terms in $N(\veps)$, which means applying a large frequency cutoff to $\mathcal{F}\big(\lambda^{-2}\mathcal{D}(N(\veps))\big)$, and later passing to the limit. Since all the estimates below will then be uniformly satisfied, the existence of the limit will be clearly satisfied. We shall henceforth suppress this technicality. 

\section{Key nonlinear estimates}

To verify that the preceding ansatz is consistent with the structure of the nonlinearity, we now have the following 
\begin{proposition}\label{prop:mulitlin1} Let $N(\veps)$ be given as in \eqref{eq:epsequation1} with an additional cutoff $\chi_{R\lesssim \tau}$ in front, and assume for now that 
\[
\veps(\tau, R) = \phi\big(\mathcal{D}\veps(\tau, \cdot)\big)(R) = \phi_0(R)\int_0^R[\phi_0(s)]^{-1}\mathcal{D}\veps(\tau, s)\,ds,
\]
where we assume 
\[
\mathcal{D}\veps(\tau, R) = \int_0^\infty \phi(R, \xi)\xb(\tau, \xi)\tilde{\rho}(\xi)\,d\xi,
\]
and $\chi_{\xi>1}\xb(\tau, \xi)$ is as in \eqref{eq:admissiblexb}, while we have $\chi_{\xi<1}\xb(\tau, \xi)\in S_0$. Then we conclude that 
\begin{equation}\label{eq:Nepsilon1}
\tilde{x}(\tau, \xi): = \xi^{-\frac{1}{2}}\int_{\tau_{0}}^{\tau}\sin\left(\lambda(\tau)\xi^{\frac{1}{2}}\int_{\sigma}^{\tau}\lambda(u)^{-1}du\right)\mathcal{F}\big(\mathcal{D}(\lambda^{-2}N(\veps))\big)\left(\sigma,\frac{\lambda(\tau)^{2}}{\lambda(\sigma)^{2}}\xi\right)d\sigma
\end{equation} 
has a similar structure: there is a splitting 
\begin{align*}
\chi_{\xi>1}\tilde{x}(\tau, \xi) = \sum_{N\geq k\geq 1,N\geq j\geq 0}\sum_{\pm}\tilde{a}_{kj}^{\pm}(\tau)\frac{e^{\pm i\nu\tau\xi^{\frac12}}}{\xi^{2+\frac{k\nu}{2}}}(\log\langle \xi\rangle)^j + \frac{\tilde{b}(\tau, \xi)}{\xi^{\frac52+\frac{\nu}{2}-}} + \tilde{x}_{good}(\tau, \xi)
\end{align*}
such that $\tilde{b}$ admits a representation in terms of functions $\tilde{c}, \tilde{d}$ analogous to the one of $b$(as detailed after \eqref{eq:admissiblexb}), and such that 
\begin{align*}
&\sup_{\tau\geq \tau_0}\frac{\lambda(\tau)}{\lambda(\tau_0)}\left\langle\log\frac{\lambda(\tau)}{\lambda(\tau_0)}\right\rangle^{1+\frac{\kappa}{2}}\big[\sum_{\pm}\tau\big|\tilde{a}_{kj}^{\pm}(\tau)\big| + \sum_{\pm}\tau\big|(\tilde{a}_{kj}^{\pm})'(\tau)\big| + \tau\big\|(|\tilde{b}| + \xi^{\frac12}|\tilde{c}|+\xi^{\frac12}|\tilde{d}|)(\tau, \xi)\big\|_{L_{\xi}^{\infty}}\\&\hspace{5cm} + \tau\big\|\langle\xi\rangle^{-\frac12}\mathcal{D}_{\tau}\tilde{b}(\tau, \xi)\big\|_{L_{\xi}^{\infty}} + \big\|\tilde{x}_{good}(\tau, \xi)\big\|_{S_0} + \big\|\mathcal{D}_{\tau}\tilde{x}_{good}(\tau, \xi)\big\|_{S_1}\big]\\
&\ll \sup_{\tau\geq \tau_0}\frac{\lambda(\tau)}{\lambda(\tau_0)}\left\langle\log\frac{\lambda(\tau)}{\lambda(\tau_0)}\right\rangle^{1+\frac{\kappa}{2}}\big[\sum_{\pm}\tau\big|a_{kj}^{\pm}(\tau)\big| + \sum_{\pm}\tau\big|(a_{kj}^{\pm})'(\tau)\big| + \tau\big\|(|b|+\xi^{\frac12}|c| + \xi^{\frac12}|d|)(\tau, \xi)\big\|_{L_{\xi}^{\infty}}\\&\hspace{5cm} + \tau\big\|\langle\xi\rangle^{-\frac12}\mathcal{D}_{\tau}b(\tau, \xi)\big\|_{L_{\xi}^{\infty}} + \big\|x_{good}(\tau, \xi)\big\|_{S_0} + \big\|\mathcal{D}_{\tau}x_{good}(\tau, \xi)\big\|_{S_1}\big]
\end{align*}
provided $\tau_0\gg 1$, and moreover, we also have the bound 
\begin{align*}
&\sup_{\tau\geq \tau_0}\frac{\lambda(\tau)}{\lambda(\tau_0)}\left\langle\log\frac{\lambda(\tau)}{\lambda(\tau_0)}\right\rangle^{1+\frac{\kappa}{2}}\big\|\chi_{\xi<1}\tilde{x}(\tau, \xi)\big\|_{S_0}\\
&\ll \sup_{\tau\geq \tau_0}\frac{\lambda(\tau)}{\lambda(\tau_0)}\left\langle\log\frac{\lambda(\tau)}{\lambda(\tau_0)}\right\rangle^{1+\frac{\kappa}{2}}\big[\sum_{\pm}[\big|a_{kj}^{\pm}(\tau)\big| + \big|(a_{kj}^{\pm})'(\tau)\big|] + \big\|(|b|+\xi^{\frac12}|c| + \xi^{\frac12}|d|)(\tau, \xi)\big\|_{L_{\xi}^{\infty}}\\&\hspace{5cm} + \big\|\langle\xi\rangle^{-\frac12}\mathcal{D}_{\tau}b(\tau, \xi)\big\|_{L_{\xi}^{\infty}} + \big\|x_{good}(\tau, \xi)\big\|_{S_0} + \big\|\mathcal{D}_{\tau}x_{good}(\tau, \xi)\big\|_{S_1}\big]
\end{align*}

\end{proposition}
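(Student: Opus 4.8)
The plan is to run a fixed-point type analysis on the Duhamel formula \eqref{eq:Nepsilon1}, where the structural decomposition \eqref{eq:admissiblexb} of the input $\xb$ feeds through $N(\veps)$ and the inhomogeneous parametrix to produce a decomposition of $\tilde x$ of exactly the same shape, but with a smallness gain coming from the extra factor $\tau_0^{-1}$ (via $\lambda^{-2}$ and the time integration) once $\tau_0\gg 1$. First I would expand $N(\veps)$ term by term as given in \eqref{eq:epsequation1}, distinguishing the terms that involve $u^\nu$ (which carry the light-cone singularity at $R=\nu\tau$ and hence limited regularity $H^{1+\nu-}$) from the purely nonlinear self-interaction terms in $\veps$ (which, by Lemma~\ref{lem:epsbound} and Lemma~\ref{lem: Dveps Linfty}, are controlled in the $S_0$/$L^\infty$ norms appearing on the right). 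For the latter, the gain is straightforward: each such term is at least quadratic in $\veps$, the factor $\lambda^{-2}$ supplies decay $\sim\tau^{2}\cdot(\text{blow-up factor})^{-2}$, and the $\sin$-kernel in \eqref{sol to inhomo xb} together with the $\xi^{-1/2}$ prefactor improves the $\xi$-decay by one half power, landing these contributions in $\tilde x_{good}$ with the required factor $(\lambda(\tau)/\lambda(\tau_0))^{-1}\langle\log(\lambda(\tau)/\lambda(\tau_0))\rangle^{-1-\kappa/2}$, times an extra small constant for $\tau_0$ large.

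\textbf{The singular terms.} The core of the argument is the treatment of the $u^\nu$-dependent pieces of $N(\veps)$. Here I would use the known structure of $u^\nu$ from \cite{KST2,GaoKr}: localized to a neighborhood of the light cone $R=\nu\tau$ it has an expansion in powers of $(R-\nu\tau)$ modulated by $\tau^{-1-k\nu}$-type coefficients and logarithmic corrections, while away from the cone it is smooth with good decay. Applying $\mathcal{D}$ and computing the distorted Fourier transform $\mathcal{F}$ (with respect to $\tilde{\calL}$), the light-cone localization, by stationary phase / non-stationary phase in $R$ against the oscillatory part $e^{\pm iR\xi^{1/2}}$ of $\phi(R,\xi)$ (recall \eqref{expansion for phi}), produces precisely the oscillatory factors $e^{\pm i\nu\tau\xi^{1/2}}$ with amplitudes decaying like $\xi^{-2-k\nu/2}(\log\langle\xi\rangle)^j$ — this is the origin of the first sum in \eqref{eq:admissiblexb}. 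The smoother remainder of $u^\nu$ away from the cone, plus error terms in the expansion, contribute to the $\xi^{-5/2-\nu/2-}b(\tau,\xi)$ piece; here the representation $b=\partial_\tau c + d$ is forced by integrating by parts in $\sigma$ in the Duhamel integral \eqref{sol to inhomo xb}, exploiting that $\partial_\sigma$ hitting the phase $\lambda(\tau)\xi^{1/2}\int_\sigma^\tau\lambda(u)^{-1}du$ produces a factor $\lambda(\tau)\xi^{1/2}/\lambda(\sigma)$ which, after rescaling $\xi\mapsto (\lambda(\tau)/\lambda(\sigma))^2\xi$, can be absorbed. Then I would verify that running the Duhamel formula on an input with this structure reproduces the structure: the rescaling $\xi\mapsto(\lambda(\tau)^2/\lambda(\sigma)^2)\xi$ inside $\mathcal{F}(\mathcal{D}(\lambda^{-2}N(\veps)))$ shifts the $e^{\pm i\nu\sigma\xi^{1/2}}$ to $e^{\pm i\nu\sigma(\lambda(\tau)/\lambda(\sigma))\xi^{1/2}}$, which combines with the $\sin$-kernel's phase $\lambda(\tau)\xi^{1/2}\int_\sigma^\tau\lambda(u)^{-1}du$; with $\lambda(\tau)=\tau^{-1-1/\nu}$ one checks the stationary point in $\sigma$ lands at $\sigma\sim\tau$, and the $\sigma$-integral by stationary phase yields the new amplitude $\tilde a_{kj}^\pm(\tau)$ with one extra power of $\tau^{-1}$ relative to $a_{kj}^\pm$, hence the $\ll$ with the gain. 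The bounds on $(\tilde a_{kj}^\pm)'$, on $\mathcal{D}_\tau\tilde b$, and on the low-frequency piece $\chi_{\xi<1}\tilde x$ follow by differentiating the parametrix and using $\tilde\rho\sim 1/\log^2\xi$ near $\xi=0$ together with Lemma~\ref{lem:cintegralbound}.

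\textbf{Main obstacle.} The hard part will be the delicate matching of oscillations in the $\sigma$-integral: one must show that the phase $\pm\nu\sigma(\lambda(\tau)/\lambda(\sigma))\xi^{1/2}\pm\lambda(\tau)\xi^{1/2}\int_\sigma^\tau\lambda(u)^{-1}\,du$ has a genuinely non-degenerate stationary point (and that the `resonant' sign choice $+,-$ vs $-,+$ behaves correctly), so that stationary phase yields the claimed $\tau^{-1}$-type gain and the amplitude decay $\xi^{-2-k\nu/2}$ is preserved rather than degraded; near the stationary frequency the $\xi$-decay is exactly critical relative to the $S_0$ norm, so one cannot afford to lose any power of $\xi$ or any power of $\langle\log\xi\rangle$. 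A secondary difficulty is bookkeeping the proliferation of logarithmic factors $(\log\langle\xi\rangle)^j$: each application of the Duhamel map and each integration by parts in $\xi$ or $\sigma$ can raise the log power, so one must check that the finite range $N\geq j\geq 0$ is preserved, which is why the statement fixes a large but finite $N$ and why the scheme in section 11 will only require a bounded number of re-iterations. Finally, the self-interaction (non-$u^\nu$) terms, while not individually delicate, require care where $\veps$ is evaluated for $R\gtrsim\tau$ — but the cutoff $\chi_{R\lesssim\tau}$ in the hypothesis removes that region, so those contributions are handled purely by Lemma~\ref{lem:epsbound}, Cauchy–Schwarz in $\xi$, and the $\lambda^{-2}$ decay.
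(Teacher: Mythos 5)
Your outline captures the right global picture (the light-cone singularity of $u^{\nu}$ is the source of the phases $e^{\pm i\nu\tau\xi^{1/2}}$, and the admissible structure should propagate through the Duhamel map), but two essential steps are missing or would fail. The first is the product analysis near the cone. In the linear term $\frac{\cos(2u^\nu)-\cos(2Q)}{R^2}\,\veps$ it is not only the coefficient that is singular at $R=\nu\tau$: by hypothesis $\chi_{\xi>1}\xb$ contains the oscillatory pieces $a^{\pm}_{kj}(\tau)e^{\pm i\nu\tau\xi^{1/2}}\xi^{-2-k\nu/2}(\log\langle\xi\rangle)^j$, so $\calD\veps$ and $\veps$ themselves carry conormal singularities $(\nu\tau-R)^{\frac12+k\nu}(\log(\nu\tau-R))^l$ on the cone and are \emph{not} controlled by the $S_0$-type norms alone. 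The heart of the paper's proof is the two-way dictionary of Lemma~\ref{lem:FourierToPhysical1} (oscillatory Fourier decay $\leftrightarrow$ conormal expansion at $R=\nu\tau$), followed by multiplying that expansion against the expansion \eqref{eq:expansionfromkst} of the coefficient, Taylor-expanding the smooth factor at $R=\nu\tau$ (the $A_1+A_2$ splitting leading to \eqref{eq:messyformula1} and \eqref{eq:A1expansion}), and then inverting the dictionary. Your proposal only Fourier-analyzes the singular part of $u^{\nu}$ and treats $\veps$ as if it were measured by the norms on the right-hand side; without the conversion-multiplication-reconversion step you cannot define $\tilde a^{\pm}_{kj}$, $\tilde b$, nor see why the output exponents improve to $\xi^{-5/2-\nu/2-}$ or better.

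The second gap is the smallness mechanism. You propose a stationary-phase gain of $\tau^{-1}$ in the $\sigma$-integral, but with $\lambda(\tau)\int_\sigma^\tau\lambda^{-1}(u)\,du=\nu\bigl(\sigma\tfrac{\lambda(\tau)}{\lambda(\sigma)}-\tau\bigr)$ the rescaled input phase $e^{\pm i\nu\sigma\frac{\lambda(\tau)}{\lambda(\sigma)}\xi^{1/2}}$ cancels the Duhamel phase \emph{identically} for the resonant sign choice, leaving $e^{\pm i\nu\tau\xi^{1/2}}$ with no $\sigma$-oscillation at all: the singularity travels along characteristics, the interaction is fully resonant, and there is no stationary-phase gain available (this exact resonance is why the output again contains the phases $e^{\pm i\nu\tau\xi^{1/2}}$). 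In the paper the $\ll$ comes instead from amplitude decay: inside the cone $u^{\nu}-Q=O(R\tau^{-2}\langle\log R\rangle)$ (cf.\ \eqref{eq:uny-Q}), so the coefficients $h^{\pm}_{kl},i^{\pm}_{kl}$ in \eqref{eq:A1expansion} carry $\tau^{-3+}$, and the $\sigma$-integration then yields smallness for $\tau_0\gg1$; integration by parts in $\sigma$ is used only to exhibit the structure $\tilde b=\partial_\tau\tilde c+\tilde d$. Finally, your claim that the interior and nonlinear contributions follow from Lemma~\ref{lem:epsbound}, Cauchy--Schwarz and the $\lambda^{-2}$ factor undersells the work needed to recover the weight $\langle\xi\rangle^{2+\kappa}$ in $S_0$ for a product: since the distorted transform does not diagonalize multiplication, the paper's Lemma~\ref{lem:nonlinestimates1} runs a dyadic frequency decomposition with repeated integration by parts via $\tcalL$ to handle the high-frequency output.
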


\begin{proof} It consists in controlling the interactions of the various constituents of $\mathcal{D}\veps$ as well as $u^{\nu}$ in the various source terms. To begin with, we consider the term linear in $\veps$, given by 
\begin{align*}
\mathcal{D}\left(\frac{\cos(2u^\nu) - \cos(2Q(\lambda(t)r)}{R^2}\veps\right) & = \mathcal{D}\left(\frac{\cos(2u^\nu) - \cos(2Q(\lambda(t)r)}{R^2}\right) \veps\\
& + \frac{\cos(2u^\nu) - \cos(2Q(\lambda(t)r)}{R^2}\mathcal{D}\veps
\end{align*}

{\it{(1): Contribution of first term in \eqref{eq:epsequation1}}}.\\
Observe that after restricting smoothly to $R\leq \frac{\nu\tau}{2}$, the functions 
\[
\chi_{R\leq \frac{\nu\tau}{2}}\mathcal{D}\left(\frac{\cos(2u^\nu) - \cos(2Q(\lambda(t)r)}{R^2}\right)\sim \chi_{R\leq \frac{\nu\tau}{2}}\tau^{-2}\langle \log\langle R\rangle\rangle R^{-3},
\]
\[
\chi_{R\leq \frac{\nu\tau}{2}}\left(\frac{\cos(2u^\nu) - \cos(2Q(\lambda(t)r)}{R^2}\right)\sim \chi_{R\leq \frac{\nu\tau}{2}}\tau^{-2}\langle \log\langle R\rangle\rangle R^{-2}
\]
for large $R$, and the corresponding contributions shall be straightforward to estimate. We shall henceforth restrict the source terms to the region $R>\frac{\nu\tau}{2}$. To convert the structure of the Fourier coefficients $\xb(\tau, \xi)$ of $\mathcal{D}\veps$ to the `physical side', we have 
\begin{lemma}\label{lem:FourierToPhysical1} Let 
\[
\xb(\tau, \xi) = \sum_{N\geq k\geq 1,N\geq j\geq 0}\sum_{\pm}a_{kj}^{\pm}(\tau)\frac{e^{\pm i\nu\tau\xi^{\frac12}}}{\xi^{2+\frac{k\nu}{2}}}(\log\langle \xi\rangle)^j. 
\]
Then we have 
\begin{align*}
\int_0^\infty \phi(R, \xi)\xb(\tau, \xi)\tilde{\rho}(\xi)\,d\xi &= \chi_{R\gtrsim \tau}R^{-\frac12}\sum_{N\geq k\geq 1,N\geq l\geq 0}b_{kl}(\tau)(\nu\tau - R)^{\frac12+k\nu}(\log(\nu\tau-R))^l\\& +  \chi_{R\gtrsim \tau}R^{-\frac32}\sum_{N\geq k\geq 1,N\geq l\geq 0}c_{kl}(\tau)(\nu\tau - R)^{\frac32+k\nu}(\log(\nu\tau-R))^l\\
& + f_{smooth}(\tau, R), 
\end{align*}
where we have the bounds 
\[
\sum_{k,l}[\big|b_{kl}(\tau)\big| + \big| b'_{kl}(\tau)\big| + \tau^{-1}\big|c_{kl}(\tau)\big| + \tau^{-1}\big| c'_{kl}|(\tau)]\lesssim \sum_{k,l}[\big|a_{kl}^{\pm}(\tau)\big| + \big| (a_{kl}^{\pm})'|(\tau)],
\]
and also $ f_{smooth}(\tau, R)\in H^{3+\nu-}_{R\,dR}$. In fact, including an extra smooth cutoff $\chi_{|R-\nu\tau|\lesssim 1}$ in front of the two sums of singular terms and modifying $f_{smooth}$ accordingly, we also have 
\[
\big\|f_{smooth}(\tau, \cdot)\big\|_{H^{3+\nu-}_{R\,dR}}\lesssim  \sum_{k,l}\sum_{\pm}\big|a_{kl}^{\pm}(\tau)\big|.
\]
\end{lemma}
\begin{proof}(lemma) To begin with, one notes that if $\chi_{y<b}(y)$ smoothly truncates to the region $y<b$, then (with $\xb$ as in the statement of the lemma)
\[
\chi_{R<\frac{\nu\tau}{2}}\int_1^\infty \phi(R, \xi)\xb(\tau, \xi)\tilde{\rho}(\xi)\,d\xi
\]
is a $C^\infty$-function and in particular can be included in $f_{smooth}$; this follows from repeated integration by parts with respect to $\xi$. In the region $R\gtrsim \tau$, and using $\xi>1$, we can expand 
\[
\phi(R, \xi) = \sum_{\pm}R^{-\frac12}\frac{e^{\pm iR\xi^{\frac12}}}{\xi^{\frac54}}\sigma_{\pm}(R\xi^{\frac12}, R),
\]
where $\sigma_{\pm}$ has a Hankel type expansion just as in \cite{KST2}, with leading term a non-vanishing constant $c_{\pm}$. Thus to leading order we obtain 
\begin{align*}
\chi_{R\gtrsim\frac{\nu\tau}{2}}\int_1^\infty \phi(R, \xi)\xb(\tau, \xi)\tilde{\rho}(\xi)\,d\xi & = \sum_{\pm}\sum_{kj}\chi_{R\gtrsim\frac{\nu\tau}{2}}R^{-\frac12}\int_1^\infty c_{\pm}a_{kj}^{\pm}(\tau)\frac{e^{\pm i(R-\nu\tau)\xi^{\frac12}}}{\xi^{\frac54 + 2 + \frac{k\nu}{2}}}(\log \xi)^j\tilde{\rho}(\xi)\,d\xi\\
& + \text{l.o.t.}
\end{align*}
Then we use the somewhat delicate asymptotics as $\xi\rightarrow\infty$ for the spectral measure given by (see Prop. 4.6, Prop. 4.7 in \cite{KST2})
\[
\tilde{\rho}(\xi) = c\xi^2\big(1+d\xi^{-\frac12} + O(\xi^{-1}\log \xi)\big).
\]
We infer the relation 
\begin{align*}
\chi_{R\gtrsim\frac{\nu\tau}{2}}\int_1^\infty \phi(R, \xi)\xb(\tau, \xi)\tilde{\rho}(\xi)\,d\xi & = R^{-\frac12}\sum_{kj}b_{kj}(\tau)(\nu\tau - R)^{\frac12 + k\nu}(\log(\nu\tau - R))^j\\
& +R^{-\frac12}\sum_{kj}b_{kj}(\tau)(\nu\tau - R)^{\frac32 + k\nu}(\log(\nu\tau - R))^j  + f_{smooth}
\end{align*}
The next higher order term in the Hankel expansion of the symbol $\sigma_{\pm}(R\xi^{\frac12}, R)$, which is of the form $(R\xi^{\frac12})^{-1}\psi_1^+(R)$ (see \cite{KST2}), is seen to lead to a term of the form 
\[
\chi_{R\gtrsim \tau}R^{-\frac32}\sum_{N\geq k\geq 1,N\geq l\geq 0}c_{kl}(\tau)(\nu\tau - R)^{\frac32+k\nu}(\log(\nu\tau-R))^l,
\]
and the remaining errors are all of the form $f_{smooth}$. 
\end{proof}

Consider now terms of the form 
\begin{equation}\label{eq:linearineps}
\chi_{R\gtrsim \tau}\mathcal{D}\left(\frac{\cos(2u^\nu) - \cos(2Q(\lambda(t)r)}{R^2}\right) \veps,\, \chi_{R\gtrsim \tau}\frac{\cos(2u^\nu) - \cos(2Q(\lambda(t)r)}{R^2}\mathcal{D}\veps
\end{equation}
and $\veps$ as in the proposition. For the first term, using the asymptotic expansion of $u^{\nu} - Q(R)$ in \cite{KST2,GaoKr}, we can write, with $a=\frac{R}{\nu\tau}$,
\begin{equation}\label{eq:expansionfromkst}\begin{split}
\chi_{R\gtrsim \tau}\mathcal{D}\left(\frac{\cos(2u^\nu) - \cos(2Q(\lambda(t)r)}{R^2}\right) &= \chi_{R\gtrsim \tau}\frac{1}{\tau^2}O\left(\frac{\log R}{R^2}\right)\sum_{k, j\leq N,\,k\geq 1}(1-a)^{-\frac12 + k\nu}\left(\log(1-a)\right)^j\\
& + g_{smooth}
\end{split}\end{equation}
where the function $O\left(\frac{\log R}{R^2}\right)$ is $C^\infty$ and obeys symbol type behavior with respect to $R$. Furthermore, we have 
\[
\big\|g_{smooth}\big\|_{H^{3+}_{R\,dR}}\lesssim \tau^{-2}. 
\]
Then we distinguish among the three different types of terms in \eqref{eq:admissiblexb} whose sum constitutes the Fourier transform $\xb(\tau, \xi)$ of $\mathcal{D}\veps(\tau, R)$. 
\\

{\it{(i) Assume $\xb(\tau, \xi) = \sum_{N\geq k\geq 1,N\geq j\geq 0}\sum_{\pm}a_{kj}^{\pm}(\tau)\frac{e^{\pm i\nu\tau\xi^{\frac12}}}{\xi^{2+\frac{k\nu}{2}}}(\log\langle \xi\rangle)^j$.}} Due to Lemma~\ref{lem:FourierToPhysical1}, we have 
\begin{align*}
\mathcal{D}\veps(\tau, R) &=   \chi_{|R-\nu\tau|\lesssim 1}R^{-\frac12}\sum_{N\geq k\geq 1,N\geq l\geq 0}b_{kl}(\tau)(\nu\tau - R)^{\frac12+k\nu}(\log(\nu\tau-R))^l\\& +   \chi_{|R-\nu\tau|\lesssim 1}R^{-\frac32}\sum_{N\geq k\geq 1,N\geq l\geq 0}c_{kl}(\tau)(\nu\tau - R)^{\frac32+k\nu}(\log(\nu\tau-R))^l\\
& + f_{smooth}(\tau, R), 
\end{align*}
with the bounds indicated in that lemma. From this, we easily conclude that 
\begin{align*}
\veps(\tau, R) &= \chi_{|R-\nu\tau|\lesssim 1}R^{-\frac12}\sum_{N\geq k\geq 1,N\geq l\geq 0}\tilde{b}_{kl}(\tau)(\nu\tau - R)^{\frac32+k\nu}(\log(\nu\tau-R))^l\\& +   \chi_{|R-\nu\tau|\lesssim 1}R^{-\frac32}\sum_{N\geq k\geq 1,N\geq l\geq 0}\tilde{c}_{kl}(\tau)(\nu\tau - R)^{\frac52+k\nu}(\log(\nu\tau-R))^l\\
& + \tilde{f}_{smooth}(\tau, R) 
\end{align*}
with 
\begin{align*}
\big\| \tilde{f}_{smooth}(\tau, \cdot)\big\|_{H^{4+}_{R\,dR}}\lesssim \tau \sum_{k,l}\sum_{\pm}\big|a_{kl}^{\pm}(\tau)\big|,\,\big\| \tilde{f}_{smooth}(\tau, \cdot)\big\|_{L^\infty_{R\,dR}}\lesssim \tau^{\frac12}\sum_{k,l}\sum_{\pm}\big|a_{kl}^{\pm}(\tau)\big|.
 \end{align*}
Using the preceding representation for $\veps$ in the first term in \eqref{eq:linearineps} and further invoking \eqref{eq:expansionfromkst}, we find the representation 
\begin{equation}\label{eq:messyformula1}\begin{split}
&\chi_{R\gtrsim \tau}\mathcal{D}\left(\frac{\cos(2u^\nu) - \cos(2Q(\lambda(t)r)}{R^2}\right) \veps\\
=& \tilde{f}_{smooth}(\tau, R)\left(\chi_{R\gtrsim \tau}\frac{1}{\tau^2}O\left(\frac{\log R}{R^2}\right)\sum_{k, j\leq N,\,k\geq 1}(1-a)^{-\frac12 + k\nu}\big(\log(1-a)\big)^j
 + g_{smooth}\right)\\
& + \frac{1}{\tau^2}O\left(\frac{\log R}{R^2}\right) \chi_{|R-\nu\tau|\lesssim 1}\sum_{k, j\leq 2N,\,k\geq 1}\frac{d_{kl}(\tau)}{\tau^{-1 + k\nu}}(1-a)^{1+ k\nu}\big(\log(1-a)\big)^l\\
& +  \frac{1}{\tau^2}O\left(\frac{\log R}{R^2}\right) \chi_{|R-\nu\tau|\lesssim 1}\sum_{k, j\leq 2N,\,k\geq 1}\frac{e_{kl}(\tau)}{\tau^{-2 + k\nu}}(1-a)^{2+ k\nu}\big(\log(1-a)\big)^l\\
\end{split}\end{equation}
where we still have the bound 
\[
\sum_{kl}\big|d_{kl}(\tau)\big| + \tau^{-1}\big|e_{kl}(\tau)\big| \lesssim \sum_{kl}\sum_{\pm}\big|a_{kl}^{\pm}(\tau)\big|. 
\]
We now revert to the distorted Fourier transform of each of these expressions, which partly means `inverting' the preceding lemma. Label the three expressions on the right as $A = \tilde{f}_{smooth}(\tau, R)\big(\ldots\big)$, $B =  \frac{1}{\tau^2}O\left(\frac{\log R}{R^2}\right)\ldots$, $C =  \frac{1}{\tau^2}O\left(\frac{\log R}{R^2}\right) $. For the term $A$, write $\tilde{f}_{smooth}(\tau, R)\chi_{R\gtrsim \tau}\frac{1}{\tau^2}O\left(\frac{\log R}{R^2}\right) = \tilde{g}_{smooth}(\tau, R)$. Then split 
\begin{align*}
A &= [\tilde{g}_{smooth}(\tau, \nu\tau) + \partial_R \tilde{g}_{smooth}(\tau, \nu\tau)(R - \nu\tau)]\big(\ldots\big)\\& + [\tilde{g}_{smooth}(\tau, R) - \tilde{g}_{smooth}(\tau, \nu\tau) - \partial_R \tilde{g}_{smooth}(\tau, \nu\tau)(R - \nu\tau)]\big(\ldots\big)\\
& = : A_1 + A_2.
\end{align*}
where $\big(\ldots\big)$ denotes the function in $a$. Then one easily checks that 
\begin{equation}\label{eq:A1expansion}\begin{split}
\mathcal{F}\big(\chi_{R\lesssim \tau}A_1(\tau, \cdot)\big)(\xi) &= \sum_{1\leq k,j\leq N}\sum_{\pm}h_{kl}^{\pm}(\tau)\chi_{\xi>1}e^{\pm i\nu\tau\xi^{\frac12}}\xi^{-\frac32 - k\frac{\nu}{2}}\langle\log\xi\rangle^j \\
&+ \sum_{1\leq k,j\leq N}\sum_{\pm}i_{kl}^{\pm}(\tau)\chi_{\xi>1}e^{\pm i\nu\tau\xi^{\frac12}}\xi^{-\frac52 - k\frac{\nu}{2}}\langle\log\xi\rangle^j+ \alpha_1(\tau, \xi),
\end{split}\end{equation}
where we have the bounds 
\[
\sum_{1\leq k,j\leq N}\sum_{\pm}\left(\big|h_{kl}^{\pm}(\tau)\big| +\big|i_{kl}^{\pm}(\tau)\big|\right)   + \sum_{1\leq k,j\leq N}\sum_{\pm}\left(\big|(h_{kl}^{\pm})'(\tau)\big| + \big|(i_{kl}^{\pm})'(\tau)\big|\right)\lesssim \tau^{-3+} \sum_{k,l}\sum_{\pm}\big|a_{kl}^{\pm}(\tau)\big|,
\]
\[
\big\| \langle\xi\rangle^{\frac52+\frac{\nu}{2}-}\alpha_1(\tau, \xi)\big\|_{L^2_{d\xi}}\lesssim \tau^{-3+} \sum_{k,l}\sum_{\pm}\big|a_{kl}^{\pm}(\tau)\big|.
\]
Substituting the first two terms in \eqref{eq:A1expansion} into the Duhamel parametrix \eqref{eq:Nepsilon1} results in the expression 
\begin{align*}
\sum_{1\leq k,l\leq N}\sum_{\pm}\tilde{a}_{kl}^{\pm}(\tau)\frac{e^{\pm i\nu\tau\xi^{\frac12}}}{\xi^{2+\frac{k\nu}{2}}}\langle\log\xi\rangle^l + \frac{\tilde{b}(\tau, \xi)}{\xi^{\frac52+\frac{\nu}{2}}}
\end{align*}
with the bound
\begin{align*}
\sum_{1\leq k,l\leq N}\sum_{\pm}\frac{\lambda(\tau)}{\lambda(\tau_0)}\left\langle\log\frac{\lambda(\tau)}{\lambda(\tau_0)}\right\rangle^{1+\frac{\kappa}{2}}\tau\big|\tilde{a}_{kl}^{\pm}(\tau)\big|&\lesssim \sum_{1\leq k,l\leq N}\sum_{\pm}\frac{\lambda(\tau)}{\lambda(\tau_0)}\left\langle\log\frac{\lambda(\tau)}{\lambda(\tau_0)}\right\rangle^{1+\frac{\kappa}{2}}\tau\int_{\tau_0}^{\tau}\big|h_{kl}^{\pm}(\sigma)\big|\frac{\lambda^2(\sigma)}{\lambda^2(\tau)}\,d\sigma\\
&\ll \sup_{\tau\geq \tau_0}\sum_{1\leq k,l\leq N}\sum_{\pm}\frac{\lambda(\tau)}{\lambda(\tau_0)}\left\langle\log\frac{\lambda(\tau)}{\lambda(\tau_0)}\right\rangle^{1+\frac{\kappa}{2}}\left[\big|a_{kl}^{\pm}(\tau)\big|+\big|(a_{kl}^{\pm})'(\tau)\big|\right]
\end{align*}
Moreover, one checks after some integrations by parts that the expression $\frac{\tilde{b}(\tau, \xi)}{\xi^{\frac52 + \frac{\nu}{2}}}$ is a linear combination of terms of the form 
\begin{align*}
d_{kl}^{\pm}(\tau,\xi)e^{\pm i\nu\tau\xi^{\frac12}}\xi^{-\frac52 - k\nu}\langle\log\xi\rangle^l,\,h_{kl}^{\pm}(\tau_0)e^{\pm i\nu\tau\xi^{\frac12}}e^{\pm2 i\nu\tau_0\frac{\lambda(\tau)}{\lambda(\tau_0)}\xi^{\frac12}}\xi^{-\frac52 - k\nu}\langle\log\xi\rangle^l 
\end{align*}
where we have schematically 
\[
d_{kl}^{\pm}(\tau,\xi) = h_{kl}^{\pm}(\tau) + \int_{\tau_0}^{\tau}e^{\pm2 i\nu\sigma\frac{\lambda(\tau)}{\lambda(\sigma)}\xi^{\frac12}}[h_{kl}^{\pm}(\sigma) + (h_{kl}^{\pm})'(\sigma)]\left(\frac{\lambda(\sigma)}{\lambda(\tau)}\right)^{3+k\nu}\left(\log\frac{\lambda(\tau)}{\lambda(\sigma)}\right)^l\,d\sigma
\]
as well as similar expressions involving $i_{kl}^{\pm}$ instead of $h_{kl}^{\pm}$, and so the required assertions about $\tilde{b}(\tau, \xi)$ follow easily from the preceding bounds on $h_{kl}^{\pm}, i_{kl}^{\pm}$. 
On the other hand, substituting the third term $\alpha_1(\tau, \xi)$ in \eqref{eq:A1expansion} into the parametrix \eqref{eq:Nepsilon1} is easily seen to result in a term with the properties of $\tilde{x}_{good}(\tau, \xi)$ in the statement of the proposition. 
\\
Next, we consider the contribution of $A_2$(in the above decomposition of $A$ into two parts). Here we claim that the corresponding contribution can be incorporated into $\tilde{x}_{good}(\tau, \xi)$. Given that the function of $a$ in $A$ is just barely of class $L^2_{R\,dR}$, we have to gain three degrees of differentiability to land in $H^{3+\nu-}$, one of which comes from the Duhamel parametrix itself. For the remaining two degrees of freedom, consider\footnote{Here $\big(\ldots\big)$ denotes the singular function of $a$.} 
\begin{align*}
&\partial_R^2\big([\tilde{g}_{smooth}(\tau, R) - \tilde{g}_{smooth}(\tau, \nu\tau) - \partial_R \tilde{g}_{smooth}(\tau, \nu\tau)(R - \nu\tau)]\big(\ldots\big)\big)\\
& = \partial_R^2[\tilde{g}_{smooth}(\tau, R) - \tilde{g}_{smooth}(\tau, \nu\tau) - \partial_R \tilde{g}_{smooth}(\tau, \nu\tau)(R - \nu\tau)]\big(\ldots\big)\\
& + 2\partial_R[\tilde{g}_{smooth}(\tau, R) - \tilde{g}_{smooth}(\tau, \nu\tau) - \partial_R \tilde{g}_{smooth}(\tau, \nu\tau)(R - \nu\tau)]\partial_R\big(\ldots\big)\\
& +[\tilde{g}_{smooth}(\tau, R) - \tilde{g}_{smooth}(\tau, \nu\tau) - \partial_R \tilde{g}_{smooth}(\tau, \nu\tau)(R - \nu\tau)] \partial_R^2\big(\ldots\big)\\
\end{align*}
Then we get 
\begin{align*}
&\left\|\partial_R^2[\tilde{g}_{smooth}(\tau, R) - \tilde{g}_{smooth}(\tau, \nu\tau) - \partial_R \tilde{g}_{smooth}(\tau, \nu\tau)(R - \nu\tau)]\big(\ldots\big)\right\|_{L^2_{R\,dR}(R\sim \tau)}\\
&\lesssim \big\|\partial_R^2[\tilde{g}_{smooth}(\tau, \cdot)\big\|_{L^\infty_{R\,dR}(\tau\sim R)}\big\|\big(\ldots\big)\big\|_{L^2_{R\,dR}(R\sim \tau)}\\
&\lesssim \tau^{-2+}\sum_{k,l}\sum_{\pm}\big|a_{kl}^{\pm}(\tau)\big|.
\end{align*}
On the other hand, for the last term above, we have 
\begin{align*}
&\left\|[\tilde{g}_{smooth}(\tau, R) - \tilde{g}_{smooth}(\tau, \nu\tau) - \partial_R \tilde{g}_{smooth}(\tau, \nu\tau)(R - \nu\tau)] \partial_R^2\big(\ldots\big)\right\|_{L^2_{R\,dR}(R\sim \tau)}\\
&\lesssim \left\|\frac{[\ldots]}{(R-\nu\tau)^2}\right\|_{L^\infty_{R\,dR}(R\sim \tau)}\left\|(R-\nu\tau)^2\partial_R^2\big(\ldots\big)\right\|_{L^2_{R\,dR}(R\sim \tau)}
\end{align*}
Here we estimate the first factor via Sobolev embedding in $1$ dimension since we are in the radial context and localized away from the origin, which gives 
\begin{align*}
\left\|\frac{[\ldots]}{(R-\nu\tau)^2}\right\|_{L^\infty_{R\,dR}(R\sim \tau)}\lesssim \big\|\tilde{g}_{smooth}(\tau, R)\big\|_{H^3_{R\,dR}}
\end{align*}
In light of the definition of $\tilde{g}_{smooth}$ further above, we then infer the bound 
\begin{align*}
&\left\|\frac{[\ldots]}{(R-\nu\tau)^2}\right\|_{L^\infty_{R\,dR}(R\sim \tau)}\big\|(R-\nu\tau)^2\partial_R^2\big(\ldots\big)\big\|_{L^2_{R\,dR}(R\sim \tau)}\\
&\lesssim \tau^{-3}\sum_{k,l}\sum_{\pm}\big|a_{kl}^{\pm}(\tau)\big|
\end{align*}
The term with mixed derivatives above is estimated similarly, and so we conclude that 
\begin{align*}
&\big\|\partial_R^2\big([\tilde{g}_{smooth}(\tau, R) - \tilde{g}_{smooth}(\tau, \nu\tau) - \partial_R \tilde{g}_{smooth}(\tau, \nu\tau)(R - \nu\tau)]\big(\ldots\big)\big)\big\|_{L^2_{R\,dR}(R\sim \tau)}\\
&\lesssim  \tau^{-2+}\sum_{k,l}\sum_{\pm}\big|a_{kl}^{\pm}(\tau)\big|.
\end{align*}
In fact, one can apply $\nu-$ additional fractional derivatives here; it follows after straightforward considerations that substituting  $\chi_{R\sim \tau}A_2$ for $\mathcal{D}(N(\veps))$ in the Duhamel parametrix \eqref{eq:Nepsilon1}, the corresponding contribution to $\tilde{x}(\tau, \xi)$ can be absorbed into $\tilde{x}_{good}(\tau, \xi)$. 
\\
This concludes dealing with the contribution of $A$ in \eqref{eq:messyformula1} except for the contribution of the error term $\tilde{f}_{smooth}\cdot g_{smooth}$, which, however, is easily seen to lead to a term contributing to $\tilde{x}_{smooth}(\tau, \xi)$. 
\\
The contributions of the remaining terms $B, C$ are of course handled analogously and in fact are better in terms of smoothness, and can be incorporated into the terms $\frac{\tilde{b}(\tau, \xi)}{\xi^{\frac52 + \frac{\nu}{2}}}$, $\tilde{x}_{smooth}(\tau, \xi)$. 
\\

{\it{(ii) Assume $\xb(\tau, \xi) = \chi_{\xi>1}\frac{b(\tau, \xi)}{\xi^{\frac52+\frac{\nu}{2}}}$}} We use similar arguments as in the preceding case. Note that then $\mathcal{D}\veps\in H^{2+\nu-}_{R\,dR}$, which implies $\veps\in H^{3+\nu-}_{R\,dR}$. We cannot conclude that $\chi_{R\ll\tau}\mathcal{D}\veps\in C^\infty$ without invoking finer structure of $b$. However, this regularity is enough to handle the contribution away from the light cone, in view of the next lemma. To handle the contribution near the boundary, one decomposes 
\begin{align*}
\veps(\tau, R) &= \veps(\tau, \nu\tau) + \partial_R\veps(\tau, \nu\tau)(R-\nu\tau)\\
& + \veps(\tau, R) - \veps(\tau, \nu\tau) - \partial_R\veps(\tau, \nu\tau)(R-\nu\tau)\\
\end{align*}
Insertion of the first term on the right for $\veps$ in the first term in \eqref{eq:linearineps} and again expanding $u^\nu - Q$ as in the preceding case results in a contribution to 
\begin{align*}
\sum_{1\leq k,l\leq N}\sum_{\pm}\tilde{a}_{kl}^{\pm}(\tau)\frac{e^{\pm i\nu\tau\xi^{\frac12}}}{\xi^{2+\frac{k\nu}{2}}}\langle\log\xi\rangle^l + \frac{\tilde{b}(\tau, \xi)}{\xi^{\frac52+\frac{\nu}{2}-}}.
\end{align*}
On the other hand, for the second term in the above formula for $\veps$ we use 
\[
\left\|\frac{\veps(\tau, R) - \veps(\tau, \nu\tau) - \partial_R\veps(\tau, \nu\tau)(R-\nu\tau)}{(R-\nu\tau)^2}\right\|_{L^\infty_{R\,dR}}\lesssim \big\|\veps\big\|_{H^{3+}_{R\,dR}}
\]
which is bounded, and argue again as in (i). 
\\

{\it{(iii) $\xb(\tau, \xi) = x_{good}(\tau, \xi)$}}. The corresponding contribution can be absorbed into $\tilde{x}_{good}(\tau, \xi)$. In fact, near $R = \nu\tau$, we decompose $\veps(\tau, R)$ as in (ii) and argue correspondingly. Near the origin $R = 0$, we take advantage of the fact that the function 
\[
\mathcal{D}\left(\frac{\cos(2u^\nu) - \cos\left(2Q(\lambda(t)r)\right)}{R^2}\right)
\]
is smooth. 
\\

This concludes the estimates for the first term in \eqref{eq:linearineps}, and the second term there is handled analogously. 
\\

{\it{(2): Contribution of the nonlinear terms in \eqref{eq:epsequation1}, as well as the linear term localised away from the light cone.}} Splitting the nonlinear terms into a contribution near the origin $R\ll \tau$ and away from the origin $R\gtrsim \tau$, we handle the latter exactly as in the preceding case (1) by using expansions such as in Lemma~\ref{lem:FourierToPhysical1} for $\epsilon$ in addition to the expansion of $u^{\nu}$ obtained from \cite{KST2,GaoKr}. It remains then to deal with the contribution near the origin for both the nonlinear and linear terms, where $u^{\nu}$ is now of class $C^\infty$. To handle this case, we then use 
\begin{lemma}\label{lem:nonlinestimates1} Let 
\[
\mathcal{D}\veps(R) = \int_0^\infty\phi(R, \xi)\xb(\tau, \xi)\tilde{\rho}(\xi)\,d\xi
\]
with $\xb\in S_0$, and put $\veps = \phi\big(\mathcal{D}\veps\big)(R)$. Then letting $N_{1}(\veps)$ be one of the nonlinear expressions in \eqref{eq:epsequation1}, we infer that 
\begin{align*}
\tilde{x}_1(\tau,\xi): = \xi^{-\frac{1}{2}}\int_{\tau_{0}}^{\tau}\sin\left(\lambda(\tau)\xi^{\frac{1}{2}}\int_{\sigma}^{\tau}\lambda(u)^{-1}du\right)\mathcal{F}\big(\mathcal{D}(\chi_{R\ll\tau}\lambda^{-2}N_1(\veps))\big)\left(\sigma,\frac{\lambda(\tau)^{2}}{\lambda(\sigma)^{2}}\xi\right)d\sigma,\,j = 1,2,
\end{align*}
satisfy 
\begin{align*}
&\sup_{\tau\geq \tau_0}\frac{\lambda(\tau)}{\lambda(\tau_0)}\left\langle\log\frac{\lambda(\tau)}{\lambda(\tau_0)}\right\rangle^{1+\frac{\kappa}{2}}\big\|\tilde{x}_1(\tau,\xi)\big\|_{S_0} + \sup_{\tau\geq \tau_0}\frac{\lambda(\tau)}{\lambda(\tau_0)}\left\langle\log\frac{\lambda(\tau)}{\lambda(\tau_0)}\right\rangle^{1+\frac{\kappa}{2}}\big\|\mathcal{D}_{\tau}\tilde{x}_1(\tau,\xi)\big\|_{S_1}\\
&\lesssim_{\tau_0} \left[\sup_{\tau\geq \tau_0}\frac{\lambda(\tau)}{\lambda(\tau_0)}\left\langle\log\frac{\lambda(\tau)}{\lambda(\tau_0)}\right\rangle^{1+\frac{\kappa}{2}}\big\|\langle\xi\rangle^{-\frac12}\xb(\tau, \cdot)\big\|_{S_0}\right]^2 +  \left[\sup_{\tau\geq \tau_0}\frac{\lambda(\tau)}{\lambda(\tau_0)}\left\langle\log\frac{\lambda(\tau)}{\lambda(\tau_0)}\right\rangle^{1+\frac{\kappa}{2}}\big\|\langle\xi\rangle^{-\frac12}\xb(\tau, \cdot)\big\|_{S_0}\right]^3.
\end{align*}
Furthermore, if $N_{2}(\veps)$ is the linear expression in \eqref{eq:epsequation1}, then  defining $\tilde{x}_2(\tau,\xi)$ analogously, we have 
\begin{align*}
&\sup_{\tau\geq \tau_0}\frac{\lambda(\tau)}{\lambda(\tau_0)}\left\langle\log\frac{\lambda(\tau)}{\lambda(\tau_0)}\right\rangle^{1+\frac{\kappa}{2}}\big\|\tilde{x}_2(\tau,\xi)\big\|_{S_0} + \sup_{\tau\geq \tau_0}\frac{\lambda(\tau)}{\lambda(\tau_0)}\left\langle\log\frac{\lambda(\tau)}{\lambda(\tau_0)}\right\rangle^{1+\frac{\kappa}{2}}\big\|\mathcal{D}_{\tau}\tilde{x}_2(\tau,\xi)\big\|_{S_1}\\
&\lesssim \tau_0^{-1}\sup_{\tau\geq \tau_0}\frac{\lambda(\tau)}{\lambda(\tau_0)}\left\langle\log\frac{\lambda(\tau)}{\lambda(\tau_0)}\right\rangle^{1+\frac{\kappa}{2}}\big\|\langle\xi\rangle^{-\frac12}\xb(\tau, \cdot)\big\|_{S_0}.
\end{align*}
\end{lemma}
The fact that the norm on the right hand side of the preceding inequalities is weaker than the norm on the left hand side is simply a consequence of the smoothing effect of the Duhamel parametrix. Note that the norm 
\[
\|\langle\xi\rangle^{-\frac12}\cdot\|_{S_0}
\]
is controlled for the terms $\chi_{\xi>1}\frac{b(\tau, \xi)}{\xi^{\frac52+\frac{\nu}{2}}}$. 

\begin{proof}
(lemma) We start with several preliminary estimates. Let 

\begin{align}\label{inhomo flow}
 \xt(\tau,\xi):=\xi^{-\frac{1}{2}}\int_{\tau_{0}}^{\tau}\sin\left(\lambda(\tau)\xi^{\frac{1}{2}}\int_{\sigma}^{\tau}\lambda(u)^{-1}du\right)\xb\left(\sigma,\frac{\lambda(\tau)^{2}}{\lambda(\sigma)^{2}}\xi\right)d\sigma.
\end{align}
To estimate $\xt(\tau,\xi)$ in terms of $\xb(\tau,\xi)$, we need to distinguish between large and small frequency. For \underline{large frequency}, we have

\begin{align}\label{nonlinear origin large xi pre}
 \begin{split}
  \|\langle\xi\rangle^{\frac{5}{2}+\kappa}\langle\log\xi\rangle^{-1-\kappa}\xt(\tau,\xi)\|_{L^{2}(\xi\geq\frac{1}{2})}\lesssim&\left\|\int_{\tau_{0}}^{\tau}\langle\xi\rangle^{2+\kappa}\langle\log\xi\rangle^{-1-\kappa}|\xb|\left(\sigma,\frac{\lambda(\tau)^{2}}{\lambda(\sigma)^{2}}\xi\right)d\sigma\right\|_{L^{2}(\xi\geq\frac{1}{2})}\\
  \lesssim&\int_{\tau_{0}}^{\tau}\left\|\langle\xi\rangle^{2+\kappa}\langle\log\xi\rangle^{-1-\kappa}\xb\left(\sigma,\frac{\lambda(\tau)^{2}}{\lambda(\sigma)^{2}}\xi\right)\right\|_{L^{2}(\xi\geq\frac{1}{2})}d\sigma\\
  \lesssim&\int_{\tau_{0}}^{\tau}\left(\frac{\lambda(\tau)}{\lambda(\sigma)}\right)^{-5}\|\langle\xi\rangle^{2+\kappa}\xb(\sigma,\xi)\|_{L^{2}(\xi\geq\frac{1}{2})}d\sigma.
 \end{split}
\end{align}
For \underline{small frequency}, we have

\begin{align}\label{nonlinear origin small xi pre}
 \begin{split}
  \|\xi^{\frac{1}{2}}\langle\log\xi\rangle^{-1-\kappa}\xt(\tau,\xi)\|_{L^{2}(\xi\leq\frac{1}{2})}\lesssim&\int_{\tau_{0}}^{\tau}\left\|\langle\log\xi\rangle^{-1-\kappa}\xb\left(\sigma,\frac{\lambda(\tau)^{2}}{\lambda(\sigma)^{2}}\xi\right)\right\|_{L^{2}(\xi\leq\frac{1}{2})}d\sigma\\
  \lesssim&\int_{\tau_{0}}^{\tau}\left(\frac{\lambda(\tau)}{\lambda(\sigma)}\right)^{-1}\left\langle\log\frac{\lambda(\tau)}{\lambda(\sigma)}\right\rangle^{-1-\kappa}\|\langle\xi\rangle^{0+}\xb(\sigma,\xi)\|_{L^{2}_{d\xi}}d\sigma\\
 \lesssim&\int_{\tau_{0}}^{\tau}\left(\frac{\lambda(\tau)}{\lambda(\sigma)}\right)^{-1}\left\langle\log\frac{\lambda(\tau)}{\lambda(\sigma)}\right\rangle^{-1-\kappa}\|\langle\log R\rangle\calF^{-1}(\xb)(\sigma,R)\|_{L^{2-}_{RdR}\cap L^{2}_{RdR}}d\sigma.
 \end{split}
\end{align}
Now we are ready to estimate the nonlinear terms. We start with the \underline{small frequency}. Among the three terms in $\calD(N(\veps))$, here we only give the details for the ``linear" contribution from

\begin{align*}
&\chi_{R\ll\tau}\mathcal{D}\left(\frac{\cos(2u^\nu) - \cos(2Q(\lambda(t)r)}{R^2}\veps\right)\\
  =&\chi_{R\ll\tau} \mathcal{D}\left(\frac{\cos(2u^\nu) - \cos(2Q(\lambda(t)r)}{R^2}\right) \veps + \chi_{R\ll\tau}\frac{\cos(2u^\nu) - \cos(2Q(\lambda(t)r)}{R^2}\mathcal{D}\veps
\end{align*}
The other two terms are handled similarly. According to \cite{KST2}, we have

\begin{align}\label{NL coe behav general}
\chi_{R\ll\tau}\frac{\cos(2u^\nu) - \cos(2Q(\lambda(t)r)}{R^2}\sim\tau^{-2}\chi_{R\ll\tau}\frac{\log(1+R^{2})}{R^2}.
\end{align}
It follows that we have

\begin{align}\label{NL coe esti general L2}
\begin{split}
&\left\|\langle\log R\rangle^2\chi_{R\ll\tau}\frac{\cos(2u^\nu) - \cos(2Q(\lambda(t)r)}{R^2}\right\|_{L^{2}_{RdR}\cap L^{2-}_{RdR}}\lesssim\tau^{-2},\\
&\left\|\chi_{R\ll\tau}R\langle\log\langle R\rangle^2\rangle\mathcal{D}\left(\frac{\cos(2u^\nu) - \cos(2Q(\lambda(t)r)}{R^2}\right)\right\|_{L^{2}_{RdR}\cap L^{2-}_{RdR}}\lesssim\tau^{-2}.
\end{split}
\end{align}
Let $\xt_{N}(\tau,\xi)$ be the inhomogeneous flow contributed by the nonlinear term in consideration. Then in view of \eqref{nonlinear origin small xi pre}, we have

\begin{align}\label{NL esti small xi}
\begin{split}
&\|\xi^{\frac{1}{2}}\langle\log\xi\rangle^{-1-\kappa}\xt_{N}(\tau,\xi)\|_{L^{2}(\xi\leq\frac{1}{2})}\\
\lesssim&\int_{\tau_{0}}^{\tau}\left(\frac{\lambda(\tau)}{\lambda(\sigma)}\right)^{-1}\left\langle\log\frac{\lambda(\tau)}{\lambda(\sigma)}\right\rangle^{-1-\kappa}\left\|\langle\log R\rangle^2\frac{\cos(2u^\nu) - \cos(2Q(\lambda(t)r)}{R^2}\right\|_{L^{2}_{RdR}\cap L^{2-}_{RdR}}\left\|\frac{\calD\veps(\sigma,\cdot)}{\langle \log R\rangle}\right\|_{L^{\infty}_{dR}}d\sigma\\
&+\int_{\tau_{0}}^{\tau}\left(\frac{\lambda(\tau)}{\lambda(\sigma)}\right)^{-1}\left\langle\log\frac{\lambda(\tau)}{\lambda(\sigma)}\right\rangle^{-1-\kappa}\left\|R\langle\log\langle R\rangle^2\rangle\calD\left(\frac{\cos(2u^\nu) - \cos(2Q(\lambda(t)r)}{R^2}\right)\right\|_{L^{2}_{RdR}\cap L^{2-}_{RdR}}\left\|\frac{\veps}{R\langle\log\langle R\rangle\rangle}(\sigma,\cdot)\right\|_{L^{\infty}_{dR}}d\sigma\\
\lesssim&\int_{\tau_{0}}^{\tau}\left(\frac{\lambda(\tau)}{\lambda(\sigma)}\right)^{-1}\left\langle\log\frac{\lambda(\tau)}{\lambda(\sigma)}\right\rangle^{-1-\kappa}\sigma^{-2}\|\langle\xi\rangle^{-\frac{1}{2}}\xb(\sigma,\xi)\|_{S_{0}}d\sigma\\
\lesssim&\int_{\tau_{0}}^{\tau}\sigma^{-2}d\sigma\left(\frac{\lambda(\tau)}{\lambda(\tau_{0})}\right)^{-1}\left\langle\log\frac{\lambda(\tau)}{\lambda(\tau_0)}\right\rangle^{-1-\frac{\kappa}{2}}\sup_{\tau\geq\tau_{0}}\left(\frac{\lambda(\tau)}{\lambda(\tau_{0})}\right)\left\langle\log\frac{\lambda(\tau)}{\lambda(\tau_{0})}\right\rangle^{1+\frac{\kappa}{2}}\|\langle\xi\rangle^{-\frac{1}{2}}\xb(\tau,\cdot)\|_{S_{0}}\\
\lesssim&\tau_{0}^{-1}\left(\frac{\lambda(\tau)}{\lambda(\tau_{0})}\right)^{-1}\left\langle\log\frac{\lambda(\tau)}{\lambda(\tau_0)}\right\rangle^{-1-\frac{\kappa}{2}}\sup_{\tau\geq\tau_{0}}\left(\frac{\lambda(\tau)}{\lambda(\tau_{0})}\right)\left\langle\log\frac{\lambda(\tau)}{\lambda(\tau_{0})}\right\rangle^{1+\frac{\kappa}{2}}\|\langle\xi\rangle^{-\frac{1}{2}}\xb(\tau,\cdot)\|_{S_{0}}
\end{split}
\end{align}
For \underline{large frequency}, we proceed in a different way. First, in addition to \eqref{NL coe esti general L2}, we also need the following $L^{\infty}$ estimates:

\begin{align}\label{NL coe esti general Linfty}
 \begin{split}
&\left\|\chi_{R\ll\tau}\frac{\cos(2u^\nu) - \cos(2Q(\lambda(t)r)}{R^2}\right\|_{L^{\infty}_{dR}}\lesssim\tau^{-2},\\
&\left\|\chi_{R\ll\tau}\mathcal{D}\left(\frac{\cos(2u^\nu) - \cos(2Q(\lambda(t)r)}{R^2}\right)\right\|_{L^{\infty}_{dR}}\lesssim\tau^{-2}.
\end{split}
\end{align}
We also need the following ``Plancherel-type'' estimate, whose proof is by a direct computation: Let $f(R)$ be a smooth function with sufficient decay as $R\rightarrow\infty$, then for a fixed $j\in\bbN$ and $\alpha\in\bbR^{+}$, we have

\begin{align}\label{plancherel esti}
 \|\xi^{1+\alpha}\calF(f)(\xi)\|_{L^{2}(\xi\sim 2^{j})}\lesssim 2^{j\alpha}\|f(\cdot)\|_{L^{2}_{RdR}}.
\end{align}
We consider an inhomogeneous dyadic decomposition in frequency space and we localize $\xi\sim 2^{j}$ for $j\in\bbN$. Let $\eta_{S}$ be localized as $\eta_{S}\sim 2^{k}$ for $k\in\bbN, k\leq j$ and $\eta_{L}$ be localized as $\eta_{L}\sim 2^{l}$ for $l\in\bbN, l\geq j$, because when $j,k,l\sim 1$, the dyadic decomposition can be easily summed. In fact here we assume that $j,k,l\gg 1$. In view of the smoothness for $\chi_{R\ll\tau}\frac{\cos(2u^\nu) - \cos(2Q(\lambda(t)r)}{R^2}$, we have

\begin{align}\label{NL coe localized}
 \begin{split}
&  \left\|P_{\eta_{L}}\left(\chi_{R\ll\tau}\frac{\cos(2u^\nu) - \cos(2Q(\lambda(t)r)}{R^2}\right)\right\|_{L^{2}_{RdR}}\lesssim\tau^{-2}\eta_{L}^{-N-1},\\
&\left\|P_{\eta_{L}}\left(\chi_{R\ll\tau}R\langle\log\langle R\rangle\rangle\calD\left(\frac{\cos(2u^\nu) - \cos(2Q(\lambda(t)r)}{R^2}\right)\right)\right\|_{L^{2}_{RdR}}\lesssim\tau^{-2}\eta_{L}^{-N}
 \end{split}
\end{align}
for arbitrarily large $N\in\bbN$. Moreover, for $\eta=\eta_{S}, \eta_{L}$, we have

\begin{align}\label{Dveps L2 localized}
\begin{split}
& \|\chi_{R\ll\tau}P_{\eta}\calD\veps\|_{L^{2}_{RdR}}\lesssim \eta^{-1-\kappa}\|\langle\cdot\rangle^{-\frac{1}{2}}\calF(P_{\eta}\calD\veps)(\tau,\cdot)\|_{S_{0}},\\
\Rightarrow\quad &\|\chi_{R\ll\tau}\tcalL P_{\eta_{S}}\calD\veps\|_{L^{2}_{RdR}}\lesssim \eta^{-\kappa}_{S}\|\langle\cdot\rangle^{-\frac{1}{2}}\calF(P_{\eta_{S}}\calD\veps)(\tau,\cdot)\|_{S_{0}},\\
&\|\chi_{R\ll\tau}(\tcalL)^{2}P_{\eta_{S}}\calD\veps\|_{L^{2}_{RdR}}\lesssim \eta^{1-\kappa}_{S}\|\calF(P_{\eta_{S}}\calD\veps)(\tau,\cdot)\|_{S_{0}}.
 \end{split}
\end{align}
For $P_{\eta}\veps:=\phi(P_{\eta}\calD\veps)$ we note the large frequency bounds 

\begin{align}\label{veps L2 localized}
 \begin{split}
& \|\chi_{R\ll\tau}P_{\eta}\veps\|_{L^{\infty}_{RdR}}\lesssim \eta^{-\frac{1}{2}-\kappa}\|\langle\cdot\rangle^{-\frac{1}{2}}\xb(\tau,\cdot)\|_{S_{0}},\\
 &\|\chi_{R\ll\tau}\tcalL P_{\eta_{S}}\veps\|_{L^{2}_{RdR}}\lesssim \eta^{-\frac{1}{2}-\kappa}_{S}\|\langle\cdot\rangle^{-\frac{1}{2}}\xb(\tau,\cdot)\|_{S_{0}},\\
&\|\chi_{R\ll\tau}(\tcalL)^{2}P_{\eta_{S}}\veps\|_{L^{2}_{RdR}}\lesssim \eta^{\frac{1}{2}-\kappa}_{S}\|\langle\cdot\rangle^{-\frac{1}{2}}\xb(\tau,\cdot)\|_{S_{0}}.
 \end{split}
\end{align}
For notational simplicity, we denote $\calC(R):=\frac{\cos(2u^\nu) - \cos(2Q(\lambda(t)r)}{R^2}$. Localizing $\xi\sim 2^{j}$, we start with the estimate for 

\begin{align}\label{NL esti large xi 1}
 \begin{split}
 & \|\xi^{2+\kappa}\calF\left(\chi_{R\ll\tau}\calC(R)\calD\veps\right)\|_{L^{2}(\xi\gtrsim 1)}\\
 \lesssim&\left\|\xi^{2+\kappa}\left\langle\phi(R,\xi),\chi_{R\ll\tau}\calC(R)\calD\veps(R)\right\rangle_{RdR}\right\|_{L^{2}(\xi\gtrsim 1)}\\
 \lesssim&\left\|\xi^{2+\kappa}\left\langle\phi(R,\xi),\chi_{R\ll\tau}\calC(R)P_{\eta_{L}}\calD\veps(R)\right\rangle_{RdR}\right\|_{L^{2}(\xi\gtrsim 1)}\\
 &+\left\|\xi^{2+\kappa}\left\langle\phi(R,\xi),\chi_{R\ll\tau}\calC(R)P_{\eta_{S}}\calD\veps(R)\right\rangle_{RdR}\right\|_{L^{2}(\xi\gtrsim 1)}
 \end{split}
\end{align}
For the first term on the right hand side of \eqref{NL esti large xi 1}, we bound it as

\begin{align}\label{NL esti large xi 1a}
 \begin{split}
  &\xi^{1+\kappa}\|\chi_{R\ll\tau}\calC(\sigma,\cdot)\|_{L^{\infty}_{dR}}\|P_{\eta_{L}}\calD\veps(\sigma,\cdot)\|_{L^{2}_{RdR}}\\
  \lesssim&\sigma^{-2}\xi^{1+\kappa}\eta_{L}^{-1-\kappa}\|\langle\cdot\rangle^{-\frac{1}{2}}\calF(P_{\eta_{L}}\calD\veps)(\sigma,\cdot)\|_{S_{0}}.
 \end{split}
\end{align}
For the second term in \eqref{NL esti large xi 1}, we decompose it as

\begin{align}\label{NL esti large xi 1b pre}
\begin{split}
 &\left\|\xi^{2+\kappa}\left\langle\phi(R,\xi),\chi_{R\ll\tau}P_{\eta_{L}}\calC(R)P_{\eta_{S}}\calD\veps(R)\right\rangle_{RdR}\right\|_{L^{2}(\xi\gtrsim 1)}\\
 &+\left\|\xi^{2+\kappa}\left\langle\phi(R,\xi),\chi_{R\ll\tau}P_{\eta_{S}}\calC(R)P_{\eta_{S}}\calD\veps(R)\right\rangle_{RdR}\right\|_{L^{2}(\xi\gtrsim 1)}
 \end{split}
\end{align}
For the first term in \eqref{NL esti large xi 1b pre}, we bound it as

\begin{align}\label{NL esti large 1b A}
 \begin{split}
  &\xi^{1+\kappa}\|\chi_{R\ll\tau}P_{\eta_{L}}\calC(\sigma,\cdot)\|_{L^{2}_{RdR}}\|P_{\eta_{S}}\calD\veps(\sigma,\cdot)\|_{L^{\infty}_{dR}}\\
  \lesssim&\sigma^{-2}\xi^{1+\kappa}\eta_{L}^{-1-N}\|\langle\cdot\rangle^{-\frac{1}{2}}\xb(\sigma,\cdot)\|_{S_{0}}
 \end{split}
\end{align}
For the second term in \eqref{NL esti large xi 1b pre}, we use integration by parts:

\begin{align}\label{NL esti large 1b B}
 \begin{split}
  &\left\|\xi^{2+\kappa}\left\langle\phi(R,\xi),\chi_{R\ll\tau}P_{\eta_{S}}\calC(R)P_{\eta_{S}}\calD\veps(R)\right\rangle_{RdR}\right\|_{L^{2}(\xi\gtrsim 1)}\\
  =&\left\|\xi^{1+\kappa}\left\langle\tilde{\calL}\phi(R,\xi),\chi_{R\ll\tau}P_{\eta_{S}}\calC(R)P_{\eta_{S}}\calD\veps(R)\right\rangle_{RdR}\right\|_{L^{2}(\xi\gtrsim 1)}\\
  =&\left\|\xi^{1+\kappa}\left\langle\phi(R,\xi),\tilde{\calL}\left(\chi_{R\ll\tau}P_{\eta_{S}}\calC(R)P_{\eta_{S}}\calD\veps(R)\right)\right\rangle_{RdR}\right\|_{L^{2}(\xi\gtrsim 1)}\\
  =&\left\|\xi^{\kappa}\left\langle\phi(R,\xi),\tilde{\calL}^{2}\left(\chi_{R\ll\tau}P_{\eta_{S}}\calC(R)P_{\eta_{S}}\calD\veps(R)\right)\right\rangle_{RdR}\right\|_{L^{2}(\xi\gtrsim 1)}\\
  \lesssim&\sigma^{-2}\xi^{-1+\kappa}\eta_{S}^{1-\kappa}\|\calF(P_{\eta_{S}}\calD\veps)(\sigma,\cdot)\|_{S_{0}}.
 \end{split}
\end{align}
Here we bound the contribution from $C(\sigma,\cdot)$ in $L^{\infty}$ and the contribution from $\calD\veps$ in $L^{2}_{RdR}$. Finally we simply sum the dyadic decomposition by taking the $\ell^{2}$-norm on both sides of the localized estimates and using Cauchy-Schwarz. Plugging the obtained estimates into \eqref{nonlinear origin large xi pre}, we get the desired result. The contribution from the term $\chi_{R\ll\tau}\calD\calC(R)\veps(R)$ are bounded similarly, by exploiting the bound on $\big\|\veps\big\|_{L^\infty}$ as well as further integrations by parts. 
%
%
This finishes the proof for the lemma.
\end{proof}
The proposition follows from the lemma.
\end{proof}

\section{Key linear estimates}
\subsection{The transference operator}
In this section we show that admissibility of the Fourier coefficients is preserved when applying the Duhamel parametrix to the non-local linear terms in \eqref{eq on Fourier side temp 7}. In fact, it will be important that this combination of operations entails a smoothing effect. 

\begin{proposition}\label{prop:nonlocallinear1} Let $F$ be either $2\frac{\lambda'}{\lambda}\mathcal{K}_0\mathcal{D}_{\tau}\xb$ or $\frac{\lambda'}{\lambda}[\mathcal{D}_{\tau}, \mathcal{K}_0]\xb$. Then if $\xb$ is admissible, so is 
\[
\tilde{x}(\tau,\xi): = \xi^{-\frac{1}{2}}\int_{\tau_{0}}^{\tau}\sin\left(\lambda(\tau)\xi^{\frac{1}{2}}\int_{\sigma}^{\tau}\lambda(u)^{-1}du\right)F\left(\sigma,\frac{\lambda(\tau)^{2}}{\lambda(\sigma)^{2}}\xi\right)d\sigma
\]
and the same bounds as in Proposition~\ref{prop:mulitlin1} hold but with $\ll$ replaced by $\lesssim $ involving a universal constant. 
\\
On the other hand, if $F$ is one of the terms $\left(\frac{\lambda'}{\lambda}\right)'\mathcal{K}_0\xb$, $\left(\frac{\lambda'}{\lambda}\right)^2\mathcal{K}_0\xb$, $\left(\frac{\lambda'}{\lambda}\right)^2\mathcal{K}_0^2\xb$, we get the corresponding bound as in Proposition~\ref{prop:mulitlin1} with a smallness gain (which can be made arbitrarily small by choosing $\tau_0$ large enough). 

\end{proposition}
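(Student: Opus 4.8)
The plan is to split the argument into a \emph{structural} step --- the admissible structure \eqref{eq:admissiblexb} of the Fourier profiles is preserved by the operators $\mathcal{K}_0$, $\mathcal{K}_0^2$ and $[\mathcal{D}_\tau,\mathcal{K}_0]$, with the admissibility ``norm'' (the bracketed quantity in Proposition~\ref{prop:mulitlin1}) controlled by that of $\xb$ --- followed by a \emph{Duhamel} step in which the resulting $F$ is inserted into the inhomogeneous parametrix \eqref{sol to inhomo xb} and the estimates in the proof of Proposition~\ref{prop:mulitlin1} are run verbatim (the decomposition of Lemma~\ref{lem:FourierToPhysical1} on the physical side, the weighted $S_0/S_1$ parametrix bounds \eqref{nonlinear origin large xi pre}, \eqref{nonlinear origin small xi pre} on the Fourier side). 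The only arithmetic distinguishing the two groups of source terms is the number of available powers of $\tfrac{\lambda'}{\lambda}\sim\tau^{-1}$: the terms $2\tfrac{\lambda'}{\lambda}\mathcal{K}_0\mathcal{D}_\tau\xb$ and $\tfrac{\lambda'}{\lambda}[\mathcal{D}_\tau,\mathcal{K}_0]\xb$ carry exactly one such power --- for the commutator, using that the $\mathcal{K}_0$-kernel $\tfrac{\tilde\rho(\eta)F(\xi,\eta)}{\xi-\eta}$ is $\tau$-independent so $[\partial_\tau,\mathcal{K}_0]=0$ and $[\mathcal{D}_\tau,\mathcal{K}_0]=-\tfrac{\lambda'}{\lambda}[2\xi\partial_\xi,\mathcal{K}_0]$, the apparent second power being consumed by the weight $\nu\tau\xi^{1/2}$ that $2\xi\partial_\xi$ produces on $e^{\pm i\nu\tau\xi^{1/2}}$ --- whereas $(\tfrac{\lambda'}{\lambda})'\mathcal{K}_0\xb$, $(\tfrac{\lambda'}{\lambda})^2\mathcal{K}_0\xb$ and $(\tfrac{\lambda'}{\lambda})^2\mathcal{K}_0^2\xb$ carry two genuine powers since $(\tfrac{\lambda'}{\lambda})'\sim(\tfrac{\lambda'}{\lambda})^2\sim\tau^{-2}$. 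A single surviving $\tau^{-1}$ integrates to a logarithm in \eqref{sol to inhomo xb} and yields no smallness (hence $\lesssim$ with a universal constant), while the surplus $\tau^{-1}$ in the second group produces $\int_{\tau_0}^{\tau}\sigma^{-2}\,d\sigma\lesssim\tau_0^{-1}$, i.e.\ the advertised smallness for $\tau_0$ large, exactly as in \eqref{NL esti small xi}.

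For the structural step I would treat the three constituents of an admissible $\xb$ separately. The part $x_{good}$ and the low-frequency part $\chi_{\xi<1}\xb$ are immediate from Proposition~\ref{prop:K_0Sbound}: $\|\mathcal{K}_0 x_{good}\|_{S_0}\lesssim\|x_{good}\|_{S_0}$, hence also for $\mathcal{K}_0^2$; for the $\mathcal{D}_\tau$-derivatives one uses $[\mathcal{D}_\tau,\mathcal{K}_0]=-\tfrac{\lambda'}{\lambda}[2\xi\partial_\xi,\mathcal{K}_0]$ together with the fact that $[2\xi\partial_\xi,\mathcal{K}_0]$ is again a Calder\'on--Zygmund operator of the same class. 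Indeed, its kernel equals $2(\xi\partial_\xi+\eta\partial_\eta+1)\big(\tfrac{\tilde\rho(\eta)F(\xi,\eta)}{\xi-\eta}\big)$, and a direct computation shows the would-be double pole $\tfrac{\tilde\rho F}{(\xi-\eta)^2}$ cancels, leaving $\tfrac{\tilde\rho(\eta)G(\xi,\eta)}{\xi-\eta}$ with $G=\xi\partial_\xi F+\eta\partial_\eta F+\tfrac{\eta\tilde\rho'}{\tilde\rho}F$, which by the derivative bounds \eqref{eq:Fbound2}, \eqref{eq:Fbound3} obeys bounds of the type \eqref{eq:Fbound1}--\eqref{eq:Fbound3}; the proof of Proposition~\ref{prop:K_0Sbound} then applies. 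The $b$-piece $\chi_{\xi>1}\tfrac{b}{\xi^{5/2+\nu/2-}}$ lies in $\langle\xi\rangle^{-1/2}S_0$, on which $\mathcal{K}_0$ is likewise bounded, and the representation $b=\partial_\tau c+d$ is preserved because $\mathcal{K}_0$ commutes with $\partial_\tau$ and maps the relevant weighted $L^\infty_\xi$ bounds into themselves thanks to the decay of $F$ in \eqref{eq:Fbound1}.

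The heart of the matter, and the step I expect to be the main obstacle, is the action of $\mathcal{K}_0$ on the genuinely oscillatory pieces $a_{kj}^{\pm}(\tau)\tfrac{e^{\pm i\nu\tau\xi^{1/2}}}{\xi^{2+k\nu/2}}(\log\langle\xi\rangle)^j$. Here I would split $\mathcal{K}_0$ into a near-diagonal part $|\xi-\eta|\lesssim\langle\xi\rangle^{1/2}/(\nu\tau)$ --- the scale of the oscillation --- and an off-diagonal remainder. On the off-diagonal part the phase $\eta\mapsto\nu\tau\eta^{1/2}$ has no critical point ($\partial_\eta(\nu\tau\eta^{1/2})=\tfrac{\nu\tau}{2}\eta^{-1/2}\neq 0$) and the amplitude $\tfrac{\tilde\rho(\eta)F(\xi,\eta)}{\xi-\eta}\eta^{-2-k\nu/2}(\log\langle\eta\rangle)^j$ is smooth and symbolic there, so repeated integration by parts in $\eta$ gains arbitrarily many powers of $(\nu\tau)^{-1}$; this contribution, being extra-decaying in $\xi$ and carrying the surplus $\tau^{-1}$, is absorbed into $x_{good}$ (or, in the borderline-decay regime, into the $b$-piece). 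On the near-diagonal part the principal value reproduces --- up to lower order in $(\nu\tau\xi^{1/2})^{-1}$, via $\mathrm{p.v.}\!\int\tfrac{e^{i\omega\eta}}{\xi-\eta}\,d\eta=-i\pi\,\mathrm{sgn}(\omega)e^{i\omega\xi}$ with a slowly varying amplitude --- the modulated symbol $c\,\tilde\rho(\xi)F(\xi,\xi)\,a_{kj}^\pm(\tau)\tfrac{e^{\pm i\nu\tau\xi^{1/2}}}{\xi^{2+k\nu/2}}(\log\langle\xi\rangle)^j$; since $\tilde\rho(\xi)F(\xi,\xi)\lesssim\langle\xi\rangle^{-1/2}$ by \eqref{eq:Fbound1}, this has strictly better $\xi$-decay than its input and is re-absorbed into the admissible class (into an oscillatory term of larger index $k'\leq N$, $N$ being taken large in \eqref{eq:admissiblexb}, or into the $b$-piece or $x_{good}$ when the gain is sufficient). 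The delicate bookkeeping is to carry out this diagonal/off-diagonal split uniformly in $\tau$ and $\xi$ with the exact weights of \eqref{eq:admissiblexb}; to treat separately the transitional regime $\xi\sim(\nu\tau)^2$, where the oscillation and the amplitude vary on comparable scales and $e^{\pm i\nu\tau\xi^{1/2}}$ must itself be regarded as a symbol; and to track the $\mathcal{D}_\tau$-derivative of the output simultaneously, noting that $\mathcal{D}_\tau$ applied to $e^{\pm i\nu\tau\xi^{1/2}}$ produces the weight $\nu\xi^{1/2}$ --- precisely the loss tolerated by the $S_1$-component of the admissibility norm, which is why the compensation placing $\tfrac{\lambda'}{\lambda}[\mathcal{D}_\tau,\mathcal{K}_0]\xb$ into the first, no-gain group goes through cleanly.

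With the structural step in hand, the proof concludes as in the first paragraph: one inserts the now-admissible $F$ into \eqref{sol to inhomo xb}, applies Lemma~\ref{lem:FourierToPhysical1} and the $S_0/S_1$ parametrix estimates of Proposition~\ref{prop:mulitlin1} to reconstruct the admissible structure of $\tilde x$, and reads off $\lesssim$ with a universal constant for $F\in\{2\tfrac{\lambda'}{\lambda}\mathcal{K}_0\mathcal{D}_\tau\xb,\ \tfrac{\lambda'}{\lambda}[\mathcal{D}_\tau,\mathcal{K}_0]\xb\}$ and the $\langle\log\tau_0\rangle^{-1}$-type smallness (from the extra $\int_{\tau_0}^\tau\sigma^{-2}\,d\sigma$) for $F\in\{(\tfrac{\lambda'}{\lambda})'\mathcal{K}_0\xb,\ (\tfrac{\lambda'}{\lambda})^2\mathcal{K}_0\xb,\ (\tfrac{\lambda'}{\lambda})^2\mathcal{K}_0^2\xb\}$, completing the proof.
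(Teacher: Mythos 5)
Your overall architecture --- treating the three constituents of \eqref{eq:admissiblexb} separately, splitting $\mathcal{K}_0$ into near-diagonal and off-diagonal parts, and the power counting in $\tau^{-1}$ that sorts the source terms into the no-gain group and the smallness group (including the correct observation that $\frac{\lambda'}{\lambda}[\mathcal{D}_\tau,\mathcal{K}_0]\xb$ belongs to the no-gain group because $[2\xi\partial_\xi,\mathcal{K}_0]$ acting on the oscillatory pieces costs a factor $\nu\tau\xi^{\frac12}$) --- matches the paper. The gap lies in your proposed order of operations: a fixed-time ``structural'' bound for $\mathcal{K}_0\mathcal{D}_\tau\xb$ followed by a generic Duhamel step ``run verbatim'' from Proposition~\ref{prop:mulitlin1}. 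The paper estimates the composition of the Duhamel parametrix with $\mathcal{K}_0\mathcal{D}_\tau$ as a single object, splitting the $\eta$-integration relative to the \emph{rescaled} frequency $\frac{\lambda^2(\tau)}{\lambda^2(\sigma)}\xi$, and for the $x_{good}$ constituent at small output frequencies the indispensable factor $\langle\log\frac{\lambda(\tau)}{\lambda(\sigma)}\rangle^{-1-\kappa}$ is extracted from the mismatch between the $S_0$ log weight at the output frequency $\xi$ and the rescaled input frequency $\frac{\lambda^2(\tau)}{\lambda^2(\sigma)}\tilde\eta$, which is available only because $\mathcal{K}_0$ has been restricted to its diagonal \emph{inside} the Duhamel integral (with the small-frequency behaviour of $\tilde\rho$ absorbing the input log weight); this is the paper's case (3.a.i). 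If you instead first use Proposition~\ref{prop:K_0Sbound} to get $\|\mathcal{K}_0\mathcal{D}_\sigma x_{good}(\sigma,\cdot)\|_{S_1}\lesssim\|\mathcal{D}_\sigma x_{good}(\sigma,\cdot)\|_{S_1}$ and then invoke the parametrix bounds, you are stuck: \eqref{nonlinear origin small xi pre} needs the source controlled in a norm without the small-frequency log degeneracy (or on the physical side), which the $S_1$ bound does not provide, while estimating with the $S_1$ control directly yields only $\frac{\lambda(\tau_0)}{\lambda(\tau)}\int_{\tau_0}^{\tau}\sigma^{-1}\langle\log\frac{\lambda(\sigma)}{\lambda(\tau_0)}\rangle^{-1-\frac{\kappa}{2}}\,d\sigma\lesssim\frac{\lambda(\tau_0)}{\lambda(\tau)}$, i.e.\ the output decay is short of the required $\frac{\lambda(\tau_0)}{\lambda(\tau)}\langle\log\frac{\lambda(\tau)}{\lambda(\tau_0)}\rangle^{-1-\frac{\kappa}{2}}$ by a full log factor. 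Since this is precisely the term for which the proposition admits no smallness, that loss cannot be repaired by taking $\tau_0$ large, and the iteration in Section 11 would not close.

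For the oscillatory pieces your fixed-time stationary-phase analysis is plausible in outline, but the regimes you yourself flag ($\xi\sim(\nu\tau)^2$, uniformity of the diagonal/off-diagonal split in $\tau$ and $\xi$) are deferred rather than carried out, and, more importantly, admissibility of $\tilde x$ requires exhibiting the representation $\tilde b=\partial_\tau\tilde c+\tilde d$ \emph{after} the Duhamel step, where the source phase $e^{\pm i\nu\sigma\eta^{\frac12}}$ is evaluated at $\eta=\frac{\lambda^2(\tau)}{\lambda^2(\sigma)}\xi$ and does not recombine into $e^{\pm i\nu\tau\xi^{\frac12}}$. The paper handles this not by re-absorbing the diagonal output into the $a_{kj}^{\pm}$ class (as you propose) but by placing it into the $\tilde b/\xi^{\frac52+\frac{\nu}{2}-}$ piece and producing the $\partial_\tau\tilde c+\tilde d$ structure through integrations by parts in $\tau$ on the combined phases $e^{\pm i\nu\tau\xi^{\frac12}\pm i\nu\sigma\frac{\lambda(\tau)}{\lambda(\sigma)}(\xi^{\frac12}\pm\tilde\eta^{\frac12})}$ (case (1.ii)), with the off-diagonal decay of $F(\xi,\eta)$ absorbing the factors generated when $\partial_\tau$ hits the phase. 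Your proposal contains no substitute for this step, so as written it establishes neither the $\tilde b$-structure nor the sharp no-smallness decay, which are the two substantive assertions of the proposition.
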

\begin{proof} The argument is essentially the same for all of these terms, and so we give the details for the most delicate case, the expression $F = 2\frac{\lambda'}{\lambda}\mathcal{K}_0\mathcal{D}_{\tau}\xb$. We will treat various cases depending on which of the three expressions in \eqref{eq:admissiblexb} represents $\xb$. We observe that the fact that one gains extra smallness for the terms with a factor $\left(\frac{\lambda'}{\lambda}\right)'$, $\left(\frac{\lambda'}{\lambda}\right)^2$ comes from their faster decay $\sim\tau^{-2}$, whence choosing $\tau_0$ large enough suffices to gains smallness for these terms. 
\\

{\it{(1): $\xb =  \chi_{\xi>1}\sum_{N\geq k\geq 1,N\geq j\geq 0}\sum_{\pm}a_{kj}^{\pm}(\tau)\frac{e^{\pm i\nu\tau\xi^{\frac12}}}{\xi^{2+\frac{k\nu}{2}}}(\log\langle \xi\rangle)^j $}} Here we show that $\tilde{x}$ will be of the form 
\[
\tilde{x}(\tau, \xi) = \chi_{\xi>1}\frac{\tilde{b}(\tau, \xi)}{\xi^{\frac52 + \frac{\nu}{2}-}} + \tilde{x}_{good}(\tau, \xi).
\]
To see this, we distinguish between three different interactions inside the integral constituting $\mathcal{K}_0\mathcal{D}_{\tau}\xb$. Schematically expand out the expression for $\tilde{x}(\tau, \xi)$ in this case as 
\begin{align*}
\tilde{x}(\tau, \xi) = \int_{\tau_0}^{\tau}\frac{\sin\left[\lambda(\tau)\xi^{\frac12}\int_{\sigma}^{\tau}\lambda^{-1}(u)\,du\right]}{\xi^{\frac12}}\sigma^{-1}\int_0^\infty \frac{F\left(\frac{\lambda^2(\tau)}{\lambda^2(\sigma)}\xi, \eta\right)\tilde{\rho}(\eta)}{\frac{\lambda^2(\tau)}{\lambda^2(\sigma)}\xi - \eta}\chi_{\eta>1}a_{kj}^{\pm}(\sigma)\frac{e^{\pm i\nu\sigma\eta^{\frac12}}}{\eta^{\frac32 + \frac{k\nu}{2}}}\big(\log\langle\eta\rangle\big)^j\,d\eta d\sigma
\end{align*}

{\it{(1.i). $\frac{\lambda^2(\tau)}{\lambda^2(\sigma)}\xi\ll \eta$}}. Here we place $\tilde{x}$ into the portion $\tilde{x}_{good}(\tau, \xi)$. In fact, note that using Proposition~\ref{prop:Kstructure}, we can bound
\begin{align*}
\left|\chi_{\eta>1}\chi_{\frac{\lambda^2(\tau)}{\lambda^2(\sigma)}\xi\ll \eta}\frac{F\left(\frac{\lambda^2(\tau)}{\lambda^2(\sigma)}\xi, \eta\right)\tilde{\rho}(\eta)}{\frac{\lambda^2(\tau)}{\lambda^2(\sigma)}\xi - \eta}\right|\lesssim \eta^{-\frac54}\left(\frac{\lambda^2(\tau)}{\lambda^2(\sigma)}\xi\right)^{-\frac54},
\end{align*}
and so we have for $\xi>1$
\begin{align*}
&\left|\int_0^\infty \chi_{\frac{\lambda^2(\tau)}{\lambda^2(\sigma)}\xi\ll \eta}\frac{F\left(\frac{\lambda^2(\tau)}{\lambda^2(\sigma)}\xi, \eta\right)\tilde{\rho}(\eta)}{\frac{\lambda^2(\tau)}{\lambda^2(\sigma)}\xi - \eta}\chi_{\eta>1}a_{kj}^{\pm}(\sigma)\frac{e^{\pm i\nu\sigma\eta^{\frac12}}}{\eta^{\frac32 + \frac{k\nu}{2}}}\left(\log\langle\eta\rangle\right)^j\,d\eta\right|\\
&\lesssim \big|a_{kj}^{\pm}(\sigma)\big|\left(\frac{\lambda^2(\tau)}{\lambda^2(\sigma)}\xi\right)^{-3-\frac{k\nu}{2}+}
\end{align*}
Then calling, by abuse of notation calling the corresponding contribution again $\tilde{x}(\tau, \xi)$, we easily infer the bound 
\begin{align*}
\sup_{\tau\geq \tau_0}\frac{\lambda(\tau)}{\lambda(\tau_0)}\left\langle\log\frac{\lambda(\tau)}{\lambda(\tau_0)}\right\rangle^{1+\frac{\kappa}{2}}\big\|\xi^{\frac52 + \frac{k\nu}{2}-}\tilde{x}(\tau, \xi)\big\|_{L^2_{d\xi}(\xi>1)}\lesssim \sup_{\sigma\geq \tau_0}\frac{\lambda(\sigma)}{\lambda(\tau_0)}\left\langle\log\frac{\lambda(\sigma)}{\lambda(\tau_0)}\right\rangle^{1+\frac{\kappa}{2}}\big|a_{kj}^{\pm}(\sigma)\big|.
\end{align*}
The remaining estimate 
\begin{align*}
\sup_{\tau\geq \tau_0}\frac{\lambda(\tau)}{\lambda(\tau_0)}\left\langle\log\frac{\lambda(\tau)}{\lambda(\tau_0)}\right\rangle^{1+\frac{\kappa}{2}}\big\|\xi^{\frac12}\left\langle\log\xi\right\rangle^{-1-\kappa}\tilde{x}(\tau, \xi)\big\|_{L^2_{d\xi}(\xi<1)}\lesssim \sup_{\sigma\geq \tau_0}\frac{\lambda(\sigma)}{\lambda(\tau_0)}\left\langle\log\frac{\lambda(\sigma)}{\lambda(\tau_0)}\right\rangle^{1+\frac{\kappa}{2}}\big|a_{kj}^{\pm}(\sigma)\big|.
\end{align*}
is also easy to see, where we refer to Proposition~\ref{prop:K_0Sbound} for the definition of the norm $\big\|\cdot\big\|_{S_0}$ which has to be used to control the small frequency term. 
\\

{\it{(1.ii). $\frac{\lambda^2(\tau)}{\lambda^2(\sigma)}\xi\sim\eta$}}. Here the corresponding kernel bounds are much weaker, and so we shall place this contribution into the somewhat weaker term $\chi_{\xi>1}\frac{\tilde{b}(\tau, \xi)}{\xi^{\frac52 + \frac{\nu}{2}-}}$ in the large frequency regime. For this we have to bound for $\xi>1$ the expression
\begin{align*}
&\left|\int_{\tau_0}^{\tau}\frac{\sin[\lambda(\tau)\xi^{\frac12}\int_{\sigma}^{\tau}\lambda^{-1}(u)\,du]|}{\xi^{\frac12}}\sigma^{-1}\int_0^\infty \chi_{\frac{\lambda^2(\tau)}{\lambda^2(\sigma)}\xi\sim\eta}\frac{\xi^{\frac52+\frac{\nu}{2}-}F\left(\frac{\lambda^2(\tau)}{\lambda^2(\sigma)}\xi, \eta\right)\tilde{\rho}(\eta)}{\frac{\lambda^2(\tau)}{\lambda^2(\sigma)}\xi - \eta}\chi_{\eta>1}a_{kj}^{\pm}(\sigma)\frac{e^{\pm i\nu\sigma\eta^{\frac12}}}{\eta^{\frac32 + \frac{k\nu}{2}}}\big(\log\langle\eta\rangle\big)^j\,d\eta d\sigma\right|\\
& =: \big|\tilde{b}(\tau, \xi)\big|,
\end{align*}
where the $\eta$-integral is understood in the principal value sense. In fact, using Proposition~\ref{prop:Kstructure}, we get 
\begin{align*}
&\left|\chi_{\frac{\lambda^2(\tau)}{\lambda^2(\sigma)}\xi\sim\eta}\xi^{\frac52+\frac{\nu}{2}-}F\left(\frac{\lambda^2(\tau)}{\lambda^2(\sigma)}\xi, \eta\right)\tilde{\rho}(\eta)\chi_{\eta>1}a_{kj}^{\pm}(\sigma)\frac{e^{\pm i\nu\sigma\eta^{\frac12}}}{\eta^{\frac32 + \frac{k\nu}{2}}}\big(\log\langle\eta\rangle\big)^j\right|\\
&\lesssim \left|a_{kj}^{\pm}(\sigma)\right|\xi^{\frac52+\frac{\nu}{2}-}\left(\frac{\lambda^2(\tau)}{\lambda^2(\sigma)}\xi\right)^{-2-\frac{k\nu}{2}-}
\end{align*}
and from here we easily infer
\begin{align*}
\sup_{\tau\geq \tau_0}\frac{\lambda(\tau)}{\lambda(\tau_0)}\left\langle\log\frac{\lambda(\tau)}{\lambda(\tau_0)}\right\rangle^{1+\frac{\kappa}{2}}\big\|\tilde{b}(\tau, \xi)\big\|_{L^\infty_{d\xi}(\xi>1)}\lesssim \sup_{\sigma\geq \tau_0}\frac{\lambda(\sigma)}{\lambda(\tau_0)}\left\langle\log\frac{\lambda(\sigma)}{\lambda(\tau_0)}\right\rangle^{1+\frac{\kappa}{2}}\big|a_{kj}^{\pm}(\sigma)\big|
\end{align*}
To get the representation $\tilde{b}(\tau, \xi) = \partial_{\tau}\tilde{c}(\tau, \xi) + \tilde{d}(\tau, \xi)$ with 
\[
\big|\tilde{c}(\tau, \xi)\big| + \big|\tilde{d}(\tau, \xi)\big|\lesssim \xi^{-\frac12}\frac{\lambda(\tau_0)}{\lambda(\tau)}\left\langle\log\frac{\lambda(\tau)}{\lambda(\tau_0)}\right\rangle^{-1-\frac{\kappa}{2}},
\]
write the product of the oscillating factors $\sin[\ldots]$, $e^{\pm i\nu\sigma\eta^{\frac12}}$ after passing to the variable 
\[
\tilde{\eta}: = \frac{\lambda^2(\sigma)}{\lambda^2(\tau)}\eta,
\]
as a linear combination of oscillating terms of the form 
\[
e^{\pm i\nu\tau\xi^{\frac12}\pm i\nu\sigma\frac{\lambda(\tau)}{\lambda(\sigma)}\left(\xi^{\frac12} + \tilde{\eta}^{\frac12}\right)},\,e^{\pm i\nu\tau\xi^{\frac12}\pm i\nu\sigma\frac{\lambda(\tau)}{\lambda(\sigma)}\left(\xi^{\frac12} - \tilde{\eta}^{\frac12}\right)}.
\]
For the first type of phase function one writes it as 
\[
\frac{\partial_{\tau}\left(e^{\pm i\nu\tau\xi^{\frac12}\pm i\nu\sigma\frac{\lambda(\tau)}{\lambda(\sigma)}(\xi^{\frac12} + \tilde{\eta}^{\frac12})}\right)}{\pm i\nu\xi^{\frac12}\pm  i\nu\sigma\frac{\lambda_{\tau}(\tau)}{\lambda(\sigma)}\left(\xi^{\frac12} + \tilde{\eta}^{\frac12}\right)}
\]
and performs integration by parts with respect to $\tau$ to get the desired $c(\tau, \xi)$, noting that we restrict to $\xi\sim \tilde{\eta}$.
\\
For the second type of phase function, write it as 
\[
e^{\pm i\nu\tau\xi^{\frac12}\pm i\nu\sigma\frac{\lambda(\tau)}{\lambda(\sigma)}\left(\xi^{\frac12} - \tilde{\eta}^{\frac12}\right)} = \frac{\partial_{\tau}e^{\pm i\nu\tau\xi^{\frac12}}}{\pm i\nu\xi^{\frac12}}\cdot e^{\pm i\nu\sigma\frac{\lambda(\tau)}{\lambda(\sigma)}\left(\xi^{\frac12} - \tilde{\eta}^{\frac12}\right)},
\]
and again performs integration by parts with respect to $\tau$. Observe that any potential factors $ i\nu\sigma\frac{\lambda_{\tau}(\tau)}{\lambda(\sigma)}\left(\xi^{\frac12} - \tilde{\eta}^{\frac12}\right)$ arising that way are handled by using the improved off-diagonal decay of the kernel $F(\xi, \eta)$. 
\\

{\it{(1.iii). $\frac{\lambda^2(\tau)}{\lambda^2(\sigma)}\xi\gg\eta$}}. This is handled like case (1.i). 
\\

{\it{(2): $\xb(\tau, \xi) = \chi_{\xi>1}\frac{\tilde{b}(\tau,\xi)}{\xi^{\frac52 + \frac{\nu}{2}-}}$.}} Here we claim that the corresponding contribution to the Duhamel parametrix applied to $F = 2\frac{\lambda'}{\lambda}\mathcal{K}_0\mathcal{D}_{\tau}\xb$ can be placed into $\tilde{x}_{good}(\tau, \xi)$. 
This is done by splitting into the cases {\it{(2.i) - (2.iii)}} just as in (1). We deal here with the most delicate case {\it{(2.ii)}}, i. e. the case when $\frac{\lambda^2(\tau)}{\lambda^2(\sigma)}\xi\sim \eta$. Fully spelled out, this is the expression 
\begin{align*}
\int_{\tau_0}^{\tau}\frac{\sin\left[\lambda(\tau)\xi^{\frac12}\int_{\sigma}^{\tau}\lambda^{-1}(u)\,du\right]}{\xi^{\frac12}}\sigma^{-1}\int_0^\infty \chi_{\frac{\lambda^2(\tau)}{\lambda^2(\sigma)}\xi\sim\eta}\frac{F\left(\frac{\lambda^2(\tau)}{\lambda^2(\sigma)}\xi, \eta\right)\tilde{\rho}(\eta)}{\frac{\lambda^2(\tau)}{\lambda^2(\sigma)}\xi - \eta}\chi_{\eta>1}\frac{\mathcal{D}_{\sigma}b(\sigma, \eta)}{\eta^{\frac52 + \frac{\nu}{2}-}}\,d\eta d\sigma
\end{align*}
In order to ensure that this can be incorporated into $\tilde{x}_{good}$ for large frequencies $\xi>1$, we need to check that 
\begin{align*}
\left\|\int_{\tau_0}^{\tau}\xi^{2+\kappa}\sin\left[\lambda(\tau)\xi^{\frac12}\int_{\sigma}^{\tau}\lambda^{-1}(u)\,du\right]\sigma^{-1}\int_0^\infty \chi_{\frac{\lambda^2(\tau)}{\lambda^2(\sigma)}\xi\sim\eta}\frac{F\left(\frac{\lambda^2(\tau)}{\lambda^2(\sigma)}\xi, \eta\right)\tilde{\rho}(\eta)}{\frac{\lambda^2(\tau)}{\lambda^2(\sigma)}\xi - \eta}\chi_{\eta>1}\frac{\mathcal{D}_{\sigma}b(\sigma, \eta)}{\eta^{\frac52 + \frac{\nu}{2}-}}\,d\eta d\sigma\right\|_{L^2_{d\xi}(\xi>1)}
\end{align*}
is bounded for sufficiently small $\kappa>0$. Observe that 
\[
\left|\chi_{\frac{\lambda^2(\tau)}{\lambda^2(\sigma)}\xi\sim\eta>1}F\left(\frac{\lambda^2(\tau)}{\lambda^2(\sigma)}\xi, \eta\right)\tilde{\rho}(\eta)\right|\lesssim \eta^{-\frac12}, 
\]
and then, arguing as in \cite{KST2} to infer the desired $L^2$-bound for the Hilbert type operator above, we bound the preceding $L^2$-norm by 
\begin{align*}
\lesssim \int_{\tau_0}^{\tau}\sigma^{-1}\left(\frac{\lambda^2(\tau)}{\lambda^2(\sigma)}\right)^{-2-\kappa}\left\|\eta^{-\frac12-\frac{\nu}{2}+\kappa+}\right\|_{L^2_{d\eta}(\eta>1)}\left\|\eta^{-\frac12}\mathcal{D}_{\sigma}b(\sigma, \eta)\right\|_{L^\infty_{d\eta}}\,d\sigma
\end{align*}
It follows that the above $L^2_{d\xi}$-norm, weighted by $\frac{\lambda(\tau)}{\lambda(\tau_0)}\left\langle\log\frac{\lambda(\tau)}{\lambda(\tau_0)}\right\rangle^{1+\frac{\kappa}{2}}$, is bounded by 
\[
\lesssim \sup_{\sigma\geq \tau_0}\frac{\lambda(\sigma)}{\lambda(\tau_0)}\left\langle\log\frac{\lambda(\sigma)}{\lambda(\tau_0)}\right\rangle^{1+\frac{\kappa}{2}}\left\|\eta^{-\frac12}\mathcal{D}_{\sigma}b(\sigma, \eta)\right\|_{L^\infty_{d\eta}},
\]
{\it{(3): $\xb(\tau, \xi) = x_{good}(\tau, \xi)$.}} In this case the argument recovering the sharp time decay is a little more delicate. We again split into different frequency interactions in terms of $\xi$ and $\eta$. 
\\

{\it{(3.a): $\xi<1$}}.
\\

{\it{(3.a.i). $\frac{\lambda^2(\tau)}{\lambda^2(\sigma)}\xi\sim \eta$. }}  Introduce the new variable $\tilde{\eta} = \frac{\lambda^2(\sigma)}{\lambda^2(\tau)}\eta$. Then we need to bound the expression
\begin{align*}
\left\|\int_{\tau_0}^{\tau}\frac{\sin[\lambda(\tau)\xi^{\frac12}\int_{\sigma}^{\tau}\lambda^{-1}(u)\,du]}{\langle\log\xi\rangle^{1+\kappa}}\beta_{\nu}(\sigma)\int_0^\infty\chi_{\xi\sim\tilde{\eta}}\frac{F\left(\frac{\lambda^2(\tau)}{\lambda^2(\sigma)}\xi, \frac{\lambda^2(\tau)}{\lambda^2(\sigma)}\tilde{\eta}\right)}{\xi - \tilde{\eta}}\tilde{\rho}\left(\frac{\lambda^2(\tau)}{\lambda^2(\sigma)}\tilde{\eta}\right)\mathcal{D}_{\sigma}x\left(\sigma, \frac{\lambda^2(\tau)}{\lambda^2(\sigma)}\tilde{\eta}\right)\,d\tilde{\eta} d\sigma\right\|_{L^2_{d\xi}(\xi<1)}
\end{align*}
 Note that further restricting to $\frac{\lambda^2(\tau)}{\lambda^2(\sigma)}\tilde{\eta}<1$, we have 
 \[
 \chi_{\xi\sim\tilde{\eta}}\langle\log\xi\rangle^{-1-\kappa}\lesssim \left\langle \log\frac{\lambda(\tau)}{\lambda(\sigma)}\right\rangle^{-1-\kappa}
 \]
 Thus, under this further restriction, the preceding $L^2_{d\xi}$-norm is bounded by 
 \begin{align*}
&\lesssim\int_{\tau_0}^{\tau}\frac{\lambda(\sigma)}{\lambda(\tau)}\left\langle \log\frac{\lambda(\tau)}{\lambda(\sigma)}\right\rangle^{-1-\kappa}\sigma^{-1}\frac{\lambda(\tau_0)}{\lambda(\sigma)}\left\langle\log\frac{\lambda(\sigma)}{\lambda(\tau_0)}\right\rangle^{-1-\frac{\kappa}{2}}\,d\sigma\\&\hspace{4cm}\cdot\sup_{\sigma\geq \tau_0}
\frac{\lambda(\sigma)}{\lambda(\tau_0)}\left\langle\log\frac{\lambda(\sigma)}{\lambda(\tau_0)}\right\rangle^{1+\frac{\kappa}{2}}\big\|\langle\xi\rangle^{2+\kappa}\langle\log\xi\rangle^{-1-\kappa}\mathcal{D}_{\sigma}x(\sigma, \cdot)\big\|_{L^2_{d\xi}}
 \end{align*}
 Dividing into the cases $\lambda(\sigma)><\sqrt{\lambda(\tau)\lambda(\tau_0)}$, one easily infers that the above $\sigma$-integral is bounded by 
\[
\lesssim \frac{\lambda(\tau_0)}{\lambda(\tau)}\left\langle\log\frac{\lambda(\tau)}{\lambda(\tau_0)}\right\rangle^{-1-\frac{\kappa}{2}},
\]
thus giving the required bound. Assuming on the other hand that $\frac{\lambda^2(\tau)}{\lambda^2(\sigma)}\tilde{\eta}>1$ (while maintaining the other localizations), we can absorb the weight $\tilde{\rho}\left(\frac{\lambda^2(\tau)}{\lambda^2(\sigma)}\tilde{\eta}\right)$ into $\left\|\int_{0}^{\infty}...\mathcal{D}_{\sigma}x\left(\sigma, \frac{\lambda^2(\tau)}{\lambda^2(\sigma)}\tilde{\eta}\right)\,d\tilde{\eta} d\sigma\right\|_{L^2_{d\xi}(\xi<1)}
$ while still gaining a decay $\left\langle\frac{\lambda^2(\tau)}{\lambda^2(\sigma)}\tilde{\eta}\right\rangle^{-\frac54}$ from $F(\cdot, \cdot)$, which then results in a gain 
\[
 \chi_{\xi\sim\tilde{\eta}}\langle\log\xi\rangle^{-1-\kappa}\left\langle\frac{\lambda^2(\tau)}{\lambda^2(\sigma)}\tilde{\eta}\right\rangle^{-\frac54}\lesssim \left\langle \log\frac{\lambda(\tau)}{\lambda(\sigma)}\right\rangle^{-1-\kappa},
 \]
 and the desired bound follows from this as before.
\\

{\it{(3.a.ii). $\frac{\lambda^2(\tau)}{\lambda^2(\sigma)}\xi\ll \gg \eta$. }} These cases are handled analogously to the preceding case taking into account an extra gain in $\min\left\{\frac{\xi}{\eta}, \frac{\eta}{\xi}\right\}$.
\\

{\it{(3.b): $\xi>1$}}. Here we have to bound the expression 
\begin{align*}
\left\|\int_{\tau_0}^{\tau}\frac{\xi^{2+\kappa}}{\langle\log\xi\rangle^{1+\kappa}}\sin\left[\lambda(\tau)\xi^{\frac12}\int_{\sigma}^{\tau}\lambda^{-1}(u)\,du\right]\beta_{\nu}(\sigma)\int_0^\infty\frac{F\left(\frac{\lambda^2(\tau)}{\lambda^2(\sigma)}\xi, \frac{\lambda^2(\tau)}{\lambda^2(\sigma)}\tilde{\eta}\right)}{\xi - \tilde{\eta}}\tilde{\rho}\left(\frac{\lambda^2(\tau)}{\lambda^2(\sigma)}\tilde{\eta}\right)\mathcal{D}_{\sigma}x\left(\sigma, \frac{\lambda^2(\tau)}{\lambda^2(\sigma)}\tilde{\eta}\right)\,d\tilde{\eta} d\sigma\right\|_{L^2_{d\xi}(\xi>1)}
\end{align*}
As before we split into various frequency interactions: 
\\

{\it{(3.b.i): $\xi>1$, $\xi\sim \tilde{\eta}$}}. Here use that 
\begin{align*}
&\left\|\frac{\xi^{2+\kappa}}{\langle\log\xi\rangle^{1+\kappa}}\int_0^\infty\frac{F\left(\frac{\lambda^2(\tau)}{\lambda^2(\sigma)}\xi, \frac{\lambda^2(\tau)}{\lambda^2(\sigma)}\tilde{\eta}\right)}{\xi - \tilde{\eta}}\tilde{\rho}\left(\frac{\lambda^2(\tau)}{\lambda^2(\sigma)}\tilde{\eta}\right)\mathcal{D}_{\sigma}x\left(\sigma, \frac{\lambda^2(\tau)}{\lambda^2(\sigma)}\tilde{\eta}\right)\,d\tilde{\eta}\right\|_{L^2_{d\xi}(\xi>1)}\\
&\lesssim \left(\frac{\lambda^2(\tau)}{\lambda^2(\sigma)}\right)^{-3-\kappa+}\left\|\frac{(\cdot)^{2+\kappa}}{\langle\log(\cdot)\rangle^{1+\kappa}}\mathcal{D}_{\sigma}x(\sigma, \cdot)\right\|_{L^2},
\end{align*}
and so the above long integral expression can be bounded by 
\begin{align*}
&\lesssim \int_{\tau_0}^{\tau}\beta_{\nu}(\sigma)\left(\frac{\lambda^2(\tau)}{\lambda^2(\sigma)}\right)^{-3-\kappa+}\frac{\frac{\lambda(\tau_0)}{\lambda(\sigma)}}{\left\langle\log\frac{\lambda(\sigma)}{\lambda(\tau_0)}\right\rangle^{1+\frac{\kappa}{2}}}\,d\sigma\cdot\left(\sup_{\sigma\geq\tau_0}\frac{\lambda(\sigma)}{\lambda(\tau_0)}\left\langle\log\frac{\lambda(\sigma)}{\lambda(\tau_0)}\right\rangle^{1+\frac{\kappa}{2}}\left\|\frac{(\cdot)^{2+\kappa}}{\langle\log(\cdot)\rangle^{1+\kappa}}\mathcal{D}_{\sigma}x(\sigma, \cdot)\right\|_{L^2}\right)\\
&\lesssim\frac{\lambda(\tau_0)}{\lambda(\tau)}\left\langle\log\frac{\lambda(\tau)}{\lambda(\tau_0)}\right\rangle^{-1-\frac{\kappa}{2}}\cdot\left(\sup_{\sigma\geq\tau_0}\frac{\lambda(\sigma)}{\lambda(\tau_0)}\left\langle\log\frac{\lambda(\sigma)}{\lambda(\tau_0)}\right\rangle^{1+\frac{\kappa}{2}}\left\|\frac{(\cdot)^{2+\kappa}}{\langle\log(\cdot)\rangle^{1+\kappa}}\mathcal{D}_{\sigma}x(\sigma, \cdot)\right\|_{L^2}\right).\\
\end{align*}
This is the required bound for this contribution. 
\\

The remaining cases $\xi\ll\tilde{\eta}$, $\xi\gg\tilde{\eta}$ are handled similarly.

\end{proof}

In analogy to Proposition~\ref{prop:K_0Sbound}, we can force smallness in {\it{all}} of the bounds in the preceding proposition, provided we include further localizers to extreme frequencies, either small or large: 
\begin{proposition}\label{prop:nonlocallinear2} Let $F$ be either $2\frac{\lambda'}{\lambda}\mathcal{K}_0\mathcal{D}_{\tau}\xb$ or $\frac{\lambda'}{\lambda}[\mathcal{D}_{\tau}, \mathcal{K}_0]\xb$. Then if $\xb$ is admissible, the expressions
\begin{align*}
&\xi^{-\frac{1}{2}}\chi_{\xi<\epsilon}\int_{\tau_{0}}^{\tau}\sin\left(\lambda(\tau)\xi^{\frac{1}{2}}\int_{\sigma}^{\tau}\lambda(u)^{-1}du\right)F\left(\sigma,\frac{\lambda(\tau)^{2}}{\lambda(\sigma)^{2}}\xi\right)d\sigma\\
&\xi^{-\frac{1}{2}}\chi_{\xi>\epsilon^{-1}}\int_{\tau_{0}}^{\tau}\sin\left(\lambda(\tau)\xi^{\frac{1}{2}}\int_{\sigma}^{\tau}\lambda(u)^{-1}du\right)F\left(\sigma,\frac{\lambda(\tau)^{2}}{\lambda(\sigma)^{2}}\xi\right)d\sigma\\
\end{align*}
obey the same bounds as in Proposition~\ref{prop:mulitlin1} with a smallness constant of the form $\langle\log\epsilon\rangle^{-\gamma(\kappa)}$ for some $\gamma(\kappa)>0$, and $\kappa$ as in the definition of $S_0$. 
\\
Similarly, if we replace $F$ by $2\frac{\lambda'}{\lambda}\chi_{\xi>\epsilon^{-1}}\mathcal{K}_0\mathcal{D}_{\tau}\xb$ or by $2\frac{\lambda'}{\lambda}\chi_{\xi<\epsilon}\mathcal{K}_0\mathcal{D}_{\tau}\xb$. 
\end{proposition}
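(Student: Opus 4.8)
The plan is to re-run the proof of Proposition~\ref{prop:nonlocallinear1} verbatim, inserting the extra localizer $\chi_{\xi<\epsilon}$ (resp.\ $\chi_{\xi>\epsilon^{-1}}$) at the appropriate place, and then to track where the localizer forces a gain of the stated form $\langle\log\epsilon\rangle^{-\gamma(\kappa)}$. The crucial observation is that in every one of the cases (1)--(3) (and their frequency subcases) treated in Proposition~\ref{prop:nonlocallinear1}, the estimates ultimately rest on two mechanisms: first, the kernel bounds on $F$ from Proposition~\ref{prop:Kstructure} together with the spectral-measure asymptotics $\tilde\rho(\xi)\sim\xi^2$ ($\xi>1$), $\tilde\rho(\xi)\sim(\xi\log^2\xi)^{-1}$ ($\xi<1$); and second, the $S_0$- and $S_1$-bounds on $\mathcal K_0$ from Proposition~\ref{prop:K_0Sbound}, which already contains a smallness statement of exactly the desired shape when one localizes the input or output to $\{\xi<\epsilon\}$ or $\{\xi>\epsilon^{-1}\}$. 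So the strategy is to route every contribution through a step where one invokes either the second part of Proposition~\ref{prop:K_0Sbound} (when $F$ is of the two forms $2\frac{\lambda'}{\lambda}\chi_{\xi>\epsilon^{-1}}\mathcal K_0\mathcal D_\tau\xb$ etc., the localizer sits directly on $\mathcal K_0$) or an elementary integration-by-parts / Cauchy--Schwarz bound in which the presence of $\chi_{\xi<\epsilon}$ on the \emph{output} frequency restricts the $\xi$-integral to a set of measure $\lesssim\epsilon$ in a weight that behaves like $\langle\log\xi\rangle^{-2}$ near zero, producing a factor $\langle\log\epsilon\rangle^{-\gamma}$, while $\chi_{\xi>\epsilon^{-1}}$ produces the gain from the improved power-decay of $F$ in \eqref{eq:Fbound1} over the region $\xi+\eta\geq1$.

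Concretely, I would organise the proof as follows. First, reduce to the model term $F=2\frac{\lambda'}{\lambda}\mathcal K_0\mathcal D_\tau\xb$, noting as in Proposition~\ref{prop:nonlocallinear1} that the terms carrying a factor $(\frac{\lambda'}{\lambda})'$ or $(\frac{\lambda'}{\lambda})^2$ are strictly better because of their $\sim\tau^{-2}$ decay, so the new localizer only makes them better still. Second, for the output-localized statement, observe that in each subcase of the proof of Proposition~\ref{prop:nonlocallinear1} the final $L^2_{d\xi}$ (or $L^\infty_{d\xi}$) estimate is performed on a fixed dyadic-type frequency window; intersecting that window with $\{\xi<\epsilon\}$ kills all dyadic pieces except those with $\xi\lesssim\epsilon$, on which the $S_0$-weight $\langle\xi\rangle^{2+\kappa}\xi^{1/2}\langle\log\xi\rangle^{-1-\kappa}$ contributes, after summing, a factor controlled by $\sum_{2^j<\epsilon}\langle j\rangle^{-1-\kappa}\lesssim\langle\log\epsilon\rangle^{-\kappa}$, i.e.\ $\gamma(\kappa)=\kappa$ works (possibly after shrinking). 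Symmetrically, intersecting with $\{\xi>\epsilon^{-1}\}$ forces $\frac{\lambda^2(\tau)}{\lambda^2(\sigma)}\xi>\epsilon^{-1}$, and in the diagonal subcase $\frac{\lambda^2(\tau)}{\lambda^2(\sigma)}\xi\sim\eta$ one then uses the extra off-diagonal decay $\langle\xi\rangle^{-5/4}\langle\eta\rangle^{-5/4}\langle\xi-\eta\rangle^{-1}$ of $F$ together with the power excess built into the spaces ($\xi^{5/2+\nu/2-}$ versus the $\xi^{2+\kappa}$ weight), each of which is integrable and produces the tail factor $\langle\log\epsilon\rangle^{-\gamma}$ after a further harmless splitting. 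For the input-localized variant ($\chi_{\eta<\epsilon}$ or $\chi_{\eta>\epsilon^{-1}}$ inside $\mathcal K_0$), I would simply quote the second display of Proposition~\ref{prop:K_0Sbound} at the exact spot where, in Proposition~\ref{prop:nonlocallinear1}, the plain bound $\|\mathcal K_0 x\|_{S_0}\lesssim\|x\|_{S_0}$ was used; the Duhamel propagator estimates of Propositions~\ref{prop:PropagatorLinftyBound}--\ref{prop:PropagatorEnergy} then carry the $\langle\log\epsilon\rangle^{-\gamma(\kappa)}$ factor through untouched, since those estimates are linear and the localizer commutes through to the data.

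The main obstacle I anticipate is bookkeeping in the oscillatory subcase (1.ii)/(2.ii)/(3.a.i), where the output of $\mathcal K_0$ is only placed into the weaker reservoir $\chi_{\xi>1}\frac{\tilde b(\tau,\xi)}{\xi^{5/2+\nu/2-}}$ (resp.\ into $\tilde x_{good}$ via the $\mathcal D_\tau b$-term), and where the bound relied on the principal-value Hilbert-transform estimate and on the phase-function integration by parts that produces the decomposition $\tilde b=\partial_\tau\tilde c+\tilde d$. One must check that inserting $\chi_{\xi<\epsilon}$ or $\chi_{\xi>\epsilon^{-1}}$ does not spoil that integration by parts — it does not, since the cutoff is a frequency multiplier independent of $\tau$ and commutes with $\partial_\tau$ — and that the resulting gain is genuinely $\langle\log\epsilon\rangle^{-\gamma}$ rather than merely $O(1)$; for $\chi_{\xi<\epsilon}$ this gain comes entirely from the $\langle\log\xi\rangle^{-1-\kappa}$ weight in the small-frequency part of the $S_0$-norm restricted to $\xi<\epsilon$, exactly as in the small-frequency estimate \eqref{F diagonal Ib} and in Lemma~\ref{lem: Dveps Linfty}. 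Once this one delicate case is checked, the remaining subcases are strictly easier and the proof is complete.
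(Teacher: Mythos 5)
The paper offers no written proof of Proposition~\ref{prop:nonlocallinear2}; the statement is preceded only by the remark that it is ``in analogy to Proposition~\ref{prop:K_0Sbound}.'' Your plan --- re-running the case analysis of Proposition~\ref{prop:nonlocallinear1} with the localizer inserted, quoting the smallness part of Proposition~\ref{prop:K_0Sbound} wherever the $S_0$-boundedness of $\mathcal K_0$ was used, and extracting a logarithmic gain from the $\langle\log\xi\rangle^{-1-\kappa}$ weight wherever the conjugated Hilbert-type bound was used directly --- is precisely the argument the authors are gesturing at, so at the level of strategy your proposal matches the paper's intent.

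Two clarifications to tighten the write-up. First, the dyadic-sum heuristic $\sum_{2^j<\epsilon}\langle j\rangle^{-1-\kappa}\lesssim\langle\log\epsilon\rangle^{-\kappa}$ in the middle of your proposal does not by itself produce a gain \emph{relative to the input norm}, which is what the statement requires; the correct mechanism for the output localizer $\chi_{\xi<\epsilon}$ in the delicate diagonal sub-case (3.a.i) is the one you describe in your final paragraph. Concretely: for $\xi<\epsilon$ write $\langle\log\xi\rangle^{-1-\kappa}\leq\langle\log\epsilon\rangle^{-\kappa/2}\langle\log\xi\rangle^{-1-\kappa/2}$, observe that the key inequality $\chi_{\xi\sim\tilde\eta}\langle\log\xi\rangle^{-1-\kappa/2}\lesssim\langle\log\tfrac{\lambda(\tau)}{\lambda(\sigma)}\rangle^{-1-\kappa/2}$ from the proof of Proposition~\ref{prop:nonlocallinear1} holds for any positive exponent, and check that the resulting $\sigma$-integral still reproduces the sharp rate $\tfrac{\lambda(\tau_0)}{\lambda(\tau)}\langle\log\tfrac{\lambda(\tau)}{\lambda(\tau_0)}\rangle^{-1-\kappa/2}$ since the convolution-type estimate $\int_0^U\langle U-u\rangle^{-1-\kappa/2}\langle u\rangle^{-1-\kappa/2}\,du\lesssim\langle U\rangle^{-1-\kappa/2}$ only requires $\kappa/2>0$. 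This gives $\gamma(\kappa)\sim\kappa/2$. Second, in the large-frequency diagonal sub-case (3.b.i) the original estimate is a direct conjugated Hilbert-transform bound, not an application of Proposition~\ref{prop:K_0Sbound}, so one cannot literally cite that proposition there; the same $\langle\log\xi\rangle^{-1-\kappa}\leq\langle\log\epsilon\rangle^{-\kappa/2}\langle\log\xi\rangle^{-1-\kappa/2}$ splitting for $\xi>\epsilon^{-1}$ serves, and is in fact weaker than what is available since, with $\tilde\eta\sim\xi>\epsilon^{-1}$, the improved off-diagonal decay \eqref{eq:Fbound1} of $F$ together with the slack already present in the factor $\bigl(\tfrac{\lambda^2(\tau)}{\lambda^2(\sigma)}\bigr)^{-3-\kappa+}$ yields an excess power gain, not merely a logarithmic one. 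With these two points made explicit, the remaining sub-cases are indeed ``strictly easier,'' as you say, and your proof is complete.
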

\subsection{Other linear contributions}
Here we consider the contribution from $\calF\left(\calR(\veps,\calD\veps)\right)$ on the right hand side of \eqref{eq on Fourier side temp 7}. $\calR(\veps,\calD\veps)$ is given in \eqref{eq Dveps temp 2}:
\begin{align*}
 \calR(\veps,\calD\veps)=&-\frac{4R}{(R^{2}+1)^{2}}\left(2\left(\frac{\lambda'}{\lambda}\right)^{2}+\left(\frac{\lambda'}{\lambda}\right)'\right)\veps+\left(\frac{\lambda'}{\lambda}\right)^{2}R\partial_{R}\left(\frac{4R}{(R^{2}+1)^{2}}\right)\veps\\
 &-2\frac{\lambda'}{\lambda}\left(\partial_{\tau}+\frac{\lambda'}{\lambda}R\partial_{R}\right)\left(\frac{4R}{(R^{2}+1)^{2}}\veps\right)
\end{align*}
The first line on the right hand side above can be treated in the same way as for the linear contribution from $\calF\left(\lambda^{-2}\calD(N(\veps))\right)$. We focus on the second line right hand side above. For notational simplicity, we denote
\begin{align}\label{notation for calR}
 U(R):=-\frac{4R}{(R^{2}+1)^{2}},\quad \calS(\veps,\calD\veps):=2\frac{\lambda'}{\lambda}\left(\partial_{\tau}+\frac{\lambda'}{\lambda}R\partial_{R}\right)\left(U(R)\veps\right).
\end{align}
The Fourier transform of $\calS(\veps,\calD\veps)$ is given by:
\begin{align}\label{Fourier of calS 1}
 \begin{split}
  \calF\left(\calS(\veps,\calD\veps)\right)(\tau,\xi)=&2\frac{\lambda'}{\lambda}\left(\partial_{\tau}-2\frac{\lambda'}{\lambda}\xi\partial_{\xi}\right)\calF\left(U(R)\veps\right)+2\frac{\lambda'}{\lambda}\calK\calF\left(U(R)\veps\right)\\
  =&2\frac{\lambda'}{\lambda}\left(\partial_{\tau}-2\frac{\lambda'}{\lambda}\xi\partial_{\xi}\right)\calF\left(U(R)\veps\right)+2\left(\frac{\lambda'}{\lambda}\right)^2\calK_{0}\calF\left(U(R)\veps\right)-4\left(\frac{\lambda'}{\lambda}\right)^2\calF\left(U(R)\veps\right).
 \end{split}
\end{align}
Again, the last two terms on the right hand side in \eqref{Fourier of calS 1} is handled in the same way as the linear contribution in $\calF\left(\lambda^{-2}\calD(N(\veps))\right)$. For the other term we have the following:
\begin{proposition}\label{prop:nonlocallinear R} Let $F$ be $2\frac{\lambda'}{\lambda}\left(\partial_{\tau}-2\frac{\lambda'}{\lambda}\xi\partial_{\xi}\right)\calF(U(R)\veps)$. Then if $\xb$ is admissible, so is 
\[
\tilde{x}(\tau,\xi): = \xi^{-\frac{1}{2}}\int_{\tau_{0}}^{\tau}\sin\left(\lambda(\tau)\xi^{\frac{1}{2}}\int_{\sigma}^{\tau}\lambda(u)^{-1}du\right)F\left(\sigma,\frac{\lambda(\tau)^{2}}{\lambda(\sigma)^{2}}\xi\right)d\sigma
\]
and the same bounds as in Proposition~\ref{prop:mulitlin1} hold but with $\ll$ replaced by $\lesssim $ involving a universal constant. 
\end{proposition}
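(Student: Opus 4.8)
\textbf{Proposal for the proof of Proposition~\ref{prop:nonlocallinear R}.} The plan is to reduce the statement to the already-established Proposition~\ref{prop:nonlocallinear1}, exploiting the close structural analogy between the operator $2\frac{\lambda'}{\lambda}\left(\partial_{\tau}-2\frac{\lambda'}{\lambda}\xi\partial_{\xi}\right)\calF(U(R)\veps)$ and the non-local operator $2\frac{\lambda'}{\lambda}\calK_0\calD_\tau\xb$ treated there, the key common features being the prefactor $\frac{\lambda'}{\lambda}\sim \tau^{-1}$, one application of a $\calD_\tau$-type derivative, and the smoothing provided by multiplication by the rapidly decaying, smooth function $U(R) = -\frac{4R}{(R^2+1)^2}$. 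First I would rewrite the operator in terms of $\calD_\tau$: using \eqref{eq:D_tau} we have $\partial_\tau - 2\frac{\lambda'}{\lambda}\xi\partial_\xi = \calD_\tau + \frac{\lambda'}{\lambda}$, so
\[
F = 2\frac{\lambda'}{\lambda}\calD_\tau\big(\calF(U\veps)\big) + 2\Big(\frac{\lambda'}{\lambda}\Big)^2\calF(U\veps),
\]
and the second piece, carrying the extra $\tau^{-1}$ decay, is placed among the terms already covered (as noted in the paragraph preceding the proposition) — it is handled exactly as the linear contribution in $\calF(\lambda^{-2}\calD(N(\veps)))$ from Lemma~\ref{lem:nonlinestimates1} and Proposition~\ref{prop:mulitlin1}, with a smallness gain from $\tau_0$ large. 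Thus the real content is the term $2\frac{\lambda'}{\lambda}\calD_\tau\big(\calF(U\veps)\big)$.

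The second step is to understand $\calF(U(R)\veps)(\tau,\xi)$ on the Fourier side in terms of the admissible structure of $\xb$. Here I would use the right-inverse formula $\veps = \phi\big(\calD\veps\big) = \phi_0(R)\int_0^R \phi_0(s)^{-1}\calD\veps(\tau,s)\,ds$ together with the three-part decomposition \eqref{eq:admissiblexb} of $\xb$. For the part near the origin $R\ll\tau$ the factor $U(R)$ is smooth and rapidly decaying, and $\veps$ inherits the $S_0$-type regularity of $\xb$ (via Lemma~\ref{lem:epsbound} and the localized bounds \eqref{veps L2 localized}); translating back through $\calF$ and the Plancherel-type estimate \eqref{plancherel esti} produces a contribution to $x_{good}$. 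For the part near the light cone $R\sim\nu\tau$, one invokes Lemma~\ref{lem:FourierToPhysical1} to see that $U(R)\veps$ has, up to the $R^{-2}$-type decay of $U$ at $R\sim\tau$, the same singular-plus-smooth structure $R^{-1/2}\sum b_{kl}(\tau)(\nu\tau-R)^{3/2+k\nu}(\log)^l + \dots$, and then `inverting' that lemma as in the proof of Proposition~\ref{prop:mulitlin1} gives back Fourier coefficients of the form $a_{kj}^\pm(\tau)e^{\pm i\nu\tau\xi^{1/2}}\xi^{-2-k\nu/2}(\log)^j + \tfrac{b(\tau,\xi)}{\xi^{5/2+\nu/2-}} + x_{good}$; crucially the extra decay of $U$ buys an improvement of two powers of $\xi^{-1/2}$ in $\xi$, so in fact $\calF(U\veps)$ sits in the \emph{better} spaces, with the $a_{kj}^\pm$-terms already promoted to the $\tfrac{b}{\xi^{5/2+\nu/2-}}$- and $x_{good}$-levels. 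One then applies $2\frac{\lambda'}{\lambda}\calD_\tau$ and feeds the result into the Duhamel parametrix \eqref{sol to inhomo xb}, using the algebra of $\calD_\tau$ with the admissible ansatz (as in the proof of Proposition~\ref{prop:nonlocallinear1}, cases (1)-(3)) and the frequency-localized integration-by-parts bounds for the Hilbert-type kernels.

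The third step is the bookkeeping: verify that the weighted norms on the left of Proposition~\ref{prop:mulitlin1} applied to $\tilde x$ are controlled, up to a universal constant (not $\ll$), by the corresponding norms of $\xb$. The time-weight manipulations are identical to those appearing throughout the previous two propositions: the $\tau^{-1}$ from $\frac{\lambda'}{\lambda}$ supplies integrability, and the $\sigma$-integrals $\int_{\tau_0}^\tau \big(\tfrac{\lambda(\tau)}{\lambda(\sigma)}\big)^{-m}\sigma^{-1}\tfrac{\lambda(\tau_0)}{\lambda(\sigma)}\langle\log\tfrac{\lambda(\sigma)}{\lambda(\tau_0)}\rangle^{-1-\kappa/2}\,d\sigma$ are bounded by $\tfrac{\lambda(\tau_0)}{\lambda(\tau)}\langle\log\tfrac{\lambda(\tau)}{\lambda(\tau_0)}\rangle^{-1-\kappa/2}$ by splitting at $\lambda(\sigma)=\sqrt{\lambda(\tau)\lambda(\tau_0)}$. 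The main obstacle, I expect, is precisely the same one flagged in Section 6 for the $h(\tau)$-source term and in the proof of Proposition~\ref{prop:mulitlin1}: ensuring that the piece of $\calF(U\veps)$ coming from the light-cone singularity of $\veps$ is handled so that the $\calD_\tau$-derivative does not destroy the oscillatory cancellation $e^{\pm i\nu\tau\xi^{1/2}}$, i.e. one must carefully commute $\calD_\tau$ past the phase — writing $\calD_\tau$-derivatives of $a_{kj}^\pm(\tau)e^{\pm i\nu\tau\xi^{1/2}}$ so that the resulting factor $\pm i\nu\xi^{1/2}$ is absorbed by the $\xi^{-1/2}$ in front of the parametrix — and track the attendant logarithmic losses. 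Since $U(R)$ already gains two powers in $\xi$-decay, there is ample room to absorb these, so no re-iteration is needed and the clean constant $\lesssim$ (rather than $\ll$) is obtained; this is why the proposition is stated with a universal constant, consistent with the fact that the smallness for the overall scheme will instead come from the $\left(\frac{\lambda'}{\lambda}\right)'$- and $\left(\frac{\lambda'}{\lambda}\right)^2$-type terms and from the manifold re-iteration of Section 11.
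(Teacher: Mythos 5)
Your proposal takes a genuinely different route from the paper's. You propose a physical-space argument: decompose $\veps$ into the near-origin and near-light-cone pieces via Lemma~\ref{lem:FourierToPhysical1}, multiply by $U(R)$, and then ``invert'' back to the Fourier side as in the proof of Proposition~\ref{prop:mulitlin1}, applying $\calD_\tau$ and feeding into the Duhamel parametrix. The paper instead stays entirely on the Fourier side: it writes $\calF(U(R)\veps)$ as a \emph{kernel operator} $\calJ$ applied to $\xb$, so that $F$ becomes
\[
2\frac{\lambda'}{\lambda}\calJ(\calD_\tau\xb)+2\frac{\lambda'}{\lambda}[\calD_\tau,\calJ]\xb
\]
(modulo the terms with an extra $\tau^{-1}$, which both of you treat as easy), and then the entire content of the proof is the kernel estimate showing $\calJ(\xi,\eta) = G(\xi,\eta)/(\xi-\eta)$ with $G$ obeying the same size and decay bounds as the function $F(\xi,\eta)$ in Proposition~\ref{prop:Kstructure}. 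Once $\calJ$ is ``completely analogous'' to $\calK_0$, the result is immediate from Proposition~\ref{prop:nonlocallinear1}, applied verbatim with $\calJ$ in place of $\calK_0$. Your algebraic peeling-off $F=2\frac{\lambda'}{\lambda}\calD_\tau\calF(U\veps)+2(\frac{\lambda'}{\lambda})^2\calF(U\veps)$ is the same first step, but from there the two arguments diverge.

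The main weakness in your proposal is that you never actually establish the central quantitative fact — whatever form it takes in your route. You assert that ``the extra decay of $U$ buys an improvement of two powers of $\xi^{-1/2}$,'' but multiplying by the smooth function $U(R)$ does not improve the $\xi$-decay associated to the light-cone singularity of $\veps$: it preserves the exponent $(\nu\tau-R)^{3/2+k\nu}$, only shrinking the amplitude by a factor $\sim\tau^{-3}$ (since $U(\nu\tau)\sim\tau^{-3}$), which is a time-decay gain, not a $\xi$-decay gain. The genuine $\xi$-gain comes from passing from $\calD\veps$ to $\veps$ (one degree higher in the vanishing at $R=\nu\tau$, hence roughly one power of $\xi^{-1/2}$), and from the paper's kernel estimates, which only establish that $\calJ$ is \emph{as good as} $\calK_0$, not two powers better. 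Even in the favorable regime $\eta\ll 1\leq\xi$ the paper gets $|\calJ(\xi,\eta)|\cdot|\xi-\eta|\lesssim\xi^{-13/4}$ after four integrations by parts — a substantial computation which your proposal sidesteps by assertion. Without this (or an equivalent physical-space bound), you do not know the term $2\frac{\lambda'}{\lambda}\calD_\tau\calF(U\veps)$ is no worse than $2\frac{\lambda'}{\lambda}\calK_0\calD_\tau\xb$, which is precisely what the proposition requires. Separately, you never spell out how to control the commutator $[\calD_\tau,\calJ]\xb$ (or its physical-side analogue, the terms where $\calD_\tau$ hits the kernel rather than $\xb$), and you do not mention the resonant piece $c(\tau)U(R)\phi_0(R)$, which the paper explicitly splits off in \eqref{calS resonant} and treats separately. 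In short: the reduction you set up is fine and matches the paper, but the load-bearing estimate — that the map $\xb\mapsto\calF(U\veps)$ is an operator of the same type as $\calK_0$ — is missing, and your substitute heuristic about extra $\xi$-decay is not correct as stated.
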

\begin{proof}
We start by writing $\calF(U(R)\veps)$ as a linear operator applied on $\xb=\calF(\calD\veps)$, assuming in the large frequency regime that $\veps(\tau,R)=\int_{0}^{\infty}\xb(\tau,\eta)R^{-\frac{1}{2}}\phi_{KST}(R,\eta)\rho(\eta)d\eta$. In fact, note that 
\[
\int_{0}^{\infty}\xb(\tau,\eta)R^{-\frac{1}{2}}\phi_{KST}(R,\eta)\rho(\eta)d\eta = c\phi_0(R)\int_{0}^{\infty}\xb(\tau,\eta)\rho(\eta)d\eta + O(R^2),
\]
and inserting the first term on the right for $\veps$ results in a contribution to $\calF(\calD\veps)$ with a kernel of the form $O(\langle\xi\rangle^{-N})\cdot\rho(\eta)$, which gives a bounded operator from $S_1$ to $S_0$.

\begin{align}\label{Fourier of U veps}
 \begin{split}
  \calF(U(R)\veps)(\tau,\xi)=&\int_{0}^{\infty}U(R)\veps(\tau,R)\phi(R,\xi)RdR\\
  =&\int_{0}^{\infty}U(R)\left(\int_{0}^{\infty}\xb(\tau,\eta)R^{-\frac{1}{2}}\phi_{KST}(R,\eta)\rho(\eta)d\eta\right)\phi(R,\xi)RdR\\
  =&\int_{0}^{\infty}\left\langle U(R)R^{-\frac{1}{2}}\eta^{-1}\phi_{KST}(R,\eta),\phi(R,\xi)\right\rangle_{RdR}\xb(\tau,\eta)\trho(\eta)d\eta\\
  =:&\int_{0}^{\infty}\calJ(\xi,\eta)\xb(\tau,\eta)\trho(\eta)d\eta=:(\calJ\xb)(\tau,\xi).
 \end{split}
\end{align}
So the first term on the right hand side of \eqref{Fourier of calS 1} can be written as, modulo the terms which can be treated in the same way as for $\calF\left(\lambda^{-2}\calD(N(\veps))\right)$,
\begin{align}\label{Fourier of calS 2}
 2\frac{\lambda'}{\lambda}\calJ(\calD_{\tau}\xb)+2\frac{\lambda'}{\lambda}[\calD_{\tau},\calJ]\xb.
\end{align}
So the problem reduces to showing that the kernel of the operator $\calJ$ behaves as well as the kernel of $\calK_{0}$. When \underline{$\eta\gtrsim1$}, we start by computing
\begin{align}\label{def kernel G}
\begin{split}
 \xi\calJ(\xi,\eta)=&\left\langle U(R)R^{-\frac{1}{2}}\eta^{-1}\phi_{KST}(R,\eta),\tcalL\phi(R,\xi)\right\rangle_{RdR}\\
 =&\left\langle[\tcalL, U(R)]R^{-\frac{1}{2}}\eta^{-1}\phi_{KST}(R,\eta),\phi(R,\xi)\right\rangle_{RdR}\\
 &+\left\langle U(R)(\tcalL-\calL)\left(\eta^{-1}R^{-\frac{1}{2}}\phi_{KST}(R,\eta)\right),\phi(R,\xi)\right\rangle_{RdR}\\
 &+\left\langle U(R)\calL\left(\eta^{-1}R^{-\frac{1}{2}}\phi_{KST}(R,\eta)\right),\phi(R,\xi)\right\rangle_{RdR}\\
 =&\left\langle[\tcalL, U(R)]\left(R^{-\frac{1}{2}}\eta^{-1}\phi_{KST}(R,\eta)\right),\phi(R,\xi)\right\rangle_{RdR}\\
 &+\left\langle U(R)(\tcalL-\calL)\left(\eta^{-1}R^{-\frac{1}{2}}\phi_{KST}(R,\eta)\right),\phi(R,\xi)\right\rangle_{RdR}\\
 &+\eta\calJ(\xi,\eta)\\
 =:&G(\xi,\eta)+\eta\calJ(\xi,\eta).
 \end{split}
\end{align}
So we need to prove that $G(\xi,\eta)$ behaves as good as $F(\xi,\eta)$ at least. To compute $G(\xi,\eta)$, we list the formulas for $\calD, \calD^{*}$ and $\calL,\tcalL$:
\begin{align}\label{expression various operators}
\begin{split}
 &\calD=\partial_{R}+\frac{1}{R}-\frac{2}{R(R^{2}+1)},\quad \calD^{*}=-\partial_{R}-\frac{1}{R}-\frac{1-R^{2}}{R(1+R^{2})},\\
 &\calL=\calD^{*}\calD=-\partial^{2}_{R}-\frac{1}{R}\partial_{R}+\frac{1-6R^{2}+R^{4}}{R^{2}(1+R^{2})^{2}},\\
 &\tcalL=\calD\calD^{*}=-\partial_{R}^{2}-\frac{1}{R}\partial_{R}+\frac{4}{R^{2}(1+R^{2})}.\\
 \Rightarrow\quad &\tcalL-\calL=\frac{-R^{4}+10R^{2}+3}{(R^{2}+1)^{2}R^{2}}.
\end{split}
 \end{align}
For the commutator $[\tcalL,U(R)]$, we have, in view of \eqref{F off diagonal pre},
\begin{align}\label{commutator tcalL U}
 [\tcalL,U(R)]=-2U_{R}\partial_{R}-U_{RR}-\frac{1}{R}U_{R}.
\end{align}
We observe that
\begin{align*}
 [\tcalL,U(R)]+\tcalL-\calL=&-2U_{R}\partial_{R}-U_{RR}-\frac{1}{R}U_{R}+\frac{-R^{4}+10R^{2}+3}{R^{2}(R^{2}+1)^{2}}\\
 =&-2U_{R}\calD+\frac{1}{R}U_{R}-U_{RR}-\frac{4}{R(R^{2}+1)}U_{R}+\frac{-R^{4}+10R^{2}+3}{R^{2}(R^{2}+1)^{2}}\\
 =:&-2U_{R}\calD+V(R).
\end{align*}
So we have
\begin{align}\label{rewrite G}
 G(\xi,\eta)=\left\langle-2U_{R}(R)\phi(R,\eta),\phi(R,\xi)\right\rangle_{RdR}+\left\langle V(R)\eta^{-1}R^{-\frac{1}{2}}\phi_{KST}(R,\eta),\phi(R,\xi)\right\rangle_{RdR}.
\end{align}
For the first term on the right hand side of \eqref{rewrite G}, we note that $U_{R}(R)$ has the same asymptotic behavior as $W(R)$ when $R\rightarrow0$ or $R\rightarrow\infty$. So we only need to look at the second term. First $V(R)$ has the following asymptotic behavior:
\begin{align}\label{V behavior}
 V(R)\sim O(R^{-2}),\quad \textrm{when}\quad R\rightarrow0\quad \textrm{and}\quad R\rightarrow\infty.
\end{align}
According to \cite{KST2} and in view of $\eta\gtrsim1$,
\begin{align}\label{modified phi KST behavior}
\begin{split}
 &\eta^{-1}R^{-\frac{1}{2}}\phi_{KST}(R,\eta)\sim R\eta^{-1},\quad \textrm{when}\quad R\eta^{\frac{1}{2}}\ll1,\\
 &\eta^{-1}R^{-\frac{1}{2}}\phi_{KST}(R,\eta)\sim R^{-\frac{1}{2}}\eta^{-\frac{7}{4}},\quad \textrm{when}\quad R\eta^{\frac{1}{2}}\gtrsim1.
 \end{split}
\end{align}
If \underline{$\xi\leq\eta$}, we split the second integral in \eqref{rewrite G} as
\begin{align*}
\int_{0}^{\eta^{-\frac{1}{2}}}+\int_{\eta^{-\frac{1}{2}}}^{\xi^{-\frac{1}{2}}}+\int_{\xi^{-\frac{1}{2}}}^{\infty}:=I+II+III.
\end{align*}
In view of \eqref{phi behavior 1}-\eqref{phi behavior 2} and \eqref{modified phi KST behavior} we have
\begin{align}\label{G bound I}
 \begin{split}
  |I|\lesssim \eta^{-1}\int_{0}^{\eta^{-\frac{1}{2}}}R^{-2}RR^{2}RdR\lesssim \eta^{-\frac{5}{2}}.
 \end{split}
\end{align}
$II$ is bounded by (if $\xi\leq 1$)
\begin{align}\label{G bound IIa}
\begin{split}
 |II|\lesssim&\eta^{-\frac{7}{4}}\int_{\eta^{-\frac{1}{2}}}^{\xi^{-\frac{1}{2}}}
 =\int_{\eta^{-\frac{1}{2}}}^{1}...+\int_{1}^{\xi^{-\frac{1}{2}}}...\\
 \lesssim&\eta^{-\frac{7}{4}}\int_{\eta^{-\frac{1}{2}}}^{1}R^{-2}R^{-\frac{1}{2}}R^{2}RdR+\eta^{-\frac{7}{4}}\int_{1}^{\xi^{-\frac{1}{2}}}R^{-2}R^{-\frac{1}{2}}|\log R|RdR\lesssim \eta^{-\frac{7}{4}}.
 \end{split}
\end{align}
If $\xi\geq 1$, we have
\begin{align}\label{G bound IIb}
 \begin{split}
  |II|\lesssim &\eta^{-\frac{7}{4}}\int_{\eta^{-\frac{1}{2}}}^{\xi^{-\frac{1}{2}}}R^{-2}R^{-\frac{1}{2}}R^{2}RdR\lesssim \eta^{-\frac{7}{4}}\xi^{-\frac{3}{4}}.
 \end{split}
\end{align}
For $III$, if $\xi\geq 1$ we have
\begin{align}\label{G bound IIIa}
 \begin{split}
 |III|\lesssim&\eta^{-\frac{7}{4}}\xi^{-\frac{5}{4}}\int_{\xi^{-\frac{1}{2}}}^{1}R^{-2}R^{-\frac{1}{2}}R^{-\frac{1}{2}}RdR+\eta^{-\frac{7}{4}}\xi^{-\frac{5}{4}}\int_{1}^{\infty}R^{-2}R^{-\frac{1}{2}}R^{-\frac{1}{2}}RdR\\
 \lesssim&\xi^{-\frac{5}{4}}\eta^{-\frac{5}{4}}|\log\xi|.
 \end{split}
\end{align}
If $\xi\leq 1$, we have
\begin{align}\label{G bound IIIb}
 \begin{split}
  |III|\lesssim&\eta^{-\frac{7}{4}}|\log\xi|\int_{\xi^{-\frac{1}{2}}}^{\infty}R^{-2}R^{-\frac{1}{2}}RdR\lesssim\eta^{-\frac{7}{4}}\xi^{\frac{1}{4}}|\log\xi|\lesssim\eta^{-\frac{7}{4}}.
 \end{split}
\end{align}
If \underline{$\xi\geq\eta$}, we split the integral as
\begin{align*}
 \int_{0}^{\xi^{-\frac{1}{2}}}+\int_{\xi^{-\frac{1}{2}}}^{\eta^{-\frac{1}{2}}}+\int_{\eta^{-\frac{1}{2}}}^{1}+\int_{1}^{\infty}:=IV+V+VI+VII.
\end{align*}
We have
\begin{align}\label{G bound IV to VII}
 \begin{split}
  |IV|\lesssim&\eta^{-1}\int_{0}^{\xi^{-\frac{1}{2}}}R^{-2}RR^{2}RdR\lesssim\eta^{-1}\xi^{-\frac{3}{2}},\\
  |V|\lesssim&\eta^{-1}\xi^{-\frac{5}{4}}\int_{\xi^{-\frac{1}{2}}}^{\eta^{-\frac{1}{2}}}R^{-2}RR^{-\frac{1}{2}}RdR\lesssim\eta^{-\frac{5}{4}}\xi^{-\frac{5}{4}},\\
  |VI|\lesssim&\xi^{-\frac{5}{4}}\eta^{-\frac{3}{2}}\int_{\eta^{-\frac{1}{2}}}^{1}R^{-\frac{1}{2}}R^{-2}RdR\lesssim\xi^{-\frac{5}{4}}\eta^{-\frac{5}{4}},\\
  |VII|\lesssim&\eta^{-\frac{7}{4}}\xi^{-\frac{5}{4}}\int_{1}^{\infty}R^{-2}R^{-\frac{1}{2}}R^{-\frac{1}{2}}RdR\lesssim\eta^{-\frac{7}{4}}\xi^{-\frac{5}{4}}.
 \end{split}
\end{align}
Now we turn to the case when \underline{$\eta\ll1$} by separating the resonant part in $\veps$ by writing $\calS(\veps,\calD\veps)$ as
\begin{align}\label{calS resonant}
 \calS(\veps,\calD\veps)=2\frac{\lambda'}{\lambda}\left(\partial_{\tau}+\frac{\lambda'}{\lambda}R\partial_{R}\right)\left(U(R)\phi_{0}(R)\int_{0}^{R}[\phi_{0}(s)]^{-1}(\calD\veps)(s)ds+U(R)c(\tau)\phi_{0}(R)\right).
\end{align}
The contribution of the resonant part to $\calF(\calS(\veps,\calD\veps))$ is bounded in the same way as its contribution to $\calF\left(\lambda^{-2}\calD\left(N(\veps)\right)\right)$. For the non-resonant part we write the contribution as
\begin{align}\label{Fourier of U non resonant}
\begin{split}
 &\calF\left(U(R)\phi_{0}(R)\int_{0}^{R}[\phi_{0}(s)]^{-1}(\calD\veps)(s)ds\right)(\tau,\xi)\\
 =&\int_{0}^{\infty}U(R)\phi_{0}(R)\left(\int_{0}^{R}[\phi_{0}(s)]^{-1}(\calD\veps)(\tau,s)ds\right)\phi(R,\xi)RdR\\
  =&\int_{0}^{\infty}U(R)\phi_{0}(R)\left(\int_{0}^{R}[\phi_{0}(s)]^{-1}\int_{0}^{\infty}\xb(\tau,\eta)\phi(s,\eta)\trho(\eta)d\eta ds\right)\phi(R,\xi)RdR\\
  =&\int_{0}^{\infty}\left\langle U(R)\phi_{0}(R)\int_{0}^{R}[\phi_{0}(s)]^{-1}\phi(s,\eta)ds,\phi(R,\xi)\right\rangle_{RdR}\xb(\tau,\eta)\trho(\eta)d\eta\\
  =:&\int_{0}^{\infty}\tcalJ(\xi,\eta)\xb(\tau,\eta)\trho(\eta)d\eta=:(\tcalJ\xb)(\tau,\xi).
 \end{split}
\end{align}
To investigate the operator $\tcalJ$, we start by estimating $\int_{0}^{R}[\phi_{0}(s)]^{-1}\phi(s,\eta)ds$. When $R^{2}\eta\lesssim 1$, using \eqref{phi behavior 1} we have
\begin{align}\label{phi phi behavior 1}
 \begin{split}
  &\left|\int_{0}^{R}[\phi_{0}(s)]^{-1}\phi(s,\eta)ds\right|\lesssim R^{2},\quad\textrm{when}\quad R\lesssim 1,\\
  &\left|\int_{0}^{R}[\phi_{0}(s)]^{-1}\phi(s,\eta)ds\right|\lesssim R^{2}|\log R|,\quad\textrm{when}\quad R\gg1.
 \end{split}
\end{align}
When $R^{2}\eta\gtrsim 1$, using \eqref{phi behavior 2} we have
\begin{align}\label{phi phi behavior 2}
 \begin{split}
  &\left|\int_{0}^{R}[\phi_{0}(s)]^{-1}\phi(s,\eta)ds\right|\lesssim R^{2}|\log R|,\quad \textrm{when}\quad \eta\ll1.
 \end{split}
\end{align}
To estimate $\tcalJ$, we first consider the case $\xi\leq\eta$. We split the integral defining $\tcalJ$ as
\begin{align*}
 \int_{0}^{1}+\int_{1}^{\eta^{-\frac{1}{2}}}+\int_{\eta^{-\frac{1}{2}}}^{\xi^{-\frac{1}{2}}}+\int_{\xi^{-\frac{1}{2}}}^{\infty}=:I'+II'+III'+IV'.
\end{align*}
We use \eqref{phi behavior 1}-\eqref{phi behavior 2} and \eqref{phi phi behavior 1}-\eqref{phi phi behavior 2} to obtain
\begin{align}\label{tcalJ bound a}
 \begin{split}
  |I'|\lesssim&\int_{0}^{1}RRR^{2}R^{2}RdR\lesssim 1,\\
  |II'|+|III'|\lesssim&\int_{1}^{\xi^{-\frac{1}{2}}}R^{-4}R^{2}|\log R|^{2}RdR\lesssim|\log\xi|^{3},\\
  |IV'|\lesssim&\int_{\xi^{-\frac{1}{2}}}^{\infty}R^{-4}R^{2}|\log R|R^{-\frac{1}{2}}\xi^{-\frac{1}{4}}|\log\xi|RdR\lesssim|\log\xi|.
 \end{split}
\end{align}
If $\eta\ll1\leq \xi$, we split the integral as
\begin{align*}
 \int_{0}^{\xi^{-\frac{1}{2}}}+\int_{\xi^{-\frac{1}{2}}}^{\infty}=:V'+VI'.
\end{align*}
We have, again using \eqref{phi behavior 1}-\eqref{phi behavior 2} and \eqref{phi phi behavior 1}-\eqref{phi phi behavior 2},
\begin{align}\label{tcalJ bound b}
 \begin{split}
  |V'|\lesssim &\int_{0}^{\xi^{-\frac{1}{2}}}R^{2}R^{2}R^{2}RdR\lesssim\xi^{-4}.
 \end{split}
\end{align}
The estimate for $VI'$ is a little more delicate. We recall that for $R^{2}\xi\gtrsim1$ and $\xi\gtrsim1$,
\begin{align*}
 &\phi(R,\xi)=\calD\left(\xi^{-1}R^{-\frac{1}{2}}\phi_{KST}(R,\xi)\right),\quad \textrm{and}\quad \xi^{-1}R^{-\frac{1}{2}}\phi_{KST}(R,\xi)\sim\xi^{-\frac{7}{4}}e^{iR\xi^{\frac{1}{2}}}\sigma(R\xi^{\frac{1}{2}},R),\\
 \Rightarrow\quad&\phi(R,\xi)\sim R^{-\frac{1}{2}}\xi^{-\frac{5}{4}}e^{iR\xi^{\frac{1}{2}}}\sigma(R\xi^{\frac{1}{2}},R)+R^{-\frac{3}{2}}\xi^{-\frac{7}{4}}e^{iR\xi^{\frac{1}{2}}}\sigma(R\xi^{\frac{1}{2}},R).
\end{align*}
Here $\sigma(R\xi^{\frac{1}{2}},R)$ denotes a class of functions given in Proposition 5.6 in [KST]. Let us denote $\phi_{nr}(R,\eta):=\phi_{0}(R)\int_{0}^{R}[\phi_{0}(s)]^{-1}\phi(s,\eta)ds$. In view of the fact $e^{iR\xi^{\frac{1}{2}}}=-i\xi^{-\frac{1}{2}}\partial_{R}\left(e^{iR\xi^{\frac{1}{2}}}\right)$, we have
\begin{align}\label{tcalJ bound c1}
 \begin{split}
  VI'\sim&\int_{\xi^{-\frac{1}{2}}}^{\infty}U(R)\phi_{nr}(R,\eta)\xi^{-\frac{7}{4}}\partial_{R}\left(e^{iR\xi^{\frac{1}{2}}}\right)\left(R^{-\frac{1}{2}}
  \sigma(R\xi^{\frac{1}{2}},R)+R^{-\frac{3}{2}}\xi^{-\frac{1}{2}}\sigma(R\xi^{\frac{1}{2}},R)\right)RdR.
 \end{split}
\end{align}
Let $I(R)$ be the function in the integrand of \eqref{tcalJ bound c1} except the factor $\xi^{-\frac{7}{4}}\partial_{R}\left(e^{iR\xi^{\frac{1}{2}}}\right)$. Therefore the integration in \eqref{tcalJ bound c1} can be written as
\begin{align}\label{tcalJ bound c1 prime}
 VI'\sim\int_{\xi^{-\frac{1}{2}}}^{\infty}I(R)\xi^{-\frac{7}{4}}\partial_{R}\left(e^{iR\xi^{\frac{1}{2}}}\right)dR.
\end{align}
The function $I(R)$ has the following asymptotic behavior for $k=0,1,2,3,4$:
\begin{align}\label{IR behavior}
\begin{split}
 &R^{k}I^{(k)}(R)\sim R^{-\frac{3}{2}}|\log R|,\quad \textrm{when}\quad R\rightarrow\infty,\quad R^{k}I^{(k)}(R)\sim R^{\frac{9}{2}},\quad\textrm{when}\quad R\rightarrow\xi^{-\frac{1}{2}}.
 \end{split}
\end{align}
We perform integration by parts:
\begin{align}\label{tcalJ bound c2}
\begin{split}
 VI'\sim&-\int_{\xi^{-\frac{1}{2}}}^{\infty}\xi^{-\frac{7}{4}}I'(R)e^{iR\xi^{\frac{1}{2}}}dR+I(R)\xi^{-\frac{7}{4}}e^{iR\xi^{\frac{1}{2}}}|^{\infty}_{\xi^{-\frac{1}{2}}}\\
 \sim&\int_{\xi^{-\frac{1}{2}}}^{\infty}\xi^{-\frac{9}{4}}I''(R)e^{iR\xi^{\frac{1}{2}}}dR-I'(R)\xi^{-\frac{9}{4}}e^{iR\xi^{\frac{1}{2}}}|^{\infty}_{\xi^{-\frac{1}{2}}}+I(R)\xi^{-\frac{7}{4}}e^{iR\xi^{\frac{1}{2}}}|^{\infty}_{\xi^{-\frac{1}{2}}}\\
 \sim&-\int_{\xi^{-\frac{1}{2}}}^{\infty}\xi^{-\frac{11}{4}}I'''(R)e^{iR\xi^{\frac{1}{2}}}dR+I''(R)\xi^{-\frac{11}{4}}e^{iR\xi^{\frac{1}{2}}}|^{\infty}_{\xi^{-\frac{1}{2}}}\\
 &-I'(R)\xi^{-\frac{9}{4}}e^{iR\xi^{\frac{1}{2}}}|^{\infty}_{\xi^{-\frac{1}{2}}}+I(R)\xi^{-\frac{7}{4}}e^{iR\xi^{\frac{1}{2}}}|^{\infty}_{\xi^{-\frac{1}{2}}}\\
 \sim&\int_{\xi^{-\frac{1}{2}}}^{\infty}\xi^{-\frac{13}{4}}I^{(4)}(R)e^{iR\xi^{\frac{1}{2}}}dR-I'''(R)\xi^{-\frac{13}{4}}e^{iR\xi^{\frac{1}{2}}}|^{\infty}_{\xi^{-\frac{1}{2}}}+I''(R)\xi^{-\frac{11}{4}}e^{iR\xi^{\frac{1}{2}}}|^{\infty}_{\xi^{-\frac{1}{2}}}\\
 &-I'(R)\xi^{-\frac{9}{4}}e^{iR\xi^{\frac{1}{2}}}|^{\infty}_{\xi^{-\frac{1}{2}}}+I(R)\xi^{-\frac{7}{4}}e^{iR\xi^{\frac{1}{2}}}|^{\infty}_{\xi^{-\frac{1}{2}}}\\
 \Rightarrow\quad\quad|VI'|\lesssim&\xi^{-\frac{13}{4}}.
 \end{split}
\end{align}
If $\eta\leq\xi\leq1$, the estimate is similar to deriving \eqref{tcalJ bound a} and we simply interchange $\xi^{-\frac{1}{2}}$ and $\eta^{-\frac{1}{2}}$ and the bound in the second estimate in \eqref{tcalJ bound a} would be $|\log\eta|^{3}$.
\end{proof}

The preceding considerations imply that the operator $\calJ$ is completely analogous to the operator $\mathcal{K}_0$, and in the future, it suffices to deal with the latter. 

\section{Interactions between the resonant and non-resonant parts}

\subsection{The effect of $c(\tau)$ on $\mathcal{D}\veps$}
Keeping in mind the decomposition \eqref{eqn:epsdecomp}, we have so far analyzed the contributions of the `non-resonant part' given by $\phi(\mathcal{D}\veps)$ to the linear and nonlinear source terms in $N(\veps)$. On the other hand, the resonant part $c(\tau)\phi_0(R)$ clearly also contributes to these terms. Recalling the equation \eqref{ctau ODE} as well as the formula \eqref{eq:L_cParametrix}, we expect $c(\tau)$ to obey bounds with two powers less of decay than $\mathcal{D}\veps$, of the form 
\[
\big|c(\tau)\big| + \tau\big|c'(\tau)|\lesssim \tau^2\frac{\lambda(\tau_0)}{\lambda(\tau)}\left\langle\log\frac{\lambda(\tau)}{\lambda(\tau_0)}\right\rangle^{-1-\frac{\kappa}{2}}. 
\]
We now analyze the effect of substituting a term of the form $\veps = c(\tau)\phi_0(R)$ into $N(\veps)$. This is straightforward for the nonlinear terms: 
\begin{proposition}\label{prop:resonantnonlin} Let $N_1(\veps)$ be one of the two nonlinear terms in \eqref{eq:epsequation1}, and assume that $\veps(\tau, R) = c(\tau)\phi_0(R)$. Also, assume $\nu\ll1$. Then denoting by 
\[
\|c\|: = \sup_{\tau\geq \tau_0}\left[|c(\tau)| + \tau|c'(\tau)|\right]\tau^{-2}\frac{\lambda(\tau)}{\lambda(\tau_0)}\left\langle\log\frac{\lambda(\tau)}{\lambda(\tau_0)}\right\rangle^{1+\frac{\kappa}{2}},
\]
and letting 
\[
\tilde{x}(\tau, \xi): = \xi^{-\frac{1}{2}}\int_{\tau_{0}}^{\tau}\sin\left(\lambda(\tau)\xi^{\frac{1}{2}}\int_{\sigma}^{\tau}\lambda(u)^{-1}du\right)\mathcal{F}\left(\lambda^{-2}\mathcal{D}(N_1(\veps))\right)\left(\sigma,\frac{\lambda(\tau)^{2}}{\lambda(\sigma)^{2}}\xi\right)d\sigma
\]
then $\tilde{x}$ is admissible in that we have a decomposition 
\begin{align*}
\tilde{x}(\tau, \xi) = \sum_{N\geq k\geq 1,N\geq j\geq 0}\sum_{\pm}\chi_{\xi>1}\tilde{a}_{kj}^{\pm}(\tau)\frac{e^{\pm i\nu\tau\xi^{\frac12}}}{\xi^{2+\frac{k\nu}{2}}}(\log\langle \xi\rangle)^j + \chi_{\xi>1}\frac{\tilde{b}(\tau, \xi)}{\xi^{\frac52+\frac{\nu}{2}-}} + \tilde{x}_{good}(\tau, \xi)
\end{align*}
with $\tilde{b}$ admitting a representation as $\partial_{\tau}\tilde{c}+\tilde{d}$ and the bound
\begin{align*}
&\sup_{\tau\geq \tau_0}\frac{\lambda(\tau)}{\lambda(\tau_0)}\left\langle\log\frac{\lambda(\tau)}{\lambda(\tau_0)}\right\rangle^{1+\frac{\kappa}{2}}\big[\sum_{\pm}\tau\big|\tilde{a}_{kj}^{\pm}(\tau)\big| + \sum_{\pm}\tau\big|(\tilde{a}_{kj}^{\pm})'(\tau)\big| + \tau\big\|(|\tilde{b}| + |\xi^{\frac12}\tilde{c}| + \xi^{\frac12}|\tilde{d}|)(\tau, \xi)\big\|_{L_{\xi}^{\infty}}\\&\hspace{6.5cm} + \tau\big\|\langle\xi\rangle^{-\frac12}\mathcal{D}_{\tau}\tilde{b}(\tau, \xi)\big\|_{L_{\xi}^{\infty}} + \big\|\tilde{x}_{good}(\tau, \xi)\big\|_{S_0} + \big\|\mathcal{D}_{\tau}\tilde{x}_{good}(\tau, \xi)\big\|_{S_1}\big]\\
&\lesssim_{\tau_0}\|c\|^2 + \|c\|^3.
\end{align*}

\end{proposition}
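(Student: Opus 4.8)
The key observation is that substituting $\veps = c(\tau)\phi_0(R)$ into a nonlinear term $N_1(\veps)$ from \eqref{eq:epsequation1} produces an expression whose $R$-dependence is explicit and smooth away from the light cone $R = \nu\tau$, while the only loss of regularity comes from the factors $u^\nu$ appearing in the coefficients. Since $\phi_0(R) = \frac{R}{1+R^2}$ is $C^\infty$ and since the nonlinear terms are at least quadratic in $\veps$, the resulting source term will be of the schematic form $c(\tau)^2 \cdot m(\tau, R) + c(\tau)^3 \cdot \tilde m(\tau, R)$ (with $\nu\ll 1$ ensuring the quadratic term dominates the analysis), where $m, \tilde m$ carry the singularity of $u^\nu$ across the light cone and otherwise obey symbol-type bounds. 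Thus the strategy is to reduce this proposition to the machinery already established in Proposition~\ref{prop:mulitlin1} and its proof: once $\mathcal{D}(N_1(c(\tau)\phi_0))$ is written in a form structurally identical to \eqref{eq:messyformula1}, the verification that the Duhamel output \eqref{eq:Nepsilon1} is admissible proceeds verbatim.

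\textbf{Step 1: Expand $N_1(c\phi_0)$.} First I would compute $\mathcal{D}\big(\lambda^{-2}N_1(c(\tau)\phi_0(R))\big)$ explicitly. The three nonlinear terms in $N(\veps)$ are $\frac{\cos(2u^\nu)-\cos(2Q)}{r^2}\veps$, $\frac{\sin(2u^\nu)}{2r^2}(\cos(2\veps)-1)$, and $\frac{\cos(2u^\nu)}{2r^2}(\sin(2\veps)-2\veps)$; with $\veps = c(\tau)\phi_0(R)$ and $\nu$ small, the last two are genuinely quadratic (in fact $O(c^2)$ for small $c$, with cubic and higher corrections), and the first is already handled in Proposition~\ref{prop:mulitlin1}(1) as the linear term but now with $\veps = c\phi_0$ in place of the general $\veps$. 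Using the asymptotic expansion of $u^\nu - Q(R)$ from \cite{KST2,GaoKr} exactly as in \eqref{eq:expansionfromkst}, and the smoothness of $\phi_0$, I would arrive at a representation
\[
\chi_{R\gtrsim\tau}\mathcal{D}\big(\lambda^{-2}N_1(c\phi_0)\big) = \frac{c(\tau)^2}{\tau^2}\,O\!\left(\tfrac{\log R}{R^3}\right)\sum_{k,j\le N,\,k\ge1}(1-a)^{-\frac12+k\nu}(\log(1-a))^j + O(c^3)\text{-terms} + g_{smooth},
\]
with $a = R/(\nu\tau)$, plus a contribution supported in $R\ll \tau$ where $u^\nu$ is smooth and hence $g_{smooth}\in H^{3+}_{R\,dR}$ with norm $\lesssim \tau^{-2}(|c(\tau)|^2 + |c(\tau)|^3)$.

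\textbf{Step 2: Feed into the Duhamel parametrix and invoke the existing machinery.} With the above representation in hand, I would invert Lemma~\ref{lem:FourierToPhysical1} (i.e. pass back to the Fourier side), splitting — as in the treatment of the term $A$ in \eqref{eq:messyformula1} — into a Taylor part localised near $R = \nu\tau$ that produces terms $\chi_{\xi>1}e^{\pm i\nu\tau\xi^{1/2}}\xi^{-3/2-k\nu/2}\langle\log\xi\rangle^j$ (giving $\tilde a_{kj}^\pm$) plus $\xi^{-5/2-k\nu/2}$-decaying terms (giving $\tilde b$), and a remainder that, having three more degrees of differentiability than the minimal $L^2_{R\,dR}$ threshold (one from the Duhamel parametrix \eqref{eq:Nepsilon1}, two from the Taylor subtraction), lands in $H^{3+\nu-}$ and hence contributes to $\tilde x_{good}$. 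The bookkeeping for $\tilde a_{kj}^\pm$ — substituting into \eqref{eq:Nepsilon1} and extracting the sharp $\frac{\lambda(\tau)}{\lambda(\tau_0)}$-weighted decay with the logarithmic loss $\langle\log(\lambda(\tau)/\lambda(\tau_0))\rangle^{1+\kappa/2}$ — is carried out by the same time-integral estimates as in Proposition~\ref{prop:mulitlin1}, now with the gain that every occurrence of $|a_{kl}^\pm|$ or $\|x\|$ is replaced by $|c(\tau)|^2$ or $|c(\tau)|^3$ times $\tau^{-2}$, which together with $\|c\|$'s definition yields the right hand side $\lesssim_{\tau_0}\|c\|^2 + \|c\|^3$. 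The representation $\tilde b = \partial_\tau\tilde c + \tilde d$ follows from the same integration-by-parts-in-$\tau$ in the oscillatory phases $e^{\pm i\nu\tau\xi^{1/2}}e^{\pm 2i\nu\sigma(\lambda(\tau)/\lambda(\sigma))\xi^{1/2}}$ as detailed after \eqref{eq:A1expansion}.

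\textbf{Step 3: The $R\ll\tau$ contribution.} For the portion supported near the origin, $u^\nu$ is smooth, and one uses the analogue of Lemma~\ref{lem:nonlinestimates1}: the relevant coefficient $\chi_{R\ll\tau}\frac{\cos(2u^\nu)-\cos(2Q)}{R^2}\sim \tau^{-2}\chi_{R\ll\tau}\frac{\log(1+R^2)}{R^2}$ obeys the $L^2_{R\,dR}$ and $L^\infty$ bounds \eqref{NL coe esti general L2}, \eqref{NL coe esti general Linfty}, and the quadratic/cubic structure in $c\phi_0$ together with $\|\phi_0\|_{L^\infty}\lesssim 1$, $\phi_0 = O(R^2)$ near zero, means the argument producing \eqref{NL esti small xi} and the dyadic large-frequency estimates goes through with $\|\langle\xi\rangle^{-1/2}\xb\|_{S_0}$ replaced by the explicit $|c(\tau)|$, landing this contribution in $\tilde x_{good}$ with the stated bound.

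\textbf{Main obstacle.} The genuinely delicate point is the same as in Proposition~\ref{prop:mulitlin1}: extracting from the Duhamel integral the \emph{sharp} temporal decay $\frac{\lambda(\tau_0)}{\lambda(\tau)}\langle\log(\lambda(\tau)/\lambda(\tau_0))\rangle^{-1-\kappa/2}$ for the leading oscillatory coefficients $\tilde a_{kj}^\pm$ rather than a lossy one. Here the structure $\veps = c(\tau)\phi_0(R)$ is actually a \emph{simplification} relative to the general case — the $R$-profile is fixed and smooth, so the only mechanism one must track carefully is the interplay between the oscillation $e^{\pm i\nu\tau\xi^{1/2}}$ and the sine kernel $\sin(\lambda(\tau)\xi^{1/2}\int_\sigma^\tau\lambda^{-1})$ in \eqref{eq:Nepsilon1}, combined with the $\tau^{-2}$ decay of the coefficient and the growth $\tau^2$ built into $\|c\|$; these two powers of $\tau$ cancel, so the required decay comes purely from the parametrix's intrinsic $(\lambda(\sigma)/\lambda(\tau))$-gain, exactly as in the $a_{kl}^\pm$-to-$\tilde a_{kl}^\pm$ estimate in Proposition~\ref{prop:mulitlin1}. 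The verification that this cancellation is clean — i.e. that no hidden logarithmic loss creeps in — is where the bulk of the work lies, but it is a direct transcription of the already-completed argument.
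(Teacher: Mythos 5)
Your proposal takes essentially the same route as the paper, which itself dispatches this proposition in two lines as a ``straightforward variation'' of Proposition~\ref{prop:mulitlin1}, the key point being exactly the one you isolate: the quadratic/cubic dependence on $c$ together with the rapid decay of $c(\tau)$ for $\nu\ll1$ absorbs the $\tau^2$ loss built into $\|c\|$. One small mis-scoping: the linear term $\frac{\cos(2u^\nu)-\cos(2Q)}{R^2}\veps$ is not among the two nonlinear terms $N_1$ covered here (it is the subject of Proposition~\ref{prop:resonantlin}), so your Step~1 remark about it is extraneous, though it does not affect your treatment of the genuinely nonlinear terms.
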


The proof of this is a straightforward variation on that of Proposition~\ref{prop:mulitlin1}. Observe that the rapid decay of $c(\tau)$ (due to $\nu\ll1$) gives big gains in the nonlinear interactions and in particular compensates for the $\tau^2$-decay loss compared to the decay of $\mathcal{D}\veps$.
Things are naturally more delicate for the {\it{linear}} term in $N(\veps)$, i. e. the {\it{first term}} in \eqref{eq:epsequation1}. Here the leading order of decay of $u^{\nu}-Q(R)$ plays a crucial role in compensating for the weaker decay of $c(\tau)$ to recover the same decay rate as $\mathcal{D}\veps$. Note that here we have to go one level deeper in the iteration and also invoke fine structure of $c''(\tau)$: 
\begin{proposition}\label{prop:resonantlin} Let $N_2(\veps)$ be the linear term in \eqref{eq:epsequation1}, and assume that $\veps(\tau, R) = c(\tau)\phi_0(R)$. Also, assume that $c'(\tau)$ admits the representation \eqref{eq:c'explicit} with $A(\tau), B(\tau)$ satisfying 
\[
\sup_{\tau\geq\tau_0}\frac{\lambda(\tau)}{\lambda(\tau_0)}\left\langle\log\frac{\lambda(\tau)}{\lambda(\tau_0)}\right\rangle^{1+\frac{\kappa}{2}}\left[\tau |A(\tau)| + |B(\tau)|\right]\leq \alpha.
\]
Then letting 
\[
\tilde{x}(\tau, \xi): = \xi^{-\frac{1}{2}}\int_{\tau_{0}}^{\tau}\sin\left(\lambda(\tau)\xi^{\frac{1}{2}}\int_{\sigma}^{\tau}\lambda(u)^{-1}du\right)\mathcal{F}\left(\lambda^{-2}\mathcal{D}(N_2(\veps))\right)\left(\sigma,\frac{\lambda(\tau)^{2}}{\lambda(\sigma)^{2}}\xi\right)d\sigma,
\]
there is a decomposition
\begin{align*}
\tilde{x}(\tau, \xi) = \sum_{N\geq k\geq 1,N\geq j\geq 0}\sum_{\pm}\chi_{\xi>1}\tilde{a}_{kj}^{\pm}(\tau)\frac{e^{\pm i\nu\tau\xi^{\frac12}}}{\xi^{2+\frac{k\nu}{2}}}(\log\langle \xi\rangle)^j + \chi_{\xi>1}\frac{\tilde{b}(\tau, \xi)}{\xi^{\frac52+\frac{\nu}{2}-}} + \tilde{x}_{good}(\tau, \xi)
\end{align*}
where $\tilde{b}$ admits the usual decomposition in terms of $\tilde{c}, \tilde{d}$, and with the bound
\begin{align*}
&\sup_{\tau\geq \tau_0}\frac{\lambda(\tau)}{\lambda(\tau_0)}\left\langle\log\frac{\lambda(\tau)}{\lambda(\tau_0)}\right\rangle^{1+\frac{\kappa}{2}}\big[\sum_{\pm}\tau\big|\tilde{a}_{kj}^{\pm}(\tau)\big| + \sum_{\pm}\tau\big|(\tilde{a}_{kj}^{\pm})'(\tau)\big| + \tau\big\|(|\tilde{b}| + |\xi^{\frac12}\tilde{c}| + |\xi^{\frac12}\tilde{d}|)(\tau, \xi)\big\|_{L_{\xi}^{\infty}}\\&\hspace{6.5cm} + \tau\big\|\langle\xi\rangle^{-\frac12}\mathcal{D}_{\tau}\tilde{b}(\tau, \xi)\big\|_{L_{\xi}^{\infty}} + \big\|\tilde{x}_{good}(\tau, \xi)\big\|_{S_0} + \big\|\mathcal{D}_{\tau}\tilde{x}_{good}(\tau, \xi)\big\|_{S_1}\big]\\
&\lesssim \big|c(\tau_0)\big| + \alpha.
\end{align*}
In fact, we can replace the preceding by the more precise (with $D$ denoting a constant only depending on $\nu$)
\[
\leq D\big|c(\tau_0)\big| + \gamma(\tau_0)\alpha
\]
and $\lim_{\tau_0\rightarrow\infty}\gamma(\tau_0) = 0$, provided we omit the term $\big\|\tilde{x}_{good}(\tau, \xi)\big\|_{S_0}$ on the left.  
\end{proposition}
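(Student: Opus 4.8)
The plan is to mimic the structure of the proof of Proposition~\ref{prop:mulitlin1}, using that $\veps = c(\tau)\phi_0(R)$ is an explicit (smooth away from $R=0$, and analytic near $R=0$) function, while the only singular object is the coefficient $\frac{\cos(2u^\nu)-\cos(2Q(\lambda(t)r))}{R^2}$, whose behavior near the light cone $R=\nu\tau$ is controlled by the expansion \eqref{eq:expansionfromkst}. First I would split the source term $N_2(\veps) = \frac{\cos(2u^\nu)-\cos(2Q(\lambda(t)r))}{R^2}\,c(\tau)\phi_0(R)$, after applying $\mathcal{D}$, into the regions $R\ll\tau$, $R\sim\nu\tau$, and $R\gtrsim\tau$ away from the cone, exactly as in the proof of Proposition~\ref{prop:mulitlin1}. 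In the region $R\ll\tau$ the coefficient is $C^\infty$ and of size $\tau^{-2}O\!\big(\log(1+R^2)R^{-2}\big)$ by \eqref{NL coe behav general}, so the contribution is handled by Lemma~\ref{lem:nonlinestimates1}-type estimates; since $\phi(c\phi_0)=c(\tau)\phi_0(R)$ is globally controlled (with the loss of $\tau^2$ absorbed into $\|c\|$), this yields a $\tilde x_{good}$-contribution with a gain $\lesssim\tau_0^{-1}$ in the constant, hence $\gamma(\tau_0)\alpha$ after feeding in the bound on $c'$ via \eqref{eq:L_cParametrix} and the hypothesized representation \eqref{eq:c'explicit}. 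In the far region $R\gtrsim\tau$ away from the cone both factors are smooth with rapid decay, and the contribution is again $\tilde x_{good}$ with a $\tau_0$-gain.

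The substantive part is the region $R\sim\nu\tau$. Here I would insert the expansion \eqref{eq:expansionfromkst} for $\mathcal{D}\big(\tfrac{\cos 2u^\nu-\cos 2Q}{R^2}\big)$ and for $\tfrac{\cos 2u^\nu-\cos 2Q}{R^2}$ itself, multiply by the smooth function $c(\tau)\phi_0(R)$, Taylor-expand $c(\tau)\phi_0(R)$ (and the smooth prefactor) around $R=\nu\tau$ to first order just as in the treatment of $A_1$, $A_2$ in the proof of Proposition~\ref{prop:mulitlin1}, and then revert to the distorted Fourier side using (the inverse of) Lemma~\ref{lem:FourierToPhysical1}. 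The leading Taylor term produces, on the Fourier side, a sum of terms $c(\tau)\,h_{kl}^{\pm}(\tau)e^{\pm i\nu\tau\xi^{1/2}}\xi^{-3/2-k\nu/2}\langle\log\xi\rangle^l$ plus a $\xi^{-5/2-k\nu/2}$ remainder; plugging these into the Duhamel parametrix \eqref{eq:Nepsilon1}, using \eqref{sol to inhomo xb} and integrating by parts in $\sigma$ (to exploit the oscillation $e^{\pm i\nu\sigma\eta^{1/2}}$, which after the substitution $\tilde\eta=\lambda^2(\sigma)\lambda^{-2}(\tau)\eta$ becomes a phase with nondegenerate $\tau$-derivative for the non-resonant combination and a boundary contribution at $\sigma=\tau_0$ for the resonant one), produces the $\tilde a_{kj}^\pm$ terms and the $\tilde b/\xi^{5/2+\nu/2-}$ term with the claimed decomposition $\tilde b=\partial_\tau\tilde c+\tilde d$. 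The boundary term at $\sigma=\tau_0$ is precisely what yields the $D|c(\tau_0)|$ contribution, while the remaining $\sigma$-integral, thanks to the extra $\tau^{-2}$ decay of the light-cone coefficient and the $\langle\log\cdot\rangle^{-1-\kappa/2}$-weighted bound on $c$ via \eqref{eq:c'explicit}, converges and gives $\gamma(\tau_0)\alpha$ with $\gamma(\tau_0)\to0$. The quadratic Taylor remainder ($A_2$-analogue) is, as in Proposition~\ref{prop:mulitlin1}, estimated by gaining two derivatives against the $(R-\nu\tau)^2$ vanishing plus one from Duhamel, landing in $H^{3+\nu-}$, i.e.\ in $\tilde x_{good}$; note this is the one place where the $\big\|\tilde x_{good}\big\|_{S_0}$ norm cannot be upgraded to a $\tau_0$-small constant (only $\lesssim|c(\tau_0)|+\alpha$), which is exactly why the sharper bound $D|c(\tau_0)|+\gamma(\tau_0)\alpha$ is stated only after omitting $\big\|\tilde x_{good}\big\|_{S_0}$ from the left-hand side.

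The main obstacle I anticipate is the bookkeeping needed to recover the \emph{sharp} temporal decay $\frac{\lambda(\tau)}{\lambda(\tau_0)}\langle\log\frac{\lambda(\tau)}{\lambda(\tau_0)}\rangle^{1+\kappa/2}$ for the output, given that $c(\tau)$ itself decays two powers of $\tau$ slower than $\mathcal{D}\veps$. This is where one genuinely uses that $u^\nu-Q(R)$ at leading order has size $\sim(\nu\tau)^{-1}(1-a)^{-1/2+\nu}(\cdots)$, contributing a $\tau^{-2}$ from the coefficient that compensates the $\tau^2$-loss in $\|c\|$, \emph{and} that the representation $c'(\tau) = A(\tau)+B(\tau)$ in \eqref{eq:c'explicit} allows integration by parts in $\sigma$ inside \eqref{eq:Nepsilon1} without losing the logarithmic weight; a crude $L^\infty$ bound on $c'$ alone would not close. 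Concretely, after the $\sigma$-integration by parts one must verify that the emerging time integrals are of the schematic form $\int_{\tau_0}^\tau \sigma^{-1}\big(\tfrac{\lambda(\tau)}{\lambda(\sigma)}\big)^{-\delta}\tfrac{\lambda(\tau_0)}{\lambda(\sigma)}\langle\log\tfrac{\lambda(\sigma)}{\lambda(\tau_0)}\rangle^{-1-\kappa/2}\,d\sigma$ with some $\delta>0$, which by the now-standard splitting at $\lambda(\sigma)=\sqrt{\lambda(\tau)\lambda(\tau_0)}$ is $\lesssim \tfrac{\lambda(\tau_0)}{\lambda(\tau)}\langle\log\tfrac{\lambda(\tau)}{\lambda(\tau_0)}\rangle^{-1-\kappa/2}$ — the same computation used repeatedly in the proof of Proposition~\ref{prop:nonlocallinear1}. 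Everything else is a routine, if lengthy, variation on the arguments already carried out for Proposition~\ref{prop:mulitlin1} and Lemma~\ref{lem:nonlinestimates1}.
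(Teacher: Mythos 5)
Your high-level toolkit (Taylor expansion near the cone, reverting to the Fourier side, integration by parts in $\sigma$, using $c'=A+B$ from \eqref{eq:c'explicit}) overlaps with the paper's, but you have mislocated where the difficulty actually sits, and because of that the proposal as written does not close.

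You write that the region $R\sim\nu\tau$ is ``the substantive part'' and that the region $R\ll\tau$ is handled by a Lemma~\ref{lem:nonlinestimates1}-type estimate yielding a gain $\lesssim\tau_0^{-1}$, hence $\gamma(\tau_0)\alpha$. That is not correct. When $\veps=c(\tau)\phi_0(R)$, the bound in Lemma~\ref{lem:nonlinestimates1} (which controls the $N_2$ contribution through $\|\veps/(R\langle\log R\rangle)\|_{L^\infty}$) produces a factor $|c(\sigma)|$, which costs two extra powers of $\sigma$ relative to $\|\cdot\|$ of an $\xb$; meanwhile \eqref{NL coe behav general} only supplies a factor $\sigma^{-2}$. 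These two cancel exactly, so the resulting $\sigma$-integral
\[
\int_{\tau_0}^\tau \frac{\lambda(\sigma)}{\lambda(\tau)}\Big\langle\log\frac{\lambda(\tau)}{\lambda(\sigma)}\Big\rangle^{-1-\kappa}\frac{\lambda(\tau_0)}{\lambda(\sigma)}\Big\langle\log\frac{\lambda(\sigma)}{\lambda(\tau_0)}\Big\rangle^{-1-\frac{\kappa}{2}}\,d\sigma
\]
has no $\sigma^{-1}$ factor, does not converge, and does not yield any $\tau_0$-smallness. This is precisely why the paper reverses the emphasis: it states that the light-cone (``singular'') contributions are the \emph{straightforward} ones, and concentrates on the part away from the cone, specifically $\xi<1$, which is where the required time decay is delicate. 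The crucial ingredient there, which does not appear in your plan, is the $\sigma$-dependent frequency cutoff $\chi\!\left(C^{-1}\xi^{\frac12}/(\tfrac{\lambda(\sigma)}{\lambda(\tau)}\sigma^{-1})\right)$ splitting $\tilde x=\tilde x_1+\tilde x_2$. On the support of $\chi$, the logarithmic weight $\langle\log\xi\rangle^{-1-\kappa}$ in $S_0$ converts into $\langle\log(\tfrac{\lambda(\sigma)}{\lambda(\tau)}\sigma^{-1})\rangle^{-1-\kappa}$, which together with the $\langle\log\tfrac{\lambda(\sigma)}{\lambda(\tau_0)}\rangle^{-1-\kappa/2}$ in $\|c\|$ makes the $\sigma$-integral converge with a $\log^{-\kappa/2}(\tau_0)$ gain. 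On the complement one integrates by parts in $\sigma$, and after separating off the fast-decaying term $A(\sigma)$ in $c'(\sigma)$ one can afford a second integration by parts, with the large constant $C$ supplying the smallness.

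A second, related inaccuracy: you attribute the lack of $\tau_0$-smallness for $\|\tilde x_{good}\|_{S_0}$ (the reason it must be omitted from the left-hand side in the sharper bound) to the quadratic Taylor remainder $A_2$ at the light cone. In the paper's argument the culprit is different: it is the boundary term at $\sigma=\tau$ arising from the first integration by parts in the $R\ll\tau$, $\xi<1$ piece $\tilde x_2$, which is bounded only by $\|c\|\tfrac{\lambda(\tau_0)}{\lambda(\tau)}\langle\log\tfrac{\lambda(\tau)}{\lambda(\tau_0)}\rangle^{-1-\kappa/2}$ with no small constant. For the $S_1$ norm of $\mathcal{D}_\tau\tilde x_{good}$ this boundary term is absent because $\sin\!\big(\lambda(\tau)\xi^{1/2}\int_\sigma^\tau\lambda^{-1}\big)$ vanishes at $\sigma=\tau$, which is why the paper can obtain smallness there but not for $S_0$. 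That distinction, not a light-cone Taylor remainder, is what is encoded in the last sentence of the proposition. Without the frequency cutoff and without identifying the $\sigma=\tau$ boundary term as the obstruction, the decisive part of the argument is missing.
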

\begin{proof} The `singular' contributions which lead to the first two terms in $\tilde{x}(\tau, \xi)$ are straightforward to handle since they arise from the `boundary' of the cone where $R\sim\sigma$. Here we shall concentrate on the part away from the boundary, and specifically the case of small frequencies $\xi<1$, where the required time decay is more subtle. Thus let now 
\[
N_2(\veps) = \mathcal{D}\left[\chi_{R\ll\tau}\frac{\sin(u^{\nu} - Q(R))\sin(u^{\nu}+Q(R))}{R^2}\veps\right]
\]
We need to recover the exact decay rate for $\|\tilde{x}(\tau, \xi)\|_{S_0}$, for $\xi<1$. Letting $\chi(x)$ be a smooth cutoff localizing to $x\leq 1$, say, we split 
\[
\tilde{x}(\tau, \xi) = \tilde{x}_1(\tau, \xi) + \tilde{x}_2(\tau, \xi),
\]
where $\tilde{x}_1(\tau, \xi)$ is defined by inclusion of $\chi\left(C^{-1}\frac{\xi^{\frac12}}{\frac{\lambda(\sigma)}{\lambda(\tau)}\sigma^{-1}}\right)$ inside the integral giving $\tilde{x}(\tau, \xi)$; here $C$ is a large constant to be chosen later. Then using that (see [],[]) we have 
\begin{equation}\label{eq:uny-Q}
u^{\nu} - Q(R) = O\left(\frac{R}{\tau^2}\left\langle\log(R)\right\rangle\right).
\end{equation}
We first estimate the contribution of the small frequency term $\tilde{x}_1(\tau, \xi)$, where we take crucial advantage of the weight $\langle\log\xi\rangle^{-1-\kappa}$ in the definition of $\|\cdot\|_{S_0}$: 
\begin{align*}
&\big\|\tilde{x}_1(\tau, \xi)\big\|_{S_0}\\ &\sim\big\|\xi^{\frac12}\langle\log\xi\rangle^{-1-\kappa}\tilde{x}_1(\tau, \xi)\big\|_{L^2_{d\xi}}\\
&\lesssim_C \int_{\tau_0}^{\tau}\frac{\lambda(\sigma)}{\lambda(\tau)}\sigma^{-1}\left\langle\log\left(\frac{\lambda(\sigma)}{\lambda(\tau)}\sigma^{-1}\right)\right\rangle^{-1-\kappa}\left\|\mathcal{F}\left(\mathcal{D}\left[\chi_{R\ll\tau}\frac{\sin(u^{\nu} - Q(R))\sin(u^{\nu}+Q(R))}{R^2}\veps\right]\right)(\sigma, \cdot)\right\|_{L^\infty_{d\xi}}\,d\sigma
\end{align*}
Since with our current choice of $\veps$ and in light of \eqref{eq:uny-Q} we have the easily verified bound 
\begin{align*}
\left\|\mathcal{F}\left(\mathcal{D}\left[\chi_{R\ll\tau}\frac{\sin(u^{\nu} - Q(R))\sin(u^{\nu}+Q(R))}{R^2}\veps\right]\right)(\sigma, \cdot)\right\|_{L^\infty_{d\xi}}\lesssim \frac{\lambda(\tau_0)}{\lambda(\sigma)}\left\langle\log\frac{\lambda(\sigma)}{\lambda(\tau_0)}\right\rangle^{-1-\frac{\kappa}{2}}\|c\|, 
\end{align*}
we can bound the preceding integral expression by 
\begin{align*}
&\|c\|\int_{\tau_0}^{\tau}\frac{\lambda(\sigma)}{\lambda(\tau)}\sigma^{-1}\left\langle\log\left(\frac{\lambda(\sigma)}{\lambda(\tau)}\sigma^{-1}\right)\right\rangle^{-1-\kappa}\frac{\lambda(\tau_0)}{\lambda(\sigma)}\left\langle\log\frac{\lambda(\sigma)}{\lambda(\tau_0)}\right\rangle^{-1-\frac{\kappa}{2}}\,d\sigma\\
&\lesssim \|c\|\log^{-\frac{\kappa}{2}}(\tau_0)\frac{\lambda(\tau_0)}{\lambda(\tau)}\left\langle\log\frac{\lambda(\tau)}{\lambda(\tau_0)}\right\rangle^{-\frac{\kappa}{2}}. 
\end{align*}
This gives the required bound even with $\ll$ by picking $\tau_0$ very large, upon observing that $\|c\|\lesssim |c(\tau_0)| + \alpha$. \\
It then remains to consider the expression $\tilde{x}_2(\tau, \xi)$, where we shall rely on twofold integration by parts with respect to $\sigma$. 
Thus write 
\begin{align*}
\tilde{x}_2(\tau, \xi)&:= \xi^{-1}\frac{\lambda(\sigma)}{\lambda(\tau)}\cos\left(\lambda(\tau)\xi^{\frac{1}{2}}\int_{\sigma}^{\tau}\lambda(u)^{-1}du\right)\left[1-\chi\left(C^{-1}\frac{\xi^{\frac12}}{\frac{\lambda(\sigma)}{\lambda(\tau)}\sigma^{-1}}\right)\right]\mathcal{F}\big(\lambda^{-2}\mathcal{D}(N_2(\veps))\big)\left(\sigma,\frac{\lambda(\tau)^{2}}{\lambda(\sigma)^{2}}\xi\right)\big|_{\tau_0}^{\tau}\\
& - \int_{\tau_0}^{\tau}\frac{\cos\left(\lambda(\tau)\xi^{\frac{1}{2}}\int_{\sigma}^{\tau}\lambda(u)^{-1}du\right)}{\xi}\frac{\partial}{\partial\sigma}\left(\frac{\lambda(\sigma)}{\lambda(\tau)}\left[1-\chi\left(C^{-1}\frac{\xi^{\frac12}}{\frac{\lambda(\sigma)}{\lambda(\tau)}\sigma^{-1}}\right)\right]\mathcal{F}\big(\lambda^{-2}\mathcal{D}(N_2(\veps))\big)\right)\left(\sigma,\frac{\lambda(\tau)^{2}}{\lambda(\sigma)^{2}}\xi\right)\,d\sigma
\end{align*}
For the boundary terms, the one with $\sigma = \tau$ does not gain any smallness: we have 
\begin{align*}
\left\|\frac{\xi^{\frac12}}{\langle\log\xi\rangle^{1+\kappa}}\cdot \xi^{-1}[1-\chi(C^{-1}\tau\xi^{\frac12})]\mathcal{F}\big(\lambda^{-2}\mathcal{D}(N_2(\veps))\big)\left(\tau, \xi\right)\right\|_{L^2_{d\xi}(\xi<1)}\lesssim \|c\|\frac{\lambda(\tau_0)}{\lambda(\tau)}\left\langle\log\frac{\lambda(\tau)}{\lambda(\tau_0)}\right\rangle^{-1-\frac{\kappa}{2}}. 
\end{align*}
For the second boundary contribution evaluated at $\sigma = \tau_0$, we bound it by exploiting the rapid decay for large frequencies of the Fourier transform: 
\begin{align*}
\left|\mathcal{F}\big(\lambda^{-2}\mathcal{D}(N_2(\veps))\big)\left(\tau_0,\frac{\lambda(\tau)^{2}}{\lambda(\tau_0)^{2}}\xi\right)\right|\lesssim \big|c(\tau_0)\big|\left\langle \frac{\lambda(\tau)^{2}}{\lambda(\tau_0)^{2}}\xi\right\rangle^{-N}
\end{align*}
for arbitrary $N>1$. We conclude that the second boundary contribution is bounded by 
\begin{align*}
&\big|c(\tau_0)\big|\frac{\lambda(\tau_0)}{\lambda(\tau)}\left\|\frac{\xi^{\frac12}}{\langle\log\xi\rangle^{1+\kappa}}\cdot \xi^{-1}\left[1-\chi\left(C^{-1}\frac{\lambda(\tau)}{\lambda(\tau_0)}\tau_0\xi^{\frac12}\right)\right]\left\langle \frac{\lambda(\tau)^{2}}{\lambda(\tau_0)^{2}}\xi\right\rangle^{-N}\right\|_{L^2_{d\xi}(\xi<1)}\\&\lesssim_{\tau_0}\big|c(\tau_0)\big|\frac{\lambda(\tau_0)}{\lambda(\tau)}\left\langle\log\frac{\lambda(\tau)}{\lambda(\tau_0)}\right\rangle^{-1-\kappa},
\end{align*}
which gives the desired decay rate. 
\\
Now consider the integral expression in the above formula for $\tilde{x}_2(\tau, \xi)$. For fixed time $\sigma$, we bound the suitably weighted $L^2_{d\xi}(\xi<1)$-norm of the integrand by 
\begin{align*}
&\lesssim \sigma^{-1}\frac{\lambda(\sigma)}{\lambda(\tau)}\left\|\xi^{-\frac12}\langle\log\xi\rangle^{-1-\kappa}\left[1-\chi\left(C^{-1}\frac{\lambda(\tau)}{\lambda(\sigma)}\sigma\xi^{\frac12}\right)\right]\left\langle\frac{\lambda^2(\tau)}{\lambda^2(\sigma)}\xi\right\rangle^{-N}\right\|_{L^2_{d\xi}(\xi<1)}\cdot\|c\|\frac{\lambda(\tau_0)}{\lambda(\sigma)}\left\langle\log\frac{\lambda(\sigma)}{\lambda(\tau_0)}\right\rangle^{-1-\frac{\kappa}{2}}\\
&\sim \sigma^{-1}\frac{\lambda(\sigma)}{\lambda(\tau)}\log^{\frac12}(\sigma)\left\langle\log\frac{\lambda(\tau)}{\lambda(\sigma)}\sigma\right\rangle^{-1-\kappa}\cdot\|c\|\frac{\lambda(\tau_0)}{\lambda(\sigma)}\left\langle\log\frac{\lambda(\sigma)}{\lambda(\tau_0)}\right\rangle^{-1-\frac{\kappa}{2}}\\
\end{align*}
which falls short of the decay required to reproduce the rate $\frac{\lambda(\tau_0)}{\lambda(\tau)}\left\langle\log\frac{\lambda(\tau)}{\lambda(\tau_0)}\right\rangle^{-1-\frac{\kappa}{2}}$. However, we observe that if the operator $\frac{\partial}{\partial{\sigma}}$ in the above integral term constituting $\tilde{x}_2(\tau, \xi)$ falls on $c(\sigma)$ in $N_2(\veps)$ (recalling that we assume $\veps(\sigma, R) = c(\sigma)\phi_o(R)$) and, referring to \eqref{eq:c'explicit} below (which is a simple consequence of Lemma~\ref{lem:cintegralbound}), produces the term $A(\sigma)$, then we gain an extra power $\sigma^{-1}$ and easily produce the required bound, with a smallness gain (when choosing $\tau_0\gg 1$). Removing this particular term (but retaining all other terms) and callling the resulting expression $\frac{\partial}{\partial{\sigma}}\big(\ldots\big)'$, we then perform another integration by parts with respect to $\sigma$,  resulting in one boundary term at $\sigma = \tau_0$ which is treated exactly like the preceding one, in addition to the following integral term: 
\begin{align*}
&\int_{\tau_0}^{\tau}\frac{\sin\left(\lambda(\tau)\xi^{\frac{1}{2}}\int_{\sigma}^{\tau}\lambda(u)^{-1}du\right)}{\xi^{\frac32}}\\
\cdot&\frac{\partial}{\partial{\sigma}}\left[\frac{\lambda(\sigma)}{\lambda(\tau)}\frac{\partial}{\partial{\sigma}}\left(\frac{\lambda(\sigma)}{\lambda(\tau)}\left[1-\chi\left(C^{-1}\frac{\xi^{\frac12}}{\frac{\lambda(\sigma)}{\lambda(\tau)}\sigma^{-1}}\right)\right]\mathcal{F}\big(\mathcal{D}(N_2(\veps))\big)\left(\sigma,\frac{\lambda(\tau)^{2}}{\lambda(\sigma)^{2}}\xi\right)\right)'\right]\,d\sigma
\end{align*}
Here the weighted $L^2_{d\xi}(\xi<1)$-norm of the integrand for fixed time $\sigma$ is bounded by 
\begin{align*}
&\lesssim \left\|\xi^{-1}\langle\log\xi\rangle^{-1-\kappa}\left[1-\chi\left(C^{-1}\frac{\xi^{\frac12}}{\frac{\lambda(\sigma)}{\lambda(\tau)}\sigma^{-1}}\right)\right]\right\|_{L^2_{d\xi}}\frac{\lambda^2(\sigma)}{\lambda^2(\tau)}\sigma^{-2}\cdot \alpha\frac{\lambda(\tau_0)}{\lambda(\sigma)}\left\langle\log\frac{\lambda(\sigma)}{\lambda(\tau_0)}\right\rangle^{-1-\frac{\kappa}{2}}\\
&\lesssim \alpha C^{-1}\frac{\lambda(\tau_0)}{\lambda(\tau)}\sigma^{-1}\left\langle\log\frac{\lambda(\tau)}{\lambda(\sigma)}\right\rangle^{-1-\kappa}\left\langle\log\frac{\lambda(\sigma)}{\lambda(\tau_0)}\right\rangle^{-1-\frac{\kappa}{2}},\\
\end{align*}
and this now reproduces the required decay upon integration in $\sigma$, and even with a smallness gain when picking $C$ sufficiently large. 

We next repeat the same steps but for the slightly modified quantity $\mathcal{D}_{\tau}\tilde{x}$. Splitting as before into the contributions $\mathcal{D}_{\tau}\tilde{x}_{1,2}$, the small frequency term $\mathcal{D}_{\tau}\tilde{x}_1$ is handled exactly as before. As for the other term, we have to work more carefully to gain smallness. Integration by parts leads to 
\begin{align*}
\mathcal{D}_{\tau}\tilde{x}_2(\tau, \xi)&:= \xi^{-\frac12}\frac{\lambda(\sigma)}{\lambda(\tau)}\sin\left(\lambda(\tau)\xi^{\frac{1}{2}}\int_{\sigma}^{\tau}\lambda(u)^{-1}du\right)\left[1-\chi\left(C^{-1}\frac{\xi^{\frac12}}{\frac{\lambda(\sigma)}{\lambda(\tau)}\sigma^{-1}}\right)\right]\mathcal{F}\big(\mathcal{D}(N_2(\veps))\big)\left(\sigma,\frac{\lambda(\tau)^{2}}{\lambda(\sigma)^{2}}\xi\right)\big|_{\tau_0}^{\tau}\\
& - \int_{\tau_0}^{\tau}\frac{\sin\left(\lambda(\tau)\xi^{\frac{1}{2}}\int_{\sigma}^{\tau}\lambda(u)^{-1}du\right)}{\xi^{\frac12}}\frac{\partial}{\partial{\sigma}}\left(\frac{\lambda(\sigma)}{\lambda(\tau)}\left[1-\chi\left(C^{-1}\frac{\xi^{\frac12}}{\frac{\lambda(\sigma)}{\lambda(\tau)}\sigma^{-1}}\right)\right]\mathcal{F}\big(\mathcal{D}(N_2(\veps))\big)\left(\sigma,\frac{\lambda(\tau)^{2}}{\lambda(\sigma)^{2}}\xi\right)\right)\,d\sigma
\end{align*}
Only the boundary term for $\sigma = \tau_0$ survives (and this gets treated as before), while the remaining integral requires another integration by parts; more precisely, we remove the case when $\frac{\partial}{\partial{\sigma}}$ falls on $c(\sigma)$ inside $N_2(\veps)$ and thereby produces $A(\sigma)$, referring to \eqref{eq:c'explicit}, calling the remaining term $\frac{\partial}{\partial{\sigma}}\big(\ldots\big)'$. Then we apply another integration by parts with respect to $\sigma$, 
resulting in an additional boundary term at $\sigma = \tau$ of the form 
\begin{align*}
\xi^{-1}\frac{\partial}{\partial{\sigma}}\left(\frac{\lambda(\sigma)}{\lambda(\tau)}\left[1-\chi\left(C^{-1}\frac{\xi^{\frac12}}{\frac{\lambda(\sigma)}{\lambda(\tau)}\sigma^{-1}}\right)\right]\mathcal{F}\big(\mathcal{D}(N_2(\veps))\big)\left(\sigma,\frac{\lambda(\tau)^{2}}{\lambda(\sigma)^{2}}\xi\right)\right)'\big|_{\sigma = \tau}
\end{align*}
in addition to another boundary term at $\sigma=\tau_0$ being handled like the preceding one. The preceding boundary term is then estimated in the usual weighted $L^2_{d\xi}$-space (this time using the weight $\langle\log\xi\rangle^{-1-\kappa}$ in light of the definition of $\big\|\cdot\big\|_{S_1}$) by 
\begin{align*}
&\lesssim \big\|\xi^{-1}\langle\log\xi\rangle^{-1-\kappa}\tau^{-1}[1-\chi(C^{-1}\tau\xi^{\frac12})]\langle\xi\rangle^{-N}\big\|_{L^2_{d\xi}(\xi<1)}\cdot\alpha\frac{\lambda(\tau_0)}{\lambda(\tau)}\left\langle\log\frac{\lambda(\tau)}{\lambda(\tau_0)}\right\rangle^{-1-\kappa}\\
&\lesssim \alpha C^{-1}\frac{\lambda(\tau_0)}{\lambda(\tau)}\left\langle\log\frac{\lambda(\tau)}{\lambda(\tau_0)}\right\rangle^{-1-\kappa},
\end{align*}
which gives the desired smallness by choosing $C$ large enough (of course independently of $\tau_0$). 
Furthermore, we arrive at the integral expression
\begin{align*}
 &\int_{\tau_0}^{\tau}\frac{\cos\left(\lambda(\tau)\xi^{\frac{1}{2}}\int_{\sigma}^{\tau}\lambda(u)^{-1}du\right)}{\xi}\\
 \cdot&\frac{\partial}{\partial{\sigma}}\left(\frac{\lambda(\sigma)}{\lambda(\tau)}\frac{\partial}{\partial{\sigma}}\left(\frac{\lambda(\sigma)}{\lambda(\tau)}\left[1-\chi\left(C^{-1}\frac{\xi^{\frac12}}{\frac{\lambda(\sigma)}{\lambda(\tau)}\sigma^{-1}}\right)\right]\mathcal{F}\big(\mathcal{D}(N_2(\veps))\big)\left(\sigma,\frac{\lambda(\tau)^{2}}{\lambda(\sigma)^{2}}\xi\right)\right)'\right)\,d\sigma
\end{align*}
Then the corresponding weighted $L^2_{d\xi}(\xi<1)$-norm is bounded by 
\begin{align*}
\lesssim&  \alpha\int_{\tau_0}^{\tau}\left\|\xi^{-1}\langle\log\xi\rangle^{-1-\kappa}\frac{\lambda(\sigma)}{\lambda(\tau)}\sigma^{-1}\left[1-\chi\left(C^{-1}\frac{\xi^{\frac12}}{\frac{\lambda(\sigma)}{\lambda(\tau)}\sigma^{-1}}\right)\right]\left\langle \frac{\lambda(\tau)^{2}}{\lambda(\sigma)^{2}}\xi\right\rangle^{-N}\right\|_{L^2_{d\xi}(\xi<1)}\\
\cdot&\frac{\lambda(\sigma)}{\lambda(\tau)}\sigma^{-1}\frac{\lambda(\tau_0)}{\lambda(\sigma)}\left\langle\log\frac{\lambda(\sigma)}{\lambda(\tau_0)}\right\rangle^{-1-\frac{\kappa}{2}}\,d\sigma\\
\lesssim& \alpha C^{-1}\frac{\lambda(\tau_0)}{\lambda(\tau)}\int_{\tau_0}^{\tau}\sigma^{-1}\left\langle\log\frac{\lambda(\tau)}{\lambda(\sigma)}\right\rangle^{-1-\kappa}\left\langle\log\frac{\lambda(\sigma)}{\lambda(\tau_0)}\right\rangle^{-1-\frac{\kappa}{2}}\,d\sigma\\
\lesssim& \alpha C^{-1}\frac{\lambda(\tau_0)}{\lambda(\tau)}\left\langle\log\frac{\lambda(\tau)}{\lambda(\tau_0)}\right\rangle^{-1-\frac{\kappa}{2}}
\end{align*}
This again furnishes the required bound with a smallness gain, provided we choose $C$ very large. This concludes the required small frequency bounds, i. e. for the regime $\xi<1$. For completeness' sake we also consider here the large frequency bounds, for the term $\mathcal{D}_{\tau}\tilde{x}_{good}(\tau, \xi)$, again arising from the term $N_2(\veps)$ as displayed above. We proceed as before via integration by parts, which leads us to bound the expressions 
\begin{align*}
\left\|\xi^{-\frac12}\frac{\lambda(\sigma)}{\lambda(\tau)}\sin\left(\lambda(\tau)\xi^{\frac{1}{2}}\int_{\sigma}^{\tau}\lambda(u)^{-1}du\right)\left[1-\chi\left(C^{-1}\frac{\xi^{\frac12}}{\frac{\lambda(\sigma)}{\lambda(\tau)}\sigma^{-1}}\right)\right]\mathcal{F}\big(\mathcal{D}(N_2(\veps))\big)\left(\sigma,\frac{\lambda(\tau)^{2}}{\lambda(\sigma)^{2}}\xi\right)\Big|_{\tau_0}^{\tau}\right\|_{S_1(\xi>1)},
\end{align*}
\begin{align*}
\left\|\int_{\tau_0}^{\tau}\frac{\sin\left(\lambda(\tau)\xi^{\frac{1}{2}}\int_{\sigma}^{\tau}\lambda(u)^{-1}du\right)}{\xi^{\frac12}}\frac{\partial}{\partial{\sigma}}\left(\frac{\lambda(\sigma)}{\lambda(\tau)}\left[1-\chi\left(C^{-1}\frac{\xi^{\frac12}}{\frac{\lambda(\sigma)}{\lambda(\tau)}\sigma^{-1}}\right)\right]\mathcal{F}\big(\mathcal{D}(N_2(\veps))\big)\left(\sigma,\frac{\lambda(\tau)^{2}}{\lambda(\sigma)^{2}}\xi\right)\right)\,d\sigma\right\|_{S_1(\xi>1)}
\end{align*}
The only relevant boundary term $\sigma = \tau_0$) is then easily seen to be bounded by $\lesssim_{\tau_0}\big|c(\tau_0)\big|\big(\frac{\lambda(\tau_0)}{\lambda(\tau)}\big)^N$, which is much better than what we need. On the other hand, the integral term can in principle be estimated directly, but in order to gain smallness, we have to do another integration by parts, as before. This leads to a boundary term at $\sigma = \tau$ which is estimated by 
\begin{align*}
&\lesssim \big\|\xi^{1+\kappa-}\langle\log\xi\rangle^{-1-\kappa}\tau^{-1}[1-\chi(C^{-1}\tau\xi^{\frac12})]\langle\xi\rangle^{-N}\big\|_{L^2_{d\xi}(\xi>1)}\cdot\|c\|\frac{\lambda(\tau_0)}{\lambda(\tau)}\left\langle\log\frac{\lambda(\tau)}{\lambda(\tau_0)}\right\rangle^{-1-\kappa}\\
&\lesssim \tau_0^{-1}\cdot\|c\|\frac{\lambda(\tau_0)}{\lambda(\tau)}\left\langle\log\frac{\lambda(\tau)}{\lambda(\tau_0)}\right\rangle^{-1-\kappa}\\
\end{align*}
which gives the required smallness gain by choosing $\tau_0\gg 1$. The resulting integral term is also small by a factor $\tau_0^{-1}$, we omit the straightforward details.  
\end{proof}
In a similar vein, we need to deal with the contributions of the remaining terms on the right hand side of \eqref{eq Dveps temp 2} comprised by $\calR(\veps, \calD\veps)$. 

\begin{proposition}\label{prop:anotherresonantlin} Let $\tilde{N}_2(\veps)$ be the expression
\[
\left[\left(\frac{\lambda'}{\lambda}\right)' + \left(\frac{\lambda'}{\lambda}\right)^2\right]\left(\frac{4R}{(1+R^2)^2}\veps\right)
\]
and assume that $\veps(\tau, R) = c(\tau)\phi_0(R)$. Also, assume that $c'(\tau)$ admits the representation \eqref{eq:c'explicit} with $A(\tau), B(\tau)$ satisfying 
\[
\sup_{\tau\geq\tau_0}\frac{\lambda(\tau)}{\lambda(\tau_0)}\left\langle\log\frac{\lambda(\tau)}{\lambda(\tau_0)}\right\rangle^{1+\frac{\kappa}{2}}[\tau\big|A(\tau)\big| + \big|B(\tau)\big|]\leq \alpha.
\]
Then letting 
\[
\tilde{x}(\tau, \xi): = \xi^{-\frac{1}{2}}\int_{\tau_{0}}^{\tau}\sin\left(\lambda(\tau)\xi^{\frac{1}{2}}\int_{\sigma}^{\tau}\lambda(u)^{-1}du\right)\mathcal{F}\left(\tilde{N}_2(\veps)\right)\left(\sigma,\frac{\lambda(\tau)^{2}}{\lambda(\sigma)^{2}}\xi\right)d\sigma,
\]
we have the bound
\begin{align*}
&\sup_{\tau\geq \tau_0}\frac{\lambda(\tau)}{\lambda(\tau_0)}\left\langle\log\frac{\lambda(\tau)}{\lambda(\tau_0)}\right\rangle^{1+\frac{\kappa}{2}}\big[\big\|\tilde{x}(\tau, \xi)\big\|_{S_0} + \big\|\mathcal{D}_{\tau}\tilde{x}(\tau, \xi)\big\|_{S_1}\big]\\
&\lesssim \big|c(\tau_0)\big| + \alpha.
\end{align*}
One may include a factor $\gamma(\tau_0)$ with $\lim_{\tau_0\rightarrow\infty}\gamma(\tau_0) = 0$ in front of $\alpha$, provided one omits the term $ \big\|\tilde{x}_{good}(\tau, \xi)\big\|_{S_0}$ on the left. A similar statement also applies to the expression 
\[
\chi_{R\gtrsim\tau}\frac{\lambda'}{\lambda}\left(\partial_{\tau} + \frac{\lambda'}{\lambda}R\partial_R\right)\left(\frac{4R}{(1+R^2)^2}\veps\right),
\]
and a more complicated statement applies to the expression 
\[
\chi_{R\ll\tau}\frac{\lambda'}{\lambda}\left(\partial_{\tau} + \frac{\lambda'}{\lambda}R\partial_R\right)\left(\frac{4R}{(1+R^2)^2}\veps\right),
\]

if one goes deeper into the iteration, i. e. invokes the fine structure of $B(\tau)$, as detailed in Lemma~\ref{lem:reiterate2} below. 
\end{proposition}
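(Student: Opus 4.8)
The claim to be proved is Proposition~\ref{prop:anotherresonantlin}, in particular the statement about
\[
\tilde{N}_2(\veps) = \left[\left(\frac{\lambda'}{\lambda}\right)' + \left(\frac{\lambda'}{\lambda}\right)^2\right]\left(\frac{4R}{(1+R^2)^2}\veps\right)
\]
with $\veps(\tau, R) = c(\tau)\phi_0(R)$, together with the companion statements about the transport-type terms $\chi_{R\gtrsim\tau}\frac{\lambda'}{\lambda}(\partial_\tau + \frac{\lambda'}{\lambda}R\partial_R)(\frac{4R}{(1+R^2)^2}\veps)$ and $\chi_{R\ll\tau}\frac{\lambda'}{\lambda}(\partial_\tau + \frac{\lambda'}{\lambda}R\partial_R)(\frac{4R}{(1+R^2)^2}\veps)$. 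The plan is to reduce all three to variations of the argument already carried out in Proposition~\ref{prop:resonantlin}. First I would observe that $\frac{4R}{(1+R^2)^2}$ is exactly (up to a constant) the function $U_R(R)$ appearing in \eqref{notation for calR}, which is smooth, decays like $R^{-3}$ at infinity, and vanishes to first order at $R=0$; consequently $\frac{4R}{(1+R^2)^2}\phi_0(R) = \frac{4R^2}{(1+R^2)^3}$ is a fixed smooth, rapidly decaying profile with quadratic vanishing at the origin. Hence $\mathcal{F}\big(\tilde{N}_2(\veps)\big)(\sigma,\xi)$ factorizes as the scalar $\big[(\frac{\lambda'}{\lambda})' + (\frac{\lambda'}{\lambda})^2\big](\sigma)\, c(\sigma)$ times the fixed, $\tau$-independent Fourier coefficient $g(\xi):=\mathcal{F}\big(\frac{4R^2}{(1+R^2)^3}\big)(\xi)$, which from the asymptotics of $\phi(R,\xi)$ in \eqref{phi behavior 1}--\eqref{phi behavior 2} (exactly as in the computation of $F$ and $\mathcal{J}$) satisfies $|g(\xi)|\lesssim \langle\xi\rangle^{-2}$ for $\xi>1$ and $|g(\xi)|\lesssim 1$ (in fact $O(\langle\log\xi\rangle)$ weighted appropriately) for $\xi<1$, with analogous bounds on $\partial_\xi g$; in particular $g \in S_0$ and $\xi^{-1/2} g\in S_0$.

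Next I would plug this factorized source into the Duhamel parametrix \eqref{sol to inhomo xb} and use the key structural input: $\big[(\frac{\lambda'}{\lambda})' + (\frac{\lambda'}{\lambda})^2\big](\sigma)\sim \sigma^{-2}$ (since $\lambda(\tau) = \tau^{-1-\nu^{-1}}$), which is two powers of $\sigma$ better than the $\sigma^{-0}$ one gets from $N_2(\veps)$. Therefore in the estimate for $\|\tilde{x}(\tau,\cdot)\|_{S_0}$ the analogue of the ``bad'' integral term from the proof of Proposition~\ref{prop:resonantlin} — the one requiring repeated integration by parts — instead closes directly: bounding $\|c(\sigma)\|$ by $\|c\| \sigma^{2}\frac{\lambda(\tau_0)}{\lambda(\sigma)}\langle\log\frac{\lambda(\sigma)}{\lambda(\tau_0)}\rangle^{-1-\kappa/2}$ and using the two extra powers $\sigma^{-2}$ from the coefficient, the $\sigma$-integral converges and reproduces the rate $\frac{\lambda(\tau_0)}{\lambda(\tau)}\langle\log\frac{\lambda(\tau)}{\lambda(\tau_0)}\rangle^{-1-\kappa/2}$, with a smallness factor $\gamma(\tau_0)\to 0$ coming from $\int_{\tau_0}^\tau \sigma^{-2}\,d\sigma\lesssim \tau_0^{-1}$. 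For the small-frequency piece $\xi<1$ I would again exploit the weight $\langle\log\xi\rangle^{-1-\kappa}$ in $\|\cdot\|_{S_0}$ and the fact that $g(\xi)$ is bounded near $\xi=0$, splitting as in the proof of Proposition~\ref{prop:resonantlin} into the region $\xi^{1/2}\lesssim C^{-1}\frac{\lambda(\sigma)}{\lambda(\tau)}\sigma^{-1}$ where a direct estimate works, and the complementary region where one integration by parts in $\sigma$ suffices (no second one is needed precisely because of the $\sigma^{-2}$ gain, except that, as flagged, the $\chi_{R\ll\tau}$ transport term requires going one level deeper and invoking the fine structure of $B(\tau)$ from \eqref{eq:c'explicit}/Lemma~\ref{lem:reiterate2}). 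The bound for $\mathcal{D}_\tau \tilde{x}$ is obtained by the same integration-by-parts scheme, using that differentiating the Duhamel kernel in $\mathcal{D}_\tau$ trades $\xi^{-1/2}\sin$ for $\sin$ (so $S_0\to S_1$) and that $g$ and its derivative obey the same class of bounds.

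For the two transport-type expressions $\frac{\lambda'}{\lambda}(\partial_\tau + \frac{\lambda'}{\lambda}R\partial_R)(\frac{4R}{(1+R^2)^2}\veps)$ I would first apply the product rule: with $\veps = c(\tau)\phi_0(R)$ and $U_R(R):=\frac{4R}{(1+R^2)^2}$, the derivative $(\partial_\tau + \frac{\lambda'}{\lambda}R\partial_R)$ produces the term $c'(\tau)\, U_R\phi_0$ (which is treated exactly like $\tilde{N}_2$ above, now with only one power $\frac{\lambda'}{\lambda}\sim\sigma^{-1}$ in front but an $A(\tau)$-term from \eqref{eq:c'explicit} carrying the extra decay), the term $\frac{\lambda'}{\lambda}c(\tau)\,R\partial_R(U_R\phi_0)$, and the term $\frac{\lambda'}{\lambda}c(\tau)\,U_R\cdot R\partial_R\phi_0$; all of these are again of the form (scalar in $\tau$)$\times$(fixed smooth rapidly decaying profile with quadratic vanishing at $R=0$), so the Fourier-side analysis is identical. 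On the region $R\gtrsim\tau$, the profile $U_R\phi_0$ restricted there is $O(\tau^{-3}\langle\log R\rangle R^{-3})$ and, after switching to the Fourier side via the expansion of $\phi(R,\xi)$ from \eqref{expansion for phi} as in Lemma~\ref{lem:FourierToPhysical1}, contributes to the $\chi_{\xi>1}\frac{\tilde b(\tau,\xi)}{\xi^{5/2+\nu/2-}}$ and $\tilde x_{good}$ pieces exactly as in Proposition~\ref{prop:resonantlin}; on the region $R\ll\tau$ the profile is globally smooth, and here one uses the dyadic decomposition and integration-by-parts-against-$\tilde{\calL}$ machinery of Lemma~\ref{lem:nonlinestimates1} to absorb everything into $\tilde{x}_{good}$, with the caveat that for the $\chi_{R\ll\tau}$ transport term the required sharp time decay forces the deeper iteration referencing the fine structure of $B(\tau)$ in Lemma~\ref{lem:reiterate2}.

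\textbf{Main obstacle.} The routine parts are the Fourier-coefficient bounds on $g(\xi)$ and the accounting of powers of $\sigma$; the genuinely delicate point — and the one I expect to be the crux — is the sharp small-frequency time decay for the $\chi_{R\ll\tau}$ transport term $\chi_{R\ll\tau}\frac{\lambda'}{\lambda}(\partial_\tau + \frac{\lambda'}{\lambda}R\partial_R)(\frac{4R}{(1+R^2)^2}\veps)$. There only one power $\frac{\lambda'}{\lambda}\sim\sigma^{-1}$ of extra decay is available in front, which is not enough to close the integral term directly after a single integration by parts in $\sigma$; just as in the proof of Proposition~\ref{prop:resonantlin}, one must integrate by parts twice, and in the second step, when $\frac{\partial}{\partial\sigma}$ falls on $c(\sigma)$, one must substitute the explicit formula \eqref{eq:c'explicit} for $c'(\sigma)$, peel off the $A(\sigma)$-term (which carries an extra $\sigma^{-1}$ and closes easily), and for the remaining $B(\sigma)$-contribution invoke the finer expansion of $B$ provided by Lemma~\ref{lem:reiterate2} — i.e. go one level deeper into the manifold re-iteration — in order to recover the rate $\frac{\lambda(\tau_0)}{\lambda(\tau)}\langle\log\frac{\lambda(\tau)}{\lambda(\tau_0)}\rangle^{-1-\kappa/2}$ with the smallness gain. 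Verifying that the $C^{-1}$ gain from the cutoff $\chi(C^{-1}\xi^{1/2}/(\frac{\lambda(\sigma)}{\lambda(\tau)}\sigma^{-1}))$ survives this extra layer, and that no logarithmic losses accumulate, is where the real work lies; everything else is a transcription of arguments already in the paper.
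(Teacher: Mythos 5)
Your overall plan is correct and in fact matches the paper's own (explicitly omitted) proof, which is stated to be analogous to Proposition~\ref{prop:resonantlin}: you correctly reduce to the factorised form $[(\frac{\lambda'}{\lambda})' + (\frac{\lambda'}{\lambda})^2](\sigma)\, c(\sigma)\, g(\xi)$ with $g = \mathcal{F}\left(\frac{4R^2}{(1+R^2)^3}\right)$, you correctly record $g\in S_0\cap S_1$ from the $\phi(R,\xi)$ asymptotics, you correctly split the transport expressions $\frac{\lambda'}{\lambda}(\partial_\tau + \frac{\lambda'}{\lambda}R\partial_R)(U\veps)$ via the product rule and identify them as (scalar)$\times$(fixed smooth quadratically vanishing profile), and you correctly isolate the $\chi_{R\ll\tau}$ transport term as the one requiring the deeper iteration through Lemma~\ref{lem:reiterate2}. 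The cosmetic slip that $\frac{4R}{(1+R^2)^2}$ equals $-U(R)$ rather than $U_R$ is immaterial.

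There is, however, one genuine misattribution in the accounting of where the smallness gain comes from. You write that for the $\left[(\frac{\lambda'}{\lambda})' + (\frac{\lambda'}{\lambda})^2\right]$ term the $\sigma$-integral ``converges and reproduces the rate \ldots with a smallness factor $\gamma(\tau_0)\to 0$ coming from $\int_{\tau_0}^\tau\sigma^{-2}\,d\sigma\lesssim\tau_0^{-1}$,'' and that consequently ``no second [integration by parts] is needed precisely because of the $\sigma^{-2}$ gain.'' This cannot be right, because the same $\sigma^{-2}$ is exactly what compensates the $\sigma^2$ growth that $|c(\sigma)|$ carries: under the hypotheses, $|c(\sigma)|\lesssim\alpha\,\sigma^2\frac{\lambda(\tau_0)}{\lambda(\sigma)}\langle\log\frac{\lambda(\sigma)}{\lambda(\tau_0)}\rangle^{-1-\kappa/2}$, so the product
$\big[(\frac{\lambda'}{\lambda})'+(\frac{\lambda'}{\lambda})^2\big]c(\sigma)\sim \sigma^{-2}\cdot\sigma^2\frac{\lambda(\tau_0)}{\lambda(\sigma)}\langle\cdot\rangle^{-1-\kappa/2}\alpha$
has \emph{exactly the same} decay rate as the source $N_2(\veps)$ in Proposition~\ref{prop:resonantlin} (whose potential also decays like $\tau^{-2}$). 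In particular a direct estimate of the small-frequency Duhamel integral, without integration by parts, does not reproduce the rate $\frac{\lambda(\tau_0)}{\lambda(\tau)}\langle\cdot\rangle^{-1-\kappa/2}$, let alone with a gain: one gets the same overshoot (roughly an extra factor $\tau_0$) that one already gets there. The $\gamma(\tau_0)$ in the statement is therefore not free — it comes from the same two-fold integration by parts scheme as in Proposition~\ref{prop:resonantlin}, namely introducing the cutoff $\chi(C^{-1}\xi^{1/2}\lambda(\tau)\sigma/\lambda(\sigma))$, peeling off the $A(\sigma)$-contribution when $\partial_\sigma$ hits $c(\sigma)$ (using \eqref{eq:c'explicit}), and in the case of $\|\mathcal{D}_\tau\tilde{x}\|_{S_1}$ using that the $\sigma = \tau$ boundary term vanishes because $\sin(0)=0$; as in the referenced proposition, the $\|\tilde x\|_{S_0}$ piece does not gain smallness precisely because its $\sigma=\tau$ boundary term survives. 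Your ``main obstacle'' paragraph actually has this right for the $\chi_{R\ll\tau}$ transport term, but your main paragraph is internally inconsistent with it: the same two-IBP scheme is needed for \emph{all three} displayed source terms, not just the last. The only reason the first two are declared ``similar'' while the $\chi_{R\ll\tau}$ transport case requires Lemma~\ref{lem:reiterate2} is the spatial localisation — on $R\gtrsim\tau$ the fixed profile is $O(\tau^{-3})$ which furnishes extra decay, while on $R\ll\tau$ that is unavailable — not the number of integrations by parts.
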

The proof of this is analogous to the one of the preceding proposition, and hence omitted, except of course for the last type of terms, which will be dealt with below. 

\subsection{The effect of $\mathcal{D}\veps$ on $c(\tau)$.} Here we analyze the solution of \eqref{ctau ODE}, using \eqref{eq:L_cParametrix} and assuming that the initial data $c(\tau_0), c'(\tau_0)$ are fixed. There are two source terms, $h(\tau), n(\tau)$, of which $h(\tau)$ is naturally the more delicate one, as it decays only as fast as $\mathcal{D}\veps$. 
\\

{\it{Contribution of the principal source term $h(\tau)$.}} Write 
\[
\mathcal{D}\veps(\tau, R) = \int_0^\infty \phi(R, \xi)\xb(\tau, \xi)\tilde{\rho}(\xi)\,d\xi.
\]
Then recall the expansion $\phi(R, \xi) = cR^2 + O(R^4\xi)$ with some $c\neq 0$ as $R\rightarrow 0$. This then formally implies 
\begin{equation}\label{eq:hfourierformula}
h(\tau) = \lim_{R\rightarrow 0}R^{-1}\mathcal{D}^*\mathcal{D}\veps = c'\int_0^\infty x(\tau, \xi)\tilde{\rho}(\xi)\,d\xi
\end{equation}
Now when $\xb$ is admissible in the sense of admitting a representation formula \eqref{eq:admissiblexb}, the expression on the right is not defined in a pointwise sense (as far as the first two terms in \eqref{eq:admissiblexb} are concerned), but only as a distribution in $\tau$, in light of Lemma~\ref{lem:cintegralbound}. Thus when invoking formula \eqref{eq:L_cParametrix}, we have to interpret it accordingly to obtain 
\begin{lemma}\label{lem:hbound} Assume that $\xb$ is admissible and further that $r(\tau) = h(\tau)$ as before, interpreted as in Lemma~\ref{lem:cintegralbound}. Then defining $c(\tau) = y(\tau)$ as in \eqref{eq:L_cParametrix}, we have 
\[
\big|y(\tau)\big| + \tau\big|y'(\tau)\big|\lesssim \tau^2\frac{\lambda(\tau)}{\lambda(\tau_0)}\left\langle\log\frac{\lambda(\tau)}{\lambda(\tau_0)}\right\rangle^{-1-\frac{\kappa}{2}}\|\xb\|,
\]
where we set 
\begin{align*}
\|\xb\|: = \sup_{\tau\geq \tau_0}\frac{\lambda(\tau)}{\lambda(\tau_0)}\left\langle\log\frac{\lambda(\tau)}{\lambda(\tau_0)}\right\rangle^{1+\frac{\kappa}{2}}\big[\sum_{\pm}\tau\big|a_{kj}^{\pm}(\tau)\big|& + \sum_{\pm}\tau\big|(a_{kj}^{\pm})'(\tau)\big|+ \tau\big\|(|b|+|\xi^{\frac12}c| + |\xi^{\frac12}d|)(\tau, \xi)\big\|_{L_{\xi}^{\infty}}\\& + \tau\big\|\langle\xi\rangle^{-\frac12}\mathcal{D}_{\tau}b(\tau, \xi)\big\|_{L_{\xi}^{\infty}} + \big\|x_{good}(\tau, \xi)\big\|_{S_0} + \big\|\mathcal{D}_{\tau}x_{good}(\tau, \xi)\big\|_{S_1}\big]
\end{align*}

\end{lemma}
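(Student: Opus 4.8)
The key point is that although $h(\tau)$ itself is only a distribution, by Lemma~\ref{lem:cintegralbound} it has the structure $h(\tau) = c'\bigl(\partial_\tau A(\tau) + B(\tau)\bigr)$ with $\tau|A(\tau)| + |B(\tau)|\lesssim \frac{\lambda(\tau_0)}{\lambda(\tau)}\langle\log\frac{\lambda(\tau)}{\lambda(\tau_0)}\rangle^{-1-\frac{\kappa}{2}}\|\xb\|$, and the parametrix \eqref{eq:L_cParametrix} is an explicit integral against polynomial weights, so the $\partial_\tau A$ piece can be handled by integrating by parts in $\sigma$ inside \eqref{eq:L_cParametrix}. First I would substitute $r(\sigma) = h(\sigma) = c'(\partial_\sigma A(\sigma) + B(\sigma))$ into \eqref{eq:L_cParametrix}, giving
\begin{align*}
y(\tau) = \nu c'\Bigl(&\tau^{-1-\nu^{-1}}\int_{\tau_0}^\tau \sigma^{2+\nu^{-1}}(\partial_\sigma A(\sigma)+B(\sigma))\,d\sigma\\
&- \tau^{-1-2\nu^{-1}}\int_{\tau_0}^\tau \sigma^{2+2\nu^{-1}}(\partial_\sigma A(\sigma)+B(\sigma))\,d\sigma\Bigr),
\end{align*}
and then integrate by parts in the two $\partial_\sigma A$ terms, picking up boundary contributions at $\sigma=\tau$ and $\sigma=\tau_0$ plus bulk terms with $A(\sigma)$ times $\partial_\sigma(\sigma^{2+\nu^{-1}})\sim \sigma^{1+\nu^{-1}}$, etc. Since $A, B$ both decay like $\frac{\lambda(\tau_0)}{\lambda(\tau)}\langle\log\rangle^{-1-\frac\kappa2}\sim \sigma^{-1-\nu^{-1}}\langle\log\sigma\rangle^{-1-\frac\kappa2}\|\xb\|$ (recalling $\lambda(\sigma)=\sigma^{-1-\nu^{-1}}$), each resulting integrand is $\lesssim \|\xb\|$ times an explicitly integrable power of $\sigma$ (up to logs), and the homogeneous prefactors $\tau^{-1-\nu^{-1}}$, $\tau^{-1-2\nu^{-1}}$ combine to reproduce $\tau^2\frac{\lambda(\tau)}{\lambda(\tau_0)}\langle\log\frac{\lambda(\tau)}{\lambda(\tau_0)}\rangle^{-1-\frac\kappa2}$; one checks that the boundary term at $\sigma=\tau$ gives exactly this size (no decay gain), while the $\sigma=\tau_0$ boundary term and all bulk terms are of the same size or better.

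Second, for the derivative bound $\tau|y'(\tau)|$, I would differentiate the (integrated-by-parts) expression for $y(\tau)$ directly in $\tau$. The $\tau$-derivative either falls on the homogeneous prefactors $\tau^{-1-\nu^{-1}}$, $\tau^{-1-2\nu^{-1}}$ — producing the same expressions with one extra power $\tau^{-1}$, hence gaining rather than losing — or it falls on the upper limit of integration $\sigma=\tau$, producing $\tau^{-1-\nu^{-1}}\cdot\tau^{2+\nu^{-1}}(\cdots)(\tau) = \tau\cdot(\cdots)(\tau)$ and similarly for the second term, which after using the size of $A(\tau), B(\tau)$ is again bounded by $\tau\cdot\tau^2\frac{\lambda(\tau)}{\lambda(\tau_0)}\langle\log\rangle^{-1-\frac\kappa2}\|\xb\|$ up to constants, so that $\tau|y'(\tau)|$ has the claimed bound. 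Here the key algebraic fact is that the two terms $\sigma^{2+\nu^{-1}}$ and $\sigma^{2+2\nu^{-1}}$ at $\sigma=\tau$ multiplied by the respective homogeneous prefactors both collapse to $\tau^{1+\nu^{-1}}\cdot\tau^{?}$... one simply tracks the exponents carefully.

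Third, I would handle the distributional interpretation cleanly: rather than trying to make sense of $\int_0^\infty x(\tau,\xi)\tilde\rho(\xi)\,d\xi$ pointwise, I treat the whole Duhamel-type formula \eqref{eq:L_cParametrix} with $r=h$ as \emph{defined} by first writing $h = c'(\partial_\tau A + B)$, plugging in, and integrating by parts — this is legitimate because $A\in C^0$ and the weights $\sigma^{2+\nu^{-1}}$, etc., are $C^1$, so no genuine distributional pairing is needed after the integration by parts, only Stieltjes-type reasoning or a limiting argument using the regularization mentioned in Section~7. I would note explicitly that, thanks to that regularization, $A$ can be taken $C^1$ uniformly, and all estimates pass to the limit.

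\textbf{Main obstacle.} The delicate point is not any single estimate but the bookkeeping of powers of $\tau$: one must verify that after the integration by parts the \emph{worst} term — the boundary term at $\sigma=\tau$ — produces exactly the size $\tau^2\frac{\lambda(\tau)}{\lambda(\tau_0)}\langle\log\frac{\lambda(\tau)}{\lambda(\tau_0)}\rangle^{-1-\frac\kappa2}\|\xb\|$ and no worse, i.e. that the two powers of $\tau$ loss relative to $\mathcal{D}\veps$ is sharp and there is no further log loss. This requires carefully matching the $\langle\log\frac{\lambda(\tau)}{\lambda(\tau_0)}\rangle$ weights coming from $A,B$ against those in the target, and checking that the integrable-tail contributions (where $\lambda(\sigma)$ ranges between $\lambda(\tau_0)$ and $\lambda(\tau)$) really do sum to the stated weight rather than, say, $\langle\log\rangle^{-\frac\kappa2}$; the standard splitting into $\lambda(\sigma)\lessgtr\sqrt{\lambda(\tau)\lambda(\tau_0)}$ (used repeatedly in Section~10) handles this but must be carried out for both integrals in \eqref{eq:L_cParametrix} simultaneously so that the leading $\tau^{-1-\nu^{-1}}$ and subleading $\tau^{-1-2\nu^{-1}}$ contributions are seen not to cancel the gain.
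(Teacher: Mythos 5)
Your approach is the same as the paper's: substitute $h = \partial_\tau A + B$ into \eqref{eq:L_cParametrix}, integrate by parts in the $\partial_\sigma A$ piece, and use the a priori decay of $A$ and $B$; the paper simply records the outcome of this computation explicitly as formulas \eqref{eq:cexplicit}--\eqref{eq:c'explicit} and notes that the stated bounds then follow. A small refinement worth noticing is that the boundary contributions at $\sigma=\tau$, which are $\nu\tau A(\tau)$ from each of the two integrals in \eqref{eq:L_cParametrix}, cancel identically rather than ``giving exactly the claimed size'' (and even without this cancellation they would be a factor $\tau^{-2}$ better than the target, since $\tau|A(\tau)|\lesssim\frac{\lambda(\tau_0)}{\lambda(\tau)}\langle\log\frac{\lambda(\tau)}{\lambda(\tau_0)}\rangle^{-1-\frac{\kappa}{2}}\|\xb\|$), so the genuinely size-critical contribution is the bulk $B$-integral near $\sigma\sim\tau$.
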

 
\begin{remark}\label{rem:nosmallness1} Note that as is to be expected, there is no smallness gain here. 
\end{remark}
\begin{proof} Writing 
\[
h(\tau) = \partial_{\tau}A(\tau) + B(\tau)
\]
as in Lemma~\ref{lem:cintegralbound} and with 
\[
\tau\big|A(\tau)\big| + \big|B(\tau)\big|\lesssim \|x\|\cdot\frac{\lambda(\tau_0)}{\lambda(\tau)}\left\langle\log\frac{\lambda(\tau)}{\lambda(\tau_0)}\right\rangle^{-1-\frac{\kappa}{2}},
\]
we immediately infer that
\begin{equation}\label{eq:cexplicit}\begin{split}
c(\tau) &= \nu\big(-(2+\nu^{-1})\tau^{-1-\nu^{-1}}\int_{\tau_0}^{\tau}\sigma^{1+\nu^{-1}}A(\sigma)\,d\sigma + (2+2\nu^{-1})\tau^{-1-2\nu^{-1}}\int_{\tau_0}^{\tau}\sigma^{1+2\nu^{-1}}A(\sigma)\,d\sigma\big)\\
& +  \nu\big(\tau^{-1-\nu^{-1}}\int_{\tau_0}^{\tau}\sigma^{2+\nu^{-1}}B(\sigma)\,d\sigma - \tau^{-1-2\nu^{-1}}\int_{\tau_0}^{\tau}\sigma^{2+2\nu^{-1}}B(\sigma)\,d\sigma\big)\\
\end{split}\end{equation}
satisfies the desired bound. Furthermore, we also note here the formula
\begin{equation}\label{eq:c'explicit}\begin{split}
c'(\tau) &= A(\tau) + \nu\big((2+\nu^{-1})(1+\nu^{-1})\tau^{-2-\nu^{-1}}\int_{\tau_0}^{\tau}\sigma^{1+\nu^{-1}}A(\sigma)\,d\sigma\\& - (2+2\nu^{-1})(1+2\nu^{-1})\tau^{-2-2\nu^{-1}}\int_{\tau_0}^{\tau}\sigma^{1+2\nu^{-1}}A(\sigma)\,d\sigma\big)\\
& +  \nu\big(-(1+\nu^{-1})\tau^{-2-\nu^{-1}}\int_{\tau_0}^{\tau}\sigma^{2+\nu^{-1}}B(\sigma)\,d\sigma + (1+2\nu^{-1})\tau^{-2-2\nu^{-1}}\int_{\tau_0}^{\tau}\sigma^{2+2\nu^{-1}}B(\sigma)\,d\sigma\big)\\
\end{split}\end{equation}
 
\end{proof}

{\it{Contribution of the principal source term $n(\tau)$.}} From \eqref{ctau ODE}, we recall that 
\[
n(\tau) = \lim_{R\rightarrow 0}R^{-1} \lambda^{-2}N(\veps)(\tau, R).
\]
Then in light of \eqref{eq:epsequation1} and Lemma~\ref{lem:epsbound}, we infer the following 
\begin{lemma}\label{lem:nbound} Assume that $\xb$ is admissible, define $\|\xb\|$ as in Lemma~\ref{lem:hbound}, and further let 
\[
\veps(\tau, R) = \phi(\mathcal{D}\veps) + c(\tau)\phi_0(R)
\]
as in \eqref{eqn:epsdecomp}. Then we have the bound 
\begin{align*}
\big|n(\tau)\big|&\lesssim\frac{\lambda(\tau_0)}{\lambda(\tau)}\left\langle\log\frac{\lambda(\tau)}{\lambda(\tau_0)}\right\rangle^{-1-\frac{\kappa}{2}}\tau^{-2}\|x\| + \gamma(\tau_0)\|c\|\tau^{-2}\frac{\lambda(\tau_0)}{\lambda(\tau)}\left\langle\log\frac{\lambda(\tau)}{\lambda(\tau_0)}\right\rangle^{-1-\frac{\kappa}{2}}\\& + D(\tau_0)\tau^{-2}\frac{\lambda(\tau_0)}{\lambda(\tau)}\left\langle\log\frac{\lambda(\tau)}{\lambda(\tau_0)}\right\rangle^{-1-\frac{\kappa}{2}}\left[\|x\|^2 + \|x\|^3 + \|c\|^2 + \|c\|^3\right]
\end{align*}
with $\lim_{\tau_0\rightarrow\infty}\gamma(\tau_0) = 0$, provided $\nu$ is sufficiently small. 
\end{lemma}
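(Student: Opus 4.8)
\textbf{Proof plan for Lemma~\ref{lem:nbound}.} Recall from \eqref{ctau ODE} that $n(\tau) = \lim_{R\to 0}R^{-1}\lambda^{-2}N(\veps)(\tau,R)$, and that $N(\veps)$ has the three pieces displayed in \eqref{eq:epsequation1}: the linear term $\frac{\cos(2u^\nu)-\cos(2Q)}{R^2}\veps$ and the two genuinely nonlinear terms built out of $\cos(2\veps)-1$ and $\sin(2\veps)-2\veps$. The plan is to substitute $\veps = \phi(\mathcal{D}\veps) + c(\tau)\phi_0(R)$ and to track the $R\to0$ Taylor coefficient of each resulting contribution. Since $\phi_0(R) = R/(1+R^2) = R + O(R^3)$ and, under the admissibility hypothesis together with the quadratic vanishing $\mathcal{D}\veps(\tau,R) = \alpha(\tau)R^2 + O(R^4)$, one has $\phi(\mathcal{D}\veps)(\tau,R) = O(R^2)$ (indeed $\phi(\mathcal{D}\veps) = \phi_0(R)\int_0^R \phi_0^{-1}(s)\mathcal{D}\veps(s)\,ds$ contributes at order $R^3$), the leading behaviour of $\veps$ near the origin is $\veps(\tau,R)\sim c(\tau)R$, while $\phi(\mathcal{D}\veps)$ is negligible for extracting the $R^{-1}$-limit except through the coefficient $\alpha(\tau)$, which by Lemma~\ref{lem:Dveps Linfty}/Lemma~\ref{lem:epsbound} is controlled by $\|x\|$ with the sharp time decay $\frac{\lambda(\tau_0)}{\lambda(\tau)}\langle\log\frac{\lambda(\tau)}{\lambda(\tau_0)}\rangle^{-1-\frac{\kappa}{2}}\tau^{-2}$ after the extra $\lambda^{-2}\sim\tau^{-2}\frac{\lambda(\tau_0)^2}{\lambda(\tau)^2}$ weight and the cancellation encoded in $n(\tau)$.

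First I would handle the \emph{linear term}. Here one uses the expansion of $u^\nu - Q(R)$ from \cite{KST2,GaoKr}, valid near $R = 0$ where $u^\nu$ is smooth, namely $u^\nu - Q(R) = O(R\tau^{-2}\langle\log R\rangle)$ (cf.\ \eqref{eq:uny-Q}), so that $\frac{\cos(2u^\nu)-\cos(2Q)}{R^2}\sim \tau^{-2}\frac{\log(1+R^2)}{R^2}$ as in \eqref{NL coe behav general}. Multiplying by $\veps\sim c(\tau)R + (\text{terms from }\phi(\mathcal{D}\veps))$ and extracting the $R^{-1}$-coefficient, one sees that the coefficient is $O(\tau^{-2})$ times the appropriate limit of $R^{-2}\veps$, which for the $\phi(\mathcal{D}\veps)$-piece is controlled by $\|x\|$ (using that the relevant limit is essentially $\int_0^\infty x(\tau,\xi)\tilde\rho(\xi)\,d\xi$ type, controlled via the $S_0$-norm exactly as in the derivation of \eqref{eq:hfourierformula}), and for the $c(\tau)\phi_0$-piece is $c(\tau)$ times a constant. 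The crucial point is the \emph{extra} factor $\tau^{-2}$ from $u^\nu - Q$: it supplies $\gamma(\tau_0)$-smallness for the $c(\tau)$-contribution because $\|c\|$ is measured relative to $\tau^2$-weaker decay than $\|x\|$, so $\tau^{-2}\cdot\tau^2 = 1$ and the residual smallness comes from the logarithmic gains as $\tau_0\to\infty$, exactly as in the $\tilde x_1$ estimate in the proof of Proposition~\ref{prop:resonantlin}. This yields the first two terms on the right of the asserted bound.

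Next I would treat the \emph{quadratic and cubic} terms. Writing $\cos(2\veps)-1 = -2\veps^2 + O(\veps^4)$ and $\sin(2\veps)-2\veps = -\tfrac43\veps^3 + O(\veps^5)$, and using $\frac{\sin(2u^\nu)}{2R^2}, \frac{\cos(2u^\nu)}{2R^2}$ which behave like $R^{-1}$ and $R^{-2}$ respectively near $R=0$ (since $u^\nu\sim Q \sim 2R$ there, $\sin(2u^\nu)\sim 4R$), one finds that the $R^{-1}$-limit of $\lambda^{-2}$ times these terms picks up $\lambda^{-2}\sim\tau^{-2}\frac{\lambda(\tau_0)^2}{\lambda(\tau)^2}$ together with either $\veps^2/R$ or $\veps^3/R^2$ evaluated in the limit. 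Since $\veps\sim (|c(\tau)| + \|x\|\text{-controlled})\cdot R$ near the origin, $\veps^2/R = O(R)$ and $\veps^3/R^2 = O(R)$, so naively these vanish; the actual surviving contribution comes from the next order, i.e. one needs one power of $\veps$ evaluated at order $R^2$ or $R^3$, which by Lemma~\ref{lem:epsbound} is again controlled by $\|x\|$ or $\|c\|$ with the appropriate time decay, producing the schematic bound $\tau^{-2}\frac{\lambda(\tau_0)}{\lambda(\tau)}\langle\log\frac{\lambda(\tau)}{\lambda(\tau_0)}\rangle^{-1-\frac{\kappa}{2}}(\|x\|^2 + \|x\|^3 + \|c\|^2 + \|c\|^3)$ with an absolute constant $D(\tau_0)$ (which can grow with $\tau_0$ but is harmless since these are higher-order in the small quantities). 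The products of a $c$-factor and an $x$-factor are absorbed by Young's inequality into $\|x\|^2 + \|c\|^2$.

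\textbf{Main obstacle.} The delicate step is the rigorous justification of the $R\to0$ limit defining $n(\tau)$ and of \eqref{eq:hfourierformula}-type identities: the raw Fourier representation $\mathcal{D}\veps = \int_0^\infty \phi(R,\xi)\xb(\tau,\xi)\tilde\rho(\xi)\,d\xi$ need not converge pointwise in $R$ when $\xb$ is merely admissible, because the first two (slowly decaying, oscillatory) terms in \eqref{eq:admissiblexb} only give a distributional object. As in Lemma~\ref{lem:cintegralbound} one must integrate by parts in $\xi$ to make sense of $\int x(\tau,\xi)\tilde\rho(\xi)\,d\xi$, and correspondingly the $R^{-1}$-extraction in $n(\tau)$ must be performed on the \emph{regularised} problem (frequency-truncated $N(\veps)$, per the remark on regularisation) with bounds uniform in the cutoff, then pass to the limit. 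Getting the sharp $\langle\log\frac{\lambda(\tau)}{\lambda(\tau_0)}\rangle^{-1-\frac{\kappa}{2}}$ decay for the $c(\tau)$-contribution — rather than the weaker $\langle\log\rangle^{-1-\frac{\kappa}{2}}$ one would get crudely — requires exactly the logarithmic-weight trick from Proposition~\ref{prop:resonantlin} (exploiting the $\langle\log\xi\rangle^{-1-\kappa}$ factor in $\|\cdot\|_{S_0}$ together with the $\tau^{-2}$ gain from $u^\nu - Q$), and this is where the smallness $\gamma(\tau_0)\to 0$ genuinely comes from; I expect this to be the only part needing real care, the rest being bookkeeping of Taylor coefficients.
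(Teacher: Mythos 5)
There is a genuine gap at the heart of your argument: the mechanism you propose for the second term, $\gamma(\tau_0)\|c\|\tau^{-2}\frac{\lambda(\tau_0)}{\lambda(\tau)}\langle\log\frac{\lambda(\tau)}{\lambda(\tau_0)}\rangle^{-1-\frac{\kappa}{2}}$, is not the right one and cannot produce the stated bound. You argue that the generic decay $u^\nu-Q=O(R\tau^{-2}\langle\log R\rangle)$ of \eqref{eq:uny-Q} cancels the $\tau^2$ in the normalization of $\|c\|$, and that the residual smallness $\gamma(\tau_0)$ then comes "from the logarithmic gains as $\tau_0\to\infty$, exactly as in the $\tilde x_1$ estimate in Proposition~\ref{prop:resonantlin}". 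But that estimate gains its $\log^{-\kappa/2}(\tau_0)$ from a Duhamel time integral combined with the spectral weight $\langle\log\xi\rangle^{-1-\kappa}$ on low frequencies; the quantity $n(\tau)=\lim_{R\to0}R^{-1}\lambda^{-2}N(\veps)(\tau,R)$ is a pointwise-in-$\tau$ limit with no $\sigma$-integration and no frequency weight, so there is nothing for that trick to act on. With only the generic bound on $u^\nu-Q$, the linear-in-$c$ contribution $\lim_{R\to0}R^{-1}\frac{\sin(u^\nu-Q)\sin(u^\nu+Q)}{R^2}c(\tau)\phi_0(R)\sim c(\tau)\,\partial_R(u^\nu-Q)(\tau,0)$ is of size $\tau^{-2}|c(\tau)|\lesssim\|c\|\frac{\lambda(\tau_0)}{\lambda(\tau)}\langle\log\frac{\lambda(\tau)}{\lambda(\tau_0)}\rangle^{-1-\frac{\kappa}{2}}$, which misses the claimed bound by the full factor $\gamma(\tau_0)\tau^{-2}$. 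The actual input, which your proposal never uses, is structural: all the profile corrections $v_j$ entering the construction of $u^\nu$ in \cite{KST2,GaoKr} vanish to higher than first order at $R=0$, so the linear Taylor coefficient of $u^\nu-Q$ at the origin is carried only by the final (exact-solution) correction, which decays much faster in $\tau$; this is precisely what the remark following the lemma in the paper invokes, and it is what simultaneously supplies the extra $\tau^{-2}$ and the vanishing constant $\gamma(\tau_0)$. Without this fine information about $u^\nu$ at $R=0$, the crucial "asymptotically no contribution linear in $\|c\|$" feature, which the iteration scheme later relies on (Lemma~\ref{lem:ctocdelicate}), is lost.

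Two smaller points. First, your Taylor bookkeeping for the genuinely nonlinear terms is off: with $\veps\sim c(\tau)R$ near the origin and $\frac{\sin(2u^\nu)}{2R^2}\sim\frac{2}{R}$, the quadratic piece gives $\lim_{R\to0}R^{-1}\cdot\frac{2}{R}\cdot(-2\veps^2)\sim -4c^2(\tau)$, i.e.\ it survives at leading order rather than "naively vanishing"; the claimed bound for it still holds, but only because $|c(\tau)|^2\lesssim\|c\|^2\tau^4(\frac{\lambda(\tau_0)}{\lambda(\tau)})^2\langle\cdots\rangle^{-2-\kappa}$ is absorbed using the strong growth of $\lambda(\tau)/\lambda(\tau_0)$ for $\nu$ small together with the allowance of a $\tau_0$-dependent constant $D(\tau_0)$ — this is where the hypothesis "$\nu$ sufficiently small" enters, and your write-up should make that explicit. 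Second, your identification of the regularisation/limit issue as the main obstacle is reasonable but peripheral here (the paper disposes of it by the frequency-truncation remark); the substance of this lemma is the origin asymptotics of $u^\nu-Q$, not the distributional interpretation of the Fourier integrals.
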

Note that we gain a power $\tau^{-2}$ in terms of additional decay here. 
\\

The proof is straightforward by observing the asymptotics of $u^{\nu} - Q(R)$ described in \cite{KST2,GaoKr}. We observe in particular that inserting this bound in the parametrix \eqref{eq:L_cParametrix} (with $n(\sigma)$ instead of $r(\sigma)$), one has a smallness gain for all contributions for $\|y\|$. We observe that the preceding bound is remarkable in that there is {\it{asymptotically as $\tau_0\rightarrow +\infty$ no contribution linear in $\|c\|$.}} The reason for this is that we in fact have 
\begin{align*}
\left|\lim_{R\rightarrow 0}R^{-1}\frac{\sin(u^\nu - Q(R))\sin(u^{\nu} + Q(R))}{R^2}c(\tau)\phi_0(R)\right|\leq \gamma(\tau_0)\|c\|\cdot \frac{\lambda(\tau_0)}{\lambda(\tau)}\left\langle\log\frac{\lambda(\tau)}{\lambda(\tau_0)}\right\rangle^{-1-\frac{\kappa}{2}},
\end{align*}
which comes from the fact that all the leading corrections $v_j$ used to construct $u^{\nu}$ in \cite{KST2,GaoKr} vanish to higher than first order at $R = 0$, and so only the final correction $\epsilon$ in \cite{KST2,GaoKr} contributes here, which, however, vanishes very rapidly toward $\tau = +\infty$. 
 
 \subsection{The effect of $c(\tau)$ on $c(\tau)$ after double iteration; smallness gain}
 
 In this subsection, we deal with the delicate technical issue of controlling and in fact gaining smallness for certain terms arising after twofold iteration. Specifically, recall from Proposition~\ref{prop:resonantlin} that the contribution to $\tilde{x}_{good}(\tau, \xi)$ by the term 
 \[
 \chi_{R\ll\tau}\mathcal{D}\big(N_2(\veps)\big),\,\veps(\tau, R) = c(\tau)\phi_0(R)
 \]
via the Duhamel parametrix displayed in  the statement of Proposition~\ref{prop:resonantlin} does not gain any smallness, i. e. the best one can assert is that 
\begin{align*}
\sup_{\tau\geq\tau_0}\frac{\lambda(\tau)}{\lambda(\tau_0)}\left\langle\log\frac{\lambda(\tau)}{\lambda(\tau_0)}\right\rangle^{1+\frac{\kappa}{2}}\big\|\tilde{x}_{good}(\tau, \xi)\big\|_{S_0}\leq D\left[\|c\| + \big|c(\tau_0)\big|\right]
\end{align*}
for some absolute constant $D$ only depending on $\nu$. If one then uses this coefficient $\tilde{x}_{good}$ as Fourier coefficient of $\mathcal{D}\veps(\tau, R)$ and in turn as a source term for the $c$ equation \eqref{ctau ODE} via 
\[
h(\tau) = \lim_{R\rightarrow 0}R^{-1}\mathcal{D}^*\mathcal{D}\veps, 
\]
then application of Lemma~\ref{lem:hbound} again does not result in a smallness gain, and so we face the possibility of divergence in the eventual iteration constructing the pair $\big(c(\tau), \xb(\tau, \xi)\big)$. Here we show that there actually is a smallness gain in this re-iteration step: 
\begin{lemma}\label{lem:ctocdelicate}
If $\veps(\tau, R) = c(\tau)\phi_0(R)$, and furthermore $c, c'$ are given by \eqref{eq:cexplicit}, \eqref{eq:c'explicit}, then we have the following bound if $c(\tau_0) = 0$:
\begin{align*}
&\left|\int_0^\infty\int_{\tau_0}^{\tau}\frac{\sin[\lambda(\tau)\xi^{\frac12}\int_{\sigma}^{\tau}\lambda^{-1}(u)\,du]}{\xi^{\frac12}}\mathcal{F}\big(\mathcal{D}\big(\chi_{R\ll\tau}N_2(\veps)\big)\left(\sigma, \frac{\lambda^2(\tau)}{\lambda^2(\sigma)}\xi\right)\,d\sigma \tilde{\rho}(\xi)d\xi\right|\\
&\leq \alpha\gamma(\tau_0)\frac{\lambda(\tau)}{\lambda(\tau_0)}\left\langle\log\frac{\lambda(\tau)}{\lambda(\tau_0)}\right\rangle^{-1-\frac{\kappa}{2}},
\end{align*}
where $\lim_{\tau_0\rightarrow\infty}\gamma(\tau_0) = 0$ and furthermore we set
\[
\alpha = \sup_{\tau\geq\tau_0}\frac{\lambda(\tau)}{\lambda(\tau_0)}\left\langle\log\frac{\lambda(\tau)}{\lambda(\tau_0)}\right\rangle^{1+\frac{\kappa}{2}}\left[\tau\big|A(\tau)\big| + \big|B(\tau)\big|\right].
\]
\end{lemma}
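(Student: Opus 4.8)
\textbf{Proof plan for Lemma~\ref{lem:ctocdelicate}.}

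The plan is to track the structure of the composed map $c \mapsto \mathcal D(\chi_{R\ll\tau}N_2(\veps)) \mapsto \tilde x_{good} \mapsto h(\tau) \mapsto c$ while being careful about where the decay is lost and where it can be recovered. The key observation, exactly as in the proof of Proposition~\ref{prop:resonantlin}, is that $N_2(\veps)$ contains the small factor $u^\nu - Q(R) = O(R\tau^{-2}\langle\log R\rangle)$, which supplies the $\tau^{-2}$ gain, and that the dangerous lack of smallness in Proposition~\ref{prop:resonantlin} only occurred because one worked with the $S_0$-norm of $\tilde x_{good}$ directly. Here, however, the quantity we must estimate is not the $S_0$-norm of the resulting Fourier coefficient but the integral $\int_0^\infty \tilde x(\tau,\xi)\tilde\rho(\xi)\,d\xi$ feeding into $h(\tau)$, which is a much softer object: it only sees a weighted $L^1_{d\xi}$-type quantity and, crucially, via Lemma~\ref{lem:cintegralbound} it decomposes as $\partial_\tau A(\tau) + B(\tau)$. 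So the first step is to substitute the Duhamel parametrix \eqref{eq:Nepsilon1} for $\tilde x$, interchange the $\sigma$ and $\xi$ integrations, and recognize that $\int_0^\infty \phi(R,\xi)\dots$-type structure means we are really computing $\lim_{R\to 0}R^{-1}\mathcal D^*$ of a Duhamel solution, i.e. the same computation as in Lemma~\ref{lem:hbound} but applied to the specific source $\mathcal D(\chi_{R\ll\tau}N_2(c\phi_0))$.

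Second, I would insert the representation $c'(\sigma) = A(\sigma) + (\text{integral terms})$ from \eqref{eq:c'explicit}, and — following the integration-by-parts-in-$\sigma$ strategy of Proposition~\ref{prop:resonantlin} — split the double integral according to whether the dominant frequency localization is $\xi^{1/2}\lesssim \frac{\lambda(\sigma)}{\lambda(\tau)}\sigma^{-1}$ (the ``low'' regime, where one exploits the $\langle\log\xi\rangle^{-1-\kappa}$ weight of $S_0$ and the bound on $\mathcal F(\mathcal D(\chi_{R\ll\tau}N_2(\veps)))$ in $L^\infty_{d\xi}$ established there) or the complementary ``high'' regime (where one integrates by parts twice in $\sigma$, the boundary term at $\sigma=\tau_0$ being harmless because $c(\tau_0)=0$, and each $\partial_\sigma$ either hitting $c(\sigma)$ to produce the extra power $\sigma^{-1}$ together with $A(\sigma)$, or hitting a $\lambda$-ratio to produce a $\frac{\lambda'}{\lambda}\sim\tau^{-1}$ gain). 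The hypothesis $c(\tau_0)=0$ is what removes the one genuinely non-small boundary term; all other boundary and bulk contributions come with either a factor $\tau_0^{-1}$, a factor $\langle\log\tau_0\rangle^{-\kappa/2}$, or a large-constant gain $C^{-1}$, each of which can be absorbed into $\gamma(\tau_0)$.

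Third, I would assemble the $\sigma$-integrals: each surviving term is bounded by $\alpha$ times an integral of the shape $\int_{\tau_0}^\tau \sigma^{-1}\frac{\lambda(\tau_0)}{\lambda(\tau)}\langle\log\frac{\lambda(\tau)}{\lambda(\sigma)}\rangle^{-1-\kappa}\langle\log\frac{\lambda(\sigma)}{\lambda(\tau_0)}\rangle^{-1-\kappa/2}\,d\sigma$, possibly with an extra power of $\sigma^{-1}$ or $\tau_0^{-1}$; splitting at $\lambda(\sigma)^2 = \lambda(\tau)\lambda(\tau_0)$ as in the proof of Proposition~\ref{prop:nonlocallinear1}, this integral is $\lesssim \frac{\lambda(\tau_0)}{\lambda(\tau)}\langle\log\frac{\lambda(\tau)}{\lambda(\tau_0)}\rangle^{-1-\kappa/2}$. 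But to conclude the lemma I actually need the \emph{reciprocal} weight $\frac{\lambda(\tau)}{\lambda(\tau_0)}\langle\log\frac{\lambda(\tau)}{\lambda(\tau_0)}\rangle^{-1-\kappa/2}$; this is where the $\tau^2$-loss from the $L_c$-parametrix \eqref{eq:L_cParametrix} enters — applying \eqref{eq:cexplicit}/\eqref{eq:c'explicit} to the source $h(\tau)=\partial_\tau A(\tau)+B(\tau)$ with $\tau|A(\tau)|+|B(\tau)|\lesssim\alpha\gamma(\tau_0)\frac{\lambda(\tau_0)}{\lambda(\tau)}\langle\log\rangle^{-1-\kappa/2}$ produces exactly a $\tau^2\frac{\lambda(\tau)}{\lambda(\tau_0)}\langle\log\rangle^{-1-\kappa/2}$ output, matching the claim after one checks that the quantity being estimated in the statement is $c(\tau)$ itself (up to the explicit $\nu$-dependent constants) and that the $\partial_\tau A$ part is handled distributionally as in Lemma~\ref{lem:hbound}. \textbf{The main obstacle} is bookkeeping: one must verify that after the two integrations by parts in $\sigma$ \emph{every} resulting term genuinely carries one of the three smallness factors ($\tau_0^{-1}$, $\langle\log\tau_0\rangle^{-\kappa/2}$, or $C^{-1}$ for a freely chosen large $C$), in particular that the term where $\partial_\sigma$ produces $A(\sigma)$ with no extra $\sigma^{-1}$ power still closes — and this is precisely where the extra decay of $u^\nu-Q$ combined with the $\chi_{R\ll\tau}$ localization away from the light cone must be used, so that $\mathcal D(\chi_{R\ll\tau}N_2(\veps))$ is not merely bounded but rapidly decaying in $\xi$, allowing the $\xi$-integral against $\tilde\rho$ to converge with room to spare.
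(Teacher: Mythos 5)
Your general strategy is right — you correctly recognize that the quantity being bounded is the $h$-type scalar $\int_0^\infty \tilde x(\tau,\xi)\tilde\rho(\xi)\,d\xi$, which is much softer than the $S_0$-norm bounded in Proposition~\ref{prop:resonantlin}, and you correctly propose the low/high frequency split at $\xi^{1/2}\sim\frac{\lambda(\sigma)}{\lambda(\tau)}\sigma^{-1}$ followed by repeated integration by parts in $\sigma$. But there is a genuine gap in how you handle the boundary terms after the first integration by parts. You assert that ``the hypothesis $c(\tau_0)=0$ is what removes the one genuinely non-small boundary term,'' pointing to the $\sigma=\tau_0$ endpoint. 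In fact the troublesome boundary term is at $\sigma=\tau$ (it is precisely the term that ``does not gain any smallness'' in the proof of Proposition~\ref{prop:resonantlin}), and $c(\tau_0)=0$ does nothing for it. The paper's decisive ingredient is the identity
\[
\int_0^\infty \xi^{-1}\mathcal{F}\bigl(\mathcal{D}g\bigr)(\xi)\,\tilde\rho(\xi)\,d\xi = c\,\lim_{R\to 0}R^{-1}g(R)
\]
(equation \eqref{eq:importantidentity}), which, applied at $\sigma=\tau$, lets one rewrite the high-frequency boundary term $\int_0^\infty[1-\chi(C^{-1}\tau\xi^{1/2})]\xi^{-1}\mathcal F(\ldots)(\tau,\xi)\tilde\rho\,d\xi$ as $-\int_0^\infty\chi(C^{-1}\tau\xi^{1/2})\xi^{-1}\mathcal F(\ldots)(\tau,\xi)\tilde\rho\,d\xi + \lim_{R\to 0}R^{-1}N_2(\veps)(\tau,\cdot)$. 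The first piece re-enters the low-frequency argument and gains from the $\langle\log\rangle^{-2}$ decay of $\tilde\rho$; the second carries the $\gamma(\tau_0)$ factor because (as observed after Lemma~\ref{lem:nbound}) all leading corrections $v_j$ in $u^\nu$ vanish to higher than first order at $R=0$. You do gesture at ``$\lim_{R\to0}R^{-1}\mathcal D^*$ of a Duhamel solution,'' and at ``the extra decay of $u^\nu-Q$ ... must be used,'' but you cash these in only as generic rapid $\xi$-decay of the source, not as the specific cancellation that tames the $\sigma=\tau$ boundary term. Without \eqref{eq:importantidentity}, that boundary term is still of size $\alpha$ with no $\gamma(\tau_0)$, and the lemma would not close.

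A secondary point: your third paragraph tries to recover a bound of the form $\tau^2\frac{\lambda(\tau)}{\lambda(\tau_0)}\langle\log\rangle^{-1-\kappa/2}$ by reapplying the $L_c$-parametrix \eqref{eq:L_cParametrix} to the source, asserting that ``the quantity being estimated in the statement is $c(\tau)$ itself.'' It is not: the double integral in the statement is the $h$-like source for the next iterate of $c$, not $c$ itself, so it should satisfy the (decaying) bound $\alpha\gamma(\tau_0)\frac{\lambda(\tau_0)}{\lambda(\tau)}\langle\log\frac{\lambda(\tau)}{\lambda(\tau_0)}\rangle^{-1-\kappa/2}$ exactly as the proof's $\sigma$-integrals produce; the ratio $\frac{\lambda(\tau)}{\lambda(\tau_0)}$ in the statement (and in Lemma~\ref{lem:hbound}) appears to be a typo of the paper, consistently corrected to $\frac{\lambda(\tau_0)}{\lambda(\tau)}$ in the proofs and in Proposition~\ref{prop:iteration}. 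Reinterpreting the claim as a statement about $c$ and invoking an extra $L_c$-application both misreads what is being estimated and does not produce the stated weight.
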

\begin{proof} This is largely analogous to the proof of Proposition~\ref{prop:resonantlin}. The idea is again to invoke integration by parts with respect to $\sigma$, but we have to be careful to avoid the boundary term at $\sigma = \tau$, which was responsible for the fact that we could not gain smallness in general in Proposition~\ref{prop:resonantlin}. In fact, this boundary term almost vanishes, as can be see seen from the following: assume that 
\[
x(\xi) = 
\xi^{-1}\mathcal{F}(\mathcal{D}g)(\xi). 
\]
This then means that 
\begin{align*}
x(\xi) &= \xi^{-1}\langle \mathcal{D}g, \xi^{-1}\mathcal{D}\big(R^{-\frac12}\phi_{KST}(R, \xi)\rangle_{L^2_{R\,dR}}\\
& = \xi^{-2}\langle g, \mathcal{L}\big(R^{-\frac12}\phi_{KST}(R, \xi)\rangle_{L^2_{R\,dR}}\\
& = \xi^{-1}\langle g, \big(R^{-\frac12}\phi_{KST}(R, \xi)\rangle_{L^2_{R\,dR}}\\
&=\xi^{-1}\mathcal{F}_{KST}\big(R^{\frac12}g\big)(\xi).
\end{align*}
It follows that 
\begin{equation}\label{eq:importantidentity}
\int_0^\infty x(\xi)\tilde{\rho}(\xi)\,d\xi = \int_0^\infty \mathcal{F}_{KST}\big(R^{\frac12}g\big)(\xi)\rho(\xi)\,d\xi = c\lim_{R\rightarrow 0}R^{-1}g(R).
\end{equation}
We shall use the consequence that
\[
\int_0^\infty\xi^{-1}\mathcal{F}\big(\mathcal{D}\big(\chi_{R\ll\tau}N_2(\veps)\big)(\tau, \xi)\tilde{\rho}(\xi)\,d\xi = \lim_{R\rightarrow 0}R^{-1}N_2(\veps)
\]
But we have already observed(after Lemma~\ref{lem:nbound}) that this is bounded in absolute value by 
\[
\leq \gamma(\tau_0)\|c\|\cdot \frac{\lambda(\tau_0)}{\lambda(\tau)}\left\langle\log\frac{\lambda(\tau)}{\lambda(\tau_0)}\right\rangle^{-1-\frac{\kappa}{2}}
\]
which is as desired since $\|c\|\lesssim \alpha$. 
\\

Now back to the double integral in the lemma, we first observe that letting $\chi\left(C^{-1}\frac{\xi^{\frac12}}{\frac{\lambda(\sigma)}{\lambda(\tau)}\sigma^{-1}}\right)$ be a cutoff localizing smoothly to $\xi^{\frac12}\lesssim C\frac{\lambda(\sigma)}{\lambda(\tau)}\sigma^{-1}$ for some large constant $C$, then we get 
\begin{align*}
&\left|\int_0^\infty\int_{\tau_0}^{\tau}\chi\left(C^{-1}\frac{\xi^{\frac12}}{\frac{\lambda(\sigma)}{\lambda(\tau)}\sigma^{-1}}\right)\frac{\sin[\lambda(\tau)\xi^{\frac12}\int_{\sigma}^{\tau}\lambda^{-1}(u)\,du]}{\xi^{\frac12}}\mathcal{F}\big(\mathcal{D}\big(\chi_{R\ll\tau}N_2(\veps)\big)\left(\sigma, \frac{\lambda^2(\tau)}{\lambda^2(\sigma)}\xi\right)\,d\sigma \tilde{\rho}(\xi)d\xi\right|\\
&\lesssim \alpha\int_{\tau_0}^{\tau} C\frac{\lambda(\sigma)}{\lambda(\tau)}\sigma^{-1}\left\langle\log \left(C^{-1}\frac{\lambda(\tau)}{\lambda(\sigma)}\sigma\right)\right\rangle^{-2}\cdot \frac{\lambda(\tau_0)}{\lambda(\sigma)}\left\langle\log\frac{\lambda(\sigma)}{\lambda(\tau_0)}\right\rangle^{-1-\frac{\kappa}{2}}\,d\sigma\\
& \leq \alpha \gamma(\tau_0)\frac{\lambda(\tau_0)}{\lambda(\tau)}\left\langle\log\frac{\lambda(\tau)}{\lambda(\tau_0)}\right\rangle^{-1-\frac{\kappa}{2}}.
\end{align*}
Replacing $\chi(\ldots)$ by $[1-\chi(\ldots)]$, we perform an integration by parts with respect to $\sigma$. We thereby encounter the boundary term 
\begin{align*}
&\int_0^\infty [1- \chi(C^{-1}\tau\xi^{\frac12})]\cdot\xi^{-1}\mathcal{F}\big(\mathcal{D}\big(\chi_{R\ll\tau}N_2(\veps)\big)(\tau, \xi)\tilde{\rho}(\xi)\,d\xi\\
& = -\int_0^\infty \chi(C^{-1}\tau\xi^{\frac12})\cdot\xi^{-1}\mathcal{F}\big(\mathcal{D}\big(\chi_{R\ll\tau}N_2(\veps)\big)(\tau, \xi)\tilde{\rho}(\xi)\,d\xi\\
& + O\left(\gamma(\tau_0)\alpha\cdot \frac{\lambda(\tau_0)}{\lambda(\tau)}\left\langle\log\frac{\lambda(\tau)}{\lambda(\tau_0)}\right\rangle^{-1-\frac{\kappa}{2}}\right),
\end{align*}
where we have taken advantage of the above relation \eqref{eq:importantidentity}. The principal term on the right can then be bounded by 
\begin{align*}
&\left|\int_0^\infty \chi(C^{-1}\tau\xi^{\frac12})\cdot\xi^{-1}\mathcal{F}\big(\mathcal{D}\big(\chi_{R\ll\tau}N_2(\veps)\big)(\tau, \xi)\tilde{\rho}(\xi)\,d\xi\right|\\
&\lesssim \left\langle\log\frac{\tau_0}{C}\right\rangle^{-1}\frac{\lambda(\tau_0)}{\lambda(\tau)}\left\langle\log\frac{\lambda(\tau)}{\lambda(\tau_0)}\right\rangle^{-1-\frac{\kappa}{2}}\cdot\alpha,
\end{align*}
which is also as desired setting $\gamma(\tau_0) = \langle\log\frac{\tau_0}{C}\rangle^{-1}$. 
In addition to the boundary term, we also encounter the double integral 
\begin{align*}
\int_0^\infty\int_{\tau_0}^{\tau}\frac{\cos[\lambda(\tau)\xi^{\frac12}\int_{\sigma}^{\tau}\lambda^{-1}(u)\,du]}{\xi}\frac{\partial}{\partial{\sigma}}\left([1-\chi(\ldots)]\frac{\lambda(\sigma)}{\lambda(\tau)}\mathcal{F}\big(\mathcal{D}\big(\chi_{R\ll\tau}N_2(\veps)\big)\left(\sigma, \frac{\lambda^2(\tau)}{\lambda^2(\sigma)}\xi\right)\right)\,d\sigma \tilde{\rho}(\xi)d\xi
\end{align*}
Here if the operator $\frac{\partial}{\partial{\sigma}}$ hits the factor $c(\sigma)$ and results in the term $A(\sigma)$, see \eqref{eq:c'explicit}, we cannot perform another integration by parts with respect to $\sigma$, but can take advantage of the faster decay of $A(\sigma)$. Taking advantage of the rapid decay of $\mathcal{F}\big(\ldots\big)$ for frequencies $>1$, we then bound the $\xi$-integral by 
\begin{align*}
\lesssim \left\langle\log\frac{\lambda(\tau)}{\lambda(\sigma)}\right\rangle^{-2}\log\sigma\cdot\alpha\cdot\sigma^{-2}\frac{\lambda(\sigma)}{\lambda(\tau)}\cdot\frac{\lambda(\tau_0)}{\lambda(\sigma)}\left\langle\log\frac{\lambda(\sigma)}{\lambda(\tau_0)}\right\rangle^{-1-\frac{\kappa}{2}}
\end{align*}
This in turn can be integrated between $\tau_0$ and $\tau$ to result in the desired upper bound 
\[
\leq\gamma(\tau_0)\alpha\cdot \frac{\lambda(\tau_0)}{\lambda(\tau)}\left\langle\log\frac{\lambda(\tau)}{\lambda(\tau_0)}\right\rangle^{-1-\frac{\kappa}{2}}
\]
with $\gamma(\tau_0) \lesssim \frac{\log\tau_0}{\tau_0}$. 
\\
Thus assume that if the operator $\frac{\partial}{\partial{\sigma}}$ hits the factor $c(\sigma)$, it does not result in $A(\sigma)$. Calling these remaining terms $\frac{\partial}{\partial{\sigma}}\big(\ldots\big)'$, we perform an integration by parts with respect to $\sigma$. Here the boundary terms vanish, and we reduce to bounding the double integral 
\begin{align*}
&\int_0^\infty\int_{\tau_0}^{\tau}\frac{\sin[\lambda(\tau)\xi^{\frac12}\int_{\sigma}^{\tau}\lambda^{-1}(u)\,du]}{\xi^{\frac32}}\\
\cdot&\frac{\partial}{\partial{\sigma}}\left[\frac{\lambda(\sigma)}{\lambda(\tau)}\frac{\partial}{\partial{\sigma}}\left([1-\chi(\ldots)]\frac{\lambda(\sigma)}{\lambda(\tau)}\mathcal{F}\big(\mathcal{D}\big(\chi_{R\ll\tau}N_2(\veps)\big)(\sigma, \frac{\lambda^2(\tau)}{\lambda^2(\sigma)}\xi)\right)'\right]\,d\sigma \tilde{\rho}(\xi)d\xi
\end{align*}
For fixed $\sigma$, the $\xi$-integral here is bounded in absolute value by 
\begin{align*}
&\lesssim \kappa(\sigma,\tau_0)\frac{\lambda^2(\sigma)}{\lambda^2(\tau)}\sigma^{-2}\left\|\xi^{-\frac32}\langle\log\xi\rangle^{-2}\left[1 - \chi\left(C^{-1}\frac{\xi^{\frac12}}{\frac{\lambda(\sigma)}{\lambda(\tau)}\sigma^{-1}}\right)\right]\right\|_{L^1_{d\xi}(\xi<1)}\\
& + \kappa(\sigma,\tau_0)\frac{\lambda^2(\sigma)}{\lambda^2(\tau)}\sigma^{-2}\left\|\xi^{-\frac32}\xi^2\left(\frac{\lambda^2(\tau)}{\lambda^2(\sigma)}\xi\right)^{-N}\left[1 - \chi\left(C^{-1}\frac{\xi^{\frac12}}{\frac{\lambda(\sigma)}{\lambda(\tau)}\sigma^{-1}}\right)\right]\right\|_{L^1_{d\xi}(\xi\geq 1)}\\
\end{align*}
where we use the notation 
\[
\kappa(\sigma,\tau_0) = \alpha\frac{\lambda(\tau_0)}{\lambda(\sigma)}\left\langle\log\frac{\lambda(\sigma)}{\lambda(\tau_0)}\right\rangle^{-1-\frac{\kappa}{2}}
\]
Then the first term on the right above is bounded by 
\begin{align*}
&\lesssim C^{-1}\frac{\lambda(\sigma)}{\lambda(\tau)}\sigma^{-1}\cdot \left\langle\log\left(C^{-1}\frac{\lambda(\tau)}{\lambda(\sigma)}\sigma\right)\right\rangle^{-2}\cdot  \alpha\frac{\lambda(\tau_0)}{\lambda(\sigma)}\left\langle\log\frac{\lambda(\sigma)}{\lambda(\tau_0)}\right\rangle^{-1-\frac{\kappa}{2}}\\
&\lesssim \alpha\langle\log(C^{-1}\tau_0)\rangle^{-1+\frac{\kappa}{2}}\frac{\lambda(\tau_0)}{\lambda(\tau)}\cdot\sigma^{-1}\left\langle\log\frac{\lambda(\tau)}{\lambda(\sigma)}\right\rangle^{-1-\frac{\kappa}{2}}\left\langle\log\frac{\lambda(\sigma)}{\lambda(\tau_0)}\right\rangle^{-1-\frac{\kappa}{2}}\\
 \end{align*}
provided $\tau_0>C$, integrating this over $\sigma$ furnishes the desired upper bound with $\gamma(\tau_0) = \langle\log(C^{-1}\tau_0)\rangle^{-1+\frac{\kappa}{2}}$. 
The second term above, comprising the $L^1$-norm over large frequencies $\xi\geq 1$, is much less delicate, and can be bounded by 
\[
\lesssim  \alpha\frac{\lambda^N(\sigma)}{\lambda^N(\tau)}\sigma^{-2}\left\langle\log\frac{\lambda(\tau)}{\lambda(\sigma)}\right\rangle^{-1-\frac{\kappa}{2}}\left\langle\log\frac{\lambda(\sigma)}{\lambda(\tau_0)}\right\rangle^{-1-\frac{\kappa}{2}}
\]
and integration over $\sigma$ furnishes the desired bound with $\gamma(\tau_0)\sim \tau_0^{-1}$. 
 This completes the proof of the lemma. 
\end{proof}

In a similar vein, we have 
\begin{lemma}\label{lem:ctocdelicate1}
If $\veps(\tau, R) = c(\tau)\phi_0(R)$, and furthermore $c, c'$ are given by \eqref{eq:cexplicit}, \eqref{eq:c'explicit}, then we have the following bound if $c(\tau_0) = 0$:
\begin{align*}
&\left|\int_0^\infty\int_{\tau_0}^{\tau}\frac{\sin[\lambda(\tau)\xi^{\frac12}\int_{\sigma}^{\tau}\lambda^{-1}(u)\,du]}{\xi^{\frac12}}\mathcal{F}\big(\chi_{R\ll\sigma}\tilde{N}_2(\veps)\big)\left(\sigma, \frac{\lambda^2(\tau)}{\lambda^2(\sigma)}\xi\right)\,d\sigma \tilde{\rho}(\xi)d\xi\right|\\
&\leq \alpha\gamma(\tau_0)\frac{\lambda(\tau)}{\lambda(\tau_0)}\left\langle\log\frac{\lambda(\tau)}{\lambda(\tau_0)}\right\rangle^{-1-\frac{\kappa}{2}},
\end{align*}
where $\lim_{\tau_0\rightarrow\infty}\gamma(\tau_0) = 0$ and furthermore we set
\[
\alpha = \sup_{\tau\geq\tau_0}\frac{\lambda(\tau)}{\lambda(\tau_0)}\left\langle\log\frac{\lambda(\tau)}{\lambda(\tau_0)}\right\rangle^{1+\frac{\kappa}{2}}[\tau\big|A(\tau)\big| + \big|B(\tau)\big|]
\]
Recall that $\tilde{N}_2$ stands for the terms listed in Proposition~\ref{prop:anotherresonantlin}. 
\end{lemma}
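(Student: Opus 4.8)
The strategy is to mirror the proof of Lemma~\ref{lem:ctocdelicate} almost verbatim, replacing the source term $\mathcal{D}\big(\chi_{R\ll\tau}N_2(\veps)\big)$ by $\chi_{R\ll\sigma}\tilde{N}_2(\veps)$, and exploiting two features that make $\tilde N_2$ no worse than $N_2$: first, $\tilde N_2(\veps) = \left[\left(\frac{\lambda'}{\lambda}\right)' + \left(\frac{\lambda'}{\lambda}\right)^2\right]\left(\frac{4R}{(1+R^2)^2}\veps\right)$ carries the explicit fast factor $\left(\frac{\lambda'}{\lambda}\right)' + \left(\frac{\lambda'}{\lambda}\right)^2 \sim \tau^{-2}$, which alone beats the borderline decay and gives a smallness gain in $\tau_0$; second, the spatial weight $\frac{4R}{(1+R^2)^2}$ vanishes to first order at $R=0$ and decays like $R^{-3}$ at infinity, so that $R^{-1}\tilde N_2(\veps)$ has a well-defined limit as $R\to0$ and, via the identity \eqref{eq:importantidentity}, one has
\[
\int_0^\infty \xi^{-1}\mathcal{F}\big(\mathcal{D}\big(\chi_{R\ll\tau}\tilde N_2(\veps)\big)\big)(\tau,\xi)\tilde\rho(\xi)\,d\xi = c\lim_{R\to0}R^{-1}\tilde N_2(\veps),
\]
(after writing $\tilde N_2 = \mathcal{D}(\ldots)$ modulo the resonant piece), which is bounded in absolute value by $\gamma(\tau_0)\|c\|\cdot\frac{\lambda(\tau_0)}{\lambda(\tau)}\left\langle\log\frac{\lambda(\tau)}{\lambda(\tau_0)}\right\rangle^{-1-\frac{\kappa}{2}}$, again because the $v_j$ in the construction of $u^\nu$ vanish to higher order at $R=0$. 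Here I should note that $\chi_{R\ll\sigma}\tilde N_2(\veps)$ is smooth near the origin (since $u^\nu$ is smooth there), so the Fourier transform in \eqref{eq:admissiblexb} has the rapid decay at $\xi>1$ needed for the integration-by-parts arguments.

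\textbf{Key steps.} First I would split the inner $\sigma$-integral using the cutoff $\chi\left(C^{-1}\frac{\xi^{\frac12}}{\frac{\lambda(\sigma)}{\lambda(\tau)}\sigma^{-1}}\right)$ exactly as in Lemma~\ref{lem:ctocdelicate}: on the cutoff region $\xi^{\frac12}\lesssim C\frac{\lambda(\sigma)}{\lambda(\tau)}\sigma^{-1}$ one estimates directly, using that $\left\|\mathcal{F}\big(\chi_{R\ll\sigma}\tilde N_2(\veps)\big)(\sigma,\cdot)\right\|_{L^\infty_{d\xi}}\lesssim \sigma^{-2}\|c\|\frac{\lambda(\tau_0)}{\lambda(\sigma)}\left\langle\log\frac{\lambda(\sigma)}{\lambda(\tau_0)}\right\rangle^{-1-\frac{\kappa}{2}}$ — where the extra $\sigma^{-2}$ comes precisely from the $\left(\frac{\lambda'}{\lambda}\right)' + \left(\frac{\lambda'}{\lambda}\right)^2$ factor — and integrates the frequency bound $C\frac{\lambda(\sigma)}{\lambda(\tau)}\sigma^{-1}$ against $\tilde\rho$; the $\sigma^{-2}$ decay makes the $\sigma$-integral converge with a factor $\gamma(\tau_0)$. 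On the complementary region $[1-\chi(\ldots)]$ I perform integration by parts in $\sigma$; the boundary term at $\sigma=\tau$ is handled by \eqref{eq:importantidentity} and the $\gamma(\tau_0)\|c\|$ bound above, the boundary term at $\sigma=\tau_0$ is negligible by rapid frequency decay, and the resulting double integral is split according to whether $\partial_\sigma$ hits the factor $c(\sigma)$ (in which case, referring to \eqref{eq:c'explicit}, one takes the faster-decaying piece $A(\sigma)$ and stops) or not; in the latter case one integrates by parts once more in $\sigma$, and the $\left(\frac{\lambda'}{\lambda}\right)' + \left(\frac{\lambda'}{\lambda}\right)^2 \sim \sigma^{-2}$ factor together with the $L^1_{d\xi}$ bounds on the extreme-frequency localizers yields the claimed $\gamma(\tau_0)\alpha$ with $\gamma(\tau_0)\to0$. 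The same scheme applies to the $\chi_{R\gtrsim\tau}\frac{\lambda'}{\lambda}\left(\partial_\tau + \frac{\lambda'}{\lambda}R\partial_R\right)\left(\frac{4R}{(1+R^2)^2}\veps\right)$ term (this one supported near the light cone and handled via the physical-side expansions of Lemma~\ref{lem:FourierToPhysical1}), while for $\chi_{R\ll\tau}\frac{\lambda'}{\lambda}\left(\partial_\tau + \frac{\lambda'}{\lambda}R\partial_R\right)\left(\frac{4R}{(1+R^2)^2}\veps\right)$ one additionally invokes the fine structure of $B(\tau)$ from Lemma~\ref{lem:reiterate2} to absorb the single power $\frac{\lambda'}{\lambda}\sim\tau^{-1}$ into a further iteration.

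\textbf{Main obstacle.} I expect the delicate point to be the term $\chi_{R\ll\tau}\frac{\lambda'}{\lambda}\left(\partial_\tau + \frac{\lambda'}{\lambda}R\partial_R\right)\left(\frac{4R}{(1+R^2)^2}\veps\right)$, because it carries only a single power $\frac{\lambda'}{\lambda}$, so that — unlike $\tilde N_2$ with its $\tau^{-2}$ — the naive estimate sits exactly at the borderline decay rate $\frac{\lambda(\tau_0)}{\lambda(\tau)}\left\langle\log\frac{\lambda(\tau)}{\lambda(\tau_0)}\right\rangle^{-1-\frac{\kappa}{2}}$ with no room to spare. Here the only way out is to go one level deeper in the iteration: one plugs in the representation of $c'(\sigma)$ via \eqref{eq:c'explicit}, and when $\partial_\sigma$ in the transport operator generates $A(\sigma)$ one gains $\sigma^{-1}$ outright; for the remaining pieces one must use the fine structure of $B(\tau)$ (Lemma~\ref{lem:reiterate2}), writing $B$ again as a distributional derivative plus a faster-decaying remainder and integrating by parts once more in $\sigma$, so that the cumulative gain comes to $\gamma(\tau_0)$ or a factor $C^{-1}$ from the extreme-frequency cutoff. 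Keeping the bookkeeping of the oscillatory phases $e^{\pm i\nu\sigma\eta^{\frac12}}$ consistent through these nested integrations by parts — and making sure at each stage that the boundary term at $\sigma=\tau$ either vanishes, or is controlled by \eqref{eq:importantidentity}, or produces the admissible singular pieces — is the part that requires genuine care.
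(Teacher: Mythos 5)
Your handling of the contributions carrying $\left(\frac{\lambda'}{\lambda}\right)' + \left(\frac{\lambda'}{\lambda}\right)^2$ is fine and agrees with the paper, which dismisses those in one line precisely because of their $\sigma^{-2}$ decay; likewise the overall scheme (frequency cutoff $\chi\left(C^{-1}\xi^{\frac12}\frac{\lambda(\tau)}{\lambda(\sigma)}\sigma\right)$, direct estimate on the low-frequency piece, one integration by parts in $\sigma$ on the complement) is the right skeleton. The gap is in the term you yourself single out as the main obstacle, namely $\chi_{R\ll\tau}\frac{\lambda'}{\lambda}\left(\partial_{\tau}+\frac{\lambda'}{\lambda}R\partial_R\right)\left(\frac{4R}{(1+R^2)^2}\,c(\tau)\phi_0(R)\right)$, whose worst piece $\beta_{\nu}(\sigma)c'(\sigma)\frac{4R}{(1+R^2)^2}\phi_0(R)$ sits exactly at the borderline rate: this is the only contribution the paper actually works out, and your proposed resolution does not go through as written. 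You want to invoke "the fine structure of $B(\tau)$ (Lemma~\ref{lem:reiterate2}), writing $B$ again as a distributional derivative plus a faster-decaying remainder and integrating by parts once more in $\sigma$", but the hypotheses of the lemma give only the pointwise bound $\tau|A(\tau)|+|B(\tau)|\le\alpha\cdot(\text{weight})$ and no finer structure of $B$; moreover Lemma~\ref{lem:reiterate2} concerns $\calD_{\tau}\tilde{x}_{good}$ for an $h$ of a specific Duhamel form and enters the argument later, in the iteration bookkeeping, so appealing to it inside this lemma is both unavailable and circular.

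What the paper does instead is self-contained and rests on three devices missing from your proposal. First, it rewrites the spatial profile as $\mathcal{D}\left(\chi_{R\lesssim\tau}\phi_0(R)\int_0^R\chi_{s\ll\tau}\frac{4s}{(1+s^2)^2}\,ds\right)$ and uses the \emph{exact} vanishing \eqref{eq:vanishing21}, $\lim_{R\to0}R^{-1}(\ldots)=0$, so that after the integration by parts in $\sigma$ the boundary term at $\sigma=\tau$ is negligible; note that $\tilde N_2$ contains no $u^{\nu}$ at all, so your appeal to \eqref{eq:importantidentity} together with the higher-order vanishing of the corrections $v_j$ to $u^{\nu}$ is a transplant from the $N_2$ case (Lemma~\ref{lem:ctocdelicate}) that does not apply here. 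Second, the $\sigma$-derivative produces $\beta_{\nu}(\sigma)B(\sigma)$ to leading order, and since no further integration by parts in $\sigma$ is possible, the paper splits the remaining integral into $|\tau-\sigma|\le\tau/C$, where the shortness of the interval alone yields the factor $1/C$. Third, on $\sigma\le\left(1-\frac1C\right)\tau$ it integrates by parts in $\xi^{\frac12}$, exploiting that the phase $\lambda(\tau)\xi^{\frac12}\int_{\sigma}^{\tau}\lambda^{-1}(u)\,du$ is non-degenerate away from $\sigma=\tau$, and the spectral weight $\tilde\rho$ then makes the resulting $\xi$-integral convergent, giving the gain $\gamma_C\langle\log\tau_0\rangle^{-1+\frac{\kappa}{2}}$. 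Without these steps (or a genuine substitute for them) your argument does not close for the borderline term, so the proof as proposed is incomplete.
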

\begin{proof} We only deal with the delicate term 
\[
\tilde{N}_2(\veps) = \beta_{\nu}(\tau)c'(\tau)\frac{4R}{(1+R^2)^2}\phi_0(R),\,\beta_{\nu}(\tau) = \frac{\lambda'}{\lambda},
\]
the others being much simpler to deal with, on account of the better decay of $\left(\frac{\lambda'}{\lambda}\right)^2,\,\left(\frac{\lambda'}{\lambda}\right)'$. Note that this coincides inside the light cone $R\lesssim \tau$ with 
\[
\beta_{\nu}(\tau)c'(\tau)\mathcal{D}\left(\chi_{R\lesssim \tau}\phi_0(R)\int_0^R\chi_{s\ll\tau}\frac{4s}{(1+s^2)^2}\,ds\right).
\]
Then
\begin{equation}\label{eq:vanishing21}
\lim_{R\rightarrow 0}R^{-1}\left(\chi_{R\lesssim \tau}\phi_0(R)\int_0^R\chi_{s\ll\tau}\frac{4s}{(1+s^2)^2}\,ds\right) = 0.
\end{equation}
Introduce a cutoff $\chi\left(\xi^{\frac12}\frac{\lambda(\tau)}{\lambda(\sigma)}\sigma\right)$, smoothly localizing to $\xi^{\frac12}\lesssim \left(\frac{\lambda(\tau)}{\lambda(\sigma)}\sigma\right)^{-1}$. Then we get 
\begin{align*}
&\left|\int_0^\infty\int_{\tau_0}^{\tau}\chi\left(\xi^{\frac12}\frac{\lambda(\tau)}{\lambda(\sigma)}\sigma\right)\frac{\sin[\lambda(\tau)\xi^{\frac12}\int_{\sigma}^{\tau}\lambda^{-1}(u)\,du]}{\xi^{\frac12}}\mathcal{F}\big(\chi_{R\ll\tau}\tilde{N}_2(\veps)\big)\left(\sigma, \frac{\lambda^2(\tau)}{\lambda^2(\sigma)}\xi\right)\,d\sigma \tilde{\rho}(\xi)d\xi\right|\\
&\lesssim \alpha\int_{\tau_0}^{\tau}\left(\frac{\lambda(\tau)}{\lambda(\sigma)}\sigma\right)^{-1}\left\langle\log\left(\frac{\lambda(\tau)}{\lambda(\sigma)}\sigma\right)\right\rangle^{-2}\frac{\lambda(\tau_0)}{\lambda(\sigma)}\left\langle\log\frac{\lambda(\sigma)}{\lambda(\tau_0)}\right\rangle^{-1-\frac{\kappa}{2}}\,d\sigma\\
&\lesssim \alpha\langle \log\tau_0\rangle^{-1+\frac{\kappa}{2}}\frac{\lambda(\tau_0)}{\lambda(\tau)}\left\langle\log\frac{\lambda(\tau)}{\lambda(\tau_0)}\right\rangle^{-1-\frac{\kappa}{2}},
\end{align*}
which is as desired with $\gamma(\tau_0) = \langle \log\tau_0\rangle^{-1+\frac{\kappa}{2}}$. Then include the cutoff $\left[1 - \chi\left(\xi^{\frac12}\frac{\lambda(\tau)}{\lambda(\sigma)}\sigma\right)\right]$, and perform integration by parts with respect to $\sigma$. Arguing as in the preceding lemma, and using \eqref{eq:vanishing21}, one sees that the boundary term at $\sigma = \tau$ is negligible, and reduces to bounding the term 
\begin{align*}
&\left|\int_0^\infty\int_{\tau_0}^{\tau}\left[1 - \chi\left(\xi^{\frac12}\frac{\lambda(\tau)}{\lambda(\sigma)}\sigma\right)\right]\frac{\cos\left[\lambda(\tau)\xi^{\frac12}\int_{\sigma}^{\tau}\lambda^{-1}(u)\,du\right]}{\frac{\lambda(\tau)}{\lambda(\sigma)}\xi}\frac{\partial}{\partial{\sigma}}\mathcal{F}\big(\chi_{R\ll\tau}\tilde{N}_2(\veps)\big)\left(\sigma, \frac{\lambda^2(\tau)}{\lambda^2(\sigma)}\xi\right)\,d\sigma \tilde{\rho}(\xi)d\xi\right|,\\
\end{align*}
where we may use the representation formula 
\[
\frac{\partial}{\partial{\sigma}}\mathcal{F}\big(\chi_{R\ll\tau}\tilde{N}_2(\veps)\big)\left(\sigma, \frac{\lambda^2(\tau)}{\lambda^2(\sigma)}\xi\right) = \beta_{\nu}(\sigma)B(\sigma)\phi\left(\frac{\lambda^2(\tau)}{\lambda^2(\sigma)}\xi\right)
\]
with a rapidly decaying smooth function $\phi$ to leading order. Note that if we restrict the $\sigma$-integral to the region $|\tau-\sigma|\leq \frac{1}{C}\tau$ for some large $\tau$, then we can bound the preceding integral by 
\begin{align*}
\lesssim \int_{\tau(1-\frac1C)}^{\tau}\beta_{\nu}(\sigma)\big|B(\sigma)\big|\,d\sigma\lesssim \frac{1}{C}\cdot\alpha\frac{\lambda(\tau_0)}{\lambda(\tau)}\left\langle\log\frac{\lambda(\tau)}{\lambda(\tau_0)}\right\rangle^{-1-\frac{\kappa}{2}},
\end{align*}
which is again as desired. Restrict then the $\sigma$-integral to the region $\sigma<\left(1-\frac{1}{C}\right)\tau$, and perform integration by parts with respect to $\xi^{\frac12}$. This leads to a double integral of essentially the form (multiplied by a constant depending on $C$)
\begin{align*}
&\gamma_C\int_0^\infty\int_{\tau_0}^{\tau\cdot\left(1-\frac{1}{C}\right)}\left[1 - \tilde{\chi}\left(\xi^{\frac12}\frac{\lambda(\tau)}{\lambda(\sigma)}\sigma\right)\right]\frac{\sin\left[\lambda(\tau)\xi^{\frac12}\int_{\sigma}^{\tau}\lambda^{-1}(u)\,du\right]}{\left(\frac{\lambda(\tau)}{\lambda(\sigma)}\right)^2\sigma\cdot\xi^{\frac32}}\beta_{\nu}(\sigma)B(\sigma)\tilde{\phi}\left(\frac{\lambda^2(\tau)}{\lambda^2(\sigma)}\xi\right)\,d\sigma \tilde{\rho}(\xi)d\xi,
\end{align*}
and thanks to the weight $\tilde{\rho}$, we can bound its absolute value by 
\begin{align*}
&\lesssim \alpha\gamma_C\int_{\tau_0}^{\tau\cdot\left(1-\frac{1}{C}\right)}\frac{\lambda(\sigma)}{\lambda(\tau)}\left\langle\log\left(\frac{\lambda(\tau)}{\lambda(\sigma)}\sigma\right)\right\rangle^{-2}\frac{\lambda(\tau_0)}{\lambda(\sigma)}\left\langle\log\frac{\lambda(\sigma)}{\lambda(\tau_0)}\right\rangle^{-1-\frac{\kappa}{2}}\beta_{\nu}(\sigma)\,d\sigma\\
&\lesssim \alpha \gamma_C\langle\log\tau_0\rangle^{-1+\frac{\kappa}{2}}\cdot\frac{\lambda(\tau_0)}{\lambda(\tau)}\left\langle\log\frac{\lambda(\tau)}{\lambda(\tau_0)}\right\rangle^{-1-\frac{\kappa}{2}}.
\end{align*}
This is also as desired with $\gamma(\tau_0) \sim  \gamma_C\langle\log\tau_0\rangle^{-1+\frac{\kappa}{2}}$. Note that we can let $C$ grow to $+\infty$ with $\tau_0$. 

\end{proof}

\subsection{The effect of $c(\tau)$ on $\calD\veps$ and $c(\tau)$; another smallness gain after double iteration}

Recall that Proposition~\ref{prop:anotherresonantlin} did not furnish a bound for the terms arising upon substituting the Fourier transform of 
\[
\tilde{N}_2(\veps)(\tau, R) = \frac{\lambda'}{\lambda}\left(\partial_{\tau} + \frac{\lambda'}{\lambda}R\partial_R\right)\left(\frac{4R}{(1+R^2)^2}\veps\right),
\]
in the Duhamel parametrix, with $\veps(\tau, R) = c(\tau)\phi_0(R)$. 
Define now $\tilde{x}_{good}(\tau, \xi)$ via
\begin{align*}
\tilde{x}_{good}(\tau, \xi) = \xi^{-\frac{1}{2}}\int_{\tau_{0}}^{\tau}\sin\left(\lambda(\tau)\xi^{\frac{1}{2}}\int_{\sigma}^{\tau}\lambda(u)^{-1}du\right)\mathcal{F}\left(\chi_{R\ll\sigma}\tilde{N}_2(\veps)\right)\left(\sigma,\frac{\lambda(\tau)^{2}}{\lambda(\sigma)^{2}}\xi\right)d\sigma,
\end{align*}
with $\veps(\tau, R) = c(\tau)\phi_0(R)$ and $\tilde{N}_2(\veps)$ as defined above.  Here we show that if we go one step deeper into the iteration and take advantage of the fine structure of $c(\tau)$, a smallness gain can be obtained, at least for $\calD_{\tau}\tilde{x}_{good}$: 
\begin{lemma}\label{lem:reiterate2} Assume that $c(\tau)$ solves \eqref{ctau ODE} with trivial data and with $n = 0$ and $h$ given by 
\[
h(\tau) = \int_0^\infty \left(\int_{\tau_0}^{\tau}\frac{\sin[\lambda(\tau)\eta^{\frac12}\int_{\sigma}^{\tau}\lambda^{-1}(u)\,du]}{\eta^{\frac12}}\Psi\left(\sigma, \frac{\lambda^2(\tau)}{\lambda^2(\sigma)}\eta\right)\,d\sigma\right)\tilde{\rho}(\eta)\,d\eta,
\]
and $\Psi(\sigma, \xi) = \beta_{\nu}(\sigma)\mathcal{K}_0\mathcal{D}_{\sigma}x_{good}(\sigma,\cdot)$. Then defining $\tilde{x}_{good}$ as above, we have 
\begin{align*}
&\sup_{\tau\geq \tau_0}\frac{\lambda(\tau)}{\lambda(\tau_0)}\left\langle\log\left(\frac{\lambda(\tau)}{\lambda(\tau_0)}\right)\right\rangle^{1+\frac{\kappa}{2}}\big\|\mathcal{D}_{\tau}\tilde{x}_{good}(\tau, \cdot)\big\|_{S_1}\\
&\leq \gamma(\tau_0)\sup_{\tau\geq \tau_0}\frac{\lambda(\tau)}{\lambda(\tau_0)}\left\langle\log\left(\frac{\lambda(\tau)}{\lambda(\tau_0)}\right)\right\rangle^{1+\frac{\kappa}{2}}\big\|\mathcal{D}_{\tau}x_{good}(\tau, \cdot)\big\|_{S_1},\\
\end{align*}
where $\lim_{\tau_0\rightarrow\infty}\gamma(\tau_0) = 0$. A similar bound but without $\gamma(\tau_0)$ and $S_0$ instead of $S_1$ is obtained for $\tilde{x}_{good}(\tau, \cdot)$ instead of $\calD_{\tau}\tilde{x}_{good}(\tau, \cdot)$. Similar bounds are obtained when substituting for $\Psi$ any other source term on the right hand side of \eqref{eq on Fourier side final}, as well as when using $n$ as given after \eqref{ctau ODE} in the equation for $c(\tau)$ (then one replaces $\big\|\mathcal{D}_{\tau}x_{good}(\tau, \cdot)\big\|_{S_1}$ by $\big\|x_{good}\big\|_{S_0} + \|c\|$).  \end{lemma}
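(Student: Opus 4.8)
\textbf{Proof plan for Lemma~\ref{lem:reiterate2}.}

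The plan is to follow the same integration-by-parts strategy used in the proof of Proposition~\ref{prop:resonantlin} and Lemma~\ref{lem:ctocdelicate}, exploiting the fact that $c(\tau)$, being the solution of \eqref{ctau ODE} with trivial data and source $h$, inherits — via the explicit parametrix \eqref{eq:L_cParametrix} and the representation \eqref{eq:c'explicit} — the structure $c'(\tau) = A(\tau) + (\text{lower-order in decay})$, where $A, B$ are controlled by the $S_0/S_1$-norm of $\mathcal{D}_{\tau}x_{good}$. Concretely, the first step is to invoke Lemma~\ref{lem:cintegralbound} together with Proposition~\ref{prop:K_0Sbound} to see that the source $h(\tau)$ appearing in the hypothesis satisfies $h(\tau) = \partial_{\tau}A(\tau) + B(\tau)$ with
\[
\tau\big|A(\tau)\big| + \big|B(\tau)\big|\lesssim \frac{\lambda(\tau_0)}{\lambda(\tau)}\left\langle\log\frac{\lambda(\tau)}{\lambda(\tau_0)}\right\rangle^{-1-\frac{\kappa}{2}}\cdot\sup_{\sigma\geq\tau_0}\frac{\lambda(\sigma)}{\lambda(\tau_0)}\left\langle\log\frac{\lambda(\sigma)}{\lambda(\tau_0)}\right\rangle^{1+\frac{\kappa}{2}}\big\|\mathcal{D}_{\sigma}x_{good}(\sigma,\cdot)\big\|_{S_1},
\]
where the extra $\beta_{\nu}(\sigma) = \lambda'/\lambda\sim\sigma^{-1}$ inside $\Psi$ supplies a decisive $\tau^{-1}$ of decay compared to the borderline case (this is exactly the mechanism that produces $\gamma(\tau_0)\to0$). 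Then formulas \eqref{eq:cexplicit}, \eqref{eq:c'explicit} give $\|c\|\lesssim\gamma(\tau_0)\,\|\mathcal{D}_{\tau}x_{good}\|$ with the weighted norms, i.e.\ $c$ itself is already small.

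The second step is to insert $\veps(\tau,R) = c(\tau)\phi_0(R)$ into $\tilde{N}_2(\veps) = \frac{\lambda'}{\lambda}(\partial_{\tau}+\frac{\lambda'}{\lambda}R\partial_R)(\frac{4R}{(1+R^2)^2}\veps)$ and observe, as in Lemma~\ref{lem:ctocdelicate1}, that inside the light cone $R\lesssim\tau$ this equals $\beta_{\nu}(\tau)c'(\tau)\mathcal{D}(\chi_{R\lesssim\tau}\phi_0(R)\int_0^R\chi_{s\ll\tau}\frac{4s}{(1+s^2)^2}\,ds)$ plus a remainder, and the inner expression vanishes to order $\geq R^3$ at the origin, so that $\lim_{R\to0}R^{-1}(\ldots) = 0$; this is the analogue of \eqref{eq:vanishing21} and is what makes the relevant boundary term at $\sigma=\tau$ in the integration by parts negligible. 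The third step is the integration-by-parts argument itself: split the $\xi$-integral in $\tilde{x}_{good}$ (and in $\mathcal{D}_{\tau}\tilde{x}_{good}$) by a cutoff $\chi(C^{-1}\xi^{1/2}/(\frac{\lambda(\sigma)}{\lambda(\tau)}\sigma^{-1}))$ localizing to low output frequencies; the low-frequency piece is estimated directly using the $\langle\log\xi\rangle^{-1-\kappa}$-weight in $\|\cdot\|_{S_1}$, which converts the smallness of the frequency window into a power of $\langle\log(\tau_0/C)\rangle^{-1}$. On the complementary region one integrates by parts once in $\sigma$; the $\sigma=\tau$ boundary term is handled by the vanishing observation above plus the identity \eqref{eq:importantidentity}, the $\sigma=\tau_0$ boundary term is bounded by $|c(\tau_0)|=0$ times rapid decay, and in the remaining double integral, whenever $\partial_\sigma$ lands on $c(\sigma)$ producing $A(\sigma)$ one stops (exploiting the faster $\sigma^{-1}$ decay of $A$), and otherwise one integrates by parts a second time, gaining $C^{-1}$ and/or $\tau_0^{-1}$. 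Summing and choosing first $C$ and then $\tau_0$ large gives $\gamma(\tau_0)\to0$.

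For the assertion about $\tilde{x}_{good}$ itself (as opposed to $\mathcal{D}_{\tau}\tilde{x}_{good}$) one argues the same way but with the $S_0$-norm and without claiming the $\gamma(\tau_0)$-smallness: here one only uses that $\|c\|\lesssim\|\mathcal{D}_{\tau}x_{good}\|$ and that the Duhamel parametrix \eqref{sol to inhomo xb} is smoothing, exactly as in Proposition~\ref{prop:resonantlin} where the $\|\tilde{x}_{good}\|_{S_0}$-term on the left also had to be dropped to obtain smallness. For the remaining variants — substituting any other source term from the right side of \eqref{eq on Fourier side final} for $\Psi$, or using $n(\tau)$ instead of $h(\tau)$ — one repeats the scheme verbatim; when the source is $n$ one uses Lemma~\ref{lem:nbound} in place of Lemma~\ref{lem:hbound}, which is why $\|\mathcal{D}_{\tau}x_{good}\|_{S_1}$ on the right gets replaced by $\|x_{good}\|_{S_0}+\|c\|$, and the better $\tau^{-2}$ decay of $n$ only helps. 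The main obstacle is the bookkeeping in the repeated integration by parts: one must track which of the two $\partial_\sigma$'s hits $c$ (producing the non-integrable-by-parts term $A(\sigma)$) versus the cutoff or the measure factors, and verify in each branch that the combination of the $\langle\log\xi\rangle$-weight, the oscillatory kernel $F(\xi,\eta)$ from Proposition~\ref{prop:Kstructure}, and the time-weight $\beta_\nu$ indeed yields a net smallness factor; this is the delicate part, but it is entirely parallel to the arguments already carried out in Lemmas~\ref{lem:ctocdelicate} and~\ref{lem:ctocdelicate1}.
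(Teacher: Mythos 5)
There is a genuine gap: your plan locates the source of the smallness $\gamma(\tau_0)$ in the wrong place, and as a result misses the actual mechanism of the paper's proof. In Step~1 you claim that the factor $\beta_{\nu}(\sigma)$ inside $\Psi=\beta_{\nu}(\sigma)\mathcal{K}_0\mathcal{D}_{\sigma}x_{good}$ supplies a ``decisive $\tau^{-1}$'' so that $h$, and hence $\|c\|$, is already bounded by $\gamma(\tau_0)\sup\|\mathcal{D}_{\tau}x_{good}\|_{S_1}$. This is false. The Duhamel parametrix exactly absorbs the $\beta_\nu$ factor: Proposition~\ref{prop:nonlocallinear1} states that the parametrix applied to $2\frac{\lambda'}{\lambda}\mathcal{K}_0\mathcal{D}_\tau\bar{x}$ reproduces the same decay with ``$\lesssim$'' in place of ``$\ll$'', i.e. with a universal constant and \emph{no smallness gain}; likewise Lemma~\ref{lem:hbound} and Remark~\ref{rem:nosmallness1} say explicitly that the passage $\bar x\mapsto c$ via $h$ gains no smallness. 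If $\|c\|$ were already small relative to $\|\mathcal{D}_\tau x_{good}\|_{S_1}$, Lemma~\ref{lem:reiterate2} would be a triviality and indeed unnecessary for the whole iteration scheme; the lemma exists precisely because it is not.

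The actual argument is different. Since for this $\Psi$ one has $A=0$, $B=h$ in \eqref{eq:c'explicit}, your strategy of ``stopping'' the integration by parts whenever $\partial_\sigma$ falls on $c(\sigma)$ and produces $A(\sigma)$ has nothing to grab: that term simply isn't there. What actually happens in the paper is that after one integration by parts in $\sigma$ (with the cutoff $[1-\chi(C^{-1}\sigma\frac{\lambda(\tau)}{\lambda(\sigma)}\xi^{1/2})]$) one lands on $c''(\sigma)$, and the ODE \eqref{ctau ODE} together with $n=0$, $A=0$ reduces $c''(\sigma)$, up to lower-order terms, to $h(\sigma)$ itself. Then $h(\sigma)$ is \emph{re-expanded} as the inner double integral in the variables $(\sigma_1,\eta)$. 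Passing to $\tilde\eta=\frac{\lambda^2(\sigma)}{\lambda^2(\sigma_1)}\eta$, the product of the outer $\sin(\lambda(\tau)\xi^{1/2}\int_\sigma^\tau\lambda^{-1})$ and the inner $\sin(\lambda(\sigma)\eta^{1/2}\int_{\sigma_1}^\sigma\lambda^{-1})$ produces the compound phases $\frac{\lambda(\tau)}{\lambda(\sigma)}\xi^{1/2}\pm\frac{\lambda(\sigma_1)}{\lambda(\sigma)}\tilde\eta^{1/2}$. The smallness then comes from splitting into the non-resonant region (another integration by parts in $\sigma$, yielding $\gamma(\tau_0)=\langle\log\tau_0\rangle^{-\kappa/2}$) and the resonant region (a dyadic decomposition in $I$ with the bound $\sum_I\langle\log I\rangle^{-2+2\kappa}<\infty$, then restricting $\xi$ to $(\tau_0^{-\delta_1},\tau_0^{\delta_1})$ and covering that range with intervals of length $\sim\tau_0^{-\gamma}$ to close via Hölder and almost-orthogonality). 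None of this double-oscillation / orthogonality machinery appears in your plan, and the vanishing relation \eqref{eq:vanishing21} and identity \eqref{eq:importantidentity} that you invoke belong to the different ``effect of $c$ on $c$'' scenario of Lemmas~\ref{lem:ctocdelicate}--\ref{lem:ctocdelicate1}, not to the proof of Lemma~\ref{lem:reiterate2}. You should redo the argument starting from the reduction $c''\leadsto h$ and the re-expansion of $h$ as a double oscillatory integral.
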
 
\begin{proof} We prove the stated bound for $\big\|\mathcal{D}_{\tau}\tilde{x}_{good}(\tau, \cdot)\big\|_{S_1}$, the (weaker) bound for $\big\|\tilde{x}_{good}(\tau, \cdot)\big\|_{S_0}$ being obtained by similar arguments. We can write 
\begin{align*}
\mathcal{F}\left(\chi_{R\ll\sigma}\tilde{N}_2(\veps)\right)\left(\sigma,\frac{\lambda(\tau)^{2}}{\lambda(\sigma)^{2}}\xi\right) = \beta_{\nu}(\sigma)c'(\sigma)\phi\left(\frac{\lambda(\tau)^{2}}{\lambda(\sigma)^{2}}\xi\right) + \beta_{\nu}^2(\sigma)c(\sigma)\tilde{\phi}\left(\frac{\lambda(\tau)^{2}}{\lambda(\sigma)^{2}}\xi\right)
\end{align*}
The functions $\phi, \tilde{\phi}$ here are $C^\infty$ and decay rapidly toward $+\infty$. 
The contribution of the second term on the right is easily seen to be amenable to an argument like the one used for the proof of Proposition~\ref{prop:resonantlin}, so we focus on the contribution by the first term on the right. Applying $\mathcal{D}_{\tau}$, we have to bound 
\begin{align*}
\mathcal{D}_{\tau}\tilde{x}_{good}(\tau, \xi) = \int_{\tau_{0}}^{\tau}\cos\left(\lambda(\tau)\xi^{\frac{1}{2}}\int_{\sigma}^{\tau}\lambda(u)^{-1}du\right)\mathcal{F}\left(\chi_{R\ll\sigma}\tilde{N}_2(\veps)\right)\left(\sigma,\frac{\lambda(\tau)^{2}}{\lambda(\sigma)^{2}}\xi\right)d\sigma
\end{align*}
Arguing as in the proof of Lemma~\ref{lem:ctocdelicate}, we may include a cutoff of the form $\left[1 - \chi\left(C^{-1}\sigma\frac{\lambda(\tau)}{\lambda(\sigma)}\xi^{\frac{1}{2}}\right)\right]$. 
Integration by parts leads to leading order (up to similar terms) to the term 
\begin{align*}
\int_{\tau_{0}}^{\tau}\left[1 - \chi\left(C^{-1}\sigma\frac{\lambda(\tau)}{\lambda(\sigma)}\xi^{\frac{1}{2}}\right)\right]\frac{\sin\left(\lambda(\tau)\xi^{\frac{1}{2}}\int_{\sigma}^{\tau}\lambda(u)^{-1}du\right)}{\xi^{\frac12}\frac{\lambda(\tau)}{\lambda(\sigma)}}\beta_{\nu}(\sigma)c''(\sigma)\phi\left(\frac{\lambda(\tau)^{2}}{\lambda(\sigma)^{2}}\xi\right) d\sigma
\end{align*}
Using \eqref{eq:c'explicit} with $A = 0$, $B = h$, this reduces up to similar or better error terms to 
 \begin{align*}
\int_{\tau_{0}}^{\tau}\left[1 - \chi\left(C^{-1}\sigma\frac{\lambda(\tau)}{\lambda(\sigma)}\xi^{\frac{1}{2}}\right)\right]\frac{\sin\left(\lambda(\tau)\xi^{\frac{1}{2}}\int_{\sigma}^{\tau}\lambda(u)^{-1}du\right)}{\xi^{\frac12}\frac{\lambda(\tau)}{\lambda(\sigma)}}\beta_{\nu}(\sigma)h(\sigma)\phi\left(\frac{\lambda(\tau)^{2}}{\lambda(\sigma)^{2}}\xi\right) d\sigma,
\end{align*}
where we have 
\[
h(\sigma) = \int_0^\infty \left(\int_{\tau_0}^{\sigma}\frac{\sin[\lambda(\sigma)\eta^{\frac12}\int_{\sigma_1}^{\sigma}\lambda^{-1}(u)\,du]}{\eta^{\frac12}}\Psi\left(\sigma_1, \frac{\lambda^2(\sigma)}{\lambda^2(\sigma_1)}\eta\right)\,d\sigma_1\right)\tilde{\rho}(\eta)\,d\eta
\]
Alternatively, for the last part of the proposition, one substitutes $B = n$, obtaining 
 \begin{align*}
\int_{\tau_{0}}^{\tau}\left[1 - \chi\left(C^{-1}\sigma\frac{\lambda(\tau)}{\lambda(\sigma)}\xi^{\frac{1}{2}}\right)\right]\frac{\sin\left(\lambda(\tau)\xi^{\frac{1}{2}}\int_{\sigma}^{\tau}\lambda(u)^{-1}du\right)}{\xi^{\frac12}\frac{\lambda(\tau)}{\lambda(\sigma)}}\beta_{\nu}(\sigma)n(\sigma)\phi\left(\frac{\lambda(\tau)^{2}}{\lambda(\sigma)^{2}}\xi\right) d\sigma,
\end{align*}
Let us deal with this easier case first. Invoking Lemma~\ref{lem:nbound}, and exploiting the decay of $\phi$, we get 
\begin{align*}
&\left\|\xi^{2+\kappa}\langle\log\xi\rangle^{-1-\kappa}\int_{\tau_{0}}^{\tau}[\ldots]\frac{\sin\left(\lambda(\tau)\xi^{\frac{1}{2}}\int_{\sigma}^{\tau}\lambda(u)^{-1}du\right)}{\xi^{\frac12}\frac{\lambda(\tau)}{\lambda(\sigma)}}\beta_{\nu}(\sigma)n(\sigma)\phi\left(\frac{\lambda(\tau)^{2}}{\lambda(\sigma)^{2}}\xi\right) d\sigma\right\|_{L^2_{d\xi}}\\
&\lesssim \gamma(\tau_0)[\|x\| + \|c\|]\int_{\tau_{0}}^{\tau}\frac{\lambda(\sigma)}{\lambda(\tau)}\left\langle\log\frac{\lambda(\tau)}{\lambda(\sigma)}\right\rangle^{-1-\kappa}\cdot\frac{(\log\sigma)^{\frac12}}{\sigma^2}\cdot\beta_{\nu}(\sigma)\cdot\frac{\lambda(\tau_0)}{\lambda(\sigma)}\left\langle\log\frac{\lambda(\sigma)}{\lambda(\tau_0)}\right\rangle^{-1-\kappa}\,d\sigma\\
&\lesssim \gamma(\tau_0)[\|x\| + \|c\|]\frac{\lambda(\tau_0)}{\lambda(\tau)}\left\langle\log\frac{\lambda(\tau)}{\lambda(\tau_0)}\right\rangle^{-1-\kappa},
\end{align*}
as desired. Note that the smallness gain $\gamma(\tau_0)$ here is inherited from the corresponding $n$-bound. Also, the factor $(\log\sigma)^{\frac12}$ comes from the bound 
\begin{align*}
\left\|\left[1 - \chi\left(C^{-1}\sigma\frac{\lambda(\tau)}{\lambda(\sigma)}\xi^{\frac{1}{2}}\right)\right]\xi^{-\frac12}\phi\left(\frac{\lambda(\tau)^{2}}{\lambda(\sigma)^{2}}\xi\right)\right\|_{L^2_{d\xi}}\lesssim (\log\sigma)^{\frac12}. 
\end{align*}

We have to work harder when $B = h$, given by the expression displayed before. 
Passing to the variable $\tilde{\eta} = \frac{\lambda^2(\sigma)}{\lambda^2(\sigma_1)}\eta$, and combining the oscillatory terms, we get the phases 
\[
\frac{\lambda(\tau)}{\lambda(\sigma)}\xi^{\frac{1}{2}}\pm \frac{\lambda(\sigma_1)}{\lambda(\sigma)}\tilde{\eta}^{\frac12}.
\]
Restricting to the non-resonant region 
\[
\left|\frac{\lambda(\tau)}{\lambda(\sigma)}\xi^{\frac{1}{2}}\pm \frac{\lambda(\sigma_1)}{\lambda(\sigma)}\tilde{\eta}^{\frac12}\right|\gtrsim \frac{\lambda(\tau)}{\lambda(\sigma)}\xi^{\frac{1}{2}}
\]
and calling the suitably modified $h$ now $\tilde{h}(\sigma)$, we perform integration by parts with respect to $\sigma$. Use that on account of Proposition~\ref{prop:nonlocallinear1} and its proof we have 
\begin{align*}
&\sup_{\sigma\geq \tau_0}\frac{\lambda(\sigma)}{\lambda(\tau_0)}\left\langle\log\left(\frac{\lambda(\sigma)}{\lambda(\tau_0)}\right)\right\rangle^{1+\frac{\kappa}{2}}\left|\int_0^\infty \left(\int_{\tau_0}^{\sigma}\frac{\sin[\lambda(\sigma)\eta^{\frac12}\int_{\sigma_1}^{\sigma}\lambda^{-1}(u)\,du]}{\eta^{\frac12}}\Psi\left(\sigma_1, \frac{\lambda^2(\sigma)}{\lambda^2(\sigma_1)}\eta\right)\,d\sigma_1\right)\tilde{\rho}(\eta)\,d\eta\right|\\
& +\sup_{\sigma\geq \tau_0}\frac{\lambda(\sigma)}{\lambda(\tau_0)}\left\langle\log\left(\frac{\lambda(\sigma)}{\lambda(\tau_0)}\right)\right\rangle^{1+\frac{\kappa}{2}}\left|\int_0^\infty \left(\int_{\tau_0}^{\sigma}\frac{\cos[\lambda(\sigma)\eta^{\frac12}\int_{\sigma_1}^{\sigma}\lambda^{-1}(u)\,du]}{\eta^{\frac12}}\Psi\left(\sigma_1, \frac{\lambda^2(\sigma)}{\lambda^2(\sigma_1)}\eta\right)\,d\sigma_1\right)\tilde{\rho}(\eta)\,d\eta\right|\\
&\lesssim \sup_{\tau\geq \tau_0}\frac{\lambda(\tau)}{\lambda(\tau_0)}\left\langle\log\left(\frac{\lambda(\tau)}{\lambda(\tau_0)}\right)\right\rangle^{1+\frac{\kappa}{2}}\big\|\mathcal{D}_{\tau}x_{good}(\tau, \cdot)\big\|_{S_1}.
\end{align*}
We then find that 
\begin{align*}
&\left\|\int_{\tau_{0}}^{\tau}\left[1 - \chi\left(C^{-1}\sigma\frac{\lambda(\tau)}{\lambda(\sigma)}\xi^{\frac{1}{2}}\right)\right]\frac{\sin\left(\lambda(\tau)\xi^{\frac{1}{2}}\int_{\sigma}^{\tau}\lambda(u)^{-1}du\right)}{\xi^{\frac12}\frac{\lambda(\tau)}{\lambda(\sigma)}}\beta_{\nu}(\sigma)\tilde{h}(\sigma)\phi\left(\frac{\lambda(\tau)^{2}}{\lambda(\sigma)^{2}}\xi\right) d\sigma\right\|_{S_1}\\
&\lesssim \int_{\tau_{0}}^{\tau}\left\langle\log\left(C^{-1}\sigma\frac{\lambda(\tau)}{\lambda(\sigma)}\right)\right\rangle^{-1-\kappa}\frac{\lambda(\tau_0)}{\lambda(\sigma)}\left\langle\log\left(\frac{\lambda(\sigma)}{\lambda(\tau_0)}\right)\right\rangle^{-1-\frac{\kappa}{2}}\,d\sigma\\
&\hspace{4cm}\cdot \sup_{\tau\geq \tau_0}\frac{\lambda(\tau)}{\lambda(\tau_0)}\left\langle\log\left(\frac{\lambda(\tau)}{\lambda(\tau_0)}\right)\right\rangle^{1+\frac{\kappa}{2}}\big\|\mathcal{D}_{\tau}x_{good}(\tau, \cdot)\big\|_{S_1},
\end{align*}
and this gives the desired bound with $\gamma(\tau_0) = \langle\log\tau_0\rangle^{-\frac{\kappa}{2}}$. 
\\
It suffices henceforth to restrict to the situation 
\[
\left|\frac{\lambda(\tau)}{\lambda(\sigma)}\xi^{\frac{1}{2}}\pm \frac{\lambda(\sigma_1)}{\lambda(\sigma)}\tilde{\eta}^{\frac12}\right|\lesssim\frac{\lambda(\tau)}{\lambda(\sigma)}\xi^{\frac{1}{2}}
\]
Denoting by $I$ dyadic intervals, it suffices to bound 
\begin{align*}
\left\|\sum_{I}\chi_I\left(\xi^{\frac12}\frac{\lambda(\tau)}{\lambda(\sigma)}\right)\int_{\tau_{0}}^{\tau}\left[1 - \chi\left(C^{-1}\sigma\frac{\lambda(\tau)}{\lambda(\sigma)}\xi^{\frac{1}{2}}\right)\right]\frac{\sin\left(\lambda(\tau)\xi^{\frac{1}{2}}\int_{\sigma}^{\tau}\lambda(u)^{-1}du\right)}{\xi^{\frac12}\frac{\lambda(\tau)}{\lambda(\sigma)}}\beta_{\nu}(\sigma)h^I(\sigma)\phi\left(\frac{\lambda(\tau)^{2}}{\lambda(\sigma)^{2}}\xi\right) d\sigma\right\|_{S_1}, 
\end{align*}
where we let 
\begin{align*}
h^{I}(\sigma)=\int_0^\infty \chi_I(\eta)\left(\int_{\tau_0}^{\sigma}\frac{\sin[\lambda(\sigma)\eta^{\frac12}\int_{\sigma_1}^{\sigma}\lambda^{-1}(u)\,du]}{\eta^{\frac12}}\Psi\left(\sigma_1, \frac{\lambda^2(\sigma)}{\lambda^2(\sigma_1)}\eta\right)\,d\sigma_1\right)\tilde{\rho}(\eta)\,d\eta.
\end{align*}
Proposition~\ref{prop:nonlocallinear1} implies that 
\[
\big|h^I(\sigma)\big|\lesssim \langle \log I\rangle^{-1+\kappa}\frac{\lambda(\tau_0)}{\lambda(\sigma)}\left\langle\log\left(\frac{\lambda(\sigma)}{\lambda(\tau_0)}\right)\right\rangle^{-1-\frac{\kappa}{2}}\cdot \sup_{\tau\geq \tau_0}\frac{\lambda(\tau)}{\lambda(\tau_0)}\left\langle\log\left(\frac{\lambda(\tau)}{\lambda(\tau_0)}\right)\right\rangle^{1+\frac{\kappa}{2}}\big\|\mathcal{D}_{\tau}x_{good}(\tau, \cdot)\big\|_{S_1}
\]
By orthogonality and Minkowski's inequality, we then infer that the $\|\cdot\|_{S_1}$-norm above is bounded by 
\begin{align*}
&\Big\|\sum_{I}\ldots\Big\|_{S_1}\lesssim \int_{\tau_{0}}^{\tau}\frac{\lambda(\sigma)}{\lambda(\tau)}\left\langle\log\frac{\lambda(\tau)}{\lambda(\sigma)}\right\rangle^{-1-\kappa}\left(\sum_I\langle \log I\rangle^{-2+2\kappa}\right)^{\frac12}\frac{\lambda(\tau_0)}{\lambda(\sigma)}\left\langle\log\left(\frac{\lambda(\sigma)}{\lambda(\tau_0)}\right)\right\rangle^{-1-\frac{\kappa}{2}}\cdot \sup_{\tau\geq \tau_0}\ldots
\end{align*}
The fact that $\kappa<\frac12$ implies that the inner sum $\sum_I\langle \log I\rangle^{-2+2\kappa}$ converges, and so one recovers the desired bound, except that there is no smallness gain yet. However, such a gain is easily obtained by restricting the frequency $\xi$ to the ranges $\xi<\tau_0^{-\delta_1}, \xi>\tau_0^{\delta_1}$ for some $\delta_1>0$, say. 
\\

Thus it suffices to obtain an improved bound for 
\begin{align*}
&\frac{\lambda(\tau)}{\lambda(\tau_0)}\left\langle\log\frac{\lambda(\tau)}{\lambda(\tau_0)}\right\rangle^{1+\frac{\kappa}{2}}\left\|\int_{\tau_{0}}^{\tau}\frac{\sin\left(\lambda(\tau)\xi^{\frac{1}{2}}\int_{\sigma}^{\tau}\lambda(u)^{-1}du\right)}{\xi^{\frac12}\frac{\lambda(\tau)}{\lambda(\sigma)}}\beta_{\nu}(\sigma)h(\sigma)\phi\left(\frac{\lambda(\tau)^{2}}{\lambda(\sigma)^{2}}\xi\right) d\sigma\right\|_{S_1\left(\tau_0^{-\delta_1}<\cdot<\tau_0^{\delta_1}\right)}\\
\end{align*}
Again we exploit the oscillatory character of the integrand with respect to $\sigma$ to force frequency alignment. Specifically, changing the variable in the $\eta$-integral to $\tilde{\eta} = \frac{\lambda^2(\sigma)}{\lambda^2(\sigma_1)}\eta$, we arrive at the oscillatory kernel 
\[
\sin\left(\lambda(\tau)\xi^{\frac{1}{2}}\int_{\sigma}^{\tau}\lambda(u)^{-1}du\right)\cdot \sin\left[\lambda(\sigma_1)\tilde{\eta}^{\frac12}\int_{\sigma_1}^{\sigma}\lambda^{-1}(u)\,du\right],
\]
and so as long as 
\[
\left|\frac{\lambda(\tau)}{\lambda(\sigma)}\xi^{\frac{1}{2}} - \frac{\lambda(\sigma_1)}{\lambda(\sigma)}\tilde{\eta}^{\frac12}\right|\gtrsim \tau_0^{-\gamma}
\]
for some small $\gamma>0$, we gain a negative power of $\tau_0$ by performing integration by parts with respect to $\sigma$. Thus we can reduce to the situation 
\[
\left|\frac{\lambda(\tau)}{\lambda(\sigma)}\xi^{\frac{1}{2}} - \eta^{\frac12}\right|\lesssim \tau_0^{-\gamma},\quad \xi\in \{\tau_0^{-\delta_1}<\cdot<\tau_0^{\delta_1}\},
\]
where we may assume $\delta_1\ll\gamma$. Now cover $\frac{\lambda^2(\tau)}{\lambda^2(\sigma)}\cdot \{\tau_0^{-\delta_1}<\cdot<\tau_0^{\delta_1}\}$ with intervals $I$ of length $\sim \tau_0^{-\gamma}$, and consider 
\begin{align*}
\left\|\sum_{I}\int_{\tau_{0}}^{\tau}\chi_{I}\left(\frac{\lambda^2(\tau)}{\lambda^2(\sigma)}\xi\right)\frac{\sin\left(\lambda(\tau)\xi^{\frac{1}{2}}\int_{\sigma}^{\tau}\lambda(u)^{-1}du\right)}{\xi^{\frac12}\frac{\lambda(\tau)}{\lambda(\sigma)}}\beta_{\nu}(\sigma)h_I(\sigma)\phi\left(\frac{\lambda(\tau)^{2}}{\lambda(\sigma)^{2}}\xi\right) d\sigma\right\|_{S_1},
\end{align*}
where (with a suitable dilate $CI$ of $I$)
\begin{align*}
h_{I}(\sigma) = \int_0^\infty\chi_{CI}(\eta) \left(\int_{\tau_0}^{\sigma}\frac{\sin[\lambda(\sigma)\eta^{\frac12}\int_{\sigma_1}^{\sigma}\lambda^{-1}(u)\,du]}{\eta^{\frac12}}\Psi\left(\sigma_1, \frac{\lambda^2(\sigma)}{\lambda^2(\sigma_1)}\eta\right)\,d\sigma_1\right)\tilde{\rho}(\eta)\,d\eta
\end{align*}
Using Holder's inequality as well as Proposition~\ref{prop:nonlocallinear1} we infer 
\begin{align*}
\frac{\lambda(\sigma)}{\lambda(\tau_0)}\left\langle\log\frac{\lambda(\sigma)}{\lambda(\tau_0)}\right\rangle^{1+\frac{\kappa}{2}}\big|h_{I}(\sigma)\big|\lesssim \big|I\big|^{\frac12}\tau_0^{\frac{\delta_1}{2}}\sup_{\tau\geq \tau_0}\frac{\lambda(\tau)}{\lambda(\tau_0)}\left\langle\log\left(\frac{\lambda(\tau)}{\lambda(\tau_0)}\right)\right\rangle^{1+\frac{\kappa}{2}}\big\|\mathcal{D}_{\tau}x_{good}(\tau, \cdot)\big\|_{S_1}
\end{align*}
and again from Holder's inequality we infer 
\begin{align*}
&\left\|\int_{\tau_{0}}^{\tau}\chi_{I}\left(\frac{\lambda^2(\tau)}{\lambda^2(\sigma)}\xi\right)\frac{\sin\left(\lambda(\tau)\xi^{\frac{1}{2}}\int_{\sigma}^{\tau}\lambda(u)^{-1}du\right)}{\xi^{\frac12}\frac{\lambda(\tau)}{\lambda(\sigma)}}\beta_{\nu}(\sigma)h_I(\sigma)\phi\left(\frac{\lambda(\tau)^{2}}{\lambda(\sigma)^{2}}\xi\right) d\sigma\right\|_{S_1}\\
&\lesssim \tau_0^{C\delta_1}\big|I\big|^{\frac12}\cdot \big|I\big|^{\frac12}\tau_0^{\frac{\delta_1}{2}}\sup_{\tau\geq \tau_0}\frac{\lambda(\tau)}{\lambda(\tau_0)}\left\langle\log\left(\frac{\lambda(\tau)}{\lambda(\tau_0)}\right)\right\rangle^{1+\frac{\kappa}{2}}\big\|\mathcal{D}_{\tau}x_{good}(\tau, \cdot)\big\|_{S_1}
\end{align*}
Then we conclude by observing that 
\begin{align*}
&\left\|\sum_{I}\int_{\tau_{0}}^{\tau}\chi_{I}\left(\frac{\lambda^2(\tau)}{\lambda^2(\sigma)}\xi\right)\frac{\sin\left(\lambda(\tau)\xi^{\frac{1}{2}}\int_{\sigma}^{\tau}\lambda(u)^{-1}du\right)}{\xi^{\frac12}\frac{\lambda(\tau)}{\lambda(\sigma)}}\beta_{\nu}(\sigma)h_I(\sigma)\phi\left(\frac{\lambda(\tau)^{2}}{\lambda(\sigma)^{2}}\xi\right) d\sigma\right\|_{S_1}\\
&\leq \left(\sum_{I}\left\|\int_{\tau_{0}}^{\tau}\chi_{I}\left(\frac{\lambda^2(\tau)}{\lambda^2(\sigma)}\xi\right)\frac{\sin\left(\lambda(\tau)\xi^{\frac{1}{2}}\int_{\sigma}^{\tau}\lambda(u)^{-1}du\right)}{\xi^{\frac12}\frac{\lambda(\tau)}{\lambda(\sigma)}}\beta_{\nu}(\sigma)h_I(\sigma)\phi\left(\frac{\lambda(\tau)^{2}}{\lambda(\sigma)^{2}}\xi\right) d\sigma\right\|_{S_1}^2\right)^{\frac12}\\
&\lesssim  \big|I\big|^{\frac12}\tau_0^{C_1\delta_1}\sup_{\tau\geq \tau_0}\frac{\lambda(\tau)}{\lambda(\tau_0)}\left\langle\log\left(\frac{\lambda(\tau)}{\lambda(\tau_0)}\right)\right\rangle^{1+\frac{\kappa}{2}}\big\|\mathcal{D}_{\tau}x_{good}(\tau, \cdot)\big\|_{S_1}
\end{align*}
which gives the first assertion of the lemma in light of $\big|I\big|\sim \tau_0^{-\gamma}$, $\gamma\gg \delta_1>0$. 
\end{proof}
\section{Forcing smallness for the non-local linear terms via re-iteration}

At this stage, we have essentially all ingredients to set up an iterative scheme yielding a solution $\big(c(\tau), \xb(\tau, \xi)\big)$ for the system consisting of \eqref{ctau ODE}, \eqref{eq on Fourier side final}, except that there is still one delicate situation where an iterative step will not yield smallness, specifically due to the terms involving $\mathcal{D}_{\tau}$ in Proposition~\ref{prop:nonlocallinear2}, except upon inclusion of additional frequency cutoffs $\chi_{\xi<\epsilon}$, $\chi_{\xi>\epsilon^{-1}}$. In order to deal with this, we implement essentially the method from \cite{CondBlow} (in fact, a simplified method), which shows that iterating application of the parametrix to the `bad' nonlocal linear terms sufficiently many times does result in smallness, provided $\tau_0$ is chosen sufficiently large. The key step here is a reduction to a `diagonal' situation in a two-fold application of the iterative step, as seen in the following. 
\\

Introduce the operator 
\begin{equation}\label{eq:Phidef}
\Phi(f)(\tau, \xi): = \int_{\tau_0}^{\tau}\cos\left[\lambda(\tau)\xi^{\frac12}\int_{\sigma}^{\tau}\lambda^{-1}(u)\,du\right]\beta_{\nu}(\sigma)\mathcal{K}_0f\left(\sigma, \frac{\lambda^2(\tau)}{\lambda^2(\sigma)}\xi\right)\,d\sigma,\,\beta_{\nu}(\sigma) = \frac{\lambda'(\sigma)}{\lambda(\sigma)}.
\end{equation}
We shall consider iterates of this operator. To control them, a splitting of the kernel of $\mathcal{K}_0$ into a 'diagonal' and a 'non-diagonal' part are essential. Labelling this kernel $K_0(\xi, \eta)$, we write 
\begin{align*}
K_0(\xi, \eta) &= \chi_{\left|\frac{\xi}{\eta} - 1\right|<\frac1n}K_0(\xi, \eta) + \chi_{\left|\frac{\xi}{\eta} - 1\right|\geq \frac1n}K_0(\xi, \eta)\\
&=:K_0^d(\xi, \eta) + K_0^{nd}(\xi, \eta),
\end{align*}
where the cutoffs are smooth and localize to dilates of the indicated regions. Call the corresponding operators $\mathcal{K}_0^d, \mathcal{K}_0^{nd}$. Then we have the important 
\begin{lemma}\label{lem:ndsmall} We have the bound
\[
\sup_{\tau\geq \tau_0}\frac{\lambda(\tau)}{\lambda(\tau_0)}\left\langle\log\frac{\lambda(\tau)}{\lambda(\tau_0)}\right\rangle^{1+\frac{\kappa}{2}}\big\|\Phi\big(\beta_{\nu}(\tau)\mathcal{K}_0^{nd}\Phi(f)\big\|_{S_1}
\lesssim \tau_0^{-\gamma}\sup_{\tau\geq \tau_0}\frac{\lambda(\tau)}{\lambda(\tau_0)}\left\langle\log\frac{\lambda(\tau)}{\lambda(\tau_0)}\right\rangle^{1+\frac{\kappa}{2}}\big\|f(\tau, \cdot)\big\|_{S_1}
\]
for a suitable universal $\gamma>0$.
\end{lemma}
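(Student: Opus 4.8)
The plan is to reduce the two-fold iterate $\Phi\big(\beta_{\nu}(\tau)\mathcal{K}_0^{nd}\Phi(f)\big)$ to a composition of oscillatory $\sigma$-integrals and to exploit the off-diagonal localization $|\frac{\xi}{\eta}-1|\geq\frac1n$ inside the inner $\mathcal{K}_0^{nd}$ to perform integration by parts in the time variables, which is where all the smallness comes from. First I would fully write out the quantity as a triple integral: one integration in $\sigma_1$ from the outer $\Phi$, one in $\sigma_2$ from the inner $\Phi$, and one in the spectral variable $\eta$ coming from the kernel $K_0^{nd}(\cdot,\cdot)$ sitting between the two $\Phi$'s (the outer $\mathcal{K}_0$ factor contributes the remaining spectral integral but, as in Proposition~\ref{prop:nonlocallinear1}, is handled by the $S_0$/$S_1$-boundedness of $\mathcal{K}_0$ established in Proposition~\ref{prop:K_0Sbound}, so the genuinely new structure is the inner $\mathcal{K}_0^{nd}$). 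After the rescalings $\tilde\eta_1=\frac{\lambda^2(\sigma_1)}{\lambda^2(\tau)}\cdot$ and $\tilde\eta_2=\frac{\lambda^2(\sigma_2)}{\lambda^2(\sigma_1)}\cdot$ one collects the oscillatory factors $\sin\big[\lambda(\tau)\xi^{\frac12}\int_{\sigma_1}^{\tau}\lambda^{-1}\big]$ and $\sin\big[\lambda(\sigma_1)\tilde\eta^{\frac12}\int_{\sigma_2}^{\sigma_1}\lambda^{-1}\big]$ (up to $\cos$'s), whose $\sigma_1$-derivative produces phases $\frac{\lambda(\tau)}{\lambda(\sigma_1)}\xi^{\frac12}\pm\frac{\lambda(\sigma_2)}{\lambda(\sigma_1)}\tilde\eta^{\frac12}$; on the support of $K_0^{nd}$ we have $|\xi-\eta|\gtrsim\frac1n\max\{\xi,\eta\}$, hence this phase is bounded below by a definite multiple of $\frac{\lambda(\tau)}{\lambda(\sigma_1)}\xi^{\frac12}$, so integration by parts in $\sigma_1$ is legitimate and gains a factor $\big(\frac{\lambda(\tau)}{\lambda(\sigma_1)}\xi^{\frac12}\big)^{-1}\lambda^{-1}(\sigma_1)\lambda(\tau)\sim \sigma_1\xi^{-\frac12}$, i.e. an extra power of $\sigma_1$.

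The next step is the bookkeeping of the resulting boundary and bulk terms, exactly in the spirit of the proofs of Proposition~\ref{prop:nonlocallinear1} and Lemma~\ref{lem:reiterate2}. The boundary term at $\sigma_1=\sigma_2$ forces $\tilde\eta\sim\xi$, which is \emph{excluded} by the off-diagonal cutoff (up to a harmless $\tilde\chi_I$ tail that can be absorbed), so it contributes nothing; the boundary term at $\sigma_1=\tau$ is estimated using the good bounds on $\Phi(f)$ itself, which by Proposition~\ref{prop:nonlocallinear1} already obeys the $S_1$-bound with constant $\lesssim\sup_\tau\frac{\lambda(\tau)}{\lambda(\tau_0)}\langle\log\frac{\lambda(\tau)}{\lambda(\tau_0)}\rangle^{1+\frac{\kappa}{2}}\|f(\tau,\cdot)\|_{S_1}$, together with the extra $\sigma_1$-gain localized near $\sigma_1=\tau$, producing a $\tau_0^{-1}$-type factor. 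For the bulk term, after the integration by parts one is left with an expression whose time integral is dominated by
\[
\int_{\tau_0}^{\tau}\sigma_1^{-1}\beta_{\nu}(\sigma_1)\Big\langle\log\tfrac{\lambda(\tau)}{\lambda(\sigma_1)}\Big\rangle^{-1-\kappa}\cdot\frac{\lambda(\tau_0)}{\lambda(\sigma_1)}\Big\langle\log\tfrac{\lambda(\sigma_1)}{\lambda(\tau_0)}\Big\rangle^{-1-\frac{\kappa}{2}}\,d\sigma_1,
\]
and splitting at $\lambda(\sigma_1)\gtrless\sqrt{\lambda(\tau)\lambda(\tau_0)}$ gives $\lesssim\tau_0^{-\gamma}\frac{\lambda(\tau_0)}{\lambda(\tau)}\langle\log\frac{\lambda(\tau)}{\lambda(\tau_0)}\rangle^{-1-\frac{\kappa}{2}}$ for a suitable $\gamma>0$, which is the claimed smallness. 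To control the $\eta$-integral uniformly I would, as in Lemma~\ref{lem:reiterate2}, partition $\eta$ into dyadic pieces $\chi_I$, use the kernel bounds \eqref{eq:Fbound1}--\eqref{eq:Fbound3} from Proposition~\ref{prop:Kstructure} to get the Hilbert-transform type $L^2$-bound for the $\eta$-operator with a $\langle\log I\rangle^{-1+\kappa}$ gain from the logarithmic weight in $\|\cdot\|_{S_1}$, and sum in $I$ by Minkowski/orthogonality using $\sum_I\langle\log I\rangle^{-2+2\kappa}<\infty$ since $\kappa<\frac12$.

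One technical point deserving care is the case where $\xi$ (or $\eta$) is so small or so large that the lower bound on the phase degenerates, i.e. when $\frac{\lambda(\tau)}{\lambda(\sigma_1)}\xi^{\frac12}$ is itself tiny; there the integration-by-parts gain $\sigma_1\xi^{-\frac12}$ can be large, but the compensating factor is that such extreme frequencies are already suppressed — one simply inserts $\chi_{\xi<\tau_0^{-\delta}}+\chi_{\xi>\tau_0^{\delta}}+\chi_{\tau_0^{-\delta}\leq\xi\leq\tau_0^{\delta}}$, handles the two extreme ranges directly (the kernel decay of $K_0^{nd}$ plus the spectral weights yield a $\langle\log\tau_0\rangle^{-\gamma}$ gain there, just as in Proposition~\ref{prop:K_0Sbound}), and on the middle range $\xi^{\frac12}$ is pinned between two powers of $\tau_0$, so the IBP gain is genuinely a gain. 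I expect the main obstacle to be precisely this interplay between the off-diagonal cutoff scale $\frac1n$, the frequency-localization scales $\tau_0^{\pm\delta}$, and the non-resonant threshold $\tau_0^{-\gamma}$ used for the $\sigma_1$-integration by parts: one has to choose $\delta\ll\gamma$ and track that the polynomial-in-$\tau_0$ losses incurred from the dyadic summation and from $\chi_I$ are strictly beaten by the $\tau_0^{-\gamma}$ gain from the time integral — this is the same balancing act as in Lemma~\ref{lem:reiterate2}, and carrying the constants through the double (rather than single) application of $\Phi$ is the part most likely to require genuine care rather than routine computation.
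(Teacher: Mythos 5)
The paper's own "proof" is a single-sentence citation to [CondBlow, Lemma 12.0.32], with the hint that the key step is integration by parts in the intermediate time variable. You have correctly identified that mechanism: integrating by parts in $\sigma_1$, and exploiting the off-diagonal localization on the support of $K_0^{nd}$ to get a definite lower bound on the $\sigma_1$-derivative of the combined phase. After the rescaling to $\tilde\eta$ the derivative becomes $-\mu_1^{1/2}\pm\mu_2^{1/2}$ with $\mu_1=\frac{\lambda^2(\tau)}{\lambda^2(\sigma_1)}\xi$, $\mu_2=\frac{\lambda^2(\sigma_2)}{\lambda^2(\sigma_1)}\tilde\eta$, and $|\mu_1/\mu_2-1|\geq\frac1n$ on the support indeed gives $|\mu_1^{1/2}-\mu_2^{1/2}|\gtrsim\frac1n\max\{\mu_1^{1/2},\mu_2^{1/2}\}$, so the phase is non-resonant and the integration by parts is legitimate. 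This is the right template, and it is consistent with how Proposition~\ref{prop:nonlocallinear1} and Lemma~\ref{lem:reiterate2} handle their resonant/non-resonant splittings.

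There are, however, two concrete gaps in the details. First, your bulk-term estimate is not internally consistent: the integrand you display,
\[
\sigma_1^{-1}\beta_\nu(\sigma_1)\Big\langle\log\tfrac{\lambda(\tau)}{\lambda(\sigma_1)}\Big\rangle^{-1-\kappa}\frac{\lambda(\tau_0)}{\lambda(\sigma_1)}\Big\langle\log\tfrac{\lambda(\sigma_1)}{\lambda(\tau_0)}\Big\rangle^{-1-\frac\kappa2},
\]
is of size $\tau_0^{-2}\langle\log\frac{\lambda(\tau)}{\lambda(\tau_0)}\rangle^{-1-\kappa}$ near $\sigma_1\sim\tau_0$, and integrating over $[\tau_0,2\tau_0]$ already yields $\sim\tau_0^{-1}\langle\log\frac{\lambda(\tau)}{\lambda(\tau_0)}\rangle^{-1-\kappa}$, which does \emph{not} sit below the target $\tau_0^{-\gamma}\frac{\lambda(\tau_0)}{\lambda(\tau)}\langle\log\frac{\lambda(\tau)}{\lambda(\tau_0)}\rangle^{-1-\frac\kappa2}$ once $\tau\gg\tau_0$. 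The missing ingredient is the propagator weight $\frac{\lambda(\sigma_1)}{\lambda(\tau)}$ that already appears in the estimates of Proposition~\ref{prop:nonlocallinear1} (see case (3.a.i)); the integration-by-parts gain $\sim\sigma_1^{-1}\mu_1^{-1/2}=\sigma_1^{-1}\frac{\lambda(\sigma_1)}{\lambda(\tau)}\xi^{-1/2}$ is \emph{in addition} to that factor, not a replacement for it, and only with both present does the split $\lambda(\sigma_1)\gtrless\sqrt{\lambda(\tau)\lambda(\tau_0)}$ close. Second, the claim that the boundary term at $\sigma_1=\sigma_2$ "forces $\tilde\eta\sim\xi$" and is excluded by the off-diagonal cutoff does not hold: at $\sigma_1=\sigma_2$ the constraint separates $\mu_1=\frac{\lambda^2(\tau)}{\lambda^2(\sigma_2)}\xi$ from $\tilde\eta$, which says nothing about $\xi$ versus $\tilde\eta$, and this boundary term does not vanish. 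Indeed, it is cleaner \emph{not} to interchange the order of integration: with $\sigma_1\in[\tau_0,\tau]$ the boundary at $\sigma_1=\tau_0$ vanishes automatically since the inner $\sigma_2$-integral collapses to zero length, the boundary at $\sigma_1=\tau$ gives a genuine $\tau^{-1}\leq\tau_0^{-1}$ gain as you note (via $\beta_\nu(\tau)/|\partial_{\sigma_1}\phi|_{\sigma_1=\tau}$), and the Leibniz term from the variable upper limit of the inner integral is a bulk term to be estimated with the corrected weights. Your remarks on the dyadic $\eta$-decomposition, the $\langle\log I\rangle^{-1+\kappa}$ gain from the logarithmic weight in $S_1$, and the parameter ordering $\delta\ll\gamma$ are accurate and do reflect the structure of the companion Lemma~\ref{lem:reiterate2}.
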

This is proved exactly as in Lemma 12.0.32 in \cite{CondBlow}, using integration by parts in the `intermediate' time variable. 
\\

The preceding lemma then reduces things to controlling the contribution of the diagonal operator $\mathcal{K}_0^d$, and it is here where we take decisive advantage of a large number of re-iterations: 
As in \cite{CondBlow}, introduce the operator 
\[
\mathcal{D}_{\tau}Uh(\tau, \xi): = \int_{\tau_0}^{\tau}\cos\left[\lambda(\tau)\xi^{\frac12}\int_{\tau}^{\sigma}\lambda^{-1}(u)\,du\right]h\left(\sigma, \frac{\lambda^2(\tau)}{\lambda^2(\sigma)}\xi\right)\,d\sigma. 
\]
Also, introduce the diagonal part of $\Phi$: 
\[
\Phi_1(f)(\tau, \xi):= \mathcal{D}_{\tau}U\big(\beta_{\nu}(\sigma)\mathcal{K}_0^df\big)(\tau, \xi).
\]
Then we have the key 
\begin{proposition}\label{prop:Kditeratedsmallness} For $\epsilon>0$ small enough, we have for any$ k\geq 1$ the bound 
\begin{align*}
\sup_{\tau\geq \tau_0}\frac{\lambda(\tau)}{\lambda(\tau_0)}\left\langle\log\frac{\lambda(\tau)}{\lambda(\tau_0)}\right\rangle^{1+\frac{\kappa}{2}}\big\|\Phi_1^k f(\tau, \cdot)\big\|_{S_1}\leq \frac{1}{|\log\epsilon|^{\gamma(\kappa)}}e^{\epsilon^{-2}}\sup_{\tau\geq \tau_0}\frac{\lambda(\tau)}{\lambda(\tau_0)}\left\langle\log\frac{\lambda(\tau)}{\lambda(\tau_0)}\right\rangle^{1+\frac{\kappa}{2}}\big\|f(\tau, \cdot)\big\|_{S_1}
\end{align*}
for suitable $\gamma(\kappa)>0$. 
\end{proposition}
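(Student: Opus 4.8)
The statement is the analogue of the key re-iteration estimate in \cite{CondBlow}, but because here the operator $\mathcal{K}_0$ already carries a $-1+\kappa$ logarithmic gain on its diagonal (via the $\langle\log\rangle^{-1-\kappa}$ weight in $\|\cdot\|_{S_1}$ and Proposition~\ref{prop:K_0Sbound}), a single application of $\Phi_1$ gains a factor $|\log\epsilon|^{-\gamma(\kappa)}$ once we restrict frequencies to the region where the extreme-frequency localizers of Proposition~\ref{prop:nonlocallinear2} are \emph{not} in force — but on the remaining (bounded, $\epsilon<\xi<\epsilon^{-1}$) frequency band we cannot gain smallness per step, only boundedness with a constant that is at worst $Ce^{\epsilon^{-2}/k}$-ish after controlling how the time integrals compound. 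The plan is: (i) reduce $\Phi_1^k$ to an explicit $k$-fold iterated time integral with oscillatory kernels $\cos[\lambda(\tau)\xi^{1/2}\int_\sigma^\tau \lambda^{-1}]$ and a chain of dilations $\eta_j = \frac{\lambda^2(\sigma_{j-1})}{\lambda^2(\sigma_j)}\eta_{j-1}$; (ii) on the diagonal, $\mathcal{K}_0^d$ essentially multiplies by a bounded symbol while forcing all frequencies in the chain to be comparable up to factors $\frac{\lambda^2(\sigma_j)}{\lambda^2(\sigma_{j-1})}$, so the chain of dilations is rigid; (iii) combine the diagonal symbol bound $|K_0^d(\xi,\eta)|\lesssim \langle\log\langle\xi\rangle\rangle^{-1}$-type decay (inherited from \eqref{eq:Fbound1} and $\tilde\rho$) with the $\langle\log\rangle^{-1-\kappa}$ temporal weight to run the $k$ nested $\sigma$-integrals, exactly as in \cite{CondBlow}.

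\textbf{Key steps in order.} First I would write out $\Phi_1^k f$ as a $k$-fold integral over $\tau_0\le \sigma_k\le\cdots\le\sigma_1\le\tau$, tracking the product of the $k$ oscillatory sine/cosine factors and the $k$ copies of $\beta_\nu(\sigma_j)\mathcal{K}_0^d$, and observe (as in the cited reference) that on the diagonal the kernels collapse the frequency variables so that the effective integrand at frequency $\xi$ is, up to bounded symbol factors, $\prod_{j}\beta_\nu(\sigma_j)\,g\big(\sigma_k,\tfrac{\lambda^2(\tau)}{\lambda^2(\sigma_k)}\xi\big)$ with $g=f$ and all intermediate frequencies forced to lie in a fixed dilate of $\xi$. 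Second, I would insert the decomposition of $\xi$-space into $\{\xi<\epsilon\}\cup\{\epsilon\le\xi\le\epsilon^{-1}\}\cup\{\xi>\epsilon^{-1}\}$: on the two extreme bands, Proposition~\ref{prop:nonlocallinear2} (together with Proposition~\ref{prop:K_0Sbound}) gives a gain $|\log\epsilon|^{-\gamma(\kappa)}$ already after \emph{one} step, so these are placed in the $\frac{1}{|\log\epsilon|^{\gamma(\kappa)}}$ prefactor and then crudely estimated, losing at most $e^{\epsilon^{-2}}$ from the remaining $k-1$ bounded steps. Third — the bulk of the work — on the middle band $\epsilon\le\xi\le\epsilon^{-1}$ I would run the $k$ nested $\sigma$-integrals: each $\sigma_j$-integral contributes $\int_{\sigma_{j+1}}^{\sigma_{j-1}} \beta_\nu(\sigma_j)\frac{\lambda(\sigma_j)}{\lambda(\tau)}\big\langle\log\frac{\lambda(\tau)}{\lambda(\sigma_j)}\big\rangle^{-1-\kappa}\cdot(\text{incoming weight})\,d\sigma_j$, and using $\beta_\nu(\sigma)\sim\sigma^{-1}$, $\lambda(\tau)/\lambda(\sigma)=(\sigma/\tau)^{1+\nu^{-1}}$, and the elementary convolution identity for the weights $\big\langle\log\rangle^{-1-\kappa}$ (as in the proof of Proposition~\ref{prop:PropagatorEnergy}), one checks that $k$ such integrations reproduce the target weight $\big\langle\log\frac{\lambda(\tau)}{\lambda(\tau_0)}\big\rangle^{1+\frac{\kappa}{2}}$ times a combinatorial constant $\lesssim \frac{C^k}{k!}(\log\tau_0^{-1}\epsilon^{-2})^k$ which, summed or taken at the worst $k$, is $\le e^{\epsilon^{-2}}$. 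Finally, I would combine the middle-band and extreme-band estimates, noting the prefactor $|\log\epsilon|^{-\gamma(\kappa)}$ survives because \emph{every} term either sits in an extreme band (direct gain) or has had at least one of its $k$ steps produce such a gain after we further dyadically decompose the middle band and use the off-diagonal-away part of $\mathcal{K}_0^d$ together with Lemma~\ref{lem:ndsmall} to mop up the truly resonant sub-band.

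\textbf{Main obstacle.} The delicate point is \emph{propagating the $\mathcal{D}_\tau$ loss}: the operator is $\mathcal{D}_\tau U(\beta_\nu \mathcal{K}_0^d \,\cdot\,)$, and each $\mathcal{D}_\tau$ produces, after the $\sigma$-integration, a factor that does \emph{not} decay in the intermediate time unless one integrates by parts in $\sigma_j$ — but integration by parts is exactly what is obstructed on the diagonal (that is why $\mathcal{K}_0^d$ was separated off in the first place, cf. Lemma~\ref{lem:ndsmall} which handles only $\mathcal{K}_0^{nd}$). So on the diagonal one must instead gain from the \emph{multiplicity} of iterations: the $k$ nested time integrals over the simplex $\tau_0\le\sigma_k\le\cdots\le\sigma_1\le\tau$ supply a $1/k!$, and the point is that the per-step constant is $O(\log\tau_0^{-1}\epsilon^{-2})$ rather than growing, so $\sum_k \frac{(C\epsilon^{-2})^k}{k!}\le e^{C\epsilon^{-2}}$ — but getting the per-step constant to be genuinely $\epsilon$-independent on the middle band (so that $e^{\epsilon^{-2}}$ and not $e^{\epsilon^{-2}k}$ appears) requires carefully exploiting that on $\epsilon\le\xi\le\epsilon^{-1}$ the symbol of $\mathcal{K}_0^d$ is bounded \emph{independently of} $\epsilon$ and that the frequency never escapes the band during the chain, which follows from the rigidity of the dilation chain on the diagonal. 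Reconciling this with the simultaneous requirement of extracting the $|\log\epsilon|^{-\gamma(\kappa)}$ prefactor (which lives at the \emph{edges} of the band) is where the bookkeeping is heaviest, and I expect to follow \cite{CondBlow} essentially line by line there.
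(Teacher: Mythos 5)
Your high-level plan is on the right track — frequency-band decomposition, the simplex factor from the nested time integrals, direct gains from the extreme bands — but there is a genuine gap in the combinatorial mechanism that makes the argument close, and it is precisely the step you wave off as ``essentially line by line'' from \cite{CondBlow}.

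\textbf{The missing algebraic structure.} If you simply expand $\Phi_1^k$ by writing $\mathcal{K}_0^d = \mathcal{K}_{0,1}^{d,\epsilon} + \mathcal{K}_{0,2}^{d,\epsilon} + \mathcal{K}_{0,3}^{d,\epsilon}$ in each of the $k$ slots, you get $3^k$ terms, and it is not true that ``every term either sits in an extreme band or has had at least one of its $k$ steps produce a $|\log\epsilon|^{-\gamma(\kappa)}$ gain'': the all-intermediate chain $\mathcal{K}_{0,2}^k$ has no logarithmic gain at all, only the simplex factor $\epsilon^{-k}/k!$, and for mixed chains you have to worry about both the $3^k$ multiplicity and the fact that the non-extreme slots contribute $\epsilon^{-1}$ per step. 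The paper resolves this with the \emph{vanishing relations} $\mathcal{K}_{0,3}^{d,\epsilon}\mathcal{D}_{\tau}U\mathcal{K}_{0,2}^{d,(1+1/n)\epsilon}=0$ and $\mathcal{K}_{0,2}^{d,(1+1/n)\epsilon}\mathcal{D}_{\tau}U\mathcal{K}_{0,1}^{d,\epsilon}=0$, which exploit that $\mathcal{D}_{\tau}U$ only dilates frequencies upward (since $\lambda(\tau)/\lambda(\sigma)\geq1$ for $\sigma\leq\tau$) and that the diagonal localizer $|\xi/\eta-1|<1/n$ keeps output and input frequencies comparable. These relations ``lock in'' an extreme-band jump: once a $\mathcal{K}_{0,3}$ or $\mathcal{K}_{0,1}$ appears, the remainder of the chain is forced to stay in (progressively more) extreme frequencies. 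That rigidity collapses the $3^k$-term expansion into a small number of structured sums (the paper's terms $A$ through $E$), each of which is either a pure $\mathcal{K}_{0,2}$-string (handled by the simplex lemma) or has all its non-$\mathcal{K}_{0,2}$ factors absorbing $|\log\epsilon|^{-\gamma(\kappa)}$ each. Without these relations, the bookkeeping in your third step does not terminate.

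\textbf{Two further issues.} First, you propose to ``use the off-diagonal-away part of $\mathcal{K}_0^d$ together with Lemma~\ref{lem:ndsmall} to mop up the truly resonant sub-band,'' but Lemma~\ref{lem:ndsmall} concerns $\mathcal{K}_0^{nd}$, which is already split off before this proposition is invoked; it cannot be re-used inside $\Phi_1$, which is built from $\mathcal{K}_0^d$ alone. The middle band is not handled by any further diagonal/non-diagonal split; it is handled by the simplex volume and the lower bound $\sigma_k>\epsilon\tau$ that the rigid dilation chain forces. Second, your per-step constant ``$O(\log\tau_0^{-1}\epsilon^{-2})$'' on the middle band does not match the mechanism: the paper's Lemma~\ref{lem:simplex} gets the factor $\epsilon^{-j}/j!$ by observing $\sigma_k>\epsilon\tau$ implies $\beta_\nu(\sigma_k)\lesssim\epsilon^{-1}\tau^{-1}$ and integrating over the simplex of volume $\tau^j/j!$; nothing like $\log\tau_0$ enters. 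The subsequent comparison with $e^{\epsilon^{-2}}$ then closes \emph{uniformly in $k$}, but only because the chain never escapes the intermediate band, which is again a consequence of the vanishing relations. You should state and prove the vanishing relations and the resulting reorganization of $\big(\beta_\nu\mathcal{K}_0^d\mathcal{D}_\tau U\big)^n$ explicitly; the rest of your outline then matches the paper's strategy.
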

\begin{proof} This is also essentially identical to arguments given in \cite{CondBlow}, except that the norms need to be adjusted. To begin with, we decompose the operator $\mathcal{K}_0^d$ into two parts restricting to very small or very large frequencies, as well as a part restricting to intermediate frequencies. Thus write 
\[
\mathcal{K}_0^d = \mathcal{K}_{0,1}^{d,\epsilon} +  \mathcal{K}_{0,2}^{d,\epsilon} +  \mathcal{K}_{0,3}^{d,\epsilon}. 
\]
Here the operators on the right are given in terms of their respective kernels as follows: 
\[
K_{0,1}^{d,\epsilon} = \chi_{\xi<\epsilon}K_0^d,\,K_{0,3}^{d,\epsilon} = \chi_{\xi>\epsilon^{-1}}K_0^d,\,K_{0,2}^{d,\epsilon} = K_0^d - K_{0,1}^{d,\epsilon}  - K_{0,3}^{d,\epsilon}. 
\]
Then in analogy to Proposition~\ref{prop:nonlocallinear2}, we infer 
\begin{align*}
&\sup_{\tau\geq \tau_0}\frac{\lambda(\tau)}{\lambda(\tau_0)}\left\langle \log\frac{\lambda(\tau)}{\lambda(\tau_0)}\right\rangle^{1+\frac{\kappa}{2}}\big\|\mathcal{D}_{\tau}U\beta_{\nu}(\sigma)\mathcal{K}_{1,3}^{d, \epsilon}f\big\|_{S_1}\\&\lesssim \frac{1}{|\log\epsilon|^{\gamma(\kappa)}}\sup_{\tau\geq \tau_0}\frac{\lambda(\tau)}{\lambda(\tau_0)}\left\langle \log\frac{\lambda(\tau)}{\lambda(\tau_0)}\right\rangle^{1+\frac{\kappa}{2}}\big\|f(\tau, \cdot)\big\|_{S_1}
\end{align*}
which gives a  smallness gain for these contributions for sufficiently small $\epsilon>0$. 
Moreover, in light of the definition of $\mathcal{D}_{\tau}U$, we have the following key vanishing relations:
\begin{equation}\label{eq:van1}
\mathcal{K}_{0,3}^{d,\epsilon}\mathcal{D}_{\tau}U\mathcal{K}_{0,2}^{d, (1+\frac{1}{n})\epsilon} = 0
\end{equation}
\begin{equation}\label{eq:van2}
\mathcal{K}_{0,2}^{d,(1+\frac{1}{n})\epsilon}\mathcal{D}_{\tau}U\mathcal{K}_{0,1}^{d, \epsilon} = 0
\end{equation}
This implies that operators of type $\mathcal{K}_{0,2}^{d,\epsilon}$, $\mathcal{K}_{0,3}^{d,\epsilon}$ can only be followed by a more restrictive class of operators, and in particular, we can 'lock in' a certain amount of gain in the presence of 'off diagonal' operators. However, iterating a large number of diagonal operators will result in smallness thanks to the fact that one essentially integrates over a simplex in high dimension. 
Specifically, we expand 
\begin{align*}
&\big(\beta_{\nu}(\tau)\mathcal{K}_0^{d}\mathcal{D}_{\tau}U\big)^n\\
& = \big(\beta_{\nu}(\tau)\mathcal{K}_{0,2}^{d, \epsilon}\mathcal{D}_{\tau}U\big)^n\\
& + \sum_{k=1}^{n-1} \big(\beta_{\nu}(\tau)\mathcal{K}_{0,2}^{d, \epsilon}\mathcal{D}_{\tau}U\big)^k \big(\beta_{\nu}(\tau)\mathcal{K}_{0,1}^{d, \epsilon}\mathcal{D}_{\tau}U\big)\big(\beta_{\nu}(\tau)\mathcal{K}_0^{d}\mathcal{D}_{\tau}U\big)^{n-k-1}\\
& + \sum_{k=1}^{n-1} \big(\beta_{\nu}(\tau)\mathcal{K}_{0,2}^{d, \epsilon}\mathcal{D}_{\tau}U\big)^k \big(\beta_{\nu}(\tau)\mathcal{K}_{0,3}^{d, \epsilon}\mathcal{D}_{\tau}U\big)\big(\beta_{\nu}(\tau)\mathcal{K}_0^{d}\mathcal{D}_{\tau}U\big)^{n-k-1}\\
& + \big(\beta_{\nu}(\tau)\mathcal{K}_{0,1}^{d, \epsilon}\mathcal{D}_{\tau}U\big)\big(\beta_{\nu}(\tau)\mathcal{K}_0^{d}\mathcal{D}_{\tau}U\big)^{n-1}\\
&+\big(\beta_{\nu}(\tau)\mathcal{K}_{0,3}^{d, \epsilon}\mathcal{D}_{\tau}U\big)\big(\beta_{\nu}(\tau)\mathcal{K}_0^{d}\mathcal{D}_{\tau}U\big)^{n-1}\\
&=: A+B+C+D+E.
\end{align*}
For the term $C$, observe that we have 
\begin{align*}
&\big(\beta_{\nu}(\tau)\mathcal{K}_{0,3}^{d, \epsilon}\mathcal{D}_{\tau}U\big)\big(\beta_{\nu}(\tau)\mathcal{K}_0^{d}\mathcal{D}_{\tau}U\big)^{n-k-1}\\& = \big(\beta_{\nu}(\tau)\mathcal{K}_{0,3}^{d, \epsilon}\mathcal{D}_{\tau}U\big)\big(\beta_{\nu}(\tau)\mathcal{K}_{0,3}^{d, 4\epsilon}\mathcal{D}_{\tau}U\big)^{n-k-1},
\end{align*}
and so all terms here are trapped in a high-frequency regime. Thus we get 
\begin{equation}\label{eq:Cequation}\begin{split}
& C = \sum_{k=1}^{n-1} \big(\beta_{\nu}(\tau)\mathcal{K}_{0,2}^{d, \epsilon}\mathcal{D}_{\tau}U\big)^k \big(\beta_{\nu}(\tau)\mathcal{K}_{0,3}^{d, \epsilon}\mathcal{D}_{\tau}U\big)\big(\beta_{\nu}(\tau)\mathcal{K}_0^{d}\mathcal{D}_{\tau}U\big)^{n-k-1}\\
& = \sum_{k=1}^{n-1} \big(\beta_{\nu}(\tau)\mathcal{K}_{0,2}^{d, \epsilon}\mathcal{D}_{\tau}U\big)^k\big(\beta_{\nu}(\tau)\mathcal{K}_{0,3}^{d, \epsilon}\mathcal{D}_{\tau}U\big)\big(\beta_{\nu}(\tau)\mathcal{K}_{0,3}^{d, 4\epsilon}\mathcal{D}_{\tau}U\big)^{n-k-1}
\end{split}\end{equation}
Moreover, for the term $B$, we have 
\begin{equation}\label{eq:Bequation}\begin{split}
&B = \sum_{k=1}^{n-1} \big(\beta_{\nu}(\tau)\mathcal{K}_{0,2}^{d, \epsilon}\mathcal{D}_{\tau}U\big)^k \big(\beta_{\nu}(\tau)\mathcal{K}_{0,1}^{d, \epsilon}\mathcal{D}_{\tau}U\big)\big(\beta_{\nu}(\tau)\mathcal{K}_0^{d}\mathcal{D}_{\tau}U\big)^{n-k-1}\\
& = \sum_{k=1}^{n-1} \big(\beta_{\nu}(\tau)\mathcal{K}_{0,2}^{d, \epsilon}\mathcal{D}_{\tau}U\big)^k \big(\beta_{\nu}(\tau)\mathcal{K}_{0,1}^{d, \epsilon}\mathcal{D}_{\tau}U\big)\big(\beta_{\nu}(\tau)\mathcal{K}_{0,2}^{d, \frac{\epsilon}{4}}\mathcal{D}_{\tau}U\big)^{n-k-1}\\
&+\sum_{\substack{1\leq k\leq n-1\\ j\leq n-k-2}} \big(\beta_{\nu}(\tau)\mathcal{K}_{0,2}^{d, \epsilon}\mathcal{D}_{\tau}U\big)^k \big(\beta_{\nu}(\tau)\mathcal{K}_{0,1}^{d, \epsilon}\mathcal{D}_{\tau}U\big)\big(\beta_{\nu}(\tau)\mathcal{K}_{0,2}^{d, \frac{\epsilon}{4}}\mathcal{D}_{\tau}U\big)^{j}\\
&\hspace{4cm}\cdot\big(\beta_{\nu}(\tau)\mathcal{K}_{0,3}^{d, \frac{\epsilon}{4}}\mathcal{D}_{\tau}U\big)\big(\beta_{\nu}(\tau)\mathcal{K}_{0,3}^{d, \epsilon}\mathcal{D}_{\tau}U\big)^{n-j-k-2}
\end{split}\end{equation}
Here we use that the operators $ \big(\beta_{\nu}(\tau)\mathcal{K}_{0,2}^{d, \epsilon}\mathcal{D}_{\tau}U\big)$ on the left force large frequencies at the end of the expression, and if only one operator $\big(\beta_{\nu}(\tau)\mathcal{K}_{0,3}^{d, \frac{\epsilon}{4}}$ occurs it will force very large frequencies after it. 
\\
For term $E$ we proceed just as for term $C$. Thus write 
\begin{align*}
\big(\beta_{\nu}(\tau)\mathcal{K}_{0,3}^{d, \epsilon}\mathcal{D}_{\tau}U\big)\big(\beta_{\nu}(\tau)\mathcal{K}_0^{d}\mathcal{D}_{\tau}U\big)^{n-1} = \big(\beta_{\nu}(\tau)\mathcal{K}_{0,3}^{d, \epsilon}\mathcal{D}_{\tau}U\big)\big(\beta_{\nu}(\tau)\mathcal{K}_{0,3}^{d, 4\epsilon}\mathcal{D}_{\tau}U\big)^{n-1}
\end{align*}
The conclusion is that for terms $A, B, C$ and $E$ we can write them in terms of a few consecutive strings of operators of type $\big(\beta_{\nu}(\tau)\mathcal{K}_{0,2}^{d, \epsilon}\mathcal{D}_{\tau}U\big)$, $\big(\beta_{\nu}(\tau)\mathcal{K}_{0,3}^{d, \epsilon}\mathcal{D}_{\tau}U\big)$, and for the latter we already have observed a smallness gain. 
Finally, for the remaining term $D$, we also write it in terms of a small number of consecutive strings, by writing 
\begin{align*}
&D =  \big(\beta_{\nu}(\tau)\mathcal{K}_{0,1}^{d, \epsilon}\mathcal{D}_{\tau}U\big)\big(\beta_{\nu}(\tau)\mathcal{K}_0^{d}\mathcal{D}_{\tau}U\big)^{n-1}\\
&=\sum_{j=1}^n\big(\beta_{\nu}(\tau)\mathcal{K}_{0,1}^{d, \epsilon}\mathcal{D}_{\tau}U\big)^j[A^{(n-j)}+B^{(n-j)} + C^{(n-j)} + E^{(n-j)}]
\end{align*}
where the superscript indicates that these terms are defined just as in $A$, $B$, $C$ and $E$ but with $n$ replaced by $n-j$. 
\\

At this point, we have essentially reduced the problem of bounding $\big(\beta_{\nu}(\tau)\mathcal{K}_0^{d}\mathcal{D}_{\tau}U\big)^n$ to the problem of bounding $\big(\beta_{\nu}(\tau)\mathcal{K}_{0,2}^{d, \epsilon}\mathcal{D}_{\tau}U\big)^n$, and so this is what we now turn to: 
\begin{lemma}\label{lem:simplex} Using the preceding notation and assuming $j\leq n$ (the latter as in the definition of $\mathcal{K}_0^{d, nd}$), we have the bound 
\begin{align*}
&\sup_{\tau\geq\tau_0}\frac{\lambda(\tau)}{\lambda(\tau_0)}\left\langle \log\frac{\lambda(\tau)}{\lambda(\tau_0)}\right\rangle^{1+\frac{\kappa}{2}}\big\|\mathcal{D}_{\tau}U\big(\beta_{\nu}(\tau)\mathcal{K}_{0,2}^{d, \epsilon}\mathcal{D}_{\tau}U\big)^j \beta_{\nu}(\tau)f\big\|_{S_1}\\&\lesssim \frac{\epsilon^{-j}}{j!} \sup_{\tau\geq\tau_0}\frac{\lambda(\tau)}{\lambda(\tau_0)}\left\langle \log\frac{\lambda(\tau)}{\lambda(\tau_0)}\right\rangle^{1+\frac{\kappa}{2}}\big\|f(\tau, \cdot)\big\|_{S_1}
\end{align*}

\end{lemma}
\begin{proof} Write 
\begin{align*}
&\big(\beta_{\nu}(\tau)\mathcal{K}_{0,2}^{d, \epsilon}\mathcal{D}_{\tau}U\big)^j(\beta_{\nu}(\tau)\mathcal{K}_{0,2}^{d, \epsilon}f)\\& = \beta_{\nu}(\tau)\mathcal{K}_{0,2}^{d, \epsilon}\int_{\tau_0}^{\tau}\int_0^\infty d\sigma_1d\eta_1\beta_{\nu}(\sigma_1)\cos\left(\lambda(\tau)\xi^{\frac{1}{2}}\int_{\tau}^{\sigma_1}\lambda^{-1}(u)\,du\right)\\
&\cdot K_{0,2}^{d, \epsilon}\left(\frac{\lambda^{2}(\tau)}{\lambda^{2}(\sigma_1)}\xi, \eta_1\right)\int_{\tau_0}^{\sigma_1}\int_0^\infty d\sigma_2d\eta_2\beta_{\nu}(\sigma_2)\cos\left(\lambda(\sigma_1)\eta_1^{\frac{1}{2}}\int_{\sigma_1}^{\sigma_2}\lambda^{-1}(u)\,du\right)\\
&\ldots\\
&\cdot K_{0,2}^{d, \epsilon}\left(\frac{\lambda^{2}(\sigma_{j-2})}{\lambda^{2}(\sigma_{j-1})}\eta_{j-2}, \eta_{j-1}\right)\int_{\tau_0}^{\sigma_{j-1}}d\sigma_j\beta_{\nu}(\sigma_j)\\&\hspace{2cm}\cdot\cos\left(\lambda(\sigma_{j-1})\eta_{j-1}^{\frac{1}{2}}\int_{\sigma_{j-1}}^{\sigma_j}\lambda^{-1}(u)\,du\right)((\beta_{\nu}(\cdot)\mathcal{K}_{0,2}^{d, \epsilon}f)\left(\sigma_j, \frac{\lambda^2(\sigma_{j-1})}{\lambda^2(\sigma_j)}\eta_{j-1}\right)\\
\end{align*}
Then we carefully recall that by choice of $\mathcal{K}_0^d$ we have 
\[
\left|\frac{\lambda^2(\sigma_k)\eta_k}{\lambda^2(\sigma_{k+1})\eta_{k+1}} - 1\right|<\frac{1}{n}, \,1\leq k\leq j-1<n. 
\]
and so 
\[
\lambda^2(\sigma_k)\eta_k\geq \left(1-\frac{1}{n}\right)^k\lambda^2(\tau)\xi,\,1\leq k\leq j-1. 
\]
Since we further have the restrictions $\xi\gtrsim\epsilon, \eta_k<\epsilon^{-1}$ on the support of the full expression, we get 
\[
\sigma_k>\tau\cdot\epsilon,\,1\leq k\leq j. 
\]
for $\nu$ and the  $\epsilon$ small enough, uniformly in $n$.  In particular, we get 
\[
\beta_{\nu}(\sigma_k)\lesssim \epsilon^{-1}\tau^{-1}, 1\leq k\leq j.
\]
Finally, we infer that for fixed $\tau\geq \tau_0$ 
\begin{align*}
&\big\|\big(\beta_{\nu}(\tau)\mathcal{K}_{0,2}^{d, \epsilon}\mathcal{D}_{\tau}U\big)^j(\beta_{\nu}(\tau)\mathcal{K}_{0,2}^{d, \epsilon}f)\big\|_{S_1}\\
&\lesssim \beta_{\nu}(\tau)\int_{\epsilon\tau}^{\tau}\beta_{\nu}(\sigma_1)\int_{\epsilon\tau}^{\sigma_1}\beta_{\nu}(\sigma_2)\ldots\int_{\epsilon\tau}^{\sigma_{j-1}}\beta_{\nu}(\sigma_j)\big\|f(\sigma_j, \cdot)\big\|_{S_1}\,d\sigma_j\ldots d\sigma_1\\
&\lesssim \beta_{\nu}(\tau)\frac{\epsilon^{-j}}{j!}\sup_{\sigma\sim \tau}\big\|f(\sigma, \cdot)\big\|_{S_1}
\end{align*}
We finally get the desired conclusion of the lemma 
\begin{align*}
&\sup_{\tau\geq \tau_0}\frac{\lambda(\tau)}{\lambda(\tau_0)}\left\langle\log\frac{\lambda(\tau)}{\lambda(\tau_0)}\right\rangle^{1+\frac{\kappa}{2}}\big\|\mathcal{D}_{\tau}U\big(\beta_{\nu}(\tau)\mathcal{K}_{0,2}^{d, \epsilon}\mathcal{D}_{\tau}U\big)^j(\beta_{\nu}(\tau)\mathcal{K}_{0,2}^{d, \epsilon}f)\big\|_{S_1}\\&\lesssim_\epsilon  \frac{\epsilon^{-j}}{j!}\sup_{\tau\geq \tau_0}\frac{\lambda(\tau)}{\lambda(\tau_0)}\left\langle\log\frac{\lambda(\tau)}{\lambda(\tau_0)}\right\rangle^{1+\frac{\kappa}{2}}\big\|f(\tau, \cdot)\big\|_{S_1}
\end{align*}
\end{proof}

We can now conclude the bound for $\Phi_1^k$ by bounding the terms $A$ - $E$ from before: 
\\

{\it{Bound for $A$.}} From preceding lemma, we have 
\begin{align*}
\sup_{\tau\geq \tau_0}\frac{\lambda(\tau)}{\lambda(\tau_0)}\left\langle\log\frac{\lambda(\tau)}{\lambda(\tau_0)}\right\rangle^{1+\frac{\kappa}{2}}\big\|(\mathcal{D}_{\tau}U Af)(\tau, \cdot)\big\|_{S_1}&\lesssim \frac{\epsilon^{-n}}{n!} \sup_{\tau\geq \tau_0}\frac{\lambda(\tau)}{\lambda(\tau_0)}\left\langle\log\frac{\lambda(\tau)}{\lambda(\tau_0)}\right\rangle^{1+\frac{\kappa}{2}}\big\|f(\tau, \cdot)\big\|_{S_1}\\
&\ll \epsilon^{n}e^{\epsilon^{-2}}\sup_{\tau\geq \tau_0}\frac{\lambda(\tau)}{\lambda(\tau_0)}\left\langle\log\frac{\lambda(\tau)}{\lambda(\tau_0)}\right\rangle^{1+\frac{\kappa}{2}}\big\|f(\tau, \cdot)\big\|_{S_1}
\end{align*}
provided $n$ is sufficiently large in relation to a fixed chosen $\epsilon$. 
\\

{\it{Bound for $B$.}} In light of identity \eqref{eq:Bequation}, we find 
\begin{align*}
&\sup_{\tau\geq \tau_0}\frac{\lambda(\tau)}{\lambda(\tau_0)}\left\langle\log\frac{\lambda(\tau)}{\lambda(\tau_0)}\right\rangle^{1+\frac{\kappa}{2}}\big\|(\mathcal{D}_{\tau}UBf)(\tau, \cdot)\big\|_{S_1}\\&\lesssim \big[\sum_{k=1}^{n-1}\frac{\epsilon^{-k}}{k!}\frac{1}{|\log\epsilon|^{\gamma(\kappa)}}\frac{\epsilon^{-(n-k-1)}}{(n-k-1)!}\\
&\hspace{1cm} + \sum_{k=1}^{n-1}\sum_{0\leq j\leq n-k-2}\frac{\epsilon^{-k}}{k!}\frac{\epsilon^{-j}}{j!}\frac{1}{|\log\epsilon|^{\gamma(\kappa)\cdot(n-k-j)}}\big]\sup_{\tau\geq \tau_0}\frac{\lambda(\tau)}{\lambda(\tau_0)}\left\langle\log\frac{\lambda(\tau)}{\lambda(\tau_0)}\right\rangle^{1+\frac{\kappa}{2}}\big\|f(\tau, \cdot)\big\|_{S_1}\\
&\ll \frac{1}{|\log\epsilon|^{\gamma(\kappa)\cdot n}}e^{\epsilon^{-2}}\sup_{\tau\geq \tau_0}\frac{\lambda(\tau)}{\lambda(\tau_0)}\left\langle\log\frac{\lambda(\tau)}{\lambda(\tau_0)}\right\rangle^{1+\frac{\kappa}{2}}\big\|f(\tau, \cdot)\big\|_{S_1}\\
\end{align*}
provided $\epsilon$ is sufficiently small and $n$ large enough. 
\\

{\it{Bound for $C, D, E$.}}The term $C$ is similar in light of relation \eqref{eq:Cequation}, as is term $E$. Finally, for the term $D$, we can bound it by 
\begin{align*}
&\sup_{\tau\geq \tau_0}\frac{\lambda(\tau)}{\lambda(\tau_0)}\left\langle\log\frac{\lambda(\tau)}{\lambda(\tau_0)}\right\rangle^{1+\frac{\kappa}{2}}\big\|Df(\tau, \cdot)\big\|_{S_1}\\&\lesssim\left[\sum_{j=1}^n \frac{1}{|\log\epsilon|^{\gamma(\kappa)\cdot j}}\frac{1}{|\log\epsilon|^{\gamma(\kappa)\cdot(n-j)}}e^{\epsilon^{-2}}\right]\sup_{\tau\geq \tau_0}\frac{\lambda(\tau)}{\lambda(\tau_0)}\left\langle\log\frac{\lambda(\tau)}{\lambda(\tau_0)}\right\rangle^{1+\frac{\kappa}{2}}\big\|f(\tau, \cdot)\big\|_{S_1}\\&\leq \frac{1}{|\log\epsilon|^{\gamma(\kappa)\cdot n}}e^{\epsilon^{-2}}\sup_{\tau\geq \tau_0}\frac{\lambda(\tau)}{\lambda(\tau_0)}\left\langle\log\frac{\lambda(\tau)}{\lambda(\tau_0)}\right\rangle^{1+\frac{\kappa}{2}}\big\|f(\tau, \cdot)\big\|_{S_1}
\end{align*}
for any $\gamma_2<\gamma_1$, provided $\epsilon<1$ and $n$ is sufficiently large. 
\\

The proposition is an immediate consequence of the preceding bounds.  
\end{proof}

Combining the preceding proposition and Lemma~\ref{lem:ndsmall}, and also keeping Proposition~\ref{prop:nonlocallinear1} in mind, we obtain upon using the splitting 
\[
\mathcal{K}_0 = \mathcal{K}_0^d + \mathcal{K}_0^{nd}. 
\]
the following key 
\begin{corollary}\label{cor:iteratedKbound} For $\epsilon>0$ small enough, and $\tau_0 = \tau_0(\epsilon)$ large enough, the operator $\Phi$ defined in \eqref{eq:Phidef}, satisfies the bound 
\begin{align*}
\sup_{\tau\geq \tau_0}\frac{\lambda(\tau)}{\lambda(\tau_0)}\left\langle\log\frac{\lambda(\tau)}{\lambda(\tau_0)}\right\rangle^{1+\frac{\kappa}{2}}\big\|(\Phi^kf)(\tau, \cdot)\big\|_{S_1}\leq \frac{1}{|\log\epsilon|^{\gamma(\kappa)\cdot n}}e^{\epsilon^{-2}}\sup_{\tau\geq \tau_0}\frac{\lambda(\tau)}{\lambda(\tau_0)}\left\langle\log\frac{\lambda(\tau)}{\lambda(\tau_0)}\right\rangle^{1+\frac{\kappa}{2}}\big\|f(\tau, \cdot)\big\|_{S_1}\\
\end{align*}
\end{corollary}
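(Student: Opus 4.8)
The proof of Corollary~\ref{cor:iteratedKbound} is a bookkeeping argument that combines Proposition~\ref{prop:Kditeratedsmallness} (smallness for iterates of the diagonal part $\Phi_1$) with Lemma~\ref{lem:ndsmall} (smallness for any two-fold string containing the off-diagonal part $\mathcal{K}_0^{nd}$), together with the basic boundedness of a single application of $\Phi$, which is contained in Proposition~\ref{prop:nonlocallinear1}. First I would expand $\Phi^k$ using the splitting $\mathcal{K}_0 = \mathcal{K}_0^d + \mathcal{K}_0^{nd}$ at each of the $k$ slots: this writes $\Phi^k$ as a sum of $2^k$ strings, each of which is an alternating product of blocks of consecutive $\Phi_1$'s (diagonal) and single occurrences of $\Phi^{nd}:= \mathcal{D}_\tau U(\beta_\nu \mathcal{K}_0^{nd}\,\cdot\,)$ (off-diagonal). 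The key structural observation is that every string either (a) consists entirely of diagonal blocks — a single $\Phi_1^k$ — or (b) contains at least one off-diagonal factor, in which case it contains at least one sub-block of the form $\Phi(\beta_\nu \mathcal{K}_0^{nd}\Phi(\cdot))$, to which Lemma~\ref{lem:ndsmall} applies and yields a gain $\tau_0^{-\gamma}$, with the remaining factors absorbed via the uniform boundedness of $\Phi$ on the weighted $S_1$-space from Proposition~\ref{prop:nonlocallinear1}.

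\textbf{Key steps in order.} (1) Record the single-step bound: there is an absolute constant $C_0$ so that $\sup_\tau \frac{\lambda(\tau)}{\lambda(\tau_0)}\langle\log\frac{\lambda(\tau)}{\lambda(\tau_0)}\rangle^{1+\frac{\kappa}{2}}\|\Phi f(\tau,\cdot)\|_{S_1} \le C_0 \sup_\tau(\ldots)\|f(\tau,\cdot)\|_{S_1}$, which follows from Proposition~\ref{prop:nonlocallinear1} applied with $F = 2\frac{\lambda'}{\lambda}\mathcal{K}_0\mathcal{D}_\tau\xb$ (one reads off that the parametrix applied to such a non-local linear term maps the admissible class to itself with a universal constant; here we only need the resulting $S_1$-bound, not the full admissible structure). (2) For the purely diagonal string, invoke Proposition~\ref{prop:Kditeratedsmallness} directly, giving the stated bound $|\log\epsilon|^{-\gamma(\kappa)}e^{\epsilon^{-2}}$ already for this term. (3) For each string containing off-diagonal factors, locate the first position where a $\Phi^{nd}$ occurs; grouping it with the adjacent $\Phi$ produces exactly the two-fold composition appearing in Lemma~\ref{lem:ndsmall}, yielding the factor $\tau_0^{-\gamma}$, and the remaining at most $k-2$ factors each contribute $C_0$ by step (1); thus each such string is bounded by $\tau_0^{-\gamma}C_0^{k}$ times the $S_1$-norm of $f$. (4) Sum: there are at most $2^k$ strings, so the total contribution of type (b) is $\le 2^k C_0^k \tau_0^{-\gamma}$; choosing $\tau_0 = \tau_0(\epsilon, k)$ large enough makes this $\le |\log\epsilon|^{-\gamma(\kappa)\cdot n}e^{\epsilon^{-2}}$, and combining with (2) gives the claim. (A cleaner alternative, mirroring \cite{CondBlow}, is to peel off the leading diagonal block first, reducing $k$, and iterate; either organization works.)

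\textbf{Main obstacle.} The genuinely delicate input is Proposition~\ref{prop:Kditeratedsmallness}, which is already proved in the excerpt, so the remaining work here is essentially combinatorial; the one point that requires care is making sure the ``gain'' factors do not get overwhelmed by the $2^k$ combinatorial multiplicity and by the $\epsilon^{-j}/j!$ factors appearing inside the diagonal estimate of Lemma~\ref{lem:simplex}. The resolution is that the $1/j!$ factors sum to $e^{\epsilon^{-2}}$ type bounds uniformly in the number of blocks, and the $|\log\epsilon|^{-\gamma(\kappa)}$ factors from Lemma~\ref{lem:ndsmall} (and from the very-small / very-large frequency pieces $\mathcal{K}_{0,1}^{d,\epsilon}, \mathcal{K}_{0,3}^{d,\epsilon}$) compound, so that any string with at least one off-diagonal or extreme-frequency block carries a power $|\log\epsilon|^{-\gamma(\kappa)\cdot(\text{number of such blocks})}$; picking $\epsilon$ small (to beat the $2^k$) and then $\tau_0$ large (to beat the remaining universal constants) closes the estimate. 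The bound is stated for ``$k\ge 1$'' with the right-hand weight $|\log\epsilon|^{-\gamma(\kappa)\cdot n}$, so one simply takes $n = n(k)$ the number of blocks as in Proposition~\ref{prop:Kditeratedsmallness}; no further subtlety arises.
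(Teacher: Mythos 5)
Your proposal is correct and follows essentially the route the paper intends: the Corollary is stated there as a direct combination of the splitting $\mathcal{K}_0 = \mathcal{K}_0^d + \mathcal{K}_0^{nd}$, Lemma~\ref{lem:ndsmall} for any string containing an off-diagonal factor, Proposition~\ref{prop:Kditeratedsmallness} (with Lemma~\ref{lem:simplex}) for the purely diagonal strings, and the single-step $S_1$-boundedness of the parametrix applied to $\beta_\nu\mathcal{K}_0\mathcal{D}_\tau\xb$ from Proposition~\ref{prop:nonlocallinear1} and its proof, exactly as you organize it. The only cosmetic caveat is that your choice $\tau_0=\tau_0(\epsilon,k)$ to absorb the $2^kC_0^k$ multiplicity is consistent with the paper's usage, since $n$ (hence $k$) is fixed in terms of $\epsilon$ before $\tau_0$ is chosen in the final iteration argument.
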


\section{Convergence of the iterative scheme and the proof of Theorem~\ref{thm:Main}}

Let $(\epsilon_0, \epsilon_1)$ be functions of the variable $r\geq 0$ and with $(\epsilon_0, \epsilon_1)\in H^4_{\R^2}\times H^3_{\R^2}$ and sufficiently small, i. e. 
\[
\big\|(\epsilon_0, \epsilon_1)\big\|_{H^4_{\R^2}\times H^3_{\R^3}}<\delta_0. 
\]
We interpret these functions as data at time $\tau = \tau_0$, corresponding in the old coordinates to $t = t_0$. We shall pick $\tau_0$ sufficiently large(independently of $\delta_0$, of course), which means picking $t_0$ sufficiently small. Write 
\[
\epsilon_0 = \phi(\mathcal{D}\epsilon_0) + c_0\phi_0(R),\,\epsilon_1 = \phi(\mathcal{D}\epsilon_1) + c_1\phi_0(R),
\]
where we think of $\epsilon_1 = \partial_{\tau}\epsilon|_{\tau = \tau_0}$. Then set 
\[
\xb_0(\xi): = \langle\mathcal{D}\epsilon_0, \phi(R,\xi)\rangle_{L^2_{R\,dR}},\,\xb_1(\xi): = \langle \mathcal{D}\epsilon_1, \phi(R,\xi)\rangle_{L^2_{R\,dR}} - 2\beta_{\nu}(\tau_0)(\xi\partial_{\xi})\xb_0(\xi).
\]
One easily checks(see Lemma 9.1 in \cite{KST2}) that having fixed.a large $\tau_0$, we then have 
\[
\big\|\big(\xb_0, \xb_1\big)\big\|_{S_0\times S_1}\lesssim_{\tau_0}\delta_0,
\]
and hence can be made arbitrarily small. Similarly, we get 
\[
\big|c_0\big| + \big|c_1\big|\lesssim\delta_0. 
\]
We shall now prove the following technical result, which immediately yields Theorem~\ref{thm:Main}:
\begin{proposition}\label{prop:iteration} The system consisting of \eqref{ctau ODE}, \eqref{eq on Fourier side final} admits a solution pair 
\[
\big(\xb(\tau, \xi), c(\tau)\big),
\]
satisfying the initial conditions $\big(\xb(\tau_0, \xi),\,\mathcal{D}_{\tau}\xb(\tau_0,\xi)\big) = \big(\xb_0, \xb_1\big)$, $(c(\tau_0), c'(\tau_0)) = (c_0, c_1)$, 
and such that $\xb(\tau, \xi)$ is admissible, i. e. allows a representation \eqref{eq:admissiblexb} satisfying the bounds
\begin{align*}
&\sum_{\pm}\tau\big|a_{kj}^{\pm}(\tau)\big| + \sum_{\pm}\tau\big|(a_{kj}^{\pm})'(\tau)\big| + \tau\big\|(|b| + |\xi^{\frac12}c| + |\xi^{\frac12}d|)(\tau, \xi)\big\|_{L_{\xi}^{\infty}}\\&\hspace{6cm} + \tau\big\|\langle\xi\rangle^{-\frac12}\mathcal{D}_{\tau}b(\tau, \xi)\big\|_{L_{\xi}^{\infty}} + \big\|x_{good}(\tau, \xi)\big\|_{S_0} + \big\|\mathcal{D}_{\tau}x_{good}(\tau, \xi)\big\|_{S_1}\\
&\lesssim_{\tau_0} \delta_0\frac{\lambda(\tau_0)}{\lambda(\tau)}\left\langle\log\frac{\lambda(\tau)}{\lambda(\tau_0)}\right\rangle^{-1-\frac{\kappa}{2}},
\end{align*}
while the resonant part satisfies the bound
\begin{align*}
\big|c(\tau)\big| + \tau\big|c'(\tau)\big|\lesssim_{\tau_0} \delta_0\tau^2\frac{\lambda(\tau_0)}{\lambda(\tau)}\left\langle\log\frac{\lambda(\tau)}{\lambda(\tau_0)}\right\rangle^{-1-\frac{\kappa}{2}}.
\end{align*}
Here as usual the actual perturbation $\veps(\tau, R)$ is then constructed via 
\[
\veps(\tau, R) = \phi(\mathcal{D}\veps) + c(\tau)\phi_0(R), \,\mathcal{D}\veps(\tau, R) = \int_0^\infty \xb(\tau, \xi)\phi(R, \xi)\tilde{\rho}(\xi)\,d\xi.
\]
\end{proposition}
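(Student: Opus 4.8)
The plan is to set up a contraction-type (or rather, convergent Picard) iteration in the complete metric space of admissible pairs $\big(\xb(\tau,\xi),c(\tau)\big)$ equipped with the norm
\[
\|\big(\xb,c\big)\| := \|\xb\| + \sup_{\tau\geq\tau_0}\frac{\lambda(\tau)}{\lambda(\tau_0)}\left\langle\log\tfrac{\lambda(\tau)}{\lambda(\tau_0)}\right\rangle^{1+\frac{\kappa}{2}}\tau^{-2}\big[|c(\tau)|+\tau|c'(\tau)|\big],
\]
where $\|\xb\|$ is the admissibility norm from Lemma~\ref{lem:hbound}. The zeroth iterate $\big(\xb^{(0)},c^{(0)}\big)$ is the free evolution: $\xb^{(0)}$ is given by \eqref{sol to free wave} with data $(\xb_0,\xb_1)$, and $c^{(0)}$ solves $L_c c^{(0)} = 0$ with data $(c_0,c_1)$. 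Proposition~\ref{prop:PropagatorEnergy} together with Lemma~\ref{lem:Dveps Linfty} shows $\|\xb^{(0)}\|\lesssim_{\tau_0}\delta_0$ (the free propagator trivially lands in $x_{good}$ with the sharp decay weight), and the explicit formula for $\phi_1,\phi_2$ gives $|c^{(0)}(\tau)|+\tau|c^{(0)}{}'(\tau)|\lesssim_{\tau_0}\delta_0\tau^2\frac{\lambda(\tau_0)}{\lambda(\tau)}$, which is even better than claimed. Then the $(m{+}1)$-st iterate is defined by feeding $\big(\xb^{(m)},c^{(m)}\big)$ into the right-hand sides: $\xb^{(m+1)}$ is $\xb^{(0)}$ plus the Duhamel parametrix \eqref{sol to inhomo xb} applied to $f(\tau,\xb^{(m)},c^{(m)})$ from \eqref{eq on Fourier side final}, and $c^{(m+1)}$ is obtained from \eqref{eq:L_cParametrix} with $r(\tau) = -h^{(m)}(\tau)-\lambda^{-2}n^{(m)}(\tau)$, interpreting $h^{(m)}$ distributionally as in Lemma~\ref{lem:cintegralbound}, plus the free piece carrying the data.

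\textbf{Key steps.} First, I would verify that the iteration map preserves the class of admissible pairs with the quantitative bound $\|\big(\xb^{(m+1)},c^{(m+1)}\big)\|\lesssim_{\tau_0}\delta_0 + \text{(contractive remainder)}$. This is precisely a bookkeeping assembly of the propositions already proved: Proposition~\ref{prop:mulitlin1} controls $\calF(\lambda^{-2}\calD(N(\phi(\mathcal{D}\veps))))$ with a genuine smallness gain $\ll$; Propositions~\ref{prop:resonantnonlin}, \ref{prop:resonantlin}, \ref{prop:anotherresonantlin} handle the substitution $\veps=c(\tau)\phi_0$ into $N(\veps)$ and $\calR$; Propositions~\ref{prop:nonlocallinear1}, \ref{prop:nonlocallinear R} handle the non-local linear terms and the $\calR(\veps,\calD\veps)$ piece; Lemmas~\ref{lem:hbound}, \ref{lem:nbound} produce $c^{(m+1)}$ from the source terms. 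The terms in $\frakR$ that split as $\left(\frac{\lambda'}{\lambda}\right)'\mathcal{K}_0$, $\left(\frac{\lambda'}{\lambda}\right)^2\mathcal{K}_0$, $\left(\frac{\lambda'}{\lambda}\right)^2\mathcal{K}_0^2$ give smallness by taking $\tau_0$ large (extra $\tau^{-2}$ decay). The quadratic/cubic terms in Proposition~\ref{prop:mulitlin1} give smallness by the factor $\|\xb\|+\|c\|\lesssim_{\tau_0}\delta_0$. Second — and this is the genuine structural difficulty — the single family of terms with \emph{no} smallness gain: the non-local linear operators $2\frac{\lambda'}{\lambda}\mathcal{K}_0\mathcal{D}_\tau\xb$ and $\frac{\lambda'}{\lambda}[\mathcal{D}_\tau,\mathcal{K}_0]\xb$ acting on the $x_{good}$-component, and the corresponding term $h(\tau)$ feeding the $c$-equation through Lemma~\ref{lem:hbound}. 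For these one cannot close a one-step contraction; instead I would follow Section 11: rather than iterating the full map, one absorbs these operators by the manifold re-iteration of the operator $\Phi$ of \eqref{eq:Phidef}, using Corollary~\ref{cor:iteratedKbound}, which shows that $k$-fold composition $\Phi^k$ has operator norm $\leq |\log\epsilon|^{-\gamma(\kappa) k}e^{\epsilon^{-2}}$ on the $S_1$-weighted space — strictly less than $1$ once $\epsilon$ is small and $\tau_0=\tau_0(\epsilon)$ large. Concretely, one recasts the $x_{good}$-component of the fixed-point equation in the schematic form $x_{good} = x_{good}^{(0)} + \Phi(x_{good}) + \text{(small remainder)}$, where ``small remainder'' collects everything with a genuine $\ll$ or $\tau_0$-gain, and inverts $(I-\Phi)$ by the Neumann series $\sum_k\Phi^k$, which converges by Corollary~\ref{cor:iteratedKbound}. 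Lemmas~\ref{lem:ctocdelicate}, \ref{lem:ctocdelicate1}, \ref{lem:reiterate2} furnish the analogous re-iteration gain in the coupling terms between $c$ and $\mathcal{D}\veps$ (the $h$-feedback and the $\tilde N_2$-feedback), so that the full coupled map $(I-\mathcal{M})$, with $\mathcal{M}$ collecting all the no-gain linear feedbacks, is invertible after the re-iteration.

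\textbf{Assembling the fixed point.} Having established (i) that the affine map $\Psi:\big(\xb,c\big)\mapsto\big(\xb^{(0)}+\text{Duhamel}(f),\ c^{(0)}+\text{parametrix}(h,n)\big)$ maps the ball $B_{C\delta_0}$ of radius $C\delta_0$ (for a constant $C=C(\tau_0)$) in the admissible-pair norm into itself, and (ii) that the non-affine part of $\Psi$ is a contraction on $B_{C\delta_0}$ after the re-iteration trick has turned the no-gain linear piece into an invertible operator whose inverse is bounded uniformly in $\tau_0$, I would conclude existence of a unique fixed point by the contraction mapping principle applied to the modified map $(I-\mathcal{M})^{-1}\circ(\Psi-\mathcal{M})$. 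Linearity of $\mathcal{M}$ ensures $(I-\mathcal{M})^{-1}$ commutes with the affine structure. The fixed point $\big(\xb,c\big)$ then satisfies \eqref{eq on Fourier side final} and \eqref{ctau ODE} by construction, carries the prescribed data (built into $\xb^{(0)},c^{(0)}$), and inherits the stated admissibility bounds with constant $\lesssim_{\tau_0}\delta_0$. Finally, one checks that the regularisation (large-frequency cutoff on $\calF(\lambda^{-2}\mathcal{D}(N(\veps)))$) discussed in the technical remark can be removed: all bounds above are uniform in the cutoff parameter, so the regularised solutions converge, and the order-of-vanishing hypotheses $\mathcal{D}\veps(\tau,R)=O(R^2)$ needed to derive \eqref{ctau ODE} hold for the limit because admissibility forces $\phi(R,\xi)=cR^2+O(R^4\xi)$ and the weights in $\|\cdot\|_{S_0}$ make the relevant $\xi$-integrals absolutely convergent. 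I expect the bookkeeping in step (i) — matching every source term on the right of \eqref{eq on Fourier side final} and in $h,n$ to the correct proposition and verifying the weights line up — to be lengthy but routine; the genuine obstacle is step (ii), i.e. the verification that the re-iteration of $\Phi$ (Corollary~\ref{cor:iteratedKbound}) and the coupled re-iteration Lemmas~\ref{lem:ctocdelicate}--\ref{lem:reiterate2} together tame \emph{all} the no-gain feedbacks simultaneously, so that the Neumann series for $(I-\mathcal{M})^{-1}$ actually converges on the product space rather than just on each factor separately.
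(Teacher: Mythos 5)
Your proposal is correct and follows essentially the same route as the paper. The paper organizes the argument as a Picard iteration on corrections $\triangle\xb_j,\triangle c_j$ and proves (Proposition~\ref{prop:iteratesmallness}) that $k$-fold application of the iteration step gains an arbitrarily small factor $\delta$ in the coupled admissibility norm, which is precisely the statement that the spectral radius of the no-gain linear feedback $\mathcal{M}$ on the product space is strictly less than one — i.e.\ exactly what your Neumann-series $(I-\mathcal{M})^{-1}$ formulation would need, assembled from the same ingredients (the $\tau_0$-smallness gains in Propositions~\ref{prop:mulitlin1}, \ref{prop:resonantnonlin}, \ref{prop:nonlocallinear2}, the two- and three-fold re-iteration gains in Lemmas~\ref{lem:ctocdelicate}, \ref{lem:ctocdelicate1}, \ref{lem:reiterate2}, and the manifold re-iteration of $\Phi$ in Corollary~\ref{cor:iteratedKbound}). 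You correctly flagged as the crux the verification that the diagonal $\Phi^k$-gain and the off-diagonal $c\leftrightarrow x_{good}$ re-iteration gains close simultaneously on the product space; the paper discharges this by explicitly enumerating the possible source-term compositions (the figure at the end of Section 12 is exactly the combinatorial bookkeeping that makes your ``all no-gain feedbacks at once'' statement rigorous). Your $(I-\mathcal{M})^{-1}\circ(\Psi-\mathcal{M})$ framing is a clean repackaging but not a genuinely different argument; the only soft spot in your write-up is that you localize the Neumann series to the $x_{good} = x_{good}^{(0)} + \Phi(x_{good}) + \cdots$ schematic equation and then informally extend to the coupled system, whereas the paper phrases the $k$-step smallness directly for the coupled pair and avoids any separate-then-combine step.
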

\begin{proof} We shall obtain the solution $\big(\xb(\tau, \xi), c(\tau)\big)$ as the limit of a sequence of iterates $\big(\xb_j(\tau, \xi), c_j(\tau)\big)$, $j\geq 1$. Call the corresponding function $\veps_j$. Specifically, we set
\[
\big(\xb_{j}(\tau, \xi), c_{j}(\tau)\big) = \big(\xb_{j-1}(\tau, \xi), c_{j-1}(\tau)\big) + \big(\triangle\xb_{j}(\tau, \xi), \triangle c_{j}(\tau)\big),\,j\geq1,
\]
The zeroth iterate $\big(\xb_{0}(\tau, \xi), c_{0}(\tau)\big)$ is given by the solutions of the linear homogeneous problems 
\begin{align*}
 -\left(\calD^{2}_{\tau}+\frac{\lambda'}{\lambda}\calD_{\tau}+\xi\right)\xb_0(\tau, \cdot) = 0,\,\big(\xb_0(\tau_0,0), \mathcal{D}_{\tau}\xb_0(\tau_0,\cdot)\big) = \big(\xb_0, \xb_1\big),
\end{align*}
\begin{align*}
 \left(\partial_{\tau}+\frac{\lambda'}{\lambda}\right)^{2}c(\tau)+\frac{\lambda'}{\lambda}\left(\partial_{\tau}+\frac{\lambda'}{\lambda}\right)c_0(\tau) = 0,\,(c_0(\tau_0), c_0'(\tau_0)) = (c_0, c_1)
\end{align*}
Then in light of Proposition~\ref{prop:PropagatorEnergy}, $\xb_0$ is admissible and in fact of the form $x_{good}(\tau, \xi)$ (referring to the decomposition \eqref{eq:admissiblexb}), and also satisfies the bound in the statement of Proposition~\ref{prop:iteration}. Similarly, the required bound for $c_0(\tau)$ follows by taking advantage of the section after Proposition~\ref{prop:PropagatorEnergy}.
\\

To conclude the proof, we now have to consider the higher order corrections $\big(\triangle\xb_{j}(\tau, \xi), \triangle c_{j}(\tau)\big)$, and infer a priori bounds and in fact exponential decay for these in the relevant norms. This of course requires passing to the difference equations in 
\eqref{ctau ODE}, \eqref{eq on Fourier side final}. 
\\

Start with the $j$-th correction $\big(\triangle\xb_{j}(\tau, \xi), \triangle c_{j}(\tau)\big)$. Then $\triangle c_{j}$ contributes to $\triangle c_{j+1}$ via $\lambda^{-2}(\tau)\triangle n_j$ in the difference equation associated to  \eqref{ctau ODE}, and a smallness gain $\gamma(\tau_0)$ follows from Lemma~\ref{lem:nbound}. Similarly, considering the contribution of $\triangle c_{j}$ to $\triangle \xb_{j+1}$ and recalling our splitting of the $N(\veps)$ in \eqref{eq:epsequation1} into a term $N_2(\veps)$ linear in $\veps$ as well as a non-linear term $N_1(\veps)$, we easily get smallness for the difference term associated to $N_1(\veps)$, provided one of the factors is $\triangle c_j(\tau)\phi_0(R)$, as follows from Proposition~\ref{prop:resonantnonlin}, Proposition~\ref{prop:mulitlin1}. However, in light of Proposition~\ref{prop:resonantlin} we do not gain smallness when substituting $\triangle c_j(\tau)\phi_0(R)$ for $\veps$ in the difference term associated to $N_2(\veps)$, and instead we only recover an a priori bound in terms of $\big(\triangle\xb_{j-1}(\tau, \xi), \triangle c_{j-1}(\tau)\big)$. Further, $\triangle c_{j}$ contributes to $\triangle \xb_{j+1}$ by substituting $\triangle c_j(\tau)\phi_0(R)$ for $\veps$ in the terms $\tilde{N}_2(\veps)$ displayed in Proposition~\ref{prop:anotherresonantlin}, where we get bounds with smallness gains except for one term, which as far as $\calD{\tau}\triangle \xb_{j+1}$ is concerned gains smallness after re-iteration on account of Lemma~\ref{lem:reiterate2}. 
\\

Next, $\triangle\xb_{j}$ contributes to $\triangle c_{j+1}(\tau)$ via the (difference)source term $h(\tau)$ in \eqref{ctau ODE}, which results in a representation formula \eqref{eq:cexplicit} and a bound without a smallness gain, see Lemma~\ref{lem:hbound}. On the other hand, in light of Proposition~\ref{prop:mulitlin1}, Proposition~\ref{prop:nonlocallinear1}, all contributions of $\triangle\xb_{j}$ to $\triangle\xb_{j+1}$ gain smallness {\it{except}} those involving the nonlocal linear terms using $\mathcal{D}_{\tau}\triangle\xb_j$ as input. 
\\

We now consider the effect of {\it{going one step deeper into the iteration}}, i. e. passing to $\big(\triangle\xb_{j+2}(\tau, \xi), \triangle c_{j+2}(\tau)\big)$. More precisely, we only need to consider those contributions to the next iterate which {\it{in and of themselves}}, i. e. without taking into account the fact that they are defined in terms of a previous iterate, do not gain any smallness, see the figure. 
\\

As far as the contributions of $\triangle\xb_{j+1}$ are concerned, this involves the contribution to $\triangle c_{j+2}$ via $h$ in \eqref{ctau ODE}, and the contribution to $\triangle x_{j+2}$ via the nonlocal linear terms involving $\mathcal{D}_{\tau}\triangle x_{j+1}$. Now if in turn $\triangle\xb_{j+1}$ arose from $\triangle c_{j}$ without gaining smallness via $N_2(\veps)$, or by one of the terms $\tilde{N}_2(\veps)$ from Proposition~\ref{prop:anotherresonantlin} without possibly any bound, as discussed above, then the {\it{combination}} of the two iterative steps leading to $\triangle c_{j+2}$ {\it{does gain}} smallness, on account of Lemma~\ref{lem:ctocdelicate}, Lemma~\ref{lem:ctocdelicate1}.
\\
Similarly, the {\it{combination}} of the two iterative steps leading to $\triangle x_{j+2}$ also leads to smallness as far as $\calD_{\tau}\triangle x_{j+2}$ is concerned, on account of the last part of Proposition~\ref{prop:resonantlin}, Lemma~\ref{lem:reiterate2}.
\\
Still considering  the contributions of $\triangle\xb_{j+1}$, assume now that $\triangle\xb_{j+1}$ arose from $\triangle\xb_j$ via a source term not gaining smallness, which means a nonlocal term linear term involving $\mathcal{D}_{\tau}\triangle\xb_j$, see Proposition~\ref{prop:nonlocallinear1}. Then the contribution to $\triangle \xb_{j+2}$ which does not gain smallness is the one involving the non-local operator and $\mathcal{D}_{\tau}\triangle\xb_{j+1}$, and so this later case corresponds to twofold application of the operator $\Phi$ defined in \eqref{eq:Phidef}.
\\

The upshot of the preceding discussion is that threefold application of the iterative step yields  a smallness gain (in fact, a negative power of $\tau_0$) except for those terms involving threefold application of $\Phi$. However, for the latter, we can use Proposition~\ref{prop:Kditeratedsmallness}, which gives smallness after sufficiently many iterations. We can now infer from the preceding discussion the following final 
\begin{proposition}\label{prop:iteratesmallness} Given $\delta>0$, there is $k = k(\delta)\in \N$ as well as $\tau_0 = \tau_0(\delta)>0$ sufficiently large and $\delta_* = \delta_*(\delta,\tau_0)>0$ sufficiently small, such that restricting $\tau\geq\tau_0$, $\delta_0<\delta_*$ we have the following: denoting by $\Vert\xb\Vert_{S_3}$ the infimum of the sum of norms displayed after \eqref{eq:admissiblexb} over all decompositions as in \eqref{eq:admissiblexb} for an admissible $\xb$, we have 
\begin{align*}
&\sup_{\tau\geq \tau_0}\frac{\lambda(\tau)}{\lambda(\tau_0)}\left\langle\log\frac{\lambda(\tau)}{\lambda(\tau_0)}\right\rangle^{1+\frac{\kappa}{2}}\left[\Vert\triangle \xb_{j+k}(\tau, \cdot)\Vert_{S_3} + \tau^{-2}\big|\triangle c_{j+k}(\tau)\big| + \tau^{-1}\big|\triangle c_{j+k}'(\tau)\big|\right]\\
&\leq \delta \sup_{\tau\geq \tau_0}\frac{\lambda(\tau)}{\lambda(\tau_0)}\left\langle\log\frac{\lambda(\tau)}{\lambda(\tau_0)}\right\rangle^{1+\frac{\kappa}{2}}\left[\Vert\triangle \xb_{j}(\tau, \cdot)\Vert_{S_3}+ \tau^{-2}\big|\triangle c_j(\tau)\big| + \tau^{-1}\big|\triangle c_j'(\tau)\big|\right]\\
\end{align*}
\end{proposition}
In turn Proposition~\ref{prop:iteration} is an immediate consequence of the preceding proposition.

\end{proof}

In the figure below, numbers refer to lemmas or propositions. Also, the letter 's' means that smallness is gained. 
\begin{center}
\begin{tikzpicture}
 \draw (0,0) node {$(c,\xb)$};
 \draw (0,2) node {$(c,\xb)$};
 \draw (-1.8,0.4) node {$s(10.9)$};
 \draw [<-] (-0.1,0.3) to (-0.1,1.7);
 \draw [<-] (-0.04,0.3) to (0.1,1.7);
 \draw (-5,-2.5) node {$c$};
 \draw (-5,-2.9) node {$s(10.6)$};
 \draw (-5,-3.3) node {$n$};
 \draw [->] (-0.4,-0.1) to (-4.8,-2.3);  
 \draw (-3.5,-2.5) node {$\xb$};
 \draw (-3.5,-2.9) node {$s(10.1)$};
 \draw (-3.5,-3.3) node {$N_{1}$};
 \draw [->] (-0.34,-0.17) to (-3.3, -2.3);
 \draw (-2, -2.5) node {$\calD_{\tau}\xb$};
 \draw (-2,-2.9) node {$s(10.2)$};
 \draw (-2,-3.3) node {$N_{2}$};
 \draw [->] (-0.27, -0.2) to (-1.7, -2.3);
 \draw (-0.5,-2.5) node {$\calD_{\tau}\xb$};
 \draw (0,-1.5) node {$\tilde{N}_{2}$};
 \draw [->] (-0.15,1.7) to [out=210,in=150] (-0.6,-2.2); 
 \draw [dashed,->] (-0.2, -0.22) to (-0.3, -2.3);
 \draw (0.7,-2.5) node {$c$};
 \draw (0.75,-1.5) node {$h$};
 \draw [->] (0.2,-0.22) to (0.6,-2.3); 
 \draw (3,-2.5) node {$c$};
 \draw (2.1,-2) node {$n$};
 \draw (2.2,-2.3) node {$s(10.6)$};
 \draw [->] (0.3, -0.22) to (2.9,-2.3);
 \draw (5,-2.5) node {$\xb$};
 \draw (4.2,-2.5) node {$s(8.3)$};
 \draw (4.2,-2.9) node {$N_{1}$};
 \draw [->] (0.35,-0.21) to (4.85,-2.4);
 \draw (7.5,-2.5) node {$\xb$};
 \draw (6.7,-2.5) node {$s(9.1)$};
 \draw (6.7,-3.1) node {$\left(\frac{\lambda'}{\lambda}\right)^{2}\calK_{0}\xb$};
 \draw [->] (0.41,-0.2) to (7.4,-2.4);
 \draw (11,-2.5) node {$\xb$};
 \draw (11,-3) node {$\tilde{N}_{2},\frac{\lambda'}{\lambda}\calK_{0}\calD_{\tau}\xb$};
 \draw [->] (0.5,-0.19) to (10.91,-2.38);
 \draw (0.7,-2.9) node {$(A,B)$};
 \draw (0.7,-4.7) node {$\xb$};
 \draw (0.4,-4.3) node {$N_{2}$};
 \draw [->] (0.7,-3.2) to (0.7,-4.4);
 \draw (2.8,-4.7) node {$\xb$};
 \draw (2.1,-3.8) node {$\tilde{N}_{2}$};
 \draw [dashed,->] (1,-3.2) to (2.7,-4.55);
 \draw (2.8,-6.2) node {$c$};
 \draw (4,-4.5) node {$s(10.8)$};
 \draw (3,-5.4) node {$h$};
 \draw [->] (1.3,-2.8) to [out=30,in=30] (3,-6);
 \draw [->] (2.8,-4.9) to (2.8,-6);
 \draw (1.75,-6.2) node {$\frac{\lambda'}{\lambda}\calK_{0}\calD_{\tau}\xb$};
 \draw (1.8,-6.7) node {$s(10.2)$};
 \draw [->] (0.75,-4.9) to (1.56,-5.9);
 \draw [->] (0.85,-3.2) to (1.82,-5.9);
 \draw (-1,-6.2) node {$c$};
 \draw (-0.8,-4.6) node {$s(10.7)$};
 \draw (-0.05,-5.7) node {$h$};
 \draw [->] (0.65,-4.9) to (-0.75,-6);
 \draw [->] (0.55,-3.2) to (-0.9,-6);
\end{tikzpicture}
\end{center}

\bibliographystyle{plain}
\bibliography{bibWM}

\bigskip

\centerline{\scshape Joachim Krieger }
\medskip
{\footnotesize
 \centerline{B\^{a}timent des Math\'ematiques, EPFL}
\centerline{Station 8, 
CH-1015 Lausanne, 
  Switzerland}
  \centerline{\email{joachim.krieger@epfl.ch}}
} 

\medskip
\centerline{\scshape Shuang Miao}
\medskip
{\footnotesize
 \centerline{B\^{a}timent des Math\'ematiques, EPFL}
\centerline{Station 8, 
CH-1015 Lausanne, 
  Switzerland}
  \centerline{\email{shuang.miao@epfl.ch}}
} 

\end{document}